\providecommand{\bbR}{\mathbb{R}}
\providecommand{\calF}{\mathcal{F}}
\providecommand{\calS}{\mathcal{S}}
\providecommand{\calB}{\mathcal{B}}
\providecommand{\calL}{\mathcal{L}}
\providecommand{\pS}{\partial \mathcal{S}}
\providecommand{\eps}{\varepsilon}
\def\longrightharpoonup{\relbar\joinrel\rightharpoonup}
\def\cv{\stackrel{w}{\longrightharpoonup}}
\def\cvwstar{\stackrel{w*}{\longrightharpoonup}}
\providecommand{\calC}{\mathcal{C}}
\providecommand{\calT}{\mathcal{T}}
\providecommand{\calJ}{\mathcal{J}}
\providecommand{\calK}{\mathcal{K}}
\renewcommand{\leq}{\leqslant}
\renewcommand{\geq}{\geqslant}
\renewcommand{\div}{\operatorname{div}}
\newcommand{\curl}{\operatorname{curl}}
\newcommand{\tr}{\operatorname{tr}}
\newcommand{\dist}{\operatorname{dist}}
\newcommand{\Id}{\operatorname{Id}}
\newtheorem{theorem}{Theorem}
\newtheorem{definition}{Definition}
\newtheorem{corollary}{Corollary}
\newtheorem{proposition}{Proposition}
\newtheorem{Lemma}{Lemma}
\newtheorem{remark}{Remark}
\begin{document}

\date{\today}
\title{On the existence of weak solutions for the 2D incompressible Euler equations with in-out flow and source and sink points.}
\author{Marco Bravin\footnote{BCAM - Basque Center for Applied Mathematics, Mazarredo 14, E48009 Bilbao, Basque Country - Spain.}}
\maketitle

\begin{abstract}
Well-posedness for the two dimensional Euler system with given initial vorticity is known since the works of Judovi\v{c}. In this paper we show existence of solutions in the case where we allowed the fluid to enter in and exit from the boundaries and from some points of the fluid domain. In particular we derive the equations of the model as the limit when we replace the points by some small holes. To do that we extend the DiPerna-Lions theory with non-tangent velocity field on the boundary to the case of time-dependent domain, we extend the existence result for the two dimensional Euler system with in-out flow to time-dependent domain and finally we derive the system that models a fluid which is allowed to enter in and exit from the boundary and some points. The solutions are characterized by the presence of source, sink and  vortex points.      
\end{abstract}

\section*{Introduction}

In this paper we study the existence for solutions of a system describing the flow of an incompressible inviscid fluid in a domain where the fluid is allowed to enter and exit from the boundary and also the presence of source and sink points is allowed. 

We start by presenting the model. Let $ \Omega  \subset \bbR^2 $ open, bounded and with smooth enough boundary and let $ \calS^{+}(t), \calS^{-}(t) \subset \Omega $ two closed, connected, disjoint subset of $ \Omega $ with smooth enough boundaries. Then if $ \calS^{+}(t) $ and $ \calS^{-}(t)$ have non-empty interiors the flow of a incompressible, inviscid fluid exiting from $ \partial \calS^+ $ and entering in $ \partial \calS^- $ is described by the following system.
\begin{align}
\partial_{t} v + v \cdot \nabla v + \nabla p = & \, 0 && \text{ for } x \in \Omega \setminus (\calS^{+}(t)\cup \calS^-(t)), \nonumber \\
\div v = & \, 0 && \text{ for } x \in \Omega \setminus (\calS^{+}(t)\cup \calS^-(t)), \label{in:sys} \\
v \cdot n = & \, g && \text{ for } x \in \partial \Omega \cup \partial \calS^{+}(t) \cup \partial \calS^-(t), \nonumber \\
\curl v = & \, \omega^{+} && \text{ for  } x \in \partial \calS^{+}(t), \nonumber
\end{align}  
where $ v = (v_1, v_2) $ and $ p $ are respectively the velocity field and the pressure. The vector $ n $ is the normal exiting from the fluid domain, the function $ g $ describes the normal component of the velocity field on the boundary and in particular $ g = 0 $ on $ \partial \Omega $, $ g < 0 $ on $\partial \calS^{+}(t) $ and $ g > 0 $ on $ \partial \calS^{-}(t) $. Finally $ \omega^{+} $ denotes the entering vorticity. This system was introduced by Judovi\v{c} in \cite{Jud}, in particular he proved existence and uniqueness of regular solutions. Later in \cite{Ale}, the author proved existence of solutions in the setting of bounded vorticity. In the case where the fluid domain is time independent, i.e. $ \Omega \setminus (\calS^{+}(t)\cup \calS^-(t)) =  \Omega \setminus (\calS^{+}(0)\cup \calS^-(0)) $, existence of solutions to the system \eqref{in:sys} was shown in \cite{IO3} for initial and entering vorticity in $ L^p $ for $ p\geq 1 $. Uniqueness was shown in \cite{NFS} in the case of $ L^{\infty} $ initial and entering vorticity.  

Judovi\v{c} in \cite{Jud} left open the question of well-posedness for the system satisfied by the flow when the domain has some holes that may shrink to points, in particular the condition of having non-empty interior is not any more fulfilled. In this article we derive the equations satisfied by the fluid when the source and the sink are allowed to be points as the limiting system satisfied by solutions of \eqref{in:sys} when the size of the source and sink becomes small at certain times. Moreover we prove existence of solutions in an appropriate sense. 

We write down the system when the source and the sink are allowed to be points for some times. To do that we define 
\begin{gather*}
 \calT^{+} = \left\{ t \in \bbR^{+} \text{ such that } \calS^{+}(t) = x_{+}(t) \text{ a point in } \bbR^{2} \right\}, \\ 
 \calT^{-} = \left\{ t \in \bbR^{+} \text{ such that } \calS^{-}(t) = x_{-}(t) \text{ a point in } \bbR^{2} \right\}, 
\end{gather*}
and their complementaries $ \calT^{+}_{NP} = \bbR^{+} \setminus \calT^{+} $ and $ \calT^{-}_{NP} = \bbR^{+} \setminus \calT^{-} $. The equations in the vorticity form read as
\begin{align}
& \partial_t \omega + \div(v \omega) = j \mathds{1}_{\calT^{+}} \delta_{x_{+}(t)}- \left( \frac{d}{dt} \int \omega  + \oint_{\partial S^{+}(t)} \omega^{+}(g - q)\mathds{1}_{\calT^{+}_{NP}} + j \mathds{1}_{\calT^{+}} \right) \mathds{1}_{\calT^{-}}\delta_{x_{-}(t)} \quad && \text{ for } x \in \calF(t), \nonumber \\
&  \begin{aligned}
v =  \calJ_{\calF(t)} \left[\mu \mathds{1}_{\calT^{+}}\delta_{x_{+}(t)} - \left(\oint_{\pS^{+}(t)} g \mathds{1}_{\calT^{+}_{NP}} + \mu \mathds{1}_{\calT^{+}} \right) \mathds{1}_{\calT^{-}}\delta_{x_{-}(t)} \right] \\  + \calK_{\calF(t)} \left[ \omega + \calC_{+} \mathds{1}_{\calT^{+}} \delta_{x_{+}(t)} + \calC_{-} \mathds{1}_{\calT^{-}} \delta_{x_{-}(t)} \right] 
\end{aligned} \quad &&\text{for } x \in \calF(t), \label{main:sys} \\
& \calC_{+}(t) = \calC_{+}^{in} - \int_{0}^{t} \left( \oint_{\partial S^{+}(t)} \omega^{+}(g - q)\mathds{1}_{\calT^{+}_{NP}} + j \mathds{1}_{\calT^{+}}  \right), &&  \nonumber \\
& \calC_{-}(t) = \calC_{-}^{in} + \int_{\calF(t)} \omega - \int_{\calF_0} \omega^{in} + \int_{0}^{t} \left( \oint_{\partial S^{+}(t)} \omega^{+}(g - q)\mathds{1}_{\calT^{+}_{NP}} + j \mathds{1}_{\calT^{+}}  \right), \nonumber &&
\end{align}
where $ \omega = \curl u $ denotes the vorticity, $ \calF(t) = \Omega \setminus \overline{\text{int}(\calS^{+}(t) \cup \calS^{-}(t))} $ with $ \text{int}(.) $ we denote the interior of a set, $ j $ and $ \mu $ are two real quantities defined on $ \calT^{+} $ and they are associated with the entering vorticity and entering flow, $ q $ is the normal component of the velocity of the boundary $ \partial \calF(t) $, $ g $ is the normal velocity of the fluid on $ \partial \calF(t) $, in particular $ g = 0 $ on $ \partial \Omega $, $ g - q < 0 $ on $ \pS^{+}(t) $ for $ t \in \calT^{+}_{NP} $ and $ g - q > 0 $ on $ \pS^{-}(t) $ for $ t \in \calT^{-}_{NP} $, $ \omega^{+} $ is the entering vorticity on $ \pS^{+}(t) $ for $ t \in \calT^{+}_{NP} $. Finally we define the operators $\calJ $ and $ \calK $ as follow. For $ \mathfrak{m} $ such that $ \int_{\calF(t)} \mathfrak{m} = \int_{\partial \calF(t) } g $, we define $ \calJ_{\calF(t)}(\mathfrak{m}) = u $ the solution of
\begin{align*}
\div u = & \, \mathfrak{m} && \text{ for } x \in \calF(t), \\
\curl u = & \, 0 && \text{ for } x \in \calF(t),\\
u\cdot n = & \, g && \text{ for } x \in \partial  \calF(t), \\
\left(\oint_{\pS^{i}(t)} u \cdot \tau \right) \mathds{1}_{\calT^{i}_{NP}} = & \, 0, &&  
\end{align*}    
and analogously for $ \mathfrak{n} $ such that $ \int_{\calF(t)} \mathfrak{n} = \calC_{+}(t) + \calC_{-}(t) +\calC_{\Omega} $, then we define $ \calK_{\calF(t)}[\mathfrak{n}] = u $ the solution of 
\begin{align*}
\div u = & \, 0 && \text{ for } x \in \calF(t), \\
\curl u = & \, \mathfrak{n} && \text{ for } x \in \calF(t),\\
u\cdot n = & \, 0 && \text{ for } x \in \partial  \calF(t), \\
\left(\oint_{\pS^{i}(t)} u \cdot \tau \right) \mathds{1}_{\calT^{i}_{NP}} = & \, \calC_{i}(t). &&  
\end{align*}

The idea of the paper is to consider some auxiliary systems characterized by the presence of a source $ \calS^{+}_{\eps}(t) $ and $ \calS^{-}_{\eps}(t) $ with non-empty interior that approximate $ \calS^{+}(t) $ and $ \calS^{-}(t) $ respectively. In particular $ \calS^{+}_{\eps}(t) $ and $ \calS^{-}_{\eps}(t) $ converge to $ \calS^{+}(t) $ and $ \calS^{-}(t) $ in an appropriate sense as $ \eps $ converges to zero. We will show that solutions of the approximate system with appropriate boundary data converges to a weak solution of \eqref{main:sys}. 

\section{Formal derivation of the system}

 The system \eqref{in:sys} in the vorticity form was derived by Judovich in \cite{Jud} in the case where the source and the sink $ \calS^{i}(t) $ have non-empty interior. Before presenting the system we recall the Kelvin's theorem for a closed curve that does not follow the flow. It holds 
\begin{equation}
\label{kelvin:theorem}
\frac{d}{dt} \oint_{\pS^{i}(t)} v(t,.) \cdot \tau = -\oint_{\pS^i(t)} (v \cdot n - q) \curl v, 
\end{equation}
which can be deduce for smooth enough solutions by computing the time derivative of the circulation and by using the equations \eqref{in:sys}. By applying $ \curl $ to the system \eqref{in:sys}, we obtain 
\begin{align}
& \partial_t \omega + \div(v \omega) = 0 && \text{ for } x \in \calF(t), \nonumber \\
& 
v =  \calK_{\calF(t)} \left[ \omega \right]  && \text{ for } x \in \calF(t), \label{judv:sys} \\
& \calC_{+}(t) = \calC_{+}^{in} - \int_{0}^{t} \oint_{\partial S^{+}(t)} \omega^{+}(g - q), &&  \nonumber \\
& \calC_{-}(t) = \calC_{-}^{in} + \int_{\calF(t)} \omega - \int_{\calF_0} \omega^{in} + \int_{0}^{t} \oint_{\partial S^{+}(t)} \omega^{+}(g - q). \nonumber
\end{align}
In the case where the source and the sink are time-independent point, i.e. $ x_{+}(t) = x_{+} $  and $ x_{-}(t) = x_{-} \neq x_{+} $, the system was derived in \cite{IO3} and it reads
\begin{align}
& \partial_t \omega + \div(v \omega) = j  \delta_{x_{+}} - \left( \frac{d}{dt} \int \omega + j  \right) \delta_{x_{-}} && \text{ for } x \in \calF(t), \nonumber \\
&  v =  \calJ_{\calF(t)} \left[\mu \delta_{x_{+}} - \mu \delta_{x_{-}} \right]  + \calK_{\calF(t)} \left[ \omega + \calC_{+} \delta_{x_{+}} + \calC_{-} \delta_{x_{-}} \right] 
 && \text{ for } x \in \calF(t), \label{BS:main:sys} \\
& \calC_{+}(t) = \calC_{+}^{in} - \int_{0}^{t} j , &&  \nonumber \\
& \calC_{-}(t) = \calC_{-}^{in} + \int_{\calF(t)} \omega - \int_{\calF_0} \omega^{in} + \int_{0}^{t} j, \nonumber
\end{align}
where $ \mu $ and $ j $ are quantities associated with respectively the entering flow and the entering vorticity.  
System \eqref{main:sys} is a mixture of the equations \eqref{judv:sys}, \eqref{BS:main:sys} and \eqref{kelvin:theorem} to describe the behaviours of the vorticity in dependence of the geometric property of the source and of the sink. More precisely close to an hole with non-empty interior the system is described by \eqref{judv:sys}, in the case it is a point by \eqref{main:sys} which is characterized by the presence of a point source/sink and a point vortex.

\section{Definition of solution and main result}

\label{section:setting}

Let start by presenting the geometry that we allow for the domain $ \Omega $ and the holes $ \calS^{i} $. The properties that we required are basically two. The first one is that the set $ \Omega, \calS^+ $ and $ \calS^- $ have regular enough boundaries in space and time variables to prove elliptic estimates uniformly in time and to have the velocity of the boundary regular enough. The second one is that $ \Omega, \calS^+ $ and $ \calS^-$ are compatible in the sense that the mutual distance is greater than a positive constant in any compact time interval $ [0,T] $.   

To give an idea we are considering  $ \Omega, \calS^+ $ and $ \calS^-$ such that $ \overline{\calS^+},  \overline{\calS^-} \subset \text{int}(\Omega) $,  $ \overline{\calS^+} \cap \overline{\calS^-} = \emptyset $ and the boundaries are smooth except the transition time where an hole of non-empty interior becomes a point or the other way around.

In the next subsection we formalize this idea by introducing the definition of regular compatible geometry. In a first reading this subsection can be skipped. 

\subsection{Assumptions on the fluid domain}

The holes $ \calS^i \subset \Omega $ can have non-empty interior or can be points. In the transition times where an hole passes from a subset of $ \bbR^2 $ with non-empty interior to a point it is not clear how to study the regularity of the boundary. To avoid this issue we restrict only to holes $ \calS^i $ that are given as follow. There exist a shape map $ S^i: \bbR^{+}\times \partial B_1(0) \longrightarrow  \bbR^{2}  $, a radius map $ r_i: \bbR^{+} \longrightarrow \bbR^{+} $ and a position map  $ h^i: \bbR^{+} \longrightarrow \Omega  $ such that the boundary of the i-th hole is 
\begin{equation}
\label{def:holes}
\partial \calS^{i}(t) = \left\{ r^i(t)S^i(t,x) + h^i(t) \quad \text{ such that } \quad x \in \partial B_{1}(0) \right\}.
\end{equation}
The set $ \calS^{i}(t) $ is the closure of the bounded component of $ \bbR^2 $ separated by $ \partial \calS^i(t,.) $ if $ r^i(t) > 0 $. It is the point $ h^i(t) $ if $ r^i(t) = 0$. With this notation $ \calT^i = \{ t \in \bbR^{+} $ such that $ r^i(t) = 0 \} $ and the normal velocity of the boundary $ q(t,x) = \partial_t(r^i(t)S^i(t,x) + h^i(t))\cdot n $ for $ (t,x) \in \mathcal{T}^i_{NP} \times \partial \calS^{i}(t) $.

\begin{definition}[Regular septuple]
Let $ \Omega \subset \bbR^2 $ a connected simply-connected bounded domain. For $ i \in {+,-} $, let $ S^i: \bbR^{+}\times \partial B_1(0) \longrightarrow  \bbR^{2}  $ the shape maps, $ r_i: \bbR^{+} \longrightarrow \bbR^{+} $ the radius map and $ h^i: \bbR^{+} \longrightarrow \Omega $ the position map. Then we say that the septuple $ (\Omega, S^+, S^-, r^+,r-,h^+,h^-) $ is regular if for some $ \alpha > 0 $
\begin{itemize}

	\item $ \Omega $ has $ C^{2,\alpha} $ boundary,
	
	\item $ S^i $ is a $ C^{1, \alpha}_{loc}(\bbR^{+};C^{1,\alpha}(\partial B_1(0);\bbR^2)) \cap C^{0, \alpha}_{loc}(\bbR^{+};C^{3,\alpha}(\partial B_1(0);\bbR^2))  $	embedding.
	
	\item $ r_i \in C^{1,\alpha}_{loc}(\bbR^{+};\bbR^+) $ and $ h^i_{loc} \in C^{1,\alpha}(\bbR^+;\bbR^2) $,
	
	\item the origin is contained in the bounded component of $ \bbR^{2} $ separated by the image  of $ S^i(t,.) $.
\end{itemize} 
\end{definition}
We define now the concept of compatibility.

\begin{definition}[Compatible geometry]
We say that a triple of collection of subsets of $ \bbR^2 $ indexed by $ t\in \bbR^+$ $ ( \Omega, \calS^+(.), \calS^{-}(.)) $  or  a septuble $ (\Omega, S^+, r^+,h^+, S^-, r^-,h^-) $ is a compatible geometry if for any $ t $
	\begin{itemize}
		\item $ \overline{\calS^i(t)} \subset \text{int}(\Omega) $  for $ i \in \{+,-\} $
		
		\item $ \overline{\calS^{+}(t)} \cap  \overline{\calS^{-}(t)} = \emptyset $,
		
	\end{itemize}
where we recall that given a septuple $ (\Omega, S^+, r^+,h^+, S^-, r^-,h^-) $, $ \calS^+ $ and $ \calS^-$ are the bounded domain with boundary defined in \eqref{def:holes}.	
		
\end{definition}

\begin{definition}[Regular compatible geometry]

Let $ (\Omega, \calS^+(.), \calS^-(.)) $ a triple of collection of subsets of $ \bbR^2 $ indexed by $ t\in \bbR^+$. Then we say that it is a regular compatible geometry if there exists a regular and compatible septuple such that $ \calS^i $ is defined via \eqref{def:holes}. 
		
\end{definition}

\begin{remark}
	In the following we always consider regular compatible geometry. In particular the mutual distances between the source, the sink and the boundary of $ \Omega $ is always lower bounded by a positive constant in any compact time interval $ [0,T] $. 
\end{remark}

\subsection{Boundary data and definition of weak solution}

To simplify the notation for a collection of set $ X(t) \subset \bbR^2$, we denote
\begin{gather*}
\bbR^{+} \odot X(t) = \bigcup_{t \in \bbR^{+}} t \times X(t) \quad \text{ and } \\ \mathcal{L}_{loc}^{p}(\bbR^{+}; W^{s,p}(X(t))) = \{ u : \bbR^{+} \odot X(t) \to \bbR \text{ s.t. }  \|u(t,.)\|_{W^{s,p}(X)(t)} \in L^{p}_{loc}   \}. 
\end{gather*}
Moreover for any measurable subset $ I \subset \bbR^{+} $ and any function $ f $ defined on $ I \odot \pS^{i}(t) $, we also denote by $ f $ the extension by zero on $ \bbR^{+} \odot \pS^{i}(t) $.

We briefly discuss the boundary data that we require in the definition and in the existence result of weak solution. Regarding $ \partial \mathcal{S}^+(t) $, we impose  the normal component of the velocity $ g $ for $ t \in \mathcal{T}^{+}_{NP} $. Moreover because $ g - q < 0 $, we are allowed to impose also the entering vorticity $ \omega^{+}$. For $ t \in \mathcal{T}^{+} $, $ \partial \mathcal{S}^+(t) = x_+(t) $ and we impose $ \mu $ and $ j $, the strengths respectively of the point source and of the point vortex. Regarding $ \partial \mathcal{S}^-(t) $, we impose  the normal component of the velocity $ g $ for $ t \in \mathcal{T}^{-}_{NP} $. In this case $ g - q > 0 $, which implies that the exiting vorticity is an unknown of the problem. For $ t \in \mathcal{T}^{-} $, no data are required, in fact both the strength of the point sink and point vortex can be recovered due to respectively the mass conservation and the conservation of the vorticity, where here by conservation we mean the sum of what is entering minus what is exiting plus what is inside is constant. For example for $ t \in \calT^{-}_{NP} $, the compatibility condition
\begin{equation}
\label{comp:con:mu}
\left(\oint_{\partial \calS^{+}(t)} (g -q)\right) \mathds{1}_{\calT^{+}_{NP}} + \mu \mathds{1}_{\calT^{+}} + \left(\oint_{\partial \calS^{-}(t)} (g -q)\right) = 0,
\end{equation}
have to be satisfied. Let us introduce the concept of compatible in-out velocity. 

\begin{definition}[Compatible in-out velocities]
	Let $ ( \Omega, \calS^+(.), \calS^{-}(.)) $ a regular compatible geometry. A couple $ (g, \mu) $ with 
	$$ g : \left( \bbR^+ \times \partial \Omega \right) \cup \left(\bigcup_{i \in  \{+,-\}} \calT^i_{NP} \odot \calS^i(t)\right) \longrightarrow \bbR \quad \text{ and } \quad \mu : \calT^+ \longrightarrow \bbR^+  $$ 
	is a compatible in-out velocities if 	
	\begin{itemize}
		\item it holds $ g- q <  0 $ on $ \calT^+_{NP} \odot \partial \calS^+ $, $ g- q >  0 $ on $ \calT^-_{NP} \odot \partial \calS^- $ and $ g = 0 $ on $ \bbR^+ \times \partial \Omega $.
		
		\item for any $ t \in \calT^{-}_{NP} $ equality \eqref{comp:con:mu} holds.
	
	\end{itemize}

\end{definition}

Let us remark that we do not define the strength $ \mu^-$  of the sink point because for $ t \in \calT^- $ it holds
\begin{equation*}
\mu^{-} = - \left(\oint_{\partial \calS^{+}(t)} (g -q)\right) \mathds{1}_{\calT^{+}_{NP}} - \mu \mathds{1}_{\calT^{+}},
\end{equation*}
due to the incompressibility of the velocity field. 

We present the definition of weak solution.

\begin{definition}
\label{main:def:def}

Let $ (\Omega, \calS^+, \calS^-) $ a regular compatible geometry. Let $ p \in (2,+ \infty) $, $ q \in [p/(p-1), 2) $ and $ r > 1 $. Let $ \omega^{in} \in L^{p}(\calF(0)) $ the initial vorticity, let $ \calC^{in}_i \in \bbR $ the initial circulations around $ \pS^i $. Let $ (g,\mu) $ a compatible in-out velocities such that $ g \in \mathcal{L}^{r}_{loc}(\bbR^{+}; W^{1-1/p,p}(\partial \calF (t)) $ and  $ \mu \in L^{r}_{loc}(\calT^{+}) $. Let $ \omega^{+} : \calT^{+}_{NP} \odot \pS^{i}(t) \longrightarrow \bbR $ such that $ \omega^{+} \in L^{p}_{loc}(\calT^{+}_{NP} \odot \pS^{i}(t), \lvert g-q \rvert \, dt ds) $ and let $ j : \calT^{+} \longrightarrow \bbR $ such that $ j \in L^{r}_{loc}(\bbR^{+})$, $ \text{supp}(j) \subset \text{supp}(\mu) $ and $ j/\mu \in L^{\infty}_{loc}(\bbR^{+}) $ where $  j/\mu $ is defined $ 0 $ for $ \mu = 0 $. Then we say that a triple $ ( \omega, \omega^{-}, v) $ is a weak solution of the system \eqref{main:sys} if $ \omega \in \mathcal{L}^{\infty}_{loc}(\bbR^{+}, L^{p}(\calF(t)))$, $ \omega^{-} \in \mathcal{L}^{p}_{loc}(\calT^{-}_{NP} \odot \pS^{-}(t), \lvert g -q \rvert dt ds)$, $ v \in \mathcal{L}^{r}_{loc}(\bbR^{+}; L^{q}(\calF(t)))$, $ \int_{\calF(t)} \omega \in W^{1,1}_{loc}(\bbR^{+}) $ and it holds
\begin{align*}
\int_{\calF(0)} \omega^i\varphi(0,.) +  & \int_{\bbR^{+}} \int_{\calF(t)} \omega \partial_t \varphi +  \int_{\bbR^{+}} \int_{\calF(t)} \omega v \cdot \nabla \varphi = \\ & \sum_{i \in \{+,-\}}\int_{\calT^{i}_{NP}} \oint_{\pS^{i}(t)} \omega^{i}(g-q)\varphi + \int_{\calT^+} j \varphi(t,x_{+}(t)) \\ - & \int_{\calT^{-}} \left( \frac{d}{dt} \int \omega +  \oint_{\partial S^{+}(t)} \omega^{+}(g - q)\mathds{1}_{\calT^{+}_{NP}} + j \mathds{1}_{\calT^{+}} \right) \varphi(t,x_{-}(t)),
\end{align*}    
for any $ \varphi \in C^{\infty}_{c}(\bbR^{+}\times \overline{\Omega}) $,
\begin{align*}
\int_{\calF(t)} v(t,.) \cdot \nabla \xi = & \, \sum_{i\in \{+,-\}} \left(\oint_{\pS^i(t)} g(t,.) \xi \right)\mathds{1}_{\mathcal{T}^i_{NP}}(t) + \mu(t)  \mathds{1}_{\calT^{+}} \xi(x_+(t))\\ & \,  - \left(\oint_{\pS^{i}(t)} g(t,.) \mathds{1}_{T_{NP}^{+}} +\mu(t) \mathds{1}_{T^{+}}(t) \right) \mathds{1}_{\calT^{-}}(t)\xi(x_{-}(t)),
\end{align*}
for any $ \xi \in C^{\infty}(\overline{\Omega})$,
\begin{align*}
& \, \int_{\calF(t)} v(t,.) \cdot \nabla^{\perp} \zeta =  - \int_{\calF(t)} \omega(t,.) \zeta  \\ & \, + \left[\calC_{+}^{in} - \int_{0}^{t} \left( \oint_{\partial S^{+}(t)} \omega^{+}(g - q)\mathds{1}_{\calT^{+}_{NP}} + j \mathds{1}_{\calT^{+}}  \right) \right] \zeta_{\pS^{+}(t)} \\
& \, +\left[ \calC_{-}^{in} + \int_{\calF(t)} \omega - \int_{\calF_0} \omega^{in} + \int_{0}^{t} \left( \oint_{\partial S^{+}(t)} \omega^{+}(g - q)\mathds{1}_{\calT^{+}_{NP}} + j \mathds{1}_{\calT^{+}}  \right) \right] \zeta_{\pS^{-}(t)},
\end{align*}
for $ \zeta \in C^{\infty}(\overline{\calF(t)}) $ such that $ \zeta = 0 $ on $ \partial \Omega $ and constant on any $ \pS^{i}(t) $. Such constants are denoted by $ \zeta_{\pS^i(t)} $, in particular for $ t \in \calT^{i}$, it holds $ \zeta_{\pS^i(t)}= \zeta(x_i(t)) $.  

\end{definition}

The main result of this paper is the proof of existence for weak solutions of \eqref{main:sys} in the sense of Definition \ref{main:def:def}, in particular we will show existence by considering an approximated sequence of problem where the holes have non empty interior and we pass to the limit in the weak formulation. 

\begin{theorem}
\label{main:theorem}

Let $ (\Omega, \calS^+, \calS^-) $ a regular compatible geometry. Let $ p \in (2,+\infty) $, $ q \in [p/(p-1), 2) $ and $ r > 1 $. Let $ \omega^{in} \in L^{p}(\calF(0)) $ the initial vorticity, let $ \calC^{in}_i \in \bbR $ the initial circulations around $ \pS^i $. Let $ (g,\mu) $ a compatible in-out velocities such that $ g \in \mathcal{L}^{r}_{loc}(\bbR^{+}; W^{1-1/p,p}(\partial \calF (t))) $ and $ \mu \in L^{r}_{loc}(\calT^{+}) $. Let $ \omega^{+} : \calT^{+}_{NP} \odot \pS^{+}(t) \longrightarrow \bbR $ such that $ \omega^{+} \in \mathcal{L}^{p}_{loc}(\calT^{+}_{NP} \odot \pS^{+}(t),\lvert g- q \rvert \, dtds) $ and let $ j : \calT^{+} \longrightarrow \bbR $ such that $ j \in L^{r}_{loc}(\bbR^{+})$, $ \text{supp}(j) \subset \text{supp}(\mu) $ and $ j/\mu \in L^{\infty}_{loc}(\bbR^{+}) $ where $  j/\mu $ is defined $ 0 $ for $ \mu = 0 $. Then there exists a weak solution $ ( \omega, \omega^{-}, v) $ of the system \eqref{main:sys} in the sense of Definition \ref{main:def:def}. 

\end{theorem}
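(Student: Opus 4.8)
The plan is to prove Theorem~\ref{main:theorem} by an approximation--compactness scheme. For each $\eps>0$ I replace the (possibly degenerate) holes $\calS^{\pm}(t)$ by holes $\calS^{\pm}_{\eps}(t)$ with non-empty interior that shrink back to $\calS^{\pm}(t)$ as $\eps\to 0$, solve on the fattened geometry the classical in--out flow Euler system in vorticity form \eqref{judv:sys} — whose solvability on time-dependent domains is granted by the two results announced in the introduction, the moving-domain DiPerna--Lions transport theory with non-tangent boundary velocity and the existence theorem for 2D Euler with in--out flow on moving domains — then derive $\eps$-uniform a priori bounds, and finally pass to the limit in the weak formulation of Definition~\ref{main:def:def}. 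The Dirac terms in \eqref{main:sys} will arise from concentration of boundary fluxes and from circulations around the shrinking holes.

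\emph{Construction of the approximate problems.} Keeping the shape and position maps $S^{i},h^{i}$ fixed, I replace $r^{i}$ by $r^{i}_{\eps}$, a smoothing of $\max(r^{i},\eps)$, so that $r^{i}_{\eps}\in C^{1,\alpha}_{loc}$, $r^{i}_{\eps}\ge r^{i}$ (hence $\calS^{i}_{\eps}\supset\calS^{i}$ and $\calF_{\eps}(t)\subset\calF(t)$), $r^{i}_{\eps}\downarrow r^{i}$ locally uniformly, and $r^{i}_{\eps}\equiv r^{i}$ wherever $r^{i}\ge 2\eps$. For $\eps$ small this septuple is a regular compatible geometry, because the curves $h^{\pm}$ stay at mutually positive distance and at positive distance from $\partial\Omega$ on every $[0,T]$. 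The approximate data $(g_{\eps},\omega^{+}_{\eps})$ coincide with $(g,\omega^{+})$ where $r^{i}\ge 2\eps$, and where $r^{i}<2\eps$ — so $\partial\calS^{+}_{\eps}(t)$ is a small circle around $x_{+}(t)=h^{+}(t)$ — are taken constant on that circle with total outflow $\mu(t)$ and $\omega^{+}_{\eps}=(j/\mu)(t)$ (well defined and in $L^{\infty}_{loc}$, since $j/\mu\in L^{\infty}_{loc}$ and $\text{supp}\,j\subset\text{supp}\,\mu$), so that the vorticity influx through $\partial\calS^{+}_{\eps}(t)$ equals $j(t)$; meanwhile $g_{\eps}$ on the small circle $\partial\calS^{-}_{\eps}(t)$ is chosen constant so that the flux-balance constraint for the fattened geometry holds (possible for $\eps$ small because $\mu>0$, and it forces the correct signs of $g_{\eps}-q$); the construction near the degeneracy times needs a little care but causes no essential difficulty. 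With $\omega^{in}_{\eps}=\omega^{in}|_{\calF_{\eps}(0)}$ and circulations $\calC^{in}_{i}$, the quoted existence result furnishes a solution $(\omega_{\eps},\omega^{-}_{\eps},v_{\eps})$ of \eqref{judv:sys} on $(\Omega,\calS^{+}_{\eps},\calS^{-}_{\eps})$.

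\emph{Uniform estimates.} The renormalized transport theory on moving domains gives, on each $[0,T]$ and uniformly in $\eps$, a bound for $\sup_{t}\|\omega_{\eps}(t,\cdot)\|_{L^{p}(\calF_{\eps}(t))}$ and for the outflow trace $\|\omega^{-}_{\eps}\|$ in the weighted $L^{p}$ in terms of $\|\omega^{in}\|_{L^{p}}$ and $\int|\omega^{+}_{\eps}|^{p}|g_{\eps}-q|$; the latter is $\eps$-uniformly bounded because, where $r^{+}<2\eps$, $\oint_{\partial\calS^{+}_{\eps}(t)}|\omega^{+}_{\eps}|^{p}|g_{\eps}-q|=|(j/\mu)(t)|^{p}\,\mu(t)\in L^{1}_{loc}$, plus the fixed contribution of the prescribed $\omega^{+}$. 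Hence the circulations $\calC_{\pm,\eps}(t)$ are uniformly bounded, and $\tfrac{d}{dt}\int_{\calF_{\eps}(t)}\omega_{\eps}$, being the explicit net vorticity flux, is bounded in $L^{1}_{loc}$ uniformly, so $\int_{\calF_{\eps}(t)}\omega_{\eps}$ is bounded in $W^{1,1}_{loc}$. For the velocity: away from a tube $T_{\rho}$ of radius $\rho$ about the trajectories $t\mapsto h^{\pm}(t)$ (and up to $\partial\Omega$, using its $C^{2,\alpha}$ regularity), $v_{\eps}$ solves $\div v_{\eps}=0$, $\curl v_{\eps}=\omega_{\eps}$ with $\mathcal{L}^{r}_{loc}$-in-time control, and so is bounded in $\mathcal{L}^{r}_{loc}(W^{1,p})$ off $T_{\rho}$ uniformly in $\eps$ (with $\rho$-dependent constants); inside $T_{\rho}$ the singular part of $v_{\eps}$ — from the point source of strength $\mu(t)$ and the circulations $\calC_{\pm,\eps}(t)$ around the small holes — is $\lesssim(\mu(t)+|\calC_{\pm,\eps}(t)|)\,|x-h^{\pm}(t)|^{-1}$, whence $\int_{0}^{T}\|v_{\eps}(t)\|_{L^{p'}(T_{\rho}(t))}\,dt=O(\rho^{(2-p')/p'})$ uniformly in $\eps$, which tends to $0$ as $\rho\to 0$ since $p>2$ (so $p'<2$). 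Altogether $v_{\eps}$ is bounded in $\mathcal{L}^{r}_{loc}(\bbR^{+};L^{q}(\calF_{\eps}(t)))$ uniformly, with quantitative equi-integrability of $|v_{\eps}|^{p'}$ near the trajectories.

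\emph{Passage to the limit.} Along a subsequence, $\omega_{\eps}\cvwstar\omega$ in $\mathcal{L}^{\infty}_{loc}(L^{p})$, $\omega^{-}_{\eps}\cv\omega^{-}$ in the weighted $L^{p}$, $v_{\eps}\cv v$ in $\mathcal{L}^{r}_{loc}(L^{q}_{loc})$, $\calC_{\pm,\eps}\to\calC_{\pm}$ locally uniformly, $\int_{\calF_{\eps}(t)}\omega_{\eps}\to\int_{\calF(t)}\omega$ in $W^{1,1}_{loc}$, and $\mathds{1}_{\calF_{\eps}(t)}\to\mathds{1}_{\calF(t)}$ a.e. The two linear (elliptic) identities pass to the limit by combining these convergences with test functions surgically made constant on the shrinking circles $\partial\calS^{i}_{\eps}(t)$ (equal to $\zeta(x_{i}(t))$ there when $t\in\calT^{i}$), with the concentration of the boundary flux $\oint_{\partial\calS^{+}_{\eps}(t)}g_{\eps}\,\xi\to\mu(t)\,\xi(x_{+}(t))$, and with the fact that a circulation $\calC_{\pm,\eps}(t)$ around a hole of size $\sim\eps$ produces in the elliptic limit on the perforated domains exactly the point vortices $\calC_{\pm}\delta_{x_{\pm}}$ encoded in Definition~\ref{main:def:def} through $\zeta_{\pS^{\pm}(t)}=\zeta(x_{\pm}(t))$. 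In the vorticity equation the boundary terms concentrate as well: $\oint_{\partial\calS^{+}_{\eps}(t)}\omega^{+}_{\eps}(g_{\eps}-q)\varphi\to j(t)\,\varphi(t,x_{+}(t))$ (the relevant boundary measure having mass $j(t)$ and living on $\partial\calS^{+}_{\eps}(t)\to\{x_{+}(t)\}$, the time integral then by dominated convergence since $j\in L^{r}_{loc}$), and similarly for the $x_{-}(t)$ term using the $W^{1,1}$-convergence of $\int\omega_{\eps}$ and the weak convergence of $\omega^{-}_{\eps}$. The nonlinear term is handled by splitting the pairing against $\nabla\varphi$ into $T_{\rho}$ and its complement: on $T_{\rho}$ it is $\le\|\omega_{\eps}\|_{\mathcal{L}^{\infty}_{t}L^{p}}\,\|\nabla\varphi\|_{\infty}\int_{0}^{T}\|v_{\eps}(t)\|_{L^{p'}(T_{\rho}(t))}\,dt=O(\rho^{\delta})$ uniformly in $\eps$ by the previous step; on the complement one decomposes $v_{\eps}$ into its explicit singular pieces (point source, hole circulations), which converge strongly there by elliptic convergence on the perforated domains, and its Biot--Savart vorticity piece, bounded in $\mathcal{L}^{r}_{t}L^{\infty}_{loc}$, and passes to the limit in $\int_{T_{\rho}^{c}}\omega_{\eps}v_{\eps}\cdot\nabla\varphi$ by a div--curl/compensated-compactness argument for the vorticity piece and by weak-$*$ times strong convergence for the cross terms (the time-regularity of $\omega_{\eps}$ required for the compactness being inherited from the transport equation); letting $\rho\to 0$ concludes, so that $(\omega,\omega^{-},v)$ is a weak solution in the sense of Definition~\ref{main:def:def}. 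I expect the main difficulties to be exactly these last two points — the $\eps$-uniform control of the outflow vorticity $\omega^{-}_{\eps}$ and of $\tfrac{d}{dt}\int_{\calF_{\eps}(t)}\omega_{\eps}$, which leans entirely on the moving-domain DiPerna--Lions trace theory, and the passage to the limit in the nonlinear term with velocities only in $L^{q}_{loc}$, $q<2$, concentrating at the moving points $x_{\pm}(t)$, which forces the localization away from the trajectories together with the uniform equi-integrability above and makes the hypothesis $p>2$, i.e. $p'<2$, essential — on top of the routine but somewhat heavy bookkeeping of the moving perforated domains.
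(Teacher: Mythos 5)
Your overall architecture coincides with the paper's: fatten the degenerate holes by cutting off the radius map, solve the non-degenerate in--out Euler system on the approximate geometry (Theorem \ref{exi:Lp:Nempty}), extract $\eps$-uniform bounds from the renormalized transport identity, and recover the Dirac terms of \eqref{main:sys} from the concentration of boundary fluxes and circulations; your explicit choices $\omega^{in}_{\eps}=\omega^{in}|_{\calF_{\eps}(0)}$, $g_{\eps}$ constant with flux $\mu$ on the tiny circle, $\omega^{+}_{\eps}=j/\mu$ are the paper's choices. There is, however, one concrete gap: the times at which a hole is \emph{small but not a point}. Your uniform velocity estimate is organized around a tube $T_{\rho}$ about the trajectories $h^{\pm}$, with the singular part modelled as a point source of strength $\mu(t)$ plus a point vortex. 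But on the set $\calT^{i}_{TR,\delta}=\{0<r^{i}(t)<\delta\}$ the hole sits inside the tube and carries the genuine boundary datum, not a concentrated source; its contribution to $v_{\eps}$ is governed by $\oint_{\pS^{i}_{\eps}(t)}g_{\eps}$ and by $\|g_{\eps}\|_{L^{q}(\pS^{i}_{\eps}(t))}$ on a boundary of length $\sim r^{i}_{\eps}(t)$, and nothing in the hypotheses of Theorem \ref{main:theorem} controls these uniformly as $r^{i}(t)\to 0^{+}$ unless the approximate data are built with a specific scaling. This is exactly what the paper's proof supplies: on the transition region $g_{\eps}$ is defined by the rescaling $(r^{i}/r^{i}_{\eps})^{1/q}(g-q)(\mathbf{x})+q_{\eps}$, which is what makes the equi-integrability hypothesis \eqref{5} hold; \eqref{5} in turn drives Proposition \ref{prop:A:Priori:est} (smallness of $\|v_{\eps}\|_{\calL^{r}(\calT_{TR,\delta};L^{q})}$ uniformly in $\eps$, via the reflection-method bound $\|w\|_{L^{q}}\lesssim (r^{i})^{1/q}\|g^{i}\|_{L^{q}(\pS^{i})}$ of Lemma \ref{lemma:ref:meth}) and the analogous smallness of the boundary terms in the vorticity equation over those times. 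Your sentence that the construction near the degeneracy times ``causes no essential difficulty'' is precisely where the bulk of the paper's work (the time decomposition of Figure \ref{Tab}, the three velocity decompositions \eqref{11}--\eqref{12}--\eqref{13}, hypothesis \eqref{5}) lives; without it neither your uniform $\calL^{r}(L^{q})$ bound on $v_{\eps}$ nor the limit of $\oint_{\pS^{-}_{\eps}(t)}(g_{\eps}-q_{\eps})\omega^{-}_{\eps}\varphi$ over those times is justified.

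A secondary, harmless difference: for the nonlinear term you keep only weak convergence of $v_{\eps}$ and propose a tube-splitting plus a div--curl/compensated-compactness argument. The paper instead proves \emph{strong} convergence $v_{\eps}\to v$ in $\calL^{r}_{loc}(L^{q})$ (Proposition \ref{strong:con:vel}), combining the $C^{0}_{loc}(\bbR^{+};L^{p}\text{-}w)$ compactness of $\omega_{\eps}$ (from the uniform bound on $\partial_{t}\omega_{\eps}$, Lemma \ref{lem:unif:est:dt}) with compactness of the Biot--Savart operator, after which $\omega_{\eps}v_{\eps}\cdot\nabla\varphi$ passes to the limit as weak-$*$ times strong. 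Both mechanisms rest on the same time-equicontinuity of the vorticity, so either would close the argument once the transition-time issue above is repaired.
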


As far as we know the literature of existence of solution for the Euler system \eqref{judv:sys} is quite incomplete due to the fact that we consider time dependent domains. For this reason we extend all the existence results proved in \cite{IO3} in this new setting.

\section{Renormalized solution for transport equation in time dependent domains}
\label{sec:ren:sol}
In this section we show that the DiPerna-Lions theory \cite{DPL} on renormalized solutions for the transport equation extends to the case of a smooth enough time dependent domains with incoming and outgoing flow. In particular we will extend Section 3 of \cite{Boyer} for time depending domains. 

This theory will be used to show existence of solutions for the $ 2D $-Euler system with prescribed entering vorticity, time dependent domain and $p$-integrable initial vorticity for $ p > 1$. Finally for $ p = 1 $, i.e. only integrable vorticity, we show existence of Delort type solutions and in particular we answer to a question posed in \cite{FLF}. These results will also extend the work \cite{HH}.    

The domain $ \mathbb{F} = \bbR^{+} \odot \calF(t) $, where we assume $ \partial \calF(0) $ is a $ C^{3,\alpha}$ and there exists $ (\Id,b) : \bbR^{+} \times \partial \calF(0) \to \bbR^{+} \odot \partial \calF (t) $ a $ C^{1,\alpha}_{loc}(\bbR^{+}, C^{1,\alpha}(\partial \calF(t)) ) \cap C^{0,\alpha}_{loc}(\bbR^{+}, C^{3,\alpha}(\partial \calF(t)) )$ diffeomorphism with $ b(0,y) = y$ in $ \partial \calF(0)$. For the velocity field $ v $ we assume, for $ p \in [1,\infty) $ that
\begin{equation}
\label{hyp:ren}
v \in \calL^{1}_{loc}(\bbR^{+}, W^{1,p}(\calF(t))) \quad \text{ and } \quad \div v = 0. 
\end{equation}
Moreover, we have to assume one of the following conditions.
\begin{enumerate}

\item (Geometric condition). There exists $ \psi \in C^{\infty}(\mathbb{F}) $ such that $ \psi = 1 $ on $ \partial \calF^+ $ and $ \psi =  0 $ on $ \partial \calF^- $.

\item (Extra regularity). There exists $ \alpha > 1 $ such that $$ v\cdot n \in L^{\alpha}_{loc}(\bbR^{+} \odot \partial \calF(t)), $$
\end{enumerate}
where we denote by $ \partial \calF^{+} = \{ (t,x) \in \bbR^{+} \odot \partial \calF(t) $ such that $  v\cdot n - q < 0 \}$ and analogously $ \partial \calF^{-} = \{ (t,x) \in \bbR^{+} \odot \partial \calF(t) $ such that $  v\cdot n - q > 0 \} $. Note that we use this convention to recall that in $ \partial \calF^{+} $ the flow is entering and in $ \partial \calF^{-} $ is exiting.

Given $ \rho^{in} $ and $ \rho^{+} $ measurable functions respectively on $ \partial \calF(0) $  and $ \partial \calF^{+} $, we look for solutions $ (\rho, \rho^{-}) $ to the transport equation
\begin{align}
\partial_ t \rho + v \cdot \nabla \rho = & \, f \quad && \text{ for } x \in \calF(t), \nonumber \\
\rho =  & \, \rho^{+} \quad && \text{ for } x \in \partial \calF^{+}(t), \label{DP:theo:tdep}\\
\rho(0,.) = & \, \rho^{in} \quad && \text{ for } \calF(0).   \nonumber
\end{align}  

Let $ \mu > 0 $ a measurable function on $ \Omega $ such that $ \int_{\Omega} \mu < + \infty $. We denote by
\begin{align*}
L^{0}(\Omega, \mu dL) = \big\{ f: \Omega \to \bar{\bbR} \quad \text{ L-measurable}   \big\}.
\end{align*}

We are now stating a well-posedness result for \eqref{DP:theo:tdep} with initial and boundary data in $L^{0} $ spaces.

\begin{theorem}
\label{exi:transport:ppppppppp}
Let $ v $ a vector field in $ \bbR^{+} \odot \calF(t) $ satisfying the hypothesis \eqref{hyp:ren} and or the geometric condition or the extra regularity. Let $ \rho^{in} \in L^{0}(\calF(0)) $ an initial datum, $ \rho^{+}\in L^{0}_{loc}(\bbR^{+}\odot \partial \calF^{+}(t), \lvert v\cdot n - q \rvert \, dt ds) $ an entering information and let $ f \in \calL^1_{loc}(\bbR^+;L^1(\calF(t)))$ a source term, then there exists a unique renormalized solution $ (\rho, \rho^{-}) \in \calL^{\infty}_{loc}(\bbR^{+}; L^{0}(\calF(t))) \times L^{0}_{loc}(\bbR^{+} \odot \partial \calF^{-}(t), \lvert v\cdot n -q \rvert \, dt ds) $. More precisely for any $ \varphi \in C^{\infty}_{c}(\bbR^{+}\times \bbR^2) $, it holds 
\begin{align}
\label{ren:equ}
\int_{\calF(0)} \beta(\rho^{in}) \varphi(0,.) + & \,  \int_{\bbR^{+}} \int_{\calF(t)} \beta(\rho) \partial_t \varphi + \int_{\bbR^{+}} \int_{\calF(t)} \beta(\rho) v \cdot \nabla \varphi = \nonumber  \\ & \, \sum_{i \in \{+,- \} }\int_{\bbR^{+}} \int_{\partial \calF^{i}} \beta(\rho^{i})(v\cdot n - q) \varphi - \int_{\bbR+}\int_{\calF(t)} f \beta'(\rho)\varphi,
\end{align}
for any $ \beta \in C^1_b (\bar{\bbR} ) = \{ \beta \in C^{0}(\bar{\bbR};\bbR) $ such that $ \|\beta \|_{C^1} < + \infty \}$.
\end{theorem}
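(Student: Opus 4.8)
The plan is to reduce the time-dependent domain $\mathbb{F} = \bbR^{+}\odot \calF(t)$ to a cylindrical domain by means of the diffeomorphism $(\Id, b)$, apply (an extension of) the DiPerna--Lions/Boyer theory there, and transport the result back. First I would use $b(t,\cdot): \partial\calF(0) \to \partial\calF(t)$, extended to a diffeomorphism $\Phi(t,\cdot): \overline{\calF(0)} \to \overline{\calF(t)}$ of the same regularity (built by a standard flow/collar construction using that the geometry is regular and compatible, so that the collar neighbourhoods do not degenerate on compact time intervals), to pull back the unknowns: set $\tilde\rho(t,y) = \rho(t,\Phi(t,y))$, $\tilde v(t,y) = D\Phi(t,y)^{-1}\bigl(v(t,\Phi(t,y)) - \partial_t\Phi(t,y)\bigr)$, and similarly for $f$. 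The chain rule turns \eqref{DP:theo:tdep} into a transport equation $\partial_t \tilde\rho + \tilde v\cdot\nabla\tilde\rho = \tilde f$ on the fixed domain $\calF(0)$, with a new divergence-free-up-to-a-bounded-factor drift; more precisely $\div_y(J\tilde v) = 0$ where $J = \det D\Phi$, and $J$ together with $1/J$ are bounded on $[0,T]$, so $\tilde v \in \calL^{1}_{loc}(\bbR^{+}, W^{1,p}(\calF(0)))$ with a divergence lying in $\calL^{1}_{loc}(\bbR^{+}, L^{p})$ (this weighted-divergence situation is exactly what Boyer's Section~3 handles, or one absorbs the factor $J$ into the measure). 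The boundary $\partial\calF^{\pm}$ corresponds, after pull-back, to the sign of $\tilde v\cdot n$ on the fixed boundary $\partial\calF(0)$, and $|v\cdot n - q|\,dt\,ds$ corresponds to $|\tilde v\cdot n|\,dt\,ds$ up to the bounded Jacobian factor of the boundary parametrisation, so the function spaces in the statement are preserved.

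Next, on the fixed domain I would invoke the renormalization machinery. For $L^\infty$ (or $L^p$) data this is Boyer's theorem directly; to reach $L^{0}$ data I would use the standard truncation/monotone approximation: approximate $\rho^{in}$, $\rho^{+}$ by bounded data $\rho^{in}_k = T_k(\rho^{in})$, $\rho^{+}_k = T_k(\rho^{+})$ with $T_k$ the truncation at level $k$, get bounded renormalized solutions $(\rho_k,\rho_k^{-})$, and observe that applying the renormalization identity \eqref{ren:equ} with $\beta = T_k \circ T_m$-type functions gives a comparison principle: $\rho_k$ is monotone in $k$ on $\{|\rho^{in}| \le m\}$-type sets and one passes to the limit by monotone convergence, defining $\rho = \lim_k \rho_k$ in $L^{0}(\calF(t))$ for a.e.\ $t$ and $\rho^{-}$ likewise in the weighted boundary space. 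That the limit still satisfies \eqref{ren:equ} for every $\beta \in C^1_b(\bar\bbR)$ follows because $\beta(\rho_k)\to\beta(\rho)$ boundedly a.e.\ and $\beta(\rho_k^{-})(v\cdot n-q)$ is dominated by $\|\beta\|_\infty |v\cdot n - q|$, which is integrable by the defining hypothesis on $\rho^{+}$ (for the incoming part) and needs the a priori boundary-flux bound for the outgoing part. Uniqueness at the $L^0$ level reduces, via the same truncation, to uniqueness at the bounded level, which is the classical DiPerna--Lions argument: the difference of two solutions with the same data has, for each $\beta$, zero renormalized equation, and choosing $\beta$ approximating $|\cdot|$ forces $\rho^{(1)} = \rho^{(2)}$ and $\rho^{(1),-} = \rho^{(2),-}$.

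The role of the geometric condition versus the extra-regularity condition enters precisely in controlling the outgoing trace $\rho^{-}$ and in making the boundary integrals in \eqref{ren:equ} well defined. Under the extra regularity $v\cdot n \in L^{\alpha}_{loc}$ with $\alpha > 1$, the normal trace is a genuine function and the outgoing flux $\beta(\rho^{-})(v\cdot n - q)$ can be controlled by a trace/renormalization estimate obtained by testing with $\varphi$ supported near the boundary and using $\beta$ bounded; this yields an a priori bound $\int_{\partial\calF^{-}} |\beta(\rho^{-})|\,|v\cdot n - q| \le \|\beta\|_\infty(\int_{\calF(0)}|\ldots| + \int_{\partial\calF^{+}}|\ldots| + \|f\|)$ that is stable under the truncation limit. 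Under the geometric condition one instead uses the function $\psi$ (separating incoming and outgoing boundary components) to build an admissible test function that localizes the flux and plays the role of the missing integrability — this is the device already present in Boyer's treatment for non-tangent fields without extra regularity. I expect the main obstacle to be this last point: rigorously justifying that the outgoing trace $\rho^{-}$ exists as a measurable function on $\bbR^{+}\odot\partial\calF^{-}(t)$ against the weighted measure $|v\cdot n - q|\,dt\,ds$ and that the renormalized boundary term is stable under the truncation approximation, in the time-dependent geometry where $\partial\calF^{\pm}(t)$ themselves move with $t$. Handling this cleanly requires carefully combining the pull-back to the fixed domain (so that $\partial\calF^{\pm}$ become fixed-in-time up to the sign of $\tilde v\cdot n$) with Boyer's boundary-trace construction, and checking that the regular compatible geometry assumption gives uniform-in-$t$ constants on every $[0,T]$ so that nothing degenerates.
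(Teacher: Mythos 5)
Your overall architecture matches the paper's: pull back to the fixed domain $\calF(0)$ via the boundary diffeomorphism (the paper does exactly this, rewriting the equation with the extra drift $\bar V=\partial_t Y$ and the weighted divergence condition $\div_x(\nabla X\,\bar v)=0$), establish existence/uniqueness of distributional solutions there by a vanishing-viscosity approximation, and prove the trace existence and renormalization property by a DiPerna--Lions/Boyer commutator argument in which the geometric condition and the extra-regularity condition play precisely the roles you describe. You correctly identify the outgoing trace as the delicate point. The divergence is in how you pass from bounded data to $L^{0}$ data, and this is where your argument has a genuine gap.

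Your reduction via truncations $T_k$ together with a ``comparison principle'' and monotone convergence does not go through as stated. The equation is linear in $\rho$, and there is no maximum principle ordering $\rho_k$ monotonically in $k$: the truncated data $T_k(\rho^{in})$ are not monotone in $k$ for sign-changing data, and even after splitting into positive and negative parts the monotone limits need not be finite --- indeed the paper's $L^{0}$ space consists of $\bar\bbR$-valued measurable functions, so the data may genuinely take infinite values and no pointwise limit of truncated solutions identifies the solution. Moreover an $L^{1}$-stability (Cauchy) argument fails because $T_k(\rho^{in})-T_m(\rho^{in})$ is not small in any $L^{p}$ for merely measurable data, and your uniqueness reduction ``via the same truncation'' cannot work because $T_k$ is not injective. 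The paper's device is different and closes all of these issues at once: fix a single \emph{strictly increasing} $\beta\in C^{1}_{b}(\bbR)$ (e.g.\ of arctangent type), observe that $\beta(\rho^{in})$ and $\beta(\rho^{+})$ are bounded, solve the transport problem once for these bounded data (existence and uniqueness of the distributional solution, which is automatically renormalized by the trace theorem), and then set $(\rho,\rho^{-})=(\beta^{-1}(\bar\rho),\beta^{-1}(\bar\rho^{-}))$ using the invertibility of $\beta$ on its image; the renormalized formulation for arbitrary $\beta\in C^{1}_{b}(\bar\bbR)$ and uniqueness in $L^{0}$ then follow immediately from the bounded case by composing with $\beta$. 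If you want to salvage your truncation scheme you are in effect forced to introduce this injective bounded reparametrisation anyway, so you should adopt it from the start.
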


Let us write a straight-forward corollary.

\begin{corollary}
Let $ v $ a vector field in $ \bbR^{+} \odot \calF(t) $ satisfying the hypothesis \eqref{hyp:ren} and or the geometric condition or the extra regularity. Let $ \rho^{in} \in L^{p}(\calF(0)) $ an initial datum, $ \rho^{+}\in L^{p}_{loc}(\bbR^{+}\odot \partial \calF^{+}(t), \lvert v\cdot n - q \rvert \, dtds) $ an entering information and let $ f \in \calL^1_{loc}(\bbR^+;L^p(\calF(t)))$ a source term, then there exists a unique renormalized solution $ (\rho, \rho^{-}) \in C^{0}_{loc}(\bbR^{+}; L^{p}(\calF(t))) \times L^{p}_{loc}(\bbR^{+} \odot \partial \calF^{-}(t), \lvert v\cdot n -q \rvert  \, dtds) $. Moreover for any $ T \in \bbR^+ $, it holds 
\begin{align*}
\int_{\calF(T)} \lvert \rho(T,.)\rvert^p + \int_{0}^{T} \int_{\partial \calF^{-}} \lvert \rho^{-}\rvert^p(v\cdot n - q) = & \, \int_{\calF(0)} \lvert\rho^{in}\rvert^p  \\ - \int_{0}^{T} \int_{\partial \calF^{+}} \lvert \rho^{+}\rvert^p(v\cdot n - q) & \, + \int_{0}^{T} \int_{\calF(t)} f p \rho \lvert \rho\rvert^{p-2}. 
\end{align*}

\end{corollary}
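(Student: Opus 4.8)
The corollary is an $L^p$ refinement of Theorem~\ref{exi:transport:ppppppppp}, so I would first invoke that theorem and then upgrade the integrability of the solution and derive the energy balance by renormalizing with the (unbounded) nonlinearity $\beta(s)=|s|^p$. Since $\rho^{in}\in L^p(\calF(0))\subset L^0(\calF(0))$, $\rho^+\in L^p_{loc}\subset L^0_{loc}$ and $f\in\calL^1_{loc}(\bbR^+;L^p(\calF(t)))\subset\calL^1_{loc}(\bbR^+;L^1(\calF(t)))$, Theorem~\ref{exi:transport:ppppppppp} already yields a unique renormalized solution $(\rho,\rho^-)\in\calL^\infty_{loc}(\bbR^+;L^0(\calF(t)))\times L^0_{loc}(\bbR^+\odot\partial\calF^-(t),|v\cdot n-q|\,dtds)$ for which \eqref{ren:equ} holds for every $\beta\in C^1_b(\bar\bbR)$. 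It then remains to show $\rho(t)\in L^p(\calF(t))$ with the quantitative bound, $\rho^-\in L^p_{loc}$, the identity itself, and the time continuity.

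\emph{A priori $L^p$ bound.} I would fix a family $\beta_n\in C^1_b(\bar\bbR)$ with $0\le\beta_n(s)\le|s|^p$, $\beta_n(s)=|s|^p$ for $|s|\le n$, $\beta_n$ constant for $|s|\ge 2n$, $|\beta_n'(s)|\le p|s|^{p-1}$, and, crucially, $|\beta_n'(s)|\le C_p\,\beta_n(s)^{1-1/p}$ with $C_p$ independent of $n$; such a family is obtained by interpolating $|s|^p$ and a constant over $n\le|s|\le 2n$. Plugging into \eqref{ren:equ} the nonlinearity $\beta_n$ and the test function $\varphi(t,x)=\chi_\delta(t)\psi(x)$, where $\psi\in C^\infty_c(\bbR^2)$ equals $1$ on $\overline\Omega$ (so $\nabla\psi$ vanishes on $\overline{\calF(t)}$) and $\chi_\delta\in C^\infty_c(\bbR^+)$ equals $1$ on $[0,T]$ and decreases to $0$ on $[T,T+\delta]$, then letting $\delta\to0$ (licit because \eqref{ren:equ} forces $t\mapsto\int_{\calF(t)}\beta_n(\rho(t))$ to be absolutely continuous, with boundary value $\int_{\calF(0)}\beta_n(\rho^{in})$ at $t=0$), one obtains for every $T$
\begin{align*}
\int_{\calF(T)}\beta_n(\rho(T))+\int_0^T\!\!\int_{\partial\calF^-}\beta_n(\rho^-)(v\cdot n-q) = & \int_{\calF(0)}\beta_n(\rho^{in})-\int_0^T\!\!\int_{\partial\calF^+}\beta_n(\rho^+)(v\cdot n-q) \\ & +\int_0^T\!\!\int_{\calF(t)}f\,\beta_n'(\rho).
\end{align*}
Since $v\cdot n-q<0$ on $\partial\calF^+$ and $v\cdot n-q>0$ on $\partial\calF^-$, both boundary contributions are favourably signed, so with $E_n(T):=\int_{\calF(T)}\beta_n(\rho(T))$, Hölder's inequality together with $|\beta_n'(s)|\le C_p\beta_n(s)^{1-1/p}$ gives
\begin{align*}
E_n(T)\le & \int_{\calF(0)}|\rho^{in}|^p-\int_0^T\!\!\int_{\partial\calF^+}|\rho^+|^p(v\cdot n-q) \\ & +C_p\int_0^T\|f(t)\|_{L^p(\calF(t))}\,E_n(t)^{1-1/p}\,dt,
\end{align*}
whose right-hand side is finite by the hypotheses on $\rho^{in}$ and $\rho^+$. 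A Bihari--Grönwall argument for the sublinear feedback $E^{1-1/p}$ then bounds $E_n(T)$ uniformly in $n$ on compact time intervals, and monotone convergence yields $\rho\in\calL^\infty_{loc}(\bbR^+;L^p(\calF(t)))$ with the stated bound, while the left-hand side gives $\rho^-\in L^p_{loc}(\bbR^+\odot\partial\calF^-(t),|v\cdot n-q|\,dtds)$.

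\emph{Passage to the limit and time continuity.} With $\rho\in\calL^\infty_{loc}(L^p)$, $\rho^\pm\in L^p_{loc}$ and $f\in\calL^1_{loc}(L^p)$ in hand, I would let $n\to\infty$ in the displayed identity: $\beta_n(\rho)\to|\rho|^p$ and $\beta_n(\rho^\pm)\to|\rho^\pm|^p$ in $L^1$ of the relevant measures by dominated convergence, while $\beta_n'(\rho)f\to p\rho|\rho|^{p-2}f$ in $\calL^1_{loc}(\bbR^+;L^1(\calF(t)))$ because $|\beta_n'(\rho)f|\le p|\rho|^{p-1}|f|\in\calL^1_{loc}(\bbR^+;L^1(\calF(t)))$ by Hölder. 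This produces the asserted identity for every $T$. For the continuity in time, pulling back to the fixed domain $\calF(0)$ through the diffeomorphism $(\Id,b)$ and using \eqref{ren:equ} with $\beta(s)=s$ (admissible since $\rho(t)\in L^p(\calF(t))\subset L^1(\calF(t))$) shows $t\mapsto\int_{\calF(t)}\rho(t)\,\varphi$ is continuous for each test $\varphi$; combined with the uniform $L^p$ bound this gives a representative of $\rho$ in $C^0_w([0,T];L^p)$, and since the identity makes $t\mapsto\|\rho(t)\|_{L^p(\calF(t))}$ continuous, the uniform convexity of $L^p$ for $p\in(1,\infty)$ (Radon--Riesz) upgrades this to $\rho\in C^0_{loc}(\bbR^+;L^p(\calF(t)))$ (the case $p=1$ being handled similarly via an equi-integrability argument). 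Uniqueness is inherited from Theorem~\ref{exi:transport:ppppppppp}.

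\emph{Main obstacle.} The crux is the simultaneous limit in the renormalized formulation: one must design the truncations $\beta_n$ so that the source term is dominated by a \emph{sublinear} function of $E_n$, which is exactly what keeps the Bihari--Grönwall estimate uniform in $n$ and delivers the a priori $L^p$ bound; the remaining points—dominated convergence in the interior and boundary terms, and the passage from weak to strong continuity in time—are then routine, modulo the measure-theoretic care needed to handle the outgoing trace $\rho^-$ and the evaluation of \eqref{ren:equ} at a fixed time $T$.
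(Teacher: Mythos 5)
Your proposal is correct and follows exactly the route the paper intends: the corollary is stated there as a ``straight-forward'' consequence of Theorem \ref{exi:transport:ppppppppp} (and the paper later says the analogous identity \eqref{equ:65} ``follows from the renormalized transport equation by choosing $\beta(x)=\lvert x\rvert^p$ and $\varphi=1$''), and your truncation family $\beta_n$ with $\lvert\beta_n'\rvert\le C_p\,\beta_n^{1-1/p}$, the sign argument on the boundary terms, and the Bihari--Gr\"onwall step are precisely the rigorous implementation of that choice of unbounded $\beta$. No gaps.
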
 

Finally the following duality formula holds true. 

\begin{theorem}
\label{duality:formula}
Let $ p, q \in [1,+\infty] $ such that $ 1/p + 1/q = 1 $. Let $ v \in \calL^1(\bbR^+;W^{1,1}(\calF(t)))$ with $ \div(v) = 0 $. Let $ u \in \calL^{\infty}(0,T;L^p(\calF(t))) $ a renormalized solution to \eqref{ren:equ} and let $ r \in \calL^{\infty}(0,T;L^q(\calF(t))) $ a renormalized solution to the transport \eqref{DP:theo:tdep} with source term $ f \in  \calL^{1}(0,T;L^q(\calF(t)) )$. Then we have the  following duality formula
\begin{align*}
\int_{\calF(t)} u(t,.) r(t,.) \, dx  & \, - \int_{\calF(0)} u(0,.) r(0,.) \, dx \\ & \, +  \sum_{i = +,-} \int_{0}^T \int_{\calF^i_t} u^i r^i (v\cdot n - q ) \, ds dt = \int_0^T \int_{\calF(t)} f u . 
\end{align*}

\end{theorem}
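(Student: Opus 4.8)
The plan is to reduce the statement to a computation of $\frac{d}{dt}\int_{\calF(t)} u(t,\cdot)\, r(t,\cdot)\, dx$ carried out in the renormalized sense, exactly as in the classical DiPerna--Lions duality argument but on the moving domain $\mathbb{F}$. First I would like to test the renormalized equation \eqref{ren:equ} for $u$ against a product test function of the form $\varphi(t,x) = \theta(t)\, \widetilde{r}(t,x)$, where $\widetilde r$ is a suitable spatial extension/regularization of $r$; symmetrically test the renormalized equation for $r$ against $\theta(t)\,\widetilde u(t,x)$; and add the two identities. The cross terms $\int \beta(u)\, v\cdot\nabla\varphi$ and $\int \beta(r)\, v\cdot\nabla\varphi$ should cancel after using $\div v = 0$ and the product rule $u\, v\cdot\nabla r + r\, v\cdot\nabla u = v\cdot\nabla(ur)$, leaving the time-derivative term, the boundary term on $\partial\calF^{\pm}$, and the source term $\int f u$. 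Since both $u$ and $r$ are merely $\calL^\infty_t L^p$ (resp. $L^q$) and $v$ is only $W^{1,1}$, the product $ur$ is just $\calL^\infty_t L^1$, so none of these manipulations is licit directly; the argument must be run first on the regularized (mollified) level provided by the DiPerna--Lions commutator lemma adapted in Section \ref{sec:ren:sol}, and then one passes to the limit.

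Concretely, the key steps in order are: (i) transport everything to the fixed reference domain $\calF(0)$ via the diffeomorphism $(\Id,b)$, so that one deals with a transport equation on a cylinder $\bbR^+\times\calF(0)$ with a modified (still divergence-free up to the Jacobian, and $W^{1,1}$ in space) velocity field and a modified source; this is the standard device for handling the time-dependence and it converts the moving boundary term into a fixed-boundary flux weighted by $v\cdot n - q$. (ii) Mollify in the space variable inside $\calF(0)$, obtaining $u_\eps = \rho_\eps * u$, $r_\eps$, which satisfy transport equations with an error term $R_\eps = v\cdot\nabla u_\eps - (v\cdot\nabla u)_\eps \to 0$ in $\calL^1_{loc}(L^1_{loc})$ by the commutator lemma; the same commutator estimate is what was already used to prove Theorem \ref{exi:transport:ppppppppp}, so I would invoke it rather than reprove it. (iii) For the mollified, smooth-in-space functions the product rule is legitimate: compute $\partial_t(u_\eps r_\eps) + v\cdot\nabla(u_\eps r_\eps) = f_\eps r_\eps + u_\eps g_\eps + R_\eps^{(1)} r_\eps + u_\eps R_\eps^{(2)}$, integrate over $\calF(0)$ (pulled back), use $\div v=0$ to turn $\int v\cdot\nabla(u_\eps r_\eps)$ into the boundary flux $\int_{\partial\calF} (u_\eps r_\eps)(v\cdot n - q)$, and integrate in $t$ over $[0,T]$. (iv) Pass $\eps\to 0$: the interior terms converge by $u_\eps\to u$ in $C^0_tL^p$ strong (from the corollary, or from $\calL^\infty_t L^p$ plus equicontinuity in time) against $r_\eps \to r$ weak-$*$ in $L^q$, noting that at least one of the two converges strongly so the product passes to the limit; the error terms vanish; and the boundary terms converge because $u_\eps^\pm \to u^\pm$, $r_\eps^\pm\to r^\pm$ in the trace spaces $L^p(|v\cdot n - q|)$, $L^q(|v\cdot n-q|)$ respectively, which is part of the content of the renormalized trace construction in Theorem \ref{exi:transport:ppppppppp}.

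The main obstacle I expect is step (iv), specifically justifying the limit in the boundary flux term $\sum_i\int_0^T\int_{\calF^i_t} u_\eps^i r_\eps^i (v\cdot n - q)$: the traces $u^+$, $r^-$ are \emph{defined} through the renormalized formulation rather than by a Sobolev trace theorem, they live only in weighted $L^0$ (then $L^p$/$L^q$) spaces with the degenerate weight $|v\cdot n-q|$ which may vanish, and the product $u_\eps^i r_\eps^i$ against a possibly unbounded weight is delicate — one needs the convergence of the mollified traces in the \emph{same} weighted spaces, which is not automatic and must be extracted from the stability part of the existence proof (e.g. the $L^p$ energy identity in the Corollary gives control of $\|\rho^-\|_{L^p(|v\cdot n-q|)}$ uniformly, hence weak compactness, and then the renormalized equation identifies the limit). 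A secondary technical point is that $v\cdot n - q$ may not be in $L^\infty$ near times where a hole degenerates to a point, but within the hypotheses of the theorem $v$ is only assumed $W^{1,1}$ in space with the \emph{geometric condition or extra regularity} ensuring the trace is well-defined, so I would first prove the formula assuming the extra regularity $v\cdot n\in L^\alpha$, $\alpha>1$ (which makes the flux term absolutely convergent and the truncation arguments clean), and then remove it by the same approximation/geometric-condition trick used earlier in Section \ref{sec:ren:sol}.
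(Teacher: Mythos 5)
Your overall strategy coincides with the paper's: regularize both solutions with the boundary-adapted mollification of Lemma \ref{Lem:5}, multiply the two (approximately satisfied) transport equations, integrate by parts to produce the boundary flux, and pass to the limit using the commutator estimate. The pull-back to the fixed reference domain $\calF(0)$ in your step (i) is not needed for this particular argument, since Lemma \ref{Lem:5} already produces the approximations $\eta_{\eps}\star_{\nu}(\cdot)$ directly on the moving domain; that is only a stylistic difference.

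The genuine gap is exactly the one you flag in step (iv) and then leave unresolved: the convergence of $\sum_i\int_0^T\int_{\partial\calF^i} u_{\eps}^i r_{\eps}^i (v\cdot n - q)\,ds\,dt$. With $u\in L^p$, $r\in L^q$, $1/p+1/q=1$, the mollified traces converge only almost everywhere (or in $L^1_{loc}$) with respect to the measure $\lvert v\cdot n - q\rvert\,ds\,dt$, and there is no uniform weighted $L^p\times L^q$ bound on the pair of traces that would let you pass to the limit in their product; ``extracting it from the stability part of the existence proof'' does not work, because the energy identity of the Corollary controls only the exiting trace of a single solution, not the product of traces of two different solutions against a degenerate weight. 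The paper closes this with a device you do not use: it runs the whole argument on $\tilde{\rho}_{\eps}=\eta_{\eps}\star_{\nu}\beta(\rho)$ and $\tilde{r}_{\eps}=\eta_{\eps}\star_{\nu}\beta(r)$ for a \emph{bounded} $\beta\in C^1_b$, so that the boundary integrands are dominated by $\|\beta\|_{L^\infty}^2\,\lvert v\cdot n - q\rvert\in L^1_{loc}$ and the limit follows from dominated convergence together with the a.e.\ convergence of traces established in Theorem \ref{theo:dist:equiv:ren}; only at the very end is $\beta$ removed by approximating $\beta(z)=z$, at which stage the integrability of $u^i r^i(v\cdot n - q)$ is needed only for the limit functions, not along the mollified sequence. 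Without inserting this bounded renormalization (or an equivalent truncation of $u$ and $r$ before mollifying), your step (iv) cannot be completed as written.
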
	

We prove the above results in Section \ref{sec:ren}. Then we will apply them to show existence of solution for the Euler-type system \eqref{judv:sys}.

\subsection{Application to existence for Euler-system with in-out flow}

Distributional solution for the transport of vorticity with data in $ L^p $ are defined only for $ p \geq 4/3 $, in fact for $ p < 4/3 $ the term $ \omega v $ is not in general $ L^{1} $. For this reason we look for renormalized solution for the transport equation for the vorticity. We will show existence of renormalized solutions through a vanishing viscosity limit. More precisely we will show that solutions can be obtained as limits of a Navier-Stokes system with appropriate boundary conditions. The Euler-system reads as 

\begin{align}
& \partial_t \omega + \div(v \omega) = 0 && \text{ for } x \in \calF(t), \nonumber \\
& 
v =  \calK_{\calF(t)} \left[ \omega \right]  && \text{ for } x \in \calF(t), \nonumber \\
& \calC_{+}(t) = \calC_{+}^{in} - \int_{0}^{t} \oint_{\partial S^{+}(t)} \omega^{+}(g - q), &&  \label{judv:sys:dddddddddd}  \\
& \calC_{-}(t) = \calC_{-}^{in} + \int_{\calF(t)} \omega - \int_{\calF_0} \omega^{in} + \int_{0}^{t} \oint_{\partial S^{+}(t)} \omega^{+}(g - q). \nonumber
\end{align}
 
We are now able to define weak solutions for the above system.

\begin{definition}
\label{def:di:di:di:di}

Let $ p > 1 $, let $ \omega^{in} \in L^{p}(\calF(0)) $, let $ \calC^{in}_{i}\in \bbR $, let $ g \in \calL^{r}_{loc}(\bbR^{+}, W^{1-1/p, p}(\partial \calF(t))) $ such that $ \int_{\partial \calF (t)} g = 0$ and let $ \omega^{+} \in L^{p}_{loc}(\bbR^{+} \odot \partial \calS^{+}(t), \lvert g - q \rvert  \, dtds)$. Then a triple $ ( \omega, \omega^{-},v) \in \calL^{\infty}_{loc}(\bbR^{+}, L^{p}(\calF(t)))\times L^{p}_{loc}(\bbR^{+} \odot \partial \calS^{-}(t), \lvert g - q \rvert \, dtds) \times \calL^{r}_{loc}(\bbR^{+}; W^{1,p}(\calF(t))) $ is a weak solution of the system \eqref{judv:sys:dddddddddd} if for any $ \varphi \in C^{\infty}_{c}(\bbR^{+}\times \bbR^{2}) $ and $  \beta \in C^1_b (\bar{\bbR}) $,  it holds
\begin{align}
\label{rem:tras:form}
\int_{\calF(0)} \beta(\omega^{in}) \varphi(0,.)+ \int_{\bbR^{+}} \int_{\calF(t)} \beta(\omega) \partial_t \varphi & \, + \int_{\bbR^{+}} \int_{\calF(t)} \beta(\omega) v \cdot \nabla \varphi =  \\ & \,  \sum_{i \in \{+,- \} }\int_{\bbR^{+}} \int_{\partial \calS^{i}} \beta(\omega^{i})(v\cdot n - q) \varphi, \nonumber
\end{align}
the velocity field $ v $ satisfies in a strong sense the Div-Curl system 
\begin{equation}
\label{div_curl:blabla}
\div v = 0, \text{ } \curl v = \omega \text{ in } \calF(t), \quad v\cdot n = g \text{ on } \partial \calF(t) \quad \text{ and } \quad \oint_{\pS^{i}(t)} v(t,.) \cdot \tau = \calC_{i}(t), 
\end{equation}
where $ \calC_+$ and $\calC_{-} $ satisfy the last two equations of \eqref{judv:sys:dddddddddd}.

\end{definition} 
 
The existence result reads as follows.

\begin{theorem}
\label{exi:Lp:Nempty}
Let $ p > 1 $, let $ \omega^{in} \in L^{p}(\calF(0)) $, let $ g \in \calL^{r}_{loc}(\bbR^{+}, W^{1-1/p, p}(\partial \calF(t))) $ such that $ \int_{\partial \calF (t)} g = 0$ and let $ \omega^{+} \in L^{p}_{loc}(\bbR^{+} \odot \partial \calS^{+}(t), \lvert g - q \rvert  \, dt ds)$. Then there exists a solution $ ( \omega, \omega^{-}, v) \in \calL^{\infty}_{loc}(\bbR^{+}, L^{p}(\calF(t)))\times L^{p}_{loc}(\bbR^{+} \odot \partial \calS^{-}(t), \lvert g - q\rvert  \, dt ds) \times \calL^{r}_{loc}(\bbR^{+}; W^{1,p}(\calF(t))) $ to the system \eqref{judv:sys:dddddddddd} in the sense of Definition \ref{def:di:di:di:di}.    

\end{theorem}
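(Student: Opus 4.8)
The plan is to construct solutions via a vanishing-viscosity approximation: for each $\nu>0$ solve a Navier–Stokes-type system on the time-dependent domain $\mathbb{F}=\bbR^+\odot\calF(t)$ with boundary conditions compatible with the prescribed normal velocity $g$ and entering vorticity $\omega^+$, obtain uniform (in $\nu$) bounds, and pass to the limit. Concretely, I would first set up the approximate problem in velocity form: $\partial_t v^\nu + v^\nu\cdot\nabla v^\nu + \nabla p^\nu = \nu\Delta v^\nu$ with $\div v^\nu=0$ in $\calF(t)$, $v^\nu\cdot n = g$ on $\partial\calF(t)$, together with a Navier-type condition on the tangential part chosen so that $\curl v^\nu = \omega^+$ on the inflow part $\partial\calF^+$ of the boundary (this encodes the prescribed entering vorticity), and with circulations around the holes fixed by the ODEs in \eqref{judv:sys:dddddddddd} (using Kelvin's theorem \eqref{kelvin:theorem} for the viscous correction). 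Since $\calF(t)$ moves, it is cleanest to pull back to the fixed reference domain $\calF(0)$ via the diffeomorphism $(\Id,b)$, turning the PDE into one with smooth time-dependent coefficients on a fixed domain, where standard parabolic theory gives existence of $v^\nu$ for each $\nu$.

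Next I would derive the $\nu$-uniform a priori estimates. Taking the curl, $\omega^\nu=\curl v^\nu$ satisfies a transport-diffusion equation $\partial_t\omega^\nu + v^\nu\cdot\nabla\omega^\nu = \nu\Delta\omega^\nu$ in $\calF(t)$ with Dirichlet data $\omega^\nu=\omega^+$ on the inflow boundary; multiplying by $p|\omega^\nu|^{p-2}\omega^\nu$ and integrating (accounting for the boundary flux term $-\int_{\partial\calF^-}|\omega^\nu|^p(v^\nu\cdot n-q)$ on the outflow part, which has a good sign) yields a uniform bound for $\omega^\nu$ in $\calL^\infty_{loc}(\bbR^+;L^p(\calF(t)))$ and for the outflow trace $\omega^{\nu,-}$ in the weighted space $L^p_{loc}(\bbR^+\odot\partial\calF^-(t),|g-q|\,dtds)$, in terms of $\|\omega^{in}\|_{L^p}$ and $\|\omega^+\|_{L^p(|g-q|)}$. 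Elliptic regularity for the Div-Curl system \eqref{div_curl:blabla} on $\calF(t)$ (uniform in time thanks to the regular compatible geometry, which lower-bounds mutual distances on $[0,T]$) then upgrades this to a uniform bound on $v^\nu$ in $\calL^r_{loc}(\bbR^+;W^{1,p}(\calF(t)))$, using also the bound on $g$ in $\calL^r_{loc}(\bbR^+;W^{1-1/p,p}(\partial\calF(t)))$ and the ODE-controlled circulations $\calC_i^\nu$.

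Then comes compactness and passage to the limit. From the uniform bounds and the equation one extracts an Aubin–Lions-type time-compactness for $v^\nu$ (the time derivative is controlled in a negative-order space), giving, up to a subsequence, $v^\nu\to v$ strongly in $\calL^2_{loc}(\bbR^+;L^2(\calF(t)))$ and $\omega^\nu\rightharpoonup\omega$ weakly-$*$ in $\calL^\infty_{loc}(\bbR^+;L^p)$; since $p>1$ this suffices to pass to the limit in the bilinear term $\beta(\omega^\nu)v^\nu$ and recover the renormalized formulation \eqref{rem:tras:form} — here I would invoke the DiPerna–Lions theory in time-dependent domains (Theorem \ref{exi:transport:ppppppppp} and the stability of renormalized solutions it entails) to guarantee that the limit $(\omega,\omega^-)$ is precisely a renormalized solution with the correct inflow datum $\omega^+$ and that the outflow trace $\omega^{\nu,-}$ converges to $\omega^-$. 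Finally, passing to the limit in the linear Div-Curl system and in the circulation ODEs is straightforward by weak convergence. The main obstacle I anticipate is the boundary analysis: proving that the viscous boundary condition genuinely produces $\curl v^\nu=\omega^+$ on the inflow part and that, in the limit, the prescribed entering vorticity is attained in the weighted-trace sense (no boundary layer contamination on the inflow side) — this is where the sign condition $g-q<0$ on $\partial\calF^+$ and the DiPerna–Lions trace theory are essential, and it requires care near the transition between inflow and outflow portions of $\partial\calF(t)$.
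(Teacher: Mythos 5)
Your overall strategy (vanishing viscosity, pull-back to the fixed domain, $L^p$ vorticity estimates, weak compactness for $\omega^\nu$ and strong compactness for $v^\nu$, identification via the transport theory of Section \ref{sec:ren}) is the same as the paper's, but there are two genuine gaps. First, the boundary condition you impose on the viscous approximation does not close the $L^p$ estimate. You prescribe the entering vorticity as a Dirichlet condition $\omega^\nu=\omega^+$ on $\partial\calF^+$ (via a Navier-type condition on $v^\nu$); then multiplying the transport-diffusion equation by $p|\omega^\nu|^{p-2}\omega^\nu$ and integrating by parts produces the term $\nu\int_{\partial\calF(t)}\partial_n\omega^\nu\,p|\omega^\nu|^{p-2}\omega^\nu$, whose trace is not controlled uniformly in $\nu$. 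The paper instead works directly with the vorticity system \eqref{equ:app:ss} and imposes the Robin-type flux condition $\nu\,\partial_n\omega_\nu=(\omega_\nu-\omega^+_\nu)(g_\nu-q)\mathds{1}_{\partial\calF^+}$, which is designed precisely so that the boundary contributions combine into signed terms and yield \eqref{est:con:lone}; the entering vorticity is then recovered only in the limit $\nu\to0$, in the weighted trace sense.

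Second, and more seriously, your passage to the limit in the renormalized formulation does not work as stated: weak-$*$ convergence $\omega^\nu\stackrel{*}{\rightharpoonup}\omega$ in $\calL^\infty_{loc}(L^p)$ does \emph{not} give $\beta(\omega^\nu)\rightharpoonup\beta(\omega)$ for nonlinear $\beta$, so you cannot ``pass to the limit in the bilinear term $\beta(\omega^\nu)v^\nu$'' and invoking ``stability of renormalized solutions'' begs the question. The paper resolves this by a duality argument: it first constructs the renormalized solution $(\bar\omega,\bar\omega^-)$ associated with the \emph{limit} velocity $v$ via Theorem \ref{exi:transport:ppppppppp}, then tests the viscous equation with solutions $\phi_\nu$ of a backward viscous transport problem \eqref{vis:back:trans:equ}, passes to the limit in this \emph{linear} pairing, and compares with the duality formula of Theorem \ref{duality:formula} to conclude $(\omega,\omega^-)=(\bar\omega,\bar\omega^-)$; uniqueness of distributional solutions (Proposition \ref{Prop:uniq:distr}) is what makes the identification legitimate. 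Without this (or an equivalent strong-convergence/commutator argument for $\omega^\nu$), the limit object is not known to be renormalized. A further point you pass over: for $p$ close to $1$ the product $\omega^\nu v^\nu$ need not be in $L^1$, so the uniform bound on $\partial_t\omega^\nu$ used for the $C^0_{loc}(L^p\text{-}w)$ compactness requires the decomposition \eqref{u:dec:ss} and the symmetrized kernel $H_\varphi$ of \eqref{H:varphi}, not a direct estimate of $\int\omega^\nu v^\nu\cdot\nabla\varphi$.
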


Let conclude the section with the limit case $ p = 1 $. For this case we prove existence of Delort type solutions introduced in \cite{Del}, see also \cite{Scho}. The idea is to rewrite the non linear term $ \omega v $ as an integral of a kernel multiplied by the vorticity. To do that we decompose the velocity
\begin{equation}
\label{u:dec:ss}
v = v_g + \calK^{0}_{\calF(t)}[\omega]+\sum_{i} \left[ \int_{\calF} \Psi_i \omega + \calC_i(t)  \right] X_i
\end{equation}
where $ v_g = \nabla \varphi $ and $ \calK^{0}_{\calF}[\omega] = \nabla^{\perp}\phi $ solutions respectively of  
\begin{gather*}
\begin{cases}  - \Delta \varphi = 0 \quad & \text{ in } \calF(t), \\ \nabla \varphi\cdot n = g \quad & \text{ in } \partial \calF(t), \end{cases} \quad  \begin{cases} \Delta \phi = \omega \quad & \text{ in } \calF(t), \\ \phi = 0 \quad & \text{ in } \partial \calF(t),\end{cases} \quad  \\ \text{ and } \quad \begin{cases}  
- \Delta \Psi_i = 0 \quad & \text{ in } \calF(t), \\ \Psi_i = 0 \quad & \text{ on } \partial \calF(t) \setminus \pS^{i}(t), \\
\Psi_i = 1 \quad & \text{ on } \pS^{i}(t),
\end{cases}
\end{gather*}
Finally $ X_i  $ is the unique harmonic vector field with circulation around $ \pS_i $ equal to the Kronecker delta $ \delta_{ij} $.

Note that $ v_{g} $ and $  \left[ \int_{\calF} \Psi_i \omega + \calC_i(t)  \right] X_i $ are $ L^{\infty } $ vector fields if we assume $ g $ smooth enough. We are left with the term $ \omega \calK^{0}_{\calF(t)}[\omega] $ which is not in general $ L^1 $, in particular the following expression, that appears in the weak formulation, 
\begin{equation*}
\int_{0}^{t}\int_{\calF} \omega \calK^{0}_{\calF(t)}[\omega] \cdot \nabla \varphi 
\end{equation*}
does not make sense. To avoid this issue we rewrite it in the same spirit of \cite{Del}. Let introduce the trilinear map 
\begin{equation}
\label{nl:term:delort:form}
\left\langle w, \omega, \varphi \right\rangle = \int_{\calF} \int_{\calF}H_{\varphi}(x,y)w(t,x)\omega(t,y) \, dx \, dy,  
\end{equation}
where 
\begin{equation}
\label{H:varphi}
H_{\varphi}(t,x,y) = - \nabla^{\perp}G(t,x,y)\cdot \frac{\nabla \varphi(t,x)-\nabla \varphi(t,y)}{2}.
\end{equation}
It holds 
\begin{equation*}
\left\langle \omega, \omega, \varphi \right\rangle = \int_{0}^{t}\int_{\calF} \omega \calK^{0}_{\calF(t)}[\omega] \cdot \nabla \varphi 
\end{equation*}
for smooth enough $ \omega $. Let us also notice that $ H_{\varphi} $ is in $ L^{\infty}(\calF(t)\times \calF(t)) $ if $ \varphi \in \mathfrak{C} = \{ \varphi \in W^{2,+\infty}(\calF(t))$ such that $ \varphi $ is constant in any connected component of $ \partial \calF(t)  \} $, see  Lemma \ref{H:phi:bounded}. This implies that \eqref{nl:term:delort:form} makes sense for any vorticity $ w = \omega \in L^{1}(\calF(t)) $.

\begin{Lemma}
\label{H:phi:bounded}

Let $ \varphi \in W^{2,+\infty}(\calF(t)) $ then
\begin{equation*}
 \|H_{\varphi}\|_{L^{\infty}(\calF(t) \times \calF(t))}  \leq M \|\varphi\|_{W^{2,+\infty}(\calF(t))},
\end{equation*}
where $ M $ does not depend on time $t$.

\end{Lemma}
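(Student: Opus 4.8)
The plan is to reduce the estimate to a pointwise bound on the kernel $H_\varphi(t,x,y)$, exploiting the cancellation built into the numerator $\nabla\varphi(t,x)-\nabla\varphi(t,y)$ against the singularity of $\nabla^\perp G(t,x,y)$. First I would recall the structure of the Green's function $G(t,x,y)$ for the Dirichlet Laplacian on $\calF(t)$: one writes $G(t,x,y) = \frac{1}{2\pi}\log|x-y| + \gamma(t,x,y)$ where $\gamma$ is the harmonic (in $x$) corrector, smooth up to the boundary, and in particular $\nabla_x\gamma(t,x,y)$ is bounded on $\overline{\calF(t)}\times\overline{\calF(t)}$ uniformly in $t$ on compact time intervals --- this uniformity is exactly what the regular compatible geometry hypothesis buys us, since the domain $\calF(t)$ varies through a $C^{1,\alpha}$-in-time family of $C^{3,\alpha}$ diffeomorphisms of a fixed reference domain and the mutual distances stay bounded below. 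Hence $\|\nabla_x\gamma\|_{L^\infty(\calF(t)\times\calF(t))}\le M_1$ with $M_1$ independent of $t$.

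Next I would split $H_\varphi$ accordingly. The corrector part contributes
\[
\left| \nabla^\perp\gamma(t,x,y)\cdot \frac{\nabla\varphi(t,x)-\nabla\varphi(t,y)}{2}\right| \le M_1 \, \|\nabla\varphi\|_{L^\infty} \le M_1 \|\varphi\|_{W^{2,\infty}(\calF(t))},
\]
using only boundedness of $\nabla\varphi$, not the Lipschitz estimate. For the singular part one has $\nabla^\perp_x\log|x-y| = \frac{(x-y)^\perp}{|x-y|^2}$, so
\[
\left| \frac{(x-y)^\perp}{2\pi|x-y|^2}\cdot \frac{\nabla\varphi(t,x)-\nabla\varphi(t,y)}{2}\right| \le \frac{1}{4\pi}\,\frac{|\nabla\varphi(t,x)-\nabla\varphi(t,y)|}{|x-y|} \le \frac{1}{4\pi}\,\|\nabla\varphi\|_{W^{1,\infty}} \le \frac{1}{4\pi}\|\varphi\|_{W^{2,\infty}(\calF(t))},
\]
where the middle inequality is the mean value / Lipschitz bound applied to the map $\nabla\varphi$ along the segment $[x,y]$ --- here one needs $\calF(t)$ to be such that $x$ and $y$ can be joined by a path of length comparable to $|x-y|$ staying in $\calF(t)$ (a quasiconvexity/chord-arc property), which again follows from the regularity of the geometry with a constant uniform in $t$. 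Combining the two contributions gives $\|H_\varphi\|_{L^\infty(\calF(t)\times\calF(t))}\le M\|\varphi\|_{W^{2,\infty}(\calF(t))}$ with $M = M_1 + \frac{1}{4\pi}$, or a comparable constant if one absorbs the quasiconvexity constant.

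The main obstacle is not any single inequality but rather establishing the \emph{time-uniformity} of the constant $M$: one must verify that the Green's function corrector bound $M_1$ and the chord-arc constant can be taken independent of $t\in[0,T]$. This is where the definition of regular compatible geometry is essential --- pulling back to the fixed reference domain $\calF(0)$ via the diffeomorphism $(\Id,b)$, the transported Laplacian becomes a uniformly elliptic operator with $C^{0,\alpha}$ coefficients whose ellipticity constants and norms are controlled on $[0,T]$, and classical Schauder/Green's function estimates then give the uniform bound; the lower bound on mutual distances of $\partial\Omega$, $\partial\calS^+(t)$, $\partial\calS^-(t)$ prevents degeneration of the geometry even across the transition times when a hole collapses to a point (near such times the relevant Green's function is simply that of $\Omega$ minus a point, which is harmless). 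I would state this uniformity as the content of "$M$ does not depend on time $t$" and refer to the geometric assumptions for the quantitative control.
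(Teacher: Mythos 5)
There is a genuine gap in your treatment of the corrector. Writing $G(t,x,y)$ as the free-space logarithm plus a harmonic corrector $\gamma(t,x,y)$, the corrector $\gamma(t,\cdot,y)$ must cancel the logarithm on $\partial\calF(t)$, and this boundary datum degenerates as $y$ approaches $\partial\calF(t)$; consequently $\nabla_x\gamma(t,x,y)$ is \emph{not} bounded on $\overline{\calF(t)}\times\overline{\calF(t)}$. The model case is the half-plane, where by the image charge $\gamma(x,y)=\pm\frac{1}{2\pi}\log|x-y^*|$ with $y^*$ the reflection of $y$, so $|\nabla_x\gamma(x,y)|\sim |x-y^*|^{-1}$ with $|x-y^*|$ comparable to $|x-y|+d(x)+d(y)$, which tends to zero as $x$ and $y$ approach the same boundary point. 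Your bound of the ``corrector part'' by $M_1\|\nabla\varphi\|_{L^\infty}$ therefore fails. The correct uniform statement, which is exactly what the paper isolates as Lemma \ref{Lemma:Green:fond:EST}, is that the \emph{full} Green's function satisfies $|\nabla_x G(t,x,y)|\le M/|x-y|$ with $M$ independent of $t$; the Lipschitz cancellation $|\nabla\varphi(t,x)-\nabla\varphi(t,y)|\lesssim\|\varphi\|_{W^{2,\infty}}|x-y|$ (with your chord-arc caveat, or after extending $\varphi$ to $\bbR^2$) must then be applied to the whole kernel, not only to its logarithmic part. With that repair, the computation you carry out for the singular part goes through verbatim for all of $H_\varphi$ as written in \eqref{H:varphi}.

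Two further points of divergence from the paper. First, you never use the hypothesis, explicitly invoked in the paper's proof, that $\varphi$ is constant on each connected component of $\partial\calF(t)$, so that $\nabla\varphi$ is normal to the boundary. This is immaterial for the non-symmetric expression $-\nabla_x^{\perp}G\cdot\frac{\nabla\varphi(x)-\nabla\varphi(y)}{2}$, but it is essential for the symmetrized kernel $\frac{1}{2}\bigl[\nabla_x^{\perp}G\cdot\nabla\varphi(x)+\nabla_y^{\perp}G\cdot\nabla\varphi(y)\bigr]$ that actually underlies the identity $\langle\omega,\omega,\varphi\rangle=\int\omega\,\calK^{0}_{\calF(t)}[\omega]\cdot\nabla\varphi$, which is the object bounded in \cite{ILN}; the extra term $\frac12(\nabla_x^{\perp}G+\nabla_y^{\perp}G)\cdot\nabla\varphi(y)$ has no difference quotient to exploit and is controlled only through this normality. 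Second, your route to time-uniformity, pulling back to a fixed reference domain by a uniformly controlled family of diffeomorphisms, cannot be run uniformly up to the times at which a hole collapses to a point, since the topology of $\calF(t)$ changes there and the pulled-back operator degenerates as $r^i\to 0^+$. The paper instead obtains the uniform constant in Lemma \ref{Lemma:Green:fond:EST} by a dimensional-reduction argument, applying the three-dimensional Green's-matrix estimates of \cite{DM} to $\calF(t)\times S^1$ and integrating out the circle variable as in \cite{AL}.
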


Let us postpone the proof to the appendix \ref{app:H:phi:bounded}.

We are now able to define weak solutions for the system \eqref{judv:sys} with vorticity initially in $ L^{1} $.

\begin{definition}
\label{weak:for:del}
Let $ \omega^{in} \in L^1(\calF(0)) $ the initial vorticity, let $ \calC_{i}^{in} \in \bbR$ the initial circulations, let $ g \in L^{r}_{loc}(\bbR^{+}; W^{1/2,2 + \eps}(\partial \calF))$ for some $ \eps > 0 $, such that $ \oint_{\partial \calF } g = 0 $. Let $ \omega^{+} \in L^{1}_{loc}\left(\bbR^+ \odot \partial \calF^{+}(t), \lvert g-q \rvert dt ds )\right)$. Then a couple $ (\omega, \omega^{-}) \in \calL^{\infty}_{loc}(\bbR^{+}; L^{1}(\calF(t))) \times L^{1}_{loc}( \bbR^+ \odot \partial \calF^{-}(t), \lvert g- q \rvert dt ds )) $
is a weak solution of \eqref{judv:sys} if for any $ \varphi \in C^{\infty}_{c}(\bbR^{+}\times \overline{\calF}) \cap \mathfrak{C} $, 

\begin{align}
\label{wf:1:equ:ss}
\int_{\calF(t)} & \omega^{in}\varphi(0,.) dx  + \int_{\bbR^{+}}  \int_{\calF(t)} \omega \partial_t \varphi dx dt +\int_{\bbR^+} \int_{\calF(t)} \omega v_g \cdot \nabla \varphi dx dt +\int_{\bbR^{+}} \langle \omega, \omega, \varphi \rangle dt  \nonumber \\ & + \int_{\bbR^{+}}   \sum_{i} \left[ \int_{\calF(t)} \Psi_i \omega + \calC_i(t)  \right] \int_{\calF(t)} X_i \cdot \varphi  \omega  dx dt = \int_{\bbR^+}\int_{\partial \calF^{+}} (g-q) \omega^+ \varphi ds dt \\ & \, + \int_{\bbR^+}\int_{\partial \calF^{-}} (g-q) \omega^{-} \varphi ds dt,  \nonumber
\end{align}
and for almost any $ t \in \bbR^{+} $, the following estimate holds
\begin{equation*}
\int_{\calF} \lvert \omega(t,.)\rvert + \int_{0}^{t} \int_{\partial \calF^{-}} (g-q) \lvert \omega^{-}\rvert \leq \int_{\calF} \lvert \omega^{in}\rvert - \int_{0}^{t} \int_{\partial \calF^{+}} (g-q) \lvert \omega^{+}\rvert.
\end{equation*}

\end{definition}

\begin{remark}

In contrast to the case $ p > 1 $, we need to assume a better integrability on the normal component of the velocity $ g $ on $ \partial \calF(t) $ due to the fact that we need $ v_{g} \in L^{\infty } $ to make sense of the term $\int \omega v_g \cdot \nabla \varphi $. 

\end{remark}

\begin{remark}
In the case the fluid domain is $ \bbR^2 $ and $ p=1$ existence of renormalized solution for the transport equation of the vorticity was shown in \cite{CNSS}. It is not clear how to extend this result in our setting. 
\end{remark}

The existence result reads as follow.

\begin{theorem}
\label{exi:L1:ss}

Let $ \omega^{in} \in L^1(\calF(0)) $ the initial vorticity, let $ \calC_{i}^{in} \in \bbR$ the initial circulations, let $ g \in \calL^{r}_{loc}(\bbR^{+}; W^{1/2,2 + \eps}(\partial \calF))$ for some $ \eps > 0 $, such that $ \oint_{\partial \calF } g = 0 $ and let $ \omega^{+} \in L^{1}_{loc}\left(\bbR^+ \odot \partial \calF^{+}(t), \lvert g-q \rvert \, dt ds \right)$. Then there exists a solution $ (\omega, \omega^{-}) $ of the system \eqref{judv:sys} in the sense of Definition \ref{weak:for:del}. 

\end{theorem}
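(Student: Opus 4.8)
The plan is to obtain $(\omega,\omega^-)$ as a limit of solutions of \eqref{judv:sys} with regularised data. For $p\ge 4/3$ Theorem~\ref{exi:Lp:Nempty} provides weak solutions $(\omega_n,\omega_n^-,v_n)$ which, through the renormalised formulation \eqref{rem:tras:form} (taking $\beta(s)=s$) together with the decomposition \eqref{u:dec:ss} and the symmetrisation identity $\int_{\calF}\omega\,\calK^0_{\calF(t)}[\omega]\cdot\nabla\varphi=\langle\omega,\omega,\varphi\rangle$ (valid by density for $\omega\in L^p$), satisfy the Delort-type formulation \eqref{wf:1:equ:ss}. By the de la Vall\'ee--Poussin criterion choose a convex increasing superlinear $\Phi:\bbR^+\to\bbR^+$ with $\Phi(0)=0$ and $\int_{\calF(0)}\Phi(|\omega^{in}|)+\int_{\bbR^+}\int_{\partial\calF^{+}}\Phi(|\omega^+|)\,|g-q|<+\infty$; fix $p\in(1,2)$ close enough to $1$ that $W^{1/2,2+\eps}(\partial\calF(t))\hookrightarrow W^{1-1/p,p}(\partial\calF(t))$, keep $g$ unchanged (so $\int_{\partial\calF(t)}g=0$ and, since $v_n\cdot n=g$, the sets $\partial\calF^{\pm}$ are determined by the sign of $g-q$ and do not depend on $n$), and mollify so as to get $\omega^{in}_n\to\omega^{in}$ in $L^1(\calF(0))$ and $\omega^+_n\to\omega^+$ in $L^1_{loc}(\bbR^+\odot\partial\calF^{+},|g-q|\,dtds)$ with $\omega^{in}_n\in L^p(\calF(0))$, $\omega^+_n\in L^p_{loc}$ and, by Jensen, $\int_{\calF(0)}\Phi(|\omega^{in}_n|)\le C$, $\int_0^T\int_{\partial\calF^{+}}\Phi(|\omega^+_n|)|g-q|\le C(T)$.

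\emph{Uniform bounds and compactness.} Testing \eqref{rem:tras:form} against $\varphi$ spatially equal to $1$ near $\overline\Omega$ and approximating $\beta(s)=s$, resp.\ $\beta=\Phi(|\cdot|)$, monotonically by functions of $C^1_b$, one gets for every $t\le T$ the identity $\int_{\calF(t)}\Phi(|\omega_n(t,\cdot)|)+\int_0^t\int_{\partial\calF^{-}}\Phi(|\omega_n^-|)|g-q|=\int_{\calF(0)}\Phi(|\omega^{in}_n|)-\int_0^t\int_{\partial\calF^{+}}\Phi(|\omega^+_n|)|g-q|\le C(T)$ (recall $g-q<0$ on $\partial\calF^{+}$ and $g-q>0$ on $\partial\calF^{-}$). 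Hence $\{\omega_n(t,\cdot):n\in\N,\ t\le T\}$ and $\{\omega_n^-\}$ are $L^1$-bounded and equi-integrable, so by Dunford--Pettis relatively weakly sequentially compact; the same with $\Phi(s)=s$ bounds $\|\omega_n(t,\cdot)\|_{L^1}$ and, via \eqref{judv:sys}, the circulations $\calC^n_i(t)$ on $[0,T]$, uniformly in $n$. From \eqref{u:dec:ss}, $v_g$ and $X_i$ are fixed and $v_g\in\calL^r_{loc}(\bbR^+;L^\infty(\calF(t)))$ since $g\in\calL^r_{loc}(\bbR^+;W^{1/2,2+\eps}(\partial\calF))$ (see the Remark after Definition~\ref{weak:for:del}), while $\calK^0_{\calF(t)}[\omega_n]$ is bounded in $\calL^\infty_{loc}(\bbR^+;L^s(\calF(t)))$ for every $s<2$ by the Stampacchia estimate for the Laplacian with $L^1$ right-hand side. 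Inserting these bounds and $|\langle\omega_n,\omega_n,\varphi\rangle|\le M\|\varphi\|_{W^{2,+\infty}}\|\omega_n(t,\cdot)\|_{L^1}^2$ (Lemma~\ref{H:phi:bounded}) into \eqref{wf:1:equ:ss} shows that, for each fixed $\varphi\in C^\infty_c(\bbR^+\times\overline\calF)\cap\mathfrak{C}$, the map $t\mapsto\int_{\calF(t)}\omega_n(t,\cdot)\varphi$ has $t$-derivative bounded in $L^1_{loc}(\bbR^+)$ uniformly in $n$, hence is equicontinuous in time. Flattening $\mathbb{F}$ to a fixed domain through the diffeomorphism $(\Id,b)$ and combining this time-equicontinuity with Dunford--Pettis (a weak-$L^1$ version of the Aubin--Lions--Simon lemma), we extract a subsequence with $\omega_n\to\omega$ in $C^0_{loc}(\bbR^+;L^1(\calF(t))\text{-weak})$, $\omega\in\calL^\infty_{loc}(\bbR^+;L^1(\calF(t)))$, and $\omega_n^-\rightharpoonup\omega^-$ weakly in $L^1_{loc}(\bbR^+\odot\partial\calF^{-},|g-q|\,dtds)$.

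\emph{Passage to the limit.} The only genuinely quadratic term is $\langle\omega_n,\omega_n,\varphi\rangle=\int_{\calF}\int_{\calF}H_\varphi(t,x,y)\,\omega_n(t,x)\omega_n(t,y)$. At a.e.\ fixed $t$, $\omega_n(t,\cdot)\,dx\rightharpoonup\omega(t,\cdot)\,dx$ weak-$\ast$ as measures on $\overline{\calF(t)}$, hence the product measures converge weak-$\ast$ on $\overline{\calF(t)}\times\overline{\calF(t)}$. Splitting the double integral at $\{|x-y|<\delta\}$: there it is bounded by $M\|\varphi\|_{W^{2,+\infty}}\|\omega_n(t,\cdot)\|_{L^1}\sup_{y}\int_{B_\delta(y)}|\omega_n(t,x)|\,dx$, which tends to $0$ as $\delta\to0$ uniformly in $n$ and $t\le T$ by equi-integrability; on $\{|x-y|\ge\delta\}$, replacing the indicator by a continuous cut-off, $H_\varphi$ times the cut-off is continuous on $\overline{\calF(t)}\times\overline{\calF(t)}$ (the Green function is smooth off the diagonal), so the limit passes by weak-$\ast$ convergence of the product measures; dominated convergence in $t$ (the integrand is $\le C\|\varphi\|_{W^{2,+\infty}}$ on $[0,T]$) concludes this term. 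Each remaining term of \eqref{wf:1:equ:ss} is, for a.e.\ $t$, a pairing of $\omega_n(t,\cdot)\rightharpoonup\omega(t,\cdot)$ (weakly in $L^1$) with converging bounded quantities: $v_g\cdot\nabla\varphi$ is fixed; in the $X_i$-term both scalar factors $\int_{\calF}\Psi_i\omega_n+\calC^n_i(t)$ and $\int_{\calF}X_i\cdot\varphi\,\omega_n$ converge for a.e.\ $t$ (weak $L^1$ convergence against $\Psi_i$ and $X_i\cdot\varphi\in L^\infty$, together with $\calC^n_i\to\calC_i$, which follows from $\int_{\calF(t)}\omega_n(t,\cdot)\to\int_{\calF(t)}\omega(t,\cdot)$ and the convergence of $\omega^{in}_n$, $\omega^+_n$), so their product does; and the right-hand side boundary integrals pass using strong $L^1_{loc}(|g-q|\,dtds)$ convergence of $\omega^+_n$ and weak $L^1_{loc}$ convergence of $\omega_n^-$. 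All these terms are dominated in $t$ by $L^1_{loc}$ functions, so $(\omega,\omega^-)$ satisfies \eqref{wf:1:equ:ss}. Finally, taking $\Phi(s)=s$ above gives $\int_{\calF}|\omega_n(t,\cdot)|+\int_0^t\int_{\partial\calF^{-}}(g-q)|\omega_n^-|=\int_{\calF}|\omega^{in}_n|-\int_0^t\int_{\partial\calF^{+}}(g-q)|\omega^+_n|$, and weak lower semicontinuity of the $L^1$-norm and of the weighted exit integral on the left, together with strong convergence on the right, yield the dissipation inequality of Definition~\ref{weak:for:del}.

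\emph{Main obstacle.} The delicate point is the limit in $\langle\omega_n,\omega_n,\varphi\rangle$; but since a transported $L^1$ density stays equi-integrable, no concentration of vorticity can occur, the near-diagonal part is automatically negligible, and --- unlike Delort's original argument --- no sign condition on $\omega$ is needed. The real effort goes into (i) carrying the whole compactness scheme over to the moving domain $\mathbb{F}$ via the flattening diffeomorphism $(\Id,b)$, so that weak-$L^1$ convergence on $\calF(t)$ is meaningful and uniform in $t$, and (ii) securing the equi-integrability of the exit trace $\omega_n^-$ and the uniform-in-$t$ control of $\int_{\calF(t)}\omega_n$ and $\calC^n_i$, which is what makes the circulations and the remaining quadratic terms converge.
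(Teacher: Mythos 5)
Your proposal is correct, and it reaches the same analytic core as the paper --- uniform $L^1$ bounds plus equi-integrability from a superlinear convex function, weak-$\ast$ convergence of the product measures $\omega_n(t,x)\omega_n(t,y)$, the diagonal cut-off $H_{\varphi,\delta}$, and the no-concentration argument --- but through a different approximating sequence. The paper runs the vanishing-viscosity scheme \eqref{equ:app:ss} directly (with regularised data at the viscous level) and extracts equi-integrability from the parabolic estimate \eqref{est:con:Gun}, which holds for every positive convex even $G$; you instead mollify $\omega^{in}$ and $\omega^{+}$ into $L^p$, $p>1$, invoke Theorem \ref{exi:Lp:Nempty} as a black box for the inviscid approximants, and recover the same convex-function control from the renormalized transport identity \eqref{rem:tras:form} with $\beta=\Phi$ chosen by de la Vall\'ee--Poussin and Jensen. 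Your route buys a cleaner modular structure (no need to revisit the Navier--Stokes layer, and the time-equicontinuity of $t\mapsto\int\omega_n\varphi$ is argued directly from the weak formulation rather than from the viscous energy balance), at the price of an extra layer of data approximation; the paper's route is shorter because the quantities $\omega_\nu$ already satisfy \eqref{est:con:Gun} and the cited Theorem 4 of \cite{IO3} supplies the Schochet argument. One small correction: do not take $p$ ``close enough to $1$''. To pass from the renormalized formulation with $\beta\in C^1_b$ to the distributional one with $\beta(s)=s$, and hence to the symmetrised form $\langle\omega_n,\omega_n,\varphi\rangle$, you need $\omega_n v_n\in L^1$, which by the Sobolev embedding $W^{1,p}\hookrightarrow L^{2p/(2-p)}$ forces $p\geq 4/3$; since $W^{1/2,2+\eps}(\partial\calF)\hookrightarrow W^{1-1/p,p}(\partial\calF)$ for every $p\leq 2$ on the compact one-dimensional boundary, any fixed $p\in[4/3,2]$ serves, and the mollified data lie in every $L^p$ anyway, so this is only a matter of phrasing and not a gap.
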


Note that the above theorem answer to the question left open in \cite{FLF} where the authors consider the special case with $ g = q $.

\section{Approximate problem}

In the case the holes have no-empty interior the existence of weak solutions can be shown by a vanishing viscosity approximation see Theorem \ref{exi:Lp:Nempty}. Moreover they satisfy the estimate  
\begin{equation}
\label{equ:65}
\int_{\calF(t)} \lvert \omega(t,.)\rvert^{p} + \int_{0}^{t}\int_{\partial \calF^{-}(t)} (g-q)\lvert\omega^{-}\rvert^{p} = \int_{\calF(0)} \lvert \omega^{in}\rvert^{p} - \int_{0}^{t}\int_{\partial \calF^{-}(t)}(g-q)\lvert\omega^{+}\rvert^{p},
\end{equation}
that follows from the renormalized transport equation \eqref{rem:tras:form} by choosing $ \beta(x) = \lvert x \rvert^{p} $ and $ \varphi = 1$. To show existence of solutions for the system \eqref{main:sys}, we consider a sequence of domains with holes with non-empty interior that approximate the subset of $ \bbR^2 $ occupied by the fluid and we use the bounds \eqref{equ:65} to pass to the limit in the weak formulations. 

The sequence of approximate geometries is defined as follow.

\begin{definition}[Sequence of approximate geometries]
Let $ (\Omega, \calS^{+}, \calS^{-}) $ a regular compatible geometry associated with the septuple $ (\Omega, S^+, h^+, r^+, S^-,h^-, r^- ) $. For $ \eps > 0 $, we say that $ (\Omega, \calS^{+}_{\eps}, \calS^{-}_{\eps}) $ is a sequence of approximate geometries if there exist  $ \Xi_{\eps}: \bbR^{+} \longrightarrow \bbR^{+} $ such that $ \Xi_{\eps} $ is smooth, increasing, $ \Xi_{\eps}(x) = \eps $ for $ x \in [0,\eps] $, $ \Xi_{\eps}(x) = x $ for $ x \in [2\eps, \infty) $ and  $ (\Omega, \calS^{+}_{\eps}, \calS^{-}_{\eps}) $ is associated with the septuple $ (\Omega, S^+, h^+, \Xi_{\eps}(r^+), S^-,h^-, \Xi_{\eps}(r^-) ) $.
\end{definition}
\begin{remark}
Note that for any $ T > 0$, there exist $ \eps_T $ such that for $ 0 < \eps < \eps_{T} $ the triple  	$ (\Omega, \calS^{+}_{\eps}, \calS^{-}_{\eps}) $ is a regular compatible geometry for $ t \in [0,T] $. 
	
\end{remark}  
The result in the following holds only local in time so without loss of generality we always assume that for a sequence of approximate geometries there exists $ \eps_{\infty} $ such that  $ (\Omega, \calS^{+}_{\eps}, \calS^{-}_{\eps}) $ is a regular compatible geometry for all the times $ t \in \bbR^+ $ and for $ 0 < \eps < \eps_{\infty} $.

Consider the approximate problem
\begin{align}
& \partial_t \omega_{\eps} + \div(v_{\eps} \omega_{\eps}) = 0 \quad \quad \quad \quad \quad \quad \quad \quad \quad \quad \quad \quad \quad \quad \quad \quad \quad \quad \text{for } x \in \calF_{\eps}(t),&&  \nonumber \\
& 
v_{\eps} =  \calK_{\calF_{\eps}(t)} \left[ \omega_{\eps} \right] \, \, \quad \quad \quad \quad \quad \quad \quad \quad \quad \quad \quad \quad \quad \quad \quad \quad \quad \quad\quad \quad \text{for } x \in \calF_{\eps}(t), &&  \label{app:judv:sys} \\
& \calC_{+,\eps}(t) = \calC_{+,\eps}^{in} - \int_{0}^{t} \oint_{\partial S^{+}_{\eps}(t)} \omega^{+}(g_{\eps} - q_{\eps}), &&  \nonumber \\
& \calC_{-,{\eps}}(t) = \calC_{-,{\eps}}^{in} + \int_{\calF_{\eps}(t)} \omega_{\eps} - \int_{\calF_{\eps}(0)} \omega^{in}_{\eps} + \int_{0}^{t} \oint_{\partial S^{+}_{\eps}(t)} \omega^{+}(g_{\eps} - q_{\eps}). \nonumber
\end{align}

We look for conditions on $ \omega^{in}_{\eps} $, $\calC^{in}_{i,_{\eps}} $, $ g_{\eps} $ and $ \omega^{+}_{\eps} $ such that weak solutions $(\omega_{\eps}, \omega^{-}_{\eps}, v_{\eps}) $ of \eqref{app:judv:sys}  converge to a weak solution of \eqref{main:sys}. 

Let denote for  $ \delta > 0 $ the set $ \mathcal{T}^{i}_{NP,\delta} = \{ t \in \bbR^{+} $ such that $ r^{i} \geq \delta \}$. Regarding the initial datum and the initial circulation, we ask that 
\begin{gather}
\label{1}
 \omega^{in}_{\eps} \longrightarrow \omega^{in} \quad \text{ in } L^{p}(\calF_0)  \\ \quad \text{ and  } \quad \calC^{in}_{i, \eps} \longrightarrow \calC^{in}_{i}. \nonumber 
\end{gather}
Regarding the boundary condition, we assume that for any $ \delta $ 
\begin{gather}
\label{2}
g_{\eps} \to g \text{ in } \mathcal{L}^{r}_{loc}(\mathcal{T}^{i}_{NP, \delta}; L^{q}(\pS^{i}(t))) \quad \\  \left(g_{\eps}-q_{\eps}\right)^{1/p}\omega^{+}_{\eps} \to \left(g-q\right)^{1/p}\omega^{+} \text{ in } \mathcal{L}^{p}_{loc}(\mathcal{T}^{+}_{NP,\delta}; L^{p}(\pS^{+}(t))), \nonumber
\end{gather}
\begin{gather}
\label{3}
\eps^{1/q} \|g_{\eps}\|_{L^{q}(\pS^{i}_{\eps}(t))} \longrightarrow 0 \quad \text{ in } L^{r}_{loc}(\calT^{i}) \quad \text{ and } \\ \quad \left\| (g_{\eps} - q_{\eps})^{1/p}\omega_{\eps}^{+}\right\|_{\calL^{p}(0,T;L^{p}(\pS^{+}_{\eps}(t))} \leq C_T, \nonumber
\end{gather}
for $ t \in \calT^{+} $, we assume  
\begin{equation}
\label{4}
\int_{\pS^{+}_{\eps}(t)} g_{\eps} \longrightarrow \mu \text{ in } L^{r}_{loc}(\calT^{+}) \quad \text{ and } \quad \int_{\pS^{+}_{\eps}(t)} (g_{\eps}-q_{\eps}) \omega^{+}_{\eps} \longrightarrow j \text{ in } L^{r}_{loc}(\calT^{+}).  
\end{equation}
Finally we assume the following equi-integrability property. For any compact $ K \subset \bbR^{+}$ and any small parameter $ \mathfrak{e} > 0 $, there exist $ \mathfrak{d} $ such that for any $ A \subset K $ with Lebesgue measure $ \mathcal{L}(A) < \mathfrak{d}$, it holds
\begin{equation}
\label{5}
\sup_{\eps} \left\| (r^{i}_{\eps})^{1/p}(t) \|g_{\eps}(t.)\|_{L^{q}(\pS^{i}_{\eps})} \right\|_{L^{r}(A)} < \mathfrak{e}.
\end{equation}

\begin{theorem}
\label{app:theo:theo}
	
Let $ (\Omega, \calS^+, \calS^-) $ a regular compatible geometry and let  $ (\Omega, \calS_{\eps}^+, \calS_{\eps}^-) $ a sequence of approximate geometry for $ \eps > 0 $.
Let $ p \in (2,+\infty) $, $ q \in [p/(p-1), 2) $ and $ r > 1 $. 
Let $ \omega^{in}_{\eps} \in L^{p}(\calF_{\eps}(0)) $ the initial vorticity, let $ \calC^{in}_{i,\eps} \in \bbR $ the initial circulations around $ \pS^i_{\eps} $. Let $ (g_{\eps},0) $ a sequence of compatible in-out velocities such that $ g_{\eps} \in \mathcal{L}^{r}_{loc}(\bbR^{+}; W^{1-1/p,p}(\partial \calF_{\eps}(t))) $. 
Let $ \omega^{+}_{\eps} \in L^{p}_{loc}(\bbR^{+} \odot \pS^{i}_{\eps}(t),\lvert g_{\eps}-q_{\eps}\rvert  \, dtds ) $ the entering vorticity. 
Suppose there exists $ \omega^{in} \in L^{p}(\calF(0))$, $ \calC^{in}_i \in \bbR $ and $ (g, \mu)$ a compatible in-out velocity such that $ g \in \mathcal{L}^{r}_{loc}(\bbR^{+}; W^{1-1/p,p}(\partial \calF(t))) $ and  $ \mu \in L^{r}_{loc}(\calT^{+}) $, $ \omega^{+} \in L^{p}_{loc}(\calT^{+}_{NP} \odot \pS^{i}(t),\lvert g-q \rvert \, dt ds)) $ an entering vorticity and $ j : \calT^{+} \longrightarrow \bbR^{+} $ such that $ j \in L^{r}_{loc}(\bbR^{+})$, for which it holds \eqref{1}-\eqref{2}-\eqref{3}-\eqref{4}-\eqref{5}. Then up to a subsequence the weak solutions $ (\omega_{\eps}, \omega_{\eps}^{-}, v_{\eps}) $ of the system \eqref{app:judv:sys}  associated with $ \omega^{in}_{\eps} $, $ \calC^{in}_{i, \eps} $, $ g_{\eps}^i $, $\omega_{\eps}^{+} $ converges to $( \omega, \omega^{-}, v) $ a solution of \eqref{main:sys} in the sense of Definition \ref{main:def:def}. Moreover the convergence holds in the following sense.
\begin{itemize}
		
		\item $ \omega_{\eps} \cvwstar \omega $ in $\mathcal{L}^{\infty}_{loc}(\bbR^{+}; L^{p}(\calF(t))) $,
		
		\item $ (g_{\eps}-q_{\eps})^{1/p}\omega_{\eps}^{-} \cv (g - q)^{1/p} \omega^{-} $ in $ L^{p}_{loc}(\calT^{-}_{NP,\delta} \odot \pS^{-}(t))) $ for any $ \delta > 0 $.
		
		\item $ v_{\eps} \longrightarrow v $ in $ \mathcal{L}^{r}_{loc}(\bbR^{+}; L^{q}(\calF(t))) $.
		
\end{itemize}  
	
\end{theorem}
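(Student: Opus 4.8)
\emph{Proof plan.} The strategy is the one announced above. For each fixed $\eps>0$ the geometry $(\Omega,\calS^+_\eps,\calS^-_\eps)$ is regular (its holes have radius $\ge\eps>0$), so Theorem \ref{exi:Lp:Nempty} provides a weak solution $(\omega_\eps,\omega^-_\eps,v_\eps)$ of \eqref{app:judv:sys}, which in addition satisfies the energy identity \eqref{equ:65}; one then extracts weak and strong limits using uniform bounds coming from \eqref{equ:65} and the hypotheses \eqref{1}--\eqref{5}, and passes to the limit in the weak formulation of Definition \ref{def:di:di:di:di} and in the circulation equations, so that every Dirac term in \eqref{main:sys} is produced by the collapse of the small holes $\calS^i_\eps(t)$ onto the points $x_i(t)$ at times $t\in\calT^i$. \emph{Uniform estimates.} Writing \eqref{equ:65} on $\calF_\eps(t)$ and using the boundedness of $\omega^{in}_\eps$ from \eqref{1} and the second bound in \eqref{3} gives $\|\omega_\eps\|_{\calL^\infty(0,T;L^p(\calF_\eps(t)))}+\|(g_\eps-q_\eps)^{1/p}\omega^-_\eps\|_{\calL^p(0,T;L^p(\pS^-_\eps(t)))}\le C_T$. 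Decomposing $v_\eps=\calK_{\calF_\eps(t)}[\omega_\eps]$ as in \eqref{u:dec:ss} adapted to $\calF_\eps(t)$, I separate the harmonic point profiles $\mu_{i,\eps}(t)\,\Phi^\eps_i$ carrying the small-hole fluxes $\mu_{i,\eps}(t)=\oint_{\pS^i_\eps(t)}g_\eps$ and the circulation fields $\big(\int_{\calF_\eps}\Psi^\eps_i\omega_\eps+\calC_{i,\eps}\big)X^\eps_i$: since the planar Neumann and Biot--Savart kernels are locally $q$-integrable precisely because $q<2$, the profiles $\Phi^\eps_i,X^\eps_i$ are bounded in $L^q(\calF_\eps(t))$ uniformly in $\eps$; the scalars $\mu_{i,\eps},\calC_{i,\eps},\int\Psi^\eps_i\omega_\eps$ are bounded by the data and by the previous line; and the remaining potential part, generated by $g_\eps$ minus its mean on each small circle and by $g$ on $\partial\Omega$ and the macroscopic holes, is controlled after rescaling by $(r^i_\eps)^{1/q}\|g_\eps\|_{L^q(\pS^i_\eps(t))}$, which tends to $0$ at the transition times by \eqref{3} and is equi-integrable in time by \eqref{5}. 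Hence $\|v_\eps\|_{\calL^{r}(0,T;L^q(\calF_\eps(t)))}\le C_T$. Finally, from the vorticity balance $\frac{d}{dt}\int_{\calF_\eps(t)}\omega_\eps=-\oint_{\pS^+_\eps}(g_\eps-q_\eps)\omega^+_\eps-\oint_{\pS^-_\eps}(g_\eps-q_\eps)\omega^-_\eps$ and from \eqref{app:judv:sys}, H\"older's inequality together with \eqref{3}--\eqref{5} shows that $t\mapsto\calC_{i,\eps}(t)$ and $t\mapsto\int_{\calF_\eps(t)}\omega_\eps$ are equibounded with equi-integrable time derivatives.

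\emph{Compactness.} Along a subsequence: $\omega_\eps\cvwstar\omega$ in $\calL^\infty_{loc}(\bbR^{+};L^p(\calF(t)))$ (extended by zero across $\calS^i_\eps\setminus\calS^i$, of vanishing measure); $(g_\eps-q_\eps)^{1/p}\omega^-_\eps\cv(g-q)^{1/p}\omega^-$ in $L^p_{loc}(\calT^-_{NP,\delta}\odot\pS^-(t))$ for every $\delta$, where for $\eps<\delta/2$ the geometry is frozen; $\calC_{i,\eps}\to\calC_i$ and $\int_{\calF_\eps(t)}\omega_\eps\to M(t)$ uniformly on compact time intervals by Arzel\`a--Ascoli, with $M(t)=\int_{\calF(t)}\omega(t)$ obtained by testing the weak-$*$ limit against products of a time cut-off and the indicator of $\calF(t)$. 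For the velocity I claim $v_\eps\to v$ strongly in $\calL^{r}_{loc}(\bbR^{+};L^q(\calF(t)))$: the bulk stream part $\calK^0_{\calF_\eps(t)}[\omega_\eps]$ converges strongly because $\omega\mapsto\calK^0[\omega]$ maps $L^p$ compactly into $L^q$ for $q<2$, the space--time compactness being supplied by extending the Aubin--Lions argument of Section \ref{sec:ren:sol} to the degenerating domain (using $\partial_t\omega_\eps=-\div(v_\eps\omega_\eps)$ bounded in $\calL^{r}_{loc}(\bbR^{+};W^{-1,s}(\calF_\eps(t)))$ with $1/s=1/p+1/q$, interior elliptic bounds away from the collapsing holes, and the smallness of the near-hole contribution); and the scalars $\mu_{i,\eps}$ and $\int\Psi^\eps_i\omega_\eps+\calC_{i,\eps}$ converge strongly in $L^{r}_{loc}$ — on $\calT^i$ because $\Psi^\eps_i\to0$ in every $L^m$ and $\Phi^\eps_i,X^\eps_i$ tend in $L^q_{loc}$ to the point-source and point-vortex profiles, on $\calT^i_{NP,\delta}$ because there the geometry is frozen and strong $L^{r}_t$ convergence of these scalars follows from the $W^{-1,s}$-compactness paired with the smooth $\Psi_i$, and on the transition set by \eqref{5}. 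Consequently $v$ satisfies the div--curl relations of Definition \ref{main:def:def} with data $\omega,g,\calC_i$ and with the point sources and vortices dictated by the limiting fluxes $\mu_i$ and circulations $\calC_i$, and by uniqueness for that div--curl problem $v$ is exactly the field of \eqref{main:sys}.

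\emph{Identification of the limit.} Fix $\delta>0$ and split every integral over $\pS^i_\eps(t)$ according to $t\in\calT^i_{NP,\delta}$, $t\in\{0<r^i<\delta\}$, $t\in\calT^i$. On $\calT^i_{NP,\delta}$ the geometry is frozen and \eqref{2} gives $\oint_{\pS^i}\omega^i_\eps(g_\eps-q_\eps)\varphi\to\oint_{\pS^i}\omega^i(g-q)\varphi$ (on $\pS^+$ from the convergence of $(g_\eps-q_\eps)^{1/p}\omega^+_\eps$, on $\pS^-$ from the weak $L^p$ limit defining $\omega^-$), and likewise for the flux and circulation integrals. On $\{0<r^i<\delta\}$, whose time-measure tends to $0$ as $\delta\to0$ since $\bigcap_{\delta>0}\{0<r^i<\delta\}=\emptyset$, the contribution is at most $\mathfrak{e}$ uniformly in $\eps$ by the equi-integrability \eqref{5} and H\"older. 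On $\calT^i$ the circle $\pS^i_\eps(t)$ has diameter $O(\eps)$, so for continuous $\phi$, $\oint_{\pS^i_\eps}g_\eps\phi=\phi(t,x_i(t))\,\mu_{i,\eps}(t)+O\!\big(\eps^{\,2-1/q}\|g_\eps\|_{L^q(\pS^i_\eps)}\big)\to\phi(t,x_i(t))\,\mu_i(t)$ by \eqref{3}--\eqref{4}, and similarly $\oint_{\pS^+_\eps}\omega^+_\eps(g_\eps-q_\eps)\phi\to j(t)\,\phi(t,x_+(t))$; the sink Dirac in \eqref{main:sys} is recovered by combining this with the vorticity balance, which turns the limit of $\oint_{\pS^-_\eps}(g_\eps-q_\eps)\omega^-_\eps$ into $-\frac{d}{dt}\int_{\calF(t)}\omega-\oint_{\pS^+}\omega^+(g-q)\mathds{1}_{\calT^+_{NP}}-j\mathds{1}_{\calT^+}$. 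Passing to $\eps\to0$ in \eqref{rem:tras:form} with $\beta=\Id$ (legitimate for $p>1$ after removing the truncation), in the two div--curl weak identities of Definition \ref{def:di:di:di:di}, and in the circulation equations of \eqref{app:judv:sys} — the nonlinear term $\int\omega_\eps v_\eps\cdot\nabla\varphi$ converging because $\omega_\eps\cvwstar\omega$ in $L^p$ and $v_\eps\to v$ strongly in $L^{p'}\supseteq L^q$ (here $q\ge p'$), and using $\calC^{in}_{i,\eps}\to\calC^{in}_i$, $\omega^{in}_\eps\to\omega^{in}$ and $\int_{\calF_\eps(t)}\omega_\eps\to\int_{\calF(t)}\omega$ — and then letting $\delta\to0$ yields all the identities of Definition \ref{main:def:def} together with the formulas for $\calC_\pm(t)$. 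The uniform bounds pass to the limit by weak lower semicontinuity, so $(\omega,\omega^-,v)$ lies in the required spaces, $\int_{\calF(t)}\omega\in W^{1,1}_{loc}$, and the three stated modes of convergence are those established above.

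\emph{Main obstacle.} The crux is the uniform control and compactness of $v_\eps$ near the transition times: there $\calF_\eps(t)$ carries a hole of size $\sim\eps$ and the usual estimate $\|v\|_{W^{1,p}}\lesssim\|\omega\|_{L^p}$ degenerates, so one cannot work in $W^{1,p}$. The remedy is the decomposition of $v_\eps$ into explicit point-source and point-vortex profiles, which are bounded and convergent in $L^q$ only because $q<2$ makes the planar Neumann/Biot--Savart kernels locally $q$-integrable, together with a remainder governed by the zero-mean boundary data on the small circles — and hypotheses \eqref{3} and \eqref{5} are tailored precisely to control this remainder. The secondary difficulties are the careful bookkeeping of the three regimes $\calT^i_{NP,\delta}$, transition and $\calT^i$ with the order of limits $\eps\to0$ then $\delta\to0$, and the extension of the renormalization and compactness machinery of Section \ref{sec:ren:sol} to a domain that degenerates in time.
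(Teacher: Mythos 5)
Your proposal follows essentially the same route as the paper: the a priori bounds from the renormalized energy identity, the decomposition of $v_\eps$ into explicit point-source/point-vortex profiles (integrable in $L^q$ only because $q<2$) plus a remainder controlled by the rescaled estimate $(r^i_\eps)^{1/q}\|g_\eps\|_{L^q(\pS^i_\eps)}$, the three-regime time splitting with $\eps\to0$ before $\delta\to0$, and the recovery of the sink Dirac from the vorticity balance are all exactly the ingredients of the paper's proof. The plan is correct and no genuinely different idea or gap is present.
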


First of all we show Theorem \ref{main:theorem} as a corollary of the above Theorem \ref{app:theo:theo}. In particular it remains only to prove Theorem \ref{app:theo:theo}.

\begin{proof}[Proof of Theorem \ref{main:theorem}] We show that there exists a sequence of data $ \omega_{\eps}^{in} $, $ C^{in}_{i,\eps} $, $ g_{\eps}^i $, $\omega_{\eps}^{+} $ that satisfy the hypothesis of Theorem \ref{app:theo:theo} and in particular  \eqref{1}-\eqref{2}-\eqref{3}-\eqref{4}-\eqref{5}.
Theorem \ref{app:theo:theo} implies the existence of a weak solution of \eqref{main:sys}.

We consider $ \omega_{\eps}^{in} = \omega^{in}\rvert_{\calF_{\eps}(0)} $ and $ \calC^{in}_{i,\eps} = \calC^{in}_{i}$, for which \eqref{1} is clear.
We use the notation $ \mathbf{x} = h^i(t) + \frac{r^{i}(t)}{r_{\eps}^{i}(t)} ( x-h^i (t)) $, we define  
\begin{equation*}
g_{\eps}(t,x) = \begin{cases} g(t,x) & \text{ in } \calT^{i}_{NP, 2 \eps}\odot \pS^{i}_{\eps}(t), \\
\left(\frac{r^i(t)}{r_{\eps}^{i}}\right)^{1/q} \left(g \left(t, \mathbf{x} \right) - q\left(t, \mathbf{x} \right)\right) + q_{\eps}  & \text{ in } \left(\calT^{i}_{NP} \setminus \calT^{i}_{NP,2\eps}\right)\odot \pS^{i}_{\eps}(t), \\ \frac{\mu}{\oint_{\pS^{i}_{\eps}(t)} 1 } + q_{\eps} & \text{ in } \left(\calT^{i} \cap \{ t \vert \mu(t) \neq 0 \} \right)\odot \pS^{i}_{\eps}(t), \\ \frac{\eps}{\int_{\pS^{i}_{\eps}}1} + q_{\eps}  & \text{ in } \{ t \vert \mu(t) \neq 0 \} \odot \pS^{i}_{\eps}(t).
\end{cases}
\end{equation*} 
and 
\begin{equation*}
\omega_{\eps}^{+} = \begin{cases} \omega^{+}(t,x) \quad & \text{ for } (x,t) \in \calT^{i}_{NP,2\eps} \odot \pS^{i}_{\eps}(t), \\ \frac{j}{\mu}  \quad & \text{ for } (t,x) \in \calT^{+}\odot \pS^{i}_{\eps} (t) , \\ 0 \quad & \text{ else}.\end{cases}
\end{equation*}

With this choice of $ \omega^{in}_{\eps}$, $ \calC^{in}_{i,\eps}$, $ g_{\eps}$ and $ \omega_{\eps}^+ $, we can apply Theorem \ref{app:theo:theo} to show Theorem \ref{main:theorem}.

\end{proof}

\section{Proof of Theorem \ref{exi:transport:ppppppppp} and Theorem \ref{duality:formula}}
\label{sec:ren}

In this section we show existence and uniqueness of renormalized solutions to the transport equation with in-out flow and with time dependent domain. Moreover we prove that renormalized solutions to the transport equation satisfy a duality formula. These results are an extension to the classical DiPerna-Lions theory from \cite{DPL}, using some tools introduced in \cite{Boyer} where the case of time-independent domain was tackled.

To show Theorem \ref{exi:transport:ppppppppp} we proceed as follow. First of all we show that distributional solutions to the transport equation with test functions which vanish on the boundary admit traces on the boundary and they satisfy the renormalized transport equation in a weak sense with test functions also non-zero on the boundary.
Then we show existence of weak solutions to the transport equation and we identify them with the renormalized solutions of the transport.

\subsection{Distributional solutions are renormalized solutions}

In this subsection we show that distributional solutions of the transport equation admits traces on the boundary $ \partial \calF^+ \cup \partial \calF^{-} $ and are renormalizable under some hypothesis on the regularity of the velocity field.

Recall that we restrict our analysis to vector fields $ v \in L^{1}_{loc}(\bbR^+, W^{1,q}(\calF(t))) $ such that $ \div(v) = 0 $ and they satisfy at least one of the following hypothesis  
\begin{enumerate}
	\item (Geometric condition). There exists $ \psi \in C^{\infty}(\mathbb{F}) $ such that $ \psi = 1 $ on $ \partial \calF^+ $ and $ \psi = 0 $ on $\partial \calF^- $.
	
	\item (Extra regularity). There exists $ \alpha > 1 $ such that $$ v\cdot n \in L^{\alpha}_{loc}(\bbR^{+} \odot \partial \calF(t)). $$
\end{enumerate}

\begin{theorem}
	\label{theo:dist:equiv:ren}
	Let $ p \in (1,+\infty] $ and $ 1/p + 1/q = 1 $. Let $ v \in \calL^1_{loc}(\bbR^+;W^{1,q}(\calF)) $ such that $\div(v) = 0 $ satisfying or the geometric condition or the extra regularity hypothesis. Let $ \rho^{in} \in L^p(\calF(0)) $ and let $ f \in \calL^{1}_{loc}(\bbR^+;L^p(\calF(t))) $.
	Let $ \rho \in \calL^{\infty}_{loc}(\bbR^+;L^p(\calF)) $ a solution of
	\begin{equation}
	\label{weak:distr:tran}
	\int_{\calF(0)} \rho^{in} \varphi(0,.) \, dx + \int_{\bbR^+}\int_{\calF(t)} \rho (\partial_t \varphi + v\cdot \nabla \varphi) \, dx dt + \int_{\bbR^+}\int_{\calF(t)} f \varphi \, dx dt  = 0 
	\end{equation}
	for any  $ \varphi \in C^{\infty}_{c}(\bbR^+ \odot \calF(t) )$.
	Then there exists a unique trace $ \gamma \rho $ a measurable function on $ \bbR^+ \odot \partial \calF(t) $ defined almost everywhere respect to the measure $ \lvert v\cdot n - q \rvert \, dsdt $, such that for any $ \beta \in C^{1}_b(\bbR^+)$ it holds 
	\begin{align*}
	\int_{\calF(0)} \beta(\rho^{in}) \varphi(0,.) \, dx + \int_{\bbR^+}\int_{\calF(t)} \beta(\rho) (\partial_t \varphi & \, + v\cdot \nabla \varphi) \, dx dt \\ +  \int_{\bbR^+}\int_{\partial \calF(t)} f \beta'(\rho) \varphi \, dx dt    & \, = \int_{\bbR^+}\int_{\partial \calF(t)} \beta(\gamma \rho) v\cdot n \, ds dt. 
	\end{align*}
	for any $ \varphi \in C^{\infty}_c(\bbR^+ \odot \overline{\calF(t)}) $.
\end{theorem}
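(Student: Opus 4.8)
The plan is to reduce the problem to the DiPerna--Lions theory on a fixed domain by straightening the time-dependent boundary, and then to handle the resulting non-tangential velocity field by the trace arguments of \cite{Boyer}. First I would use the diffeomorphism $(\Id,b):\bbR^+\times\partial\calF(0)\to\bbR^+\odot\partial\calF(t)$ to build a time-dependent diffeomorphism $\Phi_t:\calF(0)\to\calF(t)$ of class $C^{1,\alpha}_{loc}$ in time and $C^{1,\alpha}$ (indeed $C^{3,\alpha}$ for a.e. $t$) in space, for instance by extending $b$ into the interior via a cutoff near $\partial\calF(t)$ and the identity away from it. Pulling back, the unknown $\tilde\rho(t,y)=\rho(t,\Phi_t(y))$ solves, on the \emph{fixed} domain $\calF(0)$, a transport equation $\partial_t\tilde\rho+\tilde v\cdot\nabla\tilde\rho=\tilde f$ with a modified velocity $\tilde v=(D\Phi_t)^{-1}(v\circ\Phi_t-\partial_t\Phi_t)$; since $\Phi_t$ is measure-preserving up to the Jacobian (and one can absorb $\det D\Phi_t$ into the density), $\tilde v$ still has the divergence-type structure needed, with $\tilde v\cdot n=(v\cdot n-q)/|{\det}|^{\cdots}$ on $\partial\calF(0)$ having the same sign structure, so $\partial\calF^{\pm}$ is mapped to a fixed-in-nature incoming/outgoing part of $\partial\calF(0)$ that nevertheless moves in time. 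The geometric condition (resp. the extra-regularity condition $v\cdot n\in L^\alpha_{loc}$) transfers directly to $\tilde v$.

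Next I would establish the existence and uniqueness of the trace. On the transformed (fixed-in-space, time-dependent-boundary-data) problem I would mollify $v$ in space by a standard commutator argument: regularize $\tilde v$ to $\tilde v_\eta$, transport $\tilde\rho$ against $\tilde v_\eta$, and use the DiPerna--Lions commutator lemma, which is insensitive to the boundary because the test functions in \eqref{weak:distr:tran} are compactly supported in the open set. This yields that $\beta(\tilde\rho)$ satisfies the transport equation distributionally in the interior for every $\beta\in C^1_b$. To extract the boundary trace I would adapt Section 3 of \cite{Boyer}: choose test functions $\varphi$ concentrating near $\partial\calF(t)$ (built from the signed distance to $\partial\calF(t)$, which is smooth uniformly in $t$ thanks to the regularity of the septuple), integrate the interior renormalized equation, and pass to the limit. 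The key point is that the measure $|v\cdot n-q|\,ds\,dt$ is exactly the right weight: under either the geometric condition or $v\cdot n\in L^\alpha_{loc}$ with $\alpha>1$, the boundary terms produced by the concentrating test functions converge weakly-$*$ to a well-defined $L^\infty$-in-$\beta(\gamma\rho)$ density against this measure. Uniqueness of $\gamma\rho$ follows because two traces differing on a set of positive $|v\cdot n-q|$-measure would contradict the renormalized identity applied with suitable $\beta$ and $\varphi$.

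Finally I would assemble the identity: from the interior renormalized equation plus the identified trace, one obtains
\[
\int_{\calF(0)}\beta(\rho^{in})\varphi(0,.)+\int_{\bbR^+}\int_{\calF(t)}\beta(\rho)(\partial_t\varphi+v\cdot\nabla\varphi)+\int_{\bbR^+}\int_{\partial\calF(t)}f\beta'(\rho)\varphi=\int_{\bbR^+}\int_{\partial\calF(t)}\beta(\gamma\rho)\,v\cdot n\,ds\,dt,
\]
for all $\varphi\in C^\infty_c(\bbR^+\odot\overline{\calF(t)})$, after transporting everything back by $\Phi_t$ (noting $\Phi_t$ maps $\overline{\calF(0)}$ to $\overline{\calF(t)}$ and acts on $b$-data consistently). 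The main obstacle I expect is precisely the trace step in the time-dependent setting: one must show the concentrating-test-function limits exist and are independent of the chosen approximating sequence, uniformly controlling the boundary-layer geometry in $t$; this is where the $C^{1,\alpha}_{loc}(\bbR^+;C^{3,\alpha})$ regularity of $\partial\calF(t)$ and the lower bound on the mutual distances (the compatible-geometry hypothesis) are essential, since they give a tubular neighbourhood of $\partial\calF(t)$ of time-independent width and a smooth normal field, allowing the Boyer-type arguments to run with constants uniform on compact time intervals. The interior commutator estimate, by contrast, is routine once the straightening is in place.
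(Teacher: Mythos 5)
Your overall strategy (mollify, commutator, Boyer-type trace) points in the right direction, but the crucial step --- the actual construction of the trace --- is asserted rather than proved, and the way you set it up would not deliver the statement. The paper does not straighten the domain for this theorem; it works directly in $\bbR^+\odot\calF(t)$ with a \emph{shifted} convolution $\rho\star_\nu\eta_\eps(t,y)=\int_{\calF(t)}\rho(t,x)\,\eta_\eps(y-x-2\eps\nu(t,y))\,dx$, where $\nu$ is a time-dependent normal field built from the distance function. The point of the shift is that $\rho_\eps=\rho\star_\nu\eta_\eps$ is $C^1$ up to the boundary and satisfies the transport equation with an $L^1$-small remainder (the time-dependence of $\nu$ producing an extra remainder $R^1_\eps$ that must be shown to vanish), so one can literally restrict $\rho_\eps$ to $\partial\calF(t)$ and prove that these restrictions form a Cauchy sequence in $L^1_{loc}(\bbR^+\odot\partial\calF(t);\lvert v\cdot n-q\rvert\,ds\,dt)$. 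An interior mollification, which is what you propose (``insensitive to the boundary because the test functions are compactly supported''), gives the renormalized equation in the open set but no function defined on the boundary; your substitute --- concentrating test functions built from the signed distance --- produces, for each fixed $\beta$, at best a weak normal trace of the vector field $(\beta(\rho),\beta(\rho)v)$ as a distribution on $\partial\calF(t)$. The theorem requires much more: a single measurable function $\gamma\rho$, finite $\lvert v\cdot n-q\rvert\,ds\,dt$-a.e., whose composition with $\beta$ represents the boundary density \emph{simultaneously for all} $\beta\in C^1_b$. Passing from the family of $\beta$-dependent boundary distributions to one function $\gamma\rho$ is exactly the content of the Cauchy argument, and your proposal gives no mechanism for it.

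Relatedly, you never say where the two alternative hypotheses actually enter, and they are not decorative. In the Cauchy argument one tests the equation for $\beta(\rho_\eps-\rho_\delta)$ against a function whose boundary values isolate $\partial\calF^+$ from $\partial\calF^-$: under the geometric condition this is the cutoff $\psi$ ($\psi=1$ on $\partial\calF^+$, $\psi=0$ on $\partial\calF^-$), which prevents cancellation between the inflow and outflow contributions to $\int_{\partial\calF(t)}\beta(\rho_\eps-\rho_\delta)(v\cdot n-q)$; under the extra regularity one instead uses smooth approximations $G_m$ of $\lvert v\cdot n-q\rvert^{\alpha-1}(v\cdot n-q)$, obtains Cauchyness only with respect to the weight $\lvert v\cdot n-q\rvert^{\alpha}$, and then needs a separate truncation argument (with $\mathfrak{b}_M(x)=M\mathfrak{b}(x/M)$) to show that $\gamma\rho$ is finite almost everywhere. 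None of this survives in your sketch. The preliminary change of variables to $\calF(0)$ is harmless but also unnecessary here (the paper reserves it for the vanishing-viscosity existence proof); note in passing that after pulling back, $\tilde v$ is no longer divergence free, so the ``divergence-type structure'' you invoke requires carrying the Jacobian weight through the whole renormalization argument.
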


\begin{remark}
	
	Note that we exclude the case $ p = 1 $ because in this case $ q = +\infty $ and we can follow characteristics. 
	
\end{remark}

\begin{remark}
	
	It is not true that $ \gamma \rho $ is $  L^1_{loc}(\bbR^+ \odot \partial \calF(t); \lvert v\cdot n - q \rvert \, ds dt )$ in general, in fact Bardos present a counterexample in \cite{Bardos}. But in the case where the velocity fields satisfies the geometric condition we are able to show that $\gamma \rho \in L^1_{loc}(\bbR^+ \odot \partial \calF(t); \lvert v\cdot n \lvert \, ds dt ) $. In the case of the extra regularity the trace $\gamma \rho $ is $ L^1_{loc} $ respect to the measure $ \gamma_{\tau} \lvert v\cdot n -q \lvert^{\tilde{\alpha}} \, ds dt  $ which takes in account the life time $ \gamma_{\tau} $ of the characteristic that enter or exit from $ (x,t) \in \mathbb{F} $, for more details see \cite{Boyer} Section 5.  
	
\end{remark}

\begin{proof}
	
	Although the proof is classical and follows the one of \cite{DPL} and \cite{Boyer}, some extra technicalities are needed. So let us briefly prove the result. Let us start with a technical Lemma that will be showed only after.
	
	\begin{Lemma}
		\label{Lem:5}
		Let $ p \in (1,+\infty] $ and $ 1/p + 1/q = 1 $. Let $ v \in \calL^1_{loc}(\bbR^+;W^{1,q}(\calF)) $ such that $\div(v) = 0 $ satisfying or the geometric condition or the extra regularity hypothesis. Let $ \rho^{in} \in L^p(\calF(0)) $ and let $ f \in \calL^{1}_{loc}(\bbR^+;L^p(\calF(t))) $.
		Let $ \rho \in \calL^{\infty}_{loc}(\bbR^+;L^p(\calF)) $ a solution of \eqref{weak:distr:tran}. Then there exist sequences $ \rho_{\eps} \in \calL^{\infty}(\bbR^+;C^1(\overline{\calF(t)})) \cap W^{1,1}_{loc}(\bbR^+ \odot \calF(f)) $ and $ f_{\eps} \in \calL^1_{loc}(\bbR^+;C^1(\overline{\calF(t)})) $ such that  
		\begin{equation}
		\label{trans:eps:equ}
		\partial_t \rho_{\eps} + v \cdot \rho_{\eps}+f_{\eps} = R_{\eps} \quad \text{ almost everywhere in } \bbR^{+}\odot \calF(t)
		\end{equation} 
		and 
		\begin{gather*}
		\rho_{\eps} \longrightarrow \rho \text{ in } \calL^s_{loc}(\bbR^+;L^{p}(\calF(t))) \text{ for any } s < + \infty, \quad \rho_{\eps}(0,.) \longrightarrow \rho^{in} \text{ in  } L^p(\calF(0)) \\ f_{\eps} \longrightarrow f \text{ in } \calL^1_{loc}(\bbR^+; L^p(\calF(t))) \quad \text{ and } \quad R_{\eps} \longrightarrow 0 \text{ in } L^1_{loc}(\bbR^+ \odot \calF(t)).
		\end{gather*}
		
	\end{Lemma}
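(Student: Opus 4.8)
The goal is to produce the regularization sequence $\rho_\eps$, $f_\eps$ promised in the lemma: smooth-in-space approximations of the distributional solution $\rho$ that satisfy the transport equation pointwise up to a remainder $R_\eps\to 0$ in $L^1_{loc}$. The plan is to adapt the DiPerna--Lions commutator argument to the time-dependent domain by first straightening the domain and then mollifying in the space variable.

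\textbf{Step 1: Flattening the moving domain.} Using the diffeomorphism $(\Id,b):\bbR^+\times\partial\calF(0)\to\bbR^+\odot\partial\calF(t)$ from the standing assumptions, I would build a family of diffeomorphisms $\Phi_t:\calF(0)\to\calF(t)$, depending on $t$ with the same regularity $C^{1,\alpha}_{loc}$ in time, $C^{1,\alpha}$ (resp.\ $C^{3,\alpha}$) in space, extending $b$. Pulling back $\rho$, $v$ and $f$ by $\Phi_t$ transforms \eqref{weak:distr:tran} into a transport equation on the fixed domain $\calF(0)$ with a new velocity field $\tilde v = (D\Phi_t)^{-1}(v\circ\Phi_t - \partial_t\Phi_t)$, which is still in $\calL^1_{loc}(\bbR^+;W^{1,q})$, a new (generally nonzero) divergence $\tilde c = \div\tilde v \in \calL^1_{loc}(\bbR^+;L^q)$ coming from the Jacobian, and a new source $\tilde f\in\calL^1_{loc}(\bbR^+;L^p)$. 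So it suffices to prove the analogue on the fixed domain $\calF(0)$, which is exactly the situation of Section 3 of \cite{Boyer} except that here the divergence of $\tilde v$ need not vanish — but it is bounded in the right space, which is all the commutator lemma needs.

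\textbf{Step 2: Mollification and the commutator estimate.} On the fixed domain, extend $\tilde\rho$, $\tilde v$, $\tilde f$ suitably past $\partial\calF(0)$ (e.g.\ by a bounded linear extension operator, exploiting the $C^{3,\alpha}$ regularity of the boundary so that the extensions retain $W^{1,q}$-in-space bounds), mollify in space only with a standard kernel $\eta_\eps$, setting $\rho_\eps = \tilde\rho\ast\eta_\eps$, $f_\eps = \tilde f\ast\eta_\eps$ on $\calF(0)$. Then $\partial_t\rho_\eps + \tilde v\cdot\nabla\rho_\eps + f_\eps = R_\eps$ with $R_\eps = \tilde v\cdot\nabla(\tilde\rho\ast\eta_\eps) - (\tilde v\cdot\nabla\tilde\rho)\ast\eta_\eps + (\text{lower-order terms from }\tilde c)$. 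The classical Friedrichs commutator lemma (valid for $\tilde v\in W^{1,q}$, $\tilde\rho\in L^p$ with $1/p+1/q=1$) gives $R_\eps\to 0$ in $\calL^1_{loc}(\bbR^+;L^1)$; the extra divergence term $\tilde c\,(\tilde\rho\ast\eta_\eps)$ is harmless since $\tilde c\in L^q$, $\tilde\rho\ast\eta_\eps\to\tilde\rho$ in $L^p$. The convergences $\rho_\eps\to\tilde\rho$ in $\calL^s_{loc}(\bbR^+;L^p)$ for every $s<\infty$, $\rho_\eps(0,\cdot)\to\tilde\rho^{in}$ in $L^p$, and $f_\eps\to\tilde f$ in $\calL^1_{loc}(\bbR^+;L^p)$ are standard properties of spatial mollification together with the continuity in time built into the distributional formulation \eqref{weak:distr:tran} (which forces $\rho\in C^0_w(\bbR^+;L^p)$, hence gives a well-defined trace at $t=0$). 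Finally, push everything forward by $\Phi_t$ to return to the moving domain; the resulting $\rho_\eps$ lies in $\calL^\infty(\bbR^+;C^1(\overline{\calF(t)}))\cap W^{1,1}_{loc}$ because mollified functions are smooth in space and the time regularity comes from $\partial_t\rho_\eps = R_\eps - \tilde v\cdot\nabla\rho_\eps - f_\eps \in \calL^1_{loc}(\bbR^+;L^1)$ plus the smoothness of $\Phi_t$.

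\textbf{Main obstacle.} The delicate point is the interplay between the mollification and the boundary: mollifying $\tilde\rho$ near $\partial\calF(0)$ requires an extension whose $W^{1,q}$ norm in space is controlled uniformly in $t$ (so that the commutator lemma applies with $t$-integrable constants), and one must be careful that the extension does not interfere with the eventual identification of the boundary trace $\gamma\rho$ — but in this lemma we only need the interior identity \eqref{trans:eps:equ}, so it is enough to run the commutator argument on a slightly shrunk domain and exhaust $\calF(0)$, which sidesteps the boundary entirely. The genuinely new ingredient compared to \cite{Boyer} is handling the non-divergence-free pulled-back field $\tilde v$ and checking that all the constants produced by the change of variables $\Phi_t$ are locally integrable in $t$; this is where the $C^{1,\alpha}_{loc}$-in-time, $C^{3,\alpha}$-in-space regularity of the geometry is used, and it is routine but must be done carefully.
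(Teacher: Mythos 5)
There is a genuine gap, and it is exactly at the point you flag as the ``main obstacle'' and then dismiss. The crux of this lemma is not the interior commutator estimate (which is indeed classical) but the fact that the approximations $\rho_{\eps}$ must lie in $\calL^{\infty}(\bbR^+;C^1(\overline{\calF(t)}))$, i.e.\ be controlled \emph{up to the closed boundary}, with the equation and the convergence $R_{\eps}\to 0$ holding on the whole domain and not merely on compact subsets of the interior. This is essential because the lemma is used immediately afterwards (in the proof of Theorem \ref{theo:dist:equiv:ren}) to define the boundary trace $\gamma\rho$ as the limit of $\rho_{\eps}\vert_{\bbR^+\odot\partial\calF(t)}$, via an integration by parts that produces boundary integrals of $\beta(\rho_{\eps}-\rho_{\delta})$ and an interior integral of $(R_{\eps}-f_{\eps}-R_{\delta}+f_{\delta})\beta'(\rho_{\eps}-\rho_{\delta})$ over all of $\calF(t)$. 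Your two alternatives both fail here. Exhausting by shrunk domains does not produce functions in $C^1(\overline{\calF(t)})$ at all and gives no boundary values to pass to the limit. Extending $\rho$ past $\partial\calF(0)$ by a generic extension operator and mollifying with a symmetric kernel means that, for $y$ within distance $\eps$ of the boundary, the ball $B(y,\eps)$ leaves the region where the distributional equation \eqref{weak:distr:tran} holds; the identity $\partial_t\rho_{\eps}+v\cdot\nabla\rho_{\eps}+f_{\eps}=R_{\eps}$ with $R_{\eps}$ a pure commutator then fails on an $\eps$-collar of the boundary, the extra error there is not $o(1)$ in $L^1$ (it involves $\nabla\eta_{\eps}\sim\eps^{-1}$ on a set of measure $\sim\eps$), and the boundary values of the mollified function are polluted by the arbitrary extension, so the subsequent trace identification would be meaningless.

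The missing idea is Boyer's one-sided convolution, which is what the paper uses: one mollifies with a kernel translated \emph{into} the interior by $2\eps\nu(t,y)$, where $\nu$ is a smooth inward normal field built from the (time-dependent) distance function, i.e.\ $\rho\star_{\nu}\eta_{\eps}(t,y)=\int_{\calF(t)}\rho(t,x)\,\eta_{\eps}(y-x-2\eps\nu(t,y))\,dx$. This keeps the support of the kernel inside $\calF(t)$ even for $y\in\partial\calF(t)$, so no extension is needed, the approximation is $C^1$ up to the boundary, and the remainder is a genuine commutator everywhere. The price, and the genuinely new point of the paper relative to \cite{Boyer}, is that the time dependence of the domain makes $\nu$ depend on $t$, producing an additional remainder $R^1_{\eps}$ involving $\eps\,\partial_t\nu\cdot\nabla\eta_{\eps}$, which the paper kills using the $L^1$ modulus of continuity of $\rho$. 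Your Step 1 (flattening the domain and absorbing the resulting nonzero divergence into the commutator) is a legitimate alternative way to handle the time dependence, and the paper itself uses such a change of variables elsewhere; but it does not remove the need for the shifted kernel at the boundary, so as written the proposal does not prove the lemma.
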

	
	Let now show existence of a trace with the help of $ \rho_{\eps} $. Let $ \delta > 0 $ a small parameter and let $ \beta( x ) = x^2 / \sqrt{1+x^2} $. After taking the difference of \eqref{trans:eps:equ} satisfied by $ \rho_{\eps}$ and $ \rho_{\delta}$ and multiplying $ \beta'(\rho_{\eps}-\rho_{\delta})$ and using the fact that $ v $ is divergence free, we deduce that
	\begin{equation*}
	\partial_t \beta(\rho_{\eps}-\rho_{\delta}) + v \cdot \nabla \beta(\rho_{\eps}-\rho_{\delta}) =  (R_{\eps}-f_{\eps}-R_{\delta}+f_{\delta})\beta'(\rho_{\eps}-\rho_{\delta}) 
	\end{equation*} 
	almost everywhere in $ \bbR^{+}\odot \calF(t)$.
	In the case where the velocity field satisfies the geometric condition, let multiply the above equation by $\psi \phi $ with $ \phi \in  C^{\infty}_c((0,+\infty))$  and integrate by part to obtain
	\begin{align*}
	& \int_{\bbR^+} \int_{\partial \calF(t)}  \beta(\rho_{\eps}- \rho_{\delta}) \psi \phi (v\cdot n-q) =  \int_{\bbR^+}\int_{\calF(t)} \beta(\rho_{\eps}- \rho_{\delta}) \psi \partial_t \phi \\ & \, + \int_{\bbR^+}\int_{\calF(t)} v\cdot \nabla \psi \phi \beta(\rho_{\eps}- \rho_{\delta})   + \int_{\bbR^+}\int_{\calF(t)} (R_{\eps} - f_{\eps}- R_{\delta}-f_{\delta})\beta'(\rho_{\eps}- \rho_{\delta}) \\ & \, + \int_{\calF(0)} \beta(\rho_{\eps}(0,.)- \rho_{\delta}(0,.))\psi\phi(0)
	\end{align*}
	Recall that $ \psi = 1 $ on $ \partial \calF^+$ and $ 0 $ in the remaining boundary. Choose $ \phi \geq 0 $ such that $ \phi = 1 $ in $[0,n] $, then
	\begin{equation*}
	\int_{\partial \calF^+ \cup ([0,n]\odot \partial \calF(t))} \beta(\rho_{\eps}- \rho_{\delta}) \lvert v\cdot n-q \rvert \to 0
	\end{equation*} 
	as $ \eps, \delta $ converge to zero, in fact $ \beta(\rho_{\eps}-\rho_{\delta}) $ converges to $ 0 $ in $ L^p_{loc} $, $ \beta' $ is bounded, $ f_{\eps } $ and $ R_{\eps} $ converges to zero in $ L^1_{loc} $. Analogously by using $ 1- \psi $  we deduce
	\begin{equation*}
	\int_{\partial \calF^- \cup ([0,n]\odot \partial \calF(t))} \beta(\rho_{\eps}- \rho_{\delta}) \lvert v\cdot n-q \rvert \to 0
	\end{equation*} 
	Finally note that $ \int \beta(w)  $ is equivalent to $ \int \lvert w \rvert $ in bounded domain, we deduce that $ \rho_{\eps} $ is a Cauchy sequence in $ L^1_{loc}(\bbR^+\odot \partial \calF(t); \lvert v\cdot n - q \rvert ds dt )$. Then there exists $ \gamma \rho $ such that 
	\begin{equation*}
	\rho_{\eps}\vert_{\bbR^+\odot \partial \calF(t)} \longrightarrow \gamma \rho \quad \text{ in } L^1_{loc}(\bbR^+\odot \partial \calF(t); \lvert v\cdot n - q \rvert ds dt ).
	\end{equation*} 
	
	Let us now consider the case where $ v $ satisfies the extra regularity property. The idea, from \cite{Boyer}, is to multiply \eqref{trans:eps:equ} by test functions such that behaves as $ \lvert v\cdot n -q \rvert^{\alpha-1}(v\cdot n -q) $ in the boundary and deduce the convergence in a similar way as before. Note that $ \lvert v\cdot n -q \rvert^{\alpha-1}(v\cdot n -q) $ is not regular enough to be the trace of a smooth test function so let $ g_{m} $ a smooth sequence of functions such that  $ g_{m} $ converges to $ \lvert v\cdot n -q \rvert^{\alpha-1}(v\cdot n -q) $ in $ L^{\alpha/(\alpha-1)}_{loc}(\bbR^+\odot \partial \calF(t)) $   and $ G_{m} $ a smooth function in $ \bbR^{+} \odot \calF(t) $ with trace $ g_m $. Let $ \beta = x^2/\sqrt{1+x^2} $  an let $ \mathfrak{b}$ an odd smooth strictly increasing function such that $ \mathfrak{b}(x) $ converges to $ 1$ as $ x $ tends to  infinity. Let $ \eps, \delta > 0 $ two small parameter then from \eqref{trans:eps:equ}, we deduce 
	\begin{align*}
	\partial_t \beta(\mathfrak{b}(\rho_{\eps})-\mathfrak{b}(\rho_{\delta})) + & \, v \cdot \nabla \beta(\mathfrak{b}(\rho_{\eps})-\mathfrak{b}(\rho_{\delta})) =  \\ & \,  \left[(R_{\eps}-f_{\eps})\mathfrak{b}'(\rho_{\eps})-( R_{\delta}-f_{\delta})\mathfrak{b}'(\rho_{\delta})\right]\beta'(\mathfrak{b}(\rho_{\eps})-\mathfrak{b}(\rho_{\delta})).
	\end{align*}
	Let multiply by $ \tilde{G}_{m} = G_{m} \phi $ with $ \phi \in C^{\infty}_c([0,+\infty))$, such that $ \phi = 1 $ in $ [0, N] $ and $\phi \geq 0 $. And after some integration by parts we deduce
	\begin{align*}
	& \int_0^N\int_{\partial \calF(t)}  \beta(\mathfrak{b}(\rho_{\eps})-\mathfrak{b}(\rho_{\delta})) \lvert  v\cdot n - q \rvert^\alpha  \leq \\ & \,  \int_{\bbR^+}\int_{\partial \calF(t)} \beta(\mathfrak{b}(\rho_{\eps})-\mathfrak{b}(\rho_{\delta})) ( v\cdot n - q ) (\lvert v\cdot n - q \rvert^{\alpha-2}(v\cdot n -q ) - g_m )\phi \\  & \, +\int_{\bbR^+}\int_{\calF(t)} \beta(\mathfrak{b}(\rho_{\eps})-\mathfrak{b}(\rho_{\delta}))(\partial_t \tilde{G}_{m} + v \cdot \nabla \tilde{G}_m) \\ & \,+\int_{\bbR^+}\int_{\calF(t)} \left[(R_{\eps}-f_{\eps})\mathfrak{b}'(\rho_{\eps})-(R_{\delta}-f_{\delta})\mathfrak{b}'(\rho_{\delta})\right]\beta'(\mathfrak{b}(\rho_{\eps})-\mathfrak{b}(\rho_{\delta})) \tilde{G}_m
	\end{align*}
	Note that that the first term of the right hand side can be made small as we want by assuming $ m $ big enough independently of $ \eps $, $ \delta$. Fix now $ m $ then the last two terms converges to zero as $ \lvert \eps \rvert+ \lvert \delta \rvert $ converges to zero. We showed that  
	\begin{equation*}
	\int_0^N\int_{\partial \calF(t)}  \beta(\mathfrak{b}(\rho_{\eps})-\mathfrak{b}(\rho_{\delta})) \lvert v\cdot n - q \rvert^\alpha \longrightarrow 0 \quad \text{ as  
	} \lvert \eps \rvert+\lvert \delta \rvert \longrightarrow 0,
	\end{equation*}
	we deduce that $ \mathfrak{b}(\rho_{\eps}) $ is a Cauchy sequence in $L^1_{loc}(\bbR^+\odot \partial \calF(t), \lvert v\cdot n - q \rvert ^{\alpha} \, ds dt )$, so it converges to $ \gamma \mathfrak{b}(\rho) = \mathfrak{b}(\gamma \rho) $, where we use the invertibility of $ \mathfrak{b} $. Let conclude with showing $ \gamma\rho $ is finite almost everywhere respect to the $ \lvert v\cdot n - q \rvert ^{\alpha} \, ds dt $ measure. Suppose that $ \gamma \rho $ is infinite in a set $ E \subset [0,N] \odot \partial \calF(t) $ with 
	$$ c = \int_{E} \lvert v\cdot n - q \lvert^{\alpha} > 0. $$
	This implies that $ \lvert \mathfrak{b}(\gamma\rho) \rvert = 1 $ in $ E $. Passing to subsequence Egoroff's Theorem ensures that $ \mathfrak{b}(\rho_{\eps}) $ converges uniformly to $ \mathfrak{b}(\gamma \rho) $ in 
	$ \bar{E} $ with measure of $ \bar{E} \subset E $ greater than $ c/2 $. Introduce $ \mathfrak{b}_{M}(x) = M \mathfrak{b}(x/M) $. We have 
	\begin{align*}
	\int_0^N\int_{\partial \calF(t)} & \beta(\mathfrak{b}_{M}(\rho_{\eps})) \lvert v\cdot n - q \rvert^\alpha  \leq  \\ & \, \int_{\bbR^+}\int_{\partial \calF(t)} \beta(\mathfrak{b}_{M}(\rho_{\eps})) ( v\cdot n - q ) (\lvert v\cdot n - q \rvert^{\alpha-2}(v\cdot n -q ) - g_m )\phi \\  & \,  +\int_{\bbR^+}\int_{\calF(t)} \beta(\mathfrak{b}_{M}(\rho_{\eps}))(\partial_t \tilde{G}_{m} + v \cdot \nabla \tilde{G}_m) \\ & \, +\int_{\bbR^+}\int_{\calF(t)} (R_{\eps}-f_{\eps})\mathfrak{b}_{M}'(\rho_{\eps})\beta'(\mathfrak{b}(\rho_{\eps}))\tilde{G}_m \\
	\leq & \, (M+1)\frac{c\mathfrak{b}(1)}{4} + 1  \leq  M \frac{c\mathfrak{b}(1)}{4} + 2, 
	\end{align*}
	where we fixed $ m $ that $ \int_{\bbR^+}\int_{\partial \calF(t)}  ( v\cdot n - q ) (\lvert v\cdot n - q \rvert^{\alpha-2}(v\cdot n -q ) - g_m )\phi  \leq c\mathfrak{b}(1)/4 $, $ \beta(\mathfrak{b}_{M})(x) \leq M+1 $ and $ \beta(\mathfrak{b}_{M})(x) \leq \sup \lvert \beta'\rvert \sup \lvert \mathfrak{b}'_M\rvert \lvert x \rvert \lesssim \lvert x \rvert$. From the uniform convergence of $ \mathfrak{b}(\rho_{\eps}) $, we deduce that there exists $ \eps_{M} $ such that $ \lvert \rho_{\eps_M}\rvert \geq M $ in $ \bar{E} $. We have
	\begin{align*}
	\int_0^N\int_{\partial \calF(t)}  \beta(\mathfrak{b}_{M}(\rho_{\eps_M})) \lvert v\cdot n - q \rvert^\alpha \geq & \, \int_{\bar{E}}  \beta(\mathfrak{b}_{M}(\rho_{\eps_M})) \lvert v\cdot n - q \rvert^\alpha  \\   \geq & \, \int_{\bar{E}}  \mathfrak{b}_{M}(\rho_{\eps_M}) \lvert v\cdot n - q \rvert^\alpha \geq  \frac{M\mathfrak{b}(1)c}{2}.
	\end{align*}
	Putting all together we deduce  
	$$ \frac{M\mathfrak{b}(1)c}{2} \leq \frac{M\mathfrak{b}(1)c}{4} +2, $$
	for any $ M > 0 $ which is the desired contradiction.

	Up to subsequence we deduced that $ \rho_{\eps}$ satisfies \eqref{trans:eps:equ}. Moreover 
	\begin{gather*}
	\rho_{\eps} \longrightarrow \rho \text{ in } L^s_{loc}(\bbR^+;L^{p}(\calF(t))) \text{ for any } s < + \infty, \quad \rho_{\eps}(0,.) \longrightarrow \rho_0 \text{ in  } L^p(\calF(0)), \\ \rho_{\eps}\vert_{\bbR^+\odot \partial \calF(t)} \longrightarrow \gamma \rho  \quad \text{ almost everywhere repect to the measure }\lvert v\cdot n - q \rvert \, ds dt \\  f_{\eps} \longrightarrow f \text{ in } \calL^1_{loc}(\bbR^+;L^p( \calF(t))) \quad \text{ and } \quad R_{\eps} \longrightarrow 0 \text{ in } L^1_{loc}(\bbR^+ \odot \calF(t)).
	\end{gather*}
	Let $ \beta \in C^{\infty}_{b}(\bbR) $ and $ \psi \in C^{\infty}_{c}(\bbR^+ \odot \overline{\calF(t)}) $ we have
	\begin{align*}
	\int_{\calF(0)} \beta(\rho_{\eps})(0,.) \psi(0,.) + & \, \int_{\bbR^+}\int_{\calF(t)} \beta(\rho_{\eps})(\partial_t \psi + v\cdot \nabla \psi ) =  \\ & \, \int_{\bbR^+} \int_{\partial \calF(t)} \beta(\rho_{\eps}) \psi (v\cdot n - q) + \int_{\bbR^+} \int_{\calF(t)} (R_{\eps}-f_{\eps})\beta'(\rho_{\eps}) 
	\end{align*}
	From the above convergence is now straight-forward to pass to the limit in the formulation with the help of dominate convergence's theorem, to obtain
	\begin{align*}
	\int_{\calF(0)} \beta(\rho)(0,.) \psi(0,.) + & \,  \int_{\bbR^+}\int_{\calF(t)} \beta(\rho)(\partial_t \psi + v\cdot \nabla \psi ) = \\ & \, \int_{\bbR^+} \int_{\partial \calF(t)} \beta(\gamma \rho) \psi \lvert v\cdot n - q \lvert + \int_{\bbR^+}\int_{\calF(t)} f \beta'(\rho) \psi. 
	\end{align*}
\end{proof}

\begin{proof}[Proof of Lemma \ref{Lem:5}] The proof follows the original idea of Di Perna and Lions in \cite{DPL} with an ingredient used in \cite{Boyer} to deal with the transport equation with in out-flow. 

 Let us introduce some notations. First of all there exists an open neighbourhood $ E \subset [0,T] \odot \overline{\calF(t) } $ of  $ [0,T] \odot \partial \calF(t) $ such that the distant function $ D(t,x) = \dist\{(x,t),\{t\}\times \partial \calF(t) \} $ is $ C^{1}([0,T];C^2(\partial \calF(t))) $, see for instance \cite{dist:fun}. 
 Let $ \Psi \in C^{\infty}_c(E) $ such that $ \psi = 1 $ in an open neighbourhood of   $ [0,T] \odot \partial \calF(t) $. Then we define the normal close to the boundary by $ \nu(t,x) = \Psi \nabla_x(D(t,x)) $. Finally let $ \eta \in C^{\infty}_{c}(B_1(0)) $ such that $ \eta\geq 0 $ and $ \int \eta = 1 $. For $ \eps $ we define $ \eta_{\eps} = \eta(./\eps)/\eps $ and 
 \begin{equation}
 \label{con:boy}
 \psi \star_{\nu} \eta_{\eps}(t,y) = \int_{\calF(t)}\psi(t,x) \eta_{\eps}(y-x - 2\eps\nu(t,y)) dx. 
 \end{equation}
 Let us recall some properties.
 
 \begin{Lemma}
 	
 	For $ p,q \in [1,+\infty]$, if $ \psi \in L^p(0,T;L^q(\calF(t))) $, then $ \psi \star_{\nu} \eta_{\eps} \in L^p(0,T;C^1(\bar{\calF(t)})) $ and 
 	$$ \|\psi \star_{\nu} \eta_{\eps}\|_{L^p(0,T;L^q(\calF(t)))}\| \leq C \| \psi \|_{L^p(0,T;L^q(\calF(t))) } $$ $$ \text{ and } \quad \|\nabla(\psi \star_{\nu} \eta_{\eps})\|_{L^p(0,T;L^q(\calF(t)))}\| \leq \frac{C}{\eps} \| \psi \|_{L^p(0,T;L^q(\calF(t))) }. $$ 
 	Moreover for $ p, q < + \infty $, it holds 
 	$$ \psi \star_{\nu} \eta_{\eps} \longrightarrow \psi \quad \text{ in } L^p(0,T;L^q(\calF(t))).$$
 \end{Lemma}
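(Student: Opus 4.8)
The plan is to reduce the three claims to classical mollification estimates by changing variables, using that the shift $-2\eps\nu$ in \eqref{con:boy} is $C^1$ in the integration point, $C^1$ in $y$, of size $2\eps$, and supported in a collar $E$ of $\partial\calF(t)$. First I would extend $\psi(t,\cdot)$ by zero outside $\calF(t)$ and regard every integral as an integral over $\bbR^2$; since $\Psi$ is compactly supported in $E$, where $D(t,\cdot)$ is $C^2$, the field $\nu=\Psi\nabla_x D$ extends to a $C^1$ function on $\bbR^2$ whose $C^1$-norm is bounded uniformly for $t\in[0,T]$ by the regularity of the moving domain. Substituting $z=y-x-2\eps\nu(t,y)$ in \eqref{con:boy} gives
\[
\psi \star_{\nu}\eta_{\eps}(t,y) = \int_{\bbR^2} \psi\big(t,\, y - z - 2\eps\nu(t,y)\big)\, \eta_{\eps}(z)\, dz ,
\]
and, by the choice of $\nu$ and the size $2\eps$ of the shift, for every $y\in\overline{\calF(t)}$ and every small $\eps$ the map $x\mapsto\eta_\eps(y-x-2\eps\nu(t,y))$ is supported in $\calF(t)$; this is exactly what makes the restriction of $\psi\star_\nu\eta_\eps$ to $\partial\calF(t)$ meaningful.

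Next I would deal with the $C^1$-regularity and the two bounds. For fixed $(t,z)$ the map $\Phi^{t,z}_\eps(y)=y-z-2\eps\nu(t,y)$ is a $C^1$-diffeomorphism of $\bbR^2$ once $\eps$ is small, because its differential $\Id-2\eps\nabla_y\nu(t,y)$ is uniformly (in $t\in[0,T]$ and $y$) close to $\Id$, hence has Jacobian determinant bounded above and below by positive constants. Differentiating under the integral sign --- licit since $\eta_\eps$ and $\nu(t,\cdot)$ are $C^1$ --- gives $\psi\star_\nu\eta_\eps(t,\cdot)\in C^1(\overline{\calF(t)})$ for a.e.\ $t$, together with
\[
\nabla_y\big(\psi\star_\nu\eta_\eps\big)(t,y)=\int_{\bbR^2}\psi(t,x)\,(\nabla\eta_\eps)\big(y-x-2\eps\nu(t,y)\big)\big(\Id-2\eps\nabla_y\nu(t,y)\big)\,dx .
\]
Applying Minkowski's integral inequality to the first display and then the change of variables $w=\Phi^{t,z}_\eps(y)$, with the uniform Jacobian bounds, gives $\|\psi\star_\nu\eta_\eps(t,\cdot)\|_{L^q(\calF(t))}\le C\|\psi(t,\cdot)\|_{L^q(\calF(t))}$ with $C$ independent of $t$ and of small $\eps$; taking the $L^p$-norm in $t$ yields the first estimate. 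The gradient estimate follows in the same way from the second display, now using $\|\nabla\eta_\eps\|_{L^1(\bbR^2)}=\eps^{-1}\|\nabla\eta\|_{L^1(\bbR^2)}$ and the uniform bound on $\Id-2\eps\nabla_y\nu$, which produces the factor $C/\eps$.

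For the convergence when $p,q<\infty$, I would argue by density. The first estimate shows the operators $\psi\mapsto\psi\star_\nu\eta_\eps$ are bounded on $L^p(0,T;L^q(\calF(t)))$ uniformly in $\eps$, so it suffices to check $\psi\star_\nu\eta_\eps\to\psi$ on the dense set of restrictions of functions in $C_c(\bbR^+\times\bbR^2)$. For such $\psi$, uniform continuity gives $\psi(t,y-z-2\eps\nu(t,y))\to\psi(t,y)$ uniformly over $(t,y)$ and $|z|\le\eps$, hence $\psi\star_\nu\eta_\eps\to\psi$ uniformly; since $\calF(t)$ is bounded and the time support is compact, dominated convergence upgrades this to convergence in $L^p(0,T;L^q(\calF(t)))$, and the standard $\eps/3$ argument closes the case.

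The main obstacle is keeping all constants uniform in $t$: one needs $\nu$ and $\nabla_x\nu$ bounded on $[0,T]$ and $\Id-2\eps\nabla_y\nu(t,\cdot)$ uniformly invertible for small $\eps$, which is precisely where the regularity of the moving boundary (the distance function being $C^1$ in time with values in $C^2$ near $\partial\calF(t)$) enters; the only other delicate point is the inclusion property of the shifted mollifier near $\partial\calF(t)$, which is handled by the size $2\eps$ of the shift.
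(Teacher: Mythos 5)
Your proof is correct. Note that the paper does not actually prove this lemma: it is stated as a recollection (it is the time-dependent analogue of the convolution estimates in Section 3 of \cite{Boyer}), so there is no argument in the text to compare yours against. What you wrote is the standard and, I think, the intended argument: rewrite the shifted convolution as $\int \psi(t,y-z-2\eps\nu(t,y))\eta_\eps(z)\,dz$, apply Minkowski's integral inequality, and control each translate $y\mapsto y-z-2\eps\nu(t,y)$ by a change of variables whose Jacobian $\det(\Id-2\eps\nabla_y\nu(t,y))$ is uniformly bounded above and below for small $\eps$; the gradient bound then comes from $\|\nabla\eta_\eps\|_{L^1}=\eps^{-1}\|\nabla\eta\|_{L^1}$, and the convergence from uniform boundedness plus density of continuous functions. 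You correctly isolate the two points that are genuinely specific to the moving domain, namely the uniformity in $t\in[0,T]$ of the $C^1$ bounds on $\nu=\Psi\nabla_x D$ (which follows from the assumed space-time regularity of the distance function on the collar $E$) and the inclusion of the shifted ball $B(y-2\eps\nu(t,y),\eps)$ in $\calF(t)$, which is what makes the zero-extension of $\psi$ harmless in the density step. The only cosmetic caveat: whether the shift $-2\eps\nu$ pushes into the domain depends on the sign convention for $\nabla_x D$ (the gradient of the distance to the boundary points inward, so as literally written one would want $+2\eps\nu$); this is an ambiguity inherited from the paper's definition of $\nu$ and does not affect your argument.
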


 For any fixed $ y \in \mathbb{R}^d $, consider the function $ \varphi_{\eps}(t,x) = \psi(t,y)\eta_{\eps}(y-x-2\eps \nu(t,y)) $ where $ \psi \in C^{\infty}_c(\bbR^+ \odot \calF(t)) $. Note that $ \phi \in C^{\infty}_c(\bbR^+ \odot \calF(t)) $ for $ \eps $  small enough.  
 If we test the distributional formulation of the transport equation with $ \varphi_{\eps}$ and we integrate in $ y $ in all $ \bbR^d$, we deduce
 \begin{align*}
 \int_{\bbR^d}\int_{\calF(0)} \rho_0 \varphi_{\eps}(0,.) + & \, \int_{\bbR^d}\int_{\bbR^+}\int_{\calF(t)} \rho \partial_t \varphi_{\eps} \\ & \, + \int_{\bbR^d}\int_{\bbR^+}\int_{\calF(t)} \rho v \cdot \nabla \varphi_{\eps} = \int_{\bbR^d}\int_{\bbR^+}\int_{\calF(t)}  f \varphi_{\eps}
 \end{align*}
 Let rewrite any of the four terms above. The first one
 \begin{align*}
 \int_{\bbR^d}\int_{\calF(0)} \rho_0 \varphi_{\eps}(0,.) = & \,  \int_{\bbR^d}\int_{\calF(0)} \rho_0 \psi(0,y) \eta_{\eps}(y-x-2\eps \nu(0,y)) \\ =  & \,  \int_{\calF(0)} \rho_{0}\star_{\nu} \eta_{\eps} \psi(0,.).
 \end{align*}
 The second one
 \begin{align*}
 \int_{\bbR^d}\int_{\bbR^+}\int_{\calF(t)} \rho \partial_t \varphi_{\eps} = & \int_{\bbR^d}\int_{\bbR^+}\int_{\calF(t)} \rho(t,x) \partial_t \left(\psi(t,y)\eta_{\eps}(y-x-2\eps \nu(t,y)) \right) \\
 = & \, \int_{\bbR^d}\int_{\bbR^+}\int_{\calF(t)} \rho(t,x) \partial_t \left(\psi(t,y)\right)\eta_{\eps}(y-x-2\eps \nu(t,y)) \\
  \, + \int_{\bbR^d}\int_{\bbR^+}&\int_{\calF(t)}  \rho(t,x) \psi(t,y)2\eps \partial_t \nu(t,y) \cdot \nabla\eta_{\eps}(y-x-2\eps \nu(t,y)) \\
 = & \, \int_{\bbR^+}\int_{\calF(t)} \rho \star_{\nu} \eta_{\eps}(t,y) \partial_t \psi(t,y)  + \int_{\bbR^+}\int_{\calF(t)} R_{\eps}^1.
 \end{align*}
 The third one
 \begin{align*}
 \int_{\bbR^d} & \int_{\bbR^+}\int_{\calF(t)} \rho v \cdot \nabla \varphi_{\eps} = \\ & \,  \int_{\bbR^d}\int_{\bbR^+}\int_{\calF(t)} \rho(t,x) v(t,x) \cdot \nabla \left(\psi(t,y)\eta_{\eps}(y-x-2\eps \nu(t,y))  \right) \\
 = & \, \int_{\bbR^d}\int_{\bbR^+}\int_{\calF(t)} \rho(t,x) v(t,x) \cdot  \psi(t,y)\nabla_{x} \eta_{\eps}(y-x-2\eps \nu(t,y))  \\
 = & \, \int_{\bbR^+}\int_{\calF(t)} \rho \star_{\nu} \eta_{\eps}(t,y) v(t,y) \cdot \nabla \psi(t,y) - \int_{\bbR^+}\int_{\calF(t)} \rho \star_{\nu} \eta_{\eps}(t,y) v(t,y) \cdot \nabla \psi(t,y) \\
 & + \int_{\bbR^d}\int_{\bbR^+}\int_{\calF(t)} \rho(t,x) v(t,x) \cdot  \psi(t,y)\nabla_{x} \eta_{\eps}(y-x-2\eps \nu(t,y)) \\
 = & \, \int_{\bbR^+}\int_{\calF(t)} \rho \star_{\nu} \eta_{\eps}(t,y) v(t,y) \cdot \nabla \psi(t,y) \\
 & +\int_{\bbR^+}\int_{\calF(t)} \int_{\calF(t)} \rho(t,x)\Big(v(t,y) \cdot \nabla_y \eta_{\eps}(y-x-2\eps \nu(t,y))  \\
 & \quad \quad \quad \quad \quad \quad \quad \quad \quad +v(t,x) \cdot  \psi(t,y)\nabla_{x} \eta_{\eps}(y-x-2\eps \nu(t,y)) \Big) \psi(t,y) \\
 = & \, \int_{\bbR^+}\int_{\calF(t)} \rho \star_{\nu} \eta_{\eps}(t,y) v(t,y) \cdot \nabla \psi(t,y) + \int_{\bbR^+}\int_{\calF(t)} R^2_{\eps}(t,y) \psi(t,y).
 \end{align*}
 Finally
 \begin{align*}
 \int_{\bbR^d}\int_{\bbR^+}\int_{\calF(t)} f  \varphi_{\eps} =  & \int_{\bbR^d}\int_{\bbR^+}\int_{\calF(t)} f(t,x)   \psi(t,y)\eta_{\eps}(y-x-2\eps \nu(t,y)) \\
 = & \, \int_{\bbR^+}\int_{\calF(t)} f \star_{\nu} \eta_{\eps}(t,y)  \psi(t,y). 
 \end{align*}
 We deduce that 
 \begin{equation*}
 \partial_t \rho_{\eps} + v\cdot \nabla \rho_{\eps} + f_{\eps} = - R^1_{\eps}-R^2_{\eps},
 \end{equation*}
 for almost any $ (t,x) \in \bbR^+ \odot \calF(t)$, where $ \rho_{\eps} = \rho \star_{\nu} \eta_{\eps} $.
 
 It remains to show that $ R_{\eps}^1 $ and $ R_{\eps}^2 $ converge to zero in $ L^{1}((0,T) \odot \calF(t)) $. Note that the convergence of $ R_{\eps}^2 $ was showed in Lemma 3.1 of \cite{Boyer} and is a Friedrichs-type commutator lemma. So let us briefly explain how to show the convergence to zero of the term $ R^1_{\eps} $. Since $ \partial_t \nu $ is continuous in $ [0,T] \odot \calF(t) $, it is enough to show that 
 \begin{equation*}
 \int_{\calF(t)} \rho(t,x) \eps \nabla\eta_{\eps}(y-x-2\eps \nu(t,y)) \, dx \longrightarrow 0  \quad \text{ in } L^1((0,T )\otimes \calF(t)). 
 \end{equation*} 
 Note that the vector fields $ \eps \nabla \eta_{\eps} $ have integral zero and are uniformly bounded in $L^1 $. We have
 \begin{align*}
 & \left\|\int_{\calF(t)} \rho(t,x) \eps \nabla\eta_{\eps}(y-x-2\eps \nu(t,y)) \, dx \right\|_{L^1((0,T) \odot \calF(t))} \\ & \quad  =  \left\|\int_{B(y-2\eps\nu(t,y),\eps)} (\rho(t,x)- \rho(t,y - 2 \eps \nu(t,y))) \eps \nabla\eta_{\eps}(y-x-2\eps \nu(t,y)) \, dx \right\|_{L^1
 } \\
 & \quad \leq \|\eps \nabla \eta_{\eps} \|_{L^1} \sup_{\lvert h \rvert \leq \eps} \|\rho(t, y - 2 \eps \nu(t,y) + h )- \rho(t, y - 2 \eps \nu(t,y))\|_{L^1((0,T) \odot \calF(t))} \longrightarrow 0,  
 \end{align*}   
 because  $ \rho(t, y - 2 \eps \nu(t,y) ) $ is $ L^1 $, which implies that its $ L^1$ modulus of continuity converge to zero.
 	
\end{proof}

\begin{remark}
	
	Lemma \ref{Lem:5} holds true also for less regular geometry, in particular it is enough that the boundaries satisfy the so called cone condition (see Chapter III of \cite{BF}) where the cones have the axis of rotation symmetry parallel to the plane $ \{ t = 0 \} $. In this case the convolution \eqref{con:boy} take the form (III.18) of \cite{BF}.

\end{remark}

\subsection{Existence and uniqueness of distributional solutions}

Let us now prove existence of distributional solution in the case where $ v $ is a divergence free vector field in $ L^1_{loc}(\bbR^+;W^{1,q}(\calF(t)))$, $ (\rho^{in}, \rho^+) $ is in $ L^{p}(\calF_0) \times L^{p}_{loc}(\bbR^+\odot \partial \calF^+(t); \lvert g-q \rvert \, dsdt) $, $ f $ is in $\calL_{loc}^1(\bbR+;L^p(\calF(t))) $ and $ p $ and $ q $ are conjugate, i.e. $ 1/p +1/q = 1 $.

\begin{proposition}
\label{Prop:1:Last}
Let $ p \in (1,+\infty] $ and $ 1/p + 1/q = 1 $. Let $ v \in \calL^1_{loc}(\bbR^+;W^{1,q}(\calF(t))) $ such that $\div(v) = 0 $. Let $ \rho^{in} $ and $ \rho^{+} $ respectively in $ L^{p}(\calF(0)) $ and $ L^p_{loc}(\bbR^+ \odot \partial \calF^+(t),  \lvert v\cdot n - q \rvert \, dt ds ) $. Let $ f $ in $\calL_{loc}^1(\bbR^+;L^p(\calF(t)))$. Then there exists a distributional solution to \eqref{DP:theo:tdep}. More precisely there exists $ (\rho, \rho^-) $  in $ \calL^{\infty}_{loc}(\bbR^+;L^p(\calF)) \times \calL^p_{loc}(\bbR^+ \odot \partial \calF^-(t),  \lvert v\cdot n - q \rvert \, dt ds ) $ solution of
\begin{align}
\label{tr:1}
\int_{\calF(0)} \rho_0 \varphi(0,.) \, dx + \int_{\bbR^+}\int_{\calF(t)} \rho (\partial_t \varphi + v\cdot \nabla \varphi) \, dx dt & \, + \int_{\bbR^+}\int_{\calF(t)} f \varphi \, dx dt  \\  = & \, \sum_{i = +,-}\int_{\partial \calF^i} \rho^i \varphi (v\cdot n - q) \, ds dt, \nonumber
\end{align}	
for any $ \varphi \in C^{\infty}_{c}(\bbR^+ \odot \overline{\calF(t)} )$. 
	
\end{proposition}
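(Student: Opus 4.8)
The plan is to build $(\rho,\rho^{-})$ as a weak-$*$ limit of smooth approximate solutions obtained by the method of characteristics; since the transport equation is linear, the passage to the limit is cheap and no commutator lemma is needed here (that is only for uniqueness). Fix $T>0$. As $v\in\calL^{1}_{loc}(\bbR^{+};W^{1,q}(\calF(t)))$ is not Lipschitz, I would first mollify: extending each $v(t,.)$ to a fixed neighbourhood of $\overline{\calF(t)}$, convolving in space and subtracting a lower-order (Bogovskii-type) corrector whose smallness uses $\div v=0$, one obtains smooth divergence-free fields $v_{\eps}\to v$ in $\calL^{1}((0,T);W^{1,q}(\calF(t)))$, hence in $\calL^{1}((0,T);L^{q}(\calF(t)))$. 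Likewise I would take $\rho^{in}_{\eps}\in C^{1}(\overline{\calF(0)})$, $f_{\eps}\in\calL^{1}((0,T);C^{1}(\overline{\calF(t)}))$ and, setting $\partial\calF^{+}_{\eps}(t)=\{v_{\eps}\cdot n-q<0\}$ and $\partial\calF^{-}_{\eps}(t)=\{v_{\eps}\cdot n-q>0\}$, a smooth $\rho^{+}_{\eps}$ on $\bbR^{+}\odot\partial\calF^{+}_{\eps}(t)$, converging respectively to $\rho^{in}$ in $L^{p}(\calF(0))$, to $f$ in $\calL^{1}((0,T);L^{p}(\calF(t)))$ and to $\rho^{+}$ in $L^{p}((0,T)\odot\partial\calF^{+}(t),\lvert v\cdot n-q\rvert\,dsdt)$.

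For the regularized data the flow $X_{\eps}(\sigma;t,x)$ of $v_{\eps}$ is well defined, and I would set, for almost every $(t,x)\in\mathbb{F}$,
\[
\rho_{\eps}(t,x)=\rho^{\sharp}_{\eps}\bigl(s_{*},X_{\eps}(s_{*};t,x)\bigr)+\int_{s_{*}}^{t}f_{\eps}\bigl(\sigma,X_{\eps}(\sigma;t,x)\bigr)\,d\sigma ,
\]
where $s_{*}=s_{*}(t,x)\in[0,t]$ is the last time before which the backward characteristic stays inside $\calF$, with $\rho^{\sharp}_{\eps}=\rho^{in}_{\eps}$ if $s_{*}=0$ and $\rho^{\sharp}_{\eps}=\rho^{+}_{\eps}$ if $s_{*}>0$ (in which case $X_{\eps}(s_{*};t,x)\in\partial\calF^{+}_{\eps}(s_{*})$, the set of grazing characteristics being negligible). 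The outgoing trace $\rho^{-}_{\eps}$ on $\partial\calF^{-}_{\eps}$ is defined by continuity along characteristics. By construction $\rho_{\eps}$ solves the transport equation along characteristics, so it satisfies \eqref{tr:1} with the regularized data, and by the chain rule so do its renormalizations.

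The uniform estimate is the heart of the argument: multiplying the transport equation for $\rho_{\eps}$ by $p\,\rho_{\eps}\lvert\rho_{\eps}\rvert^{p-2}$ (or using the maximum principle along characteristics when $p=+\infty$), integrating on $\calF(t)$ and using $\div v_{\eps}=0$, the Reynolds transport theorem for the moving domain and the sign of $v_{\eps}\cdot n-q$ on $\partial\calF^{\pm}_{\eps}$, one gets
\begin{align*}
\int_{\calF(t)}\lvert\rho_{\eps}(t,.)\rvert^{p}+\int_{0}^{t}\int_{\partial\calF^{-}_{\eps}}\lvert\rho^{-}_{\eps}\rvert^{p}(v_{\eps}\cdot n-q) = & \, \int_{\calF(0)}\lvert\rho^{in}_{\eps}\rvert^{p}-\int_{0}^{t}\int_{\partial\calF^{+}_{\eps}}\lvert\rho^{+}_{\eps}\rvert^{p}(v_{\eps}\cdot n-q) \\ & \, +p\int_{0}^{t}\int_{\calF(t)}f_{\eps}\,\rho_{\eps}\lvert\rho_{\eps}\rvert^{p-2}.
\end{align*}
Here the term over $\partial\calF^{+}_{\eps}$ is nonnegative and bounded by the (convergent, hence uniformly bounded) incoming data, the term over $\partial\calF^{-}_{\eps}$ on the left has a sign, and a Gr\"onwall argument for $t\mapsto\|\rho_{\eps}(t,.)\|_{L^{p}(\calF(t))}$ absorbs the source. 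This yields, uniformly in $\eps$ on $[0,T]$, a bound for $\rho_{\eps}$ in $\calL^{\infty}((0,T);L^{p}(\calF(t)))$ and for $(v_{\eps}\cdot n-q)^{1/p}\rho^{-}_{\eps}$ in $L^{p}((0,T)\odot\partial\calF^{-}_{\eps}(t))$.

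It then remains to let $\eps\to0$. Up to a subsequence $\rho_{\eps}\cvwstar\rho$ in $\calL^{\infty}((0,T);L^{p}(\calF(t)))$ and $(v_{\eps}\cdot n-q)^{1/p}\rho^{-}_{\eps}\cv(v\cdot n-q)^{1/p}\rho^{-}$ weakly in $L^{p}$ on the outgoing boundary, and the incoming term of \eqref{tr:1} converges by the choice of $\rho^{+}_{\eps}$. The only nontrivial product is $\rho_{\eps}v_{\eps}=\rho_{\eps}(v_{\eps}-v)+\rho_{\eps}v$: the first summand tends to $0$ in $\calL^{1}((0,T);L^{1}(\calF(t)))$ because $v_{\eps}\to v$ in $\calL^{1}((0,T);L^{q}(\calF(t)))$ while $\rho_{\eps}$ is bounded in $\calL^{\infty}L^{p}$, and the second tests against the fixed function $v\cdot\nabla\varphi\in\calL^{1}((0,T);L^{q}(\calF(t)))$; together with $\rho_{\eps}(0,.)\to\rho^{in}$ in $L^{p}(\calF(0))$ this passes \eqref{tr:1} to the limit. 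The step I expect to be the main obstacle is the bookkeeping attached to the moving boundary and to the $v$-dependent decomposition $\partial\calF(t)=\partial\calF^{+}_{\eps}(t)\cup\partial\calF^{-}_{\eps}(t)$: one has to make the characteristic construction rigorous (almost every backward characteristic reaches $\partial\calF$, if at all, only on the incoming part, the grazing set being null) and to guarantee that the weighted trace estimate survives the limit although the weight $\lvert v\cdot n-q\rvert$ degenerates on the transition set $\{v\cdot n=q\}$, which itself moves with $\eps$.
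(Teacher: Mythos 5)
Your construction is correct in outline, but it follows a genuinely different route from the paper. You regularize the velocity (mollification plus a divergence-free corrector), solve the regularized problem by the method of characteristics with the explicit backward representation through the last entry time $s_{*}$, derive the $L^{p}$ balance along characteristics, and pass to the limit by weak compactness, using $\rho_{\eps}(v_{\eps}-v)\to 0$ in $\calL^{1}L^{1}$ to handle the product. The paper instead never touches characteristics for this proposition: it first flattens the moving domain via the diffeomorphism $(id,X)$ (Propositions \ref{Prop:2:Last} and \ref{Prop:equivalence}), adds a viscous term with the Robin-type boundary condition $\nu\nabla\rho_{\nu}\cdot n=(\rho_{\nu}-\rho^{+}_{\nu})(v_{\nu}\cdot n-q)\mathds{1}_{\partial\calF^{+}}$ encoding the inflow data, builds strong solutions by Galerkin (Proposition \ref{exi:vis:sys}), and lets $\nu\to0$ using the $\sqrt{\nu}\,\|\nabla\bar\rho_{\nu}\|_{L^{2}}$ bound to kill the viscous terms. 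What your route buys is elementariness and an explicit representation formula; what the paper's route buys is that the same parabolic machinery is reused verbatim for the Navier--Stokes approximation of the Euler system later on, and that all the delicate boundary geometry (grazing characteristics, characteristics that touch the boundary tangentially and re-enter, the Zeno-type pathologies behind Bardos's counterexample) is absorbed into an energy estimate rather than into a pointwise Lagrangian analysis. Two points in your sketch deserve to be made precise if you carry it out: (i) the claim that the influence set of the grazing boundary is Lebesgue-null should be justified by the flux identity $\mathcal{L}(\text{influence set of }A)\le\int_{A}\lvert v_{\eps}\cdot n-q\rvert\,ds\,dt$, valid for smooth divergence-free fields, which is exactly what makes the representation through $s_{*}$ well defined a.e.; and (ii) the uniform bound on the incoming term $-\int_{\partial\calF^{+}_{\eps}}\lvert\rho^{+}_{\eps}\rvert^{p}(v_{\eps}\cdot n-q)$ requires approximate data with $(v_{\eps}\cdot n-q)^{1/p}\rho^{+}_{\eps}\mathds{1}_{\partial\calF^{+}_{\eps}}$ converging in $L^{p}(ds\,dt)$, whose existence is not automatic and is the content of the paper's truncation argument in Lemma \ref{approx:lem}; you assert it but do not construct it.
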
	

Let us also notice that distributional solutions are also unique.

\begin{proposition}
\label{Prop:uniq:distr}
Let $ p \in (1,+\infty] $ and $ 1/p + 1/q = 1 $. Let $ v \in \calL^1_{loc}(\bbR^+;W^{1,q}(\calF(t))) $ such that $\div(v) = 0 $. Let $ \rho^{in} $ and $ \rho^{+} $ respectively in $ L^{p}(\calF(0)) $ and $ L^p_{loc}(\bbR^+ \odot \partial \calF^+(t) , \lvert v\cdot n - q \rvert \, dt ds ) $. Let $ f $ in $\calL_{loc}^1(\bbR^+;L^p(\calF(t)))$.Then distributional solutions  $ (\rho, \rho^- )$  in $ \calL^{\infty}_{loc}(\bbR^+;L^p(\calF)) \times L^p_{loc}(\bbR^+ \odot \partial \calF^-(t) ,  \lvert v\cdot n - q \rvert \, dt ds ) $ of \eqref{tr:1} are unique.
\end{proposition}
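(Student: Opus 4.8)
The plan is to reduce uniqueness to a statement about the homogeneous problem and then use the renormalization theory of Theorem \ref{theo:dist:equiv:ren} together with a duality/energy argument. Let $(\rho_1,\rho_1^-)$ and $(\rho_2,\rho_2^-)$ be two distributional solutions of \eqref{tr:1} with the same data $(\rho^{in},\rho^+,f)$, and set $\sigma = \rho_1 - \rho_2$, $\sigma^- = \rho_1^- - \rho_2^-$. Then $\sigma \in \calL^{\infty}_{loc}(\bbR^+;L^p(\calF))$ solves the distributional transport equation \eqref{weak:distr:tran} with zero source, zero initial datum, and — since the test functions in \eqref{weak:distr:tran} vanish near $\partial\calF(t)$ — also trivially with respect to the boundary terms. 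The first step is to invoke Theorem \ref{theo:dist:equiv:ren}: $\sigma$ admits a unique boundary trace $\gamma\sigma$ defined $|v\cdot n - q|\,dsdt$-almost everywhere, and for every $\beta \in C^1_b(\bbR)$ one has
\begin{equation*}
\int_{\calF(0)} \beta(0)\varphi(0,.) + \int_{\bbR^+}\int_{\calF(t)} \beta(\sigma)(\partial_t\varphi + v\cdot\nabla\varphi) = \int_{\bbR^+}\int_{\partial\calF(t)} \beta(\gamma\sigma)\,\varphi\,(v\cdot n - q),
\end{equation*}
so in particular $\sigma$ is a renormalized solution. It remains to show that $\gamma\sigma = 0$ on the inflow boundary $\partial\calF^+$ forces $\sigma \equiv 0$.

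The second step pins down the trace on $\partial\calF^+$. Taking the difference of the two copies of \eqref{tr:1} and testing with $\varphi \in C^\infty_c(\bbR^+\odot\overline{\calF(t)})$ shows that $\sigma$ satisfies the renormalized identity above but with the boundary term on $\partial\calF^+$ replaced by $\int \beta(\rho_1^+ - \rho_2^+)\varphi(v\cdot n - q) = 0$, because both solutions carry the same inflow datum $\rho^+$; comparing with the trace identity from Theorem \ref{theo:dist:equiv:ren} and using that $\varphi$ ranges over a dense set forces $\beta(\gamma\sigma) = \beta(\rho_1^+-\rho_2^+) = \beta(0) = 0$ on $\partial\calF^+$, i.e. $\gamma\sigma|_{\partial\calF^+} = 0$. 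Equivalently, $\sigma^- = \gamma\sigma|_{\partial\calF^-}$ is the unique outflow trace and there is no inflow contribution.

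The third step is the energy estimate. Choose $\beta(x) = x^2/\sqrt{1+x^2}$ (bounded, $C^1$, with $\beta \geq 0$, $\beta(0)=0$, and $\beta(x) \simeq |x|$ for $|x|$ bounded — exactly as used in the proof of Theorem \ref{theo:dist:equiv:ren}), take a test function $\varphi$ of the form $\varphi(t,x) = \phi(t)$ with $\phi \in C^\infty_c([0,T))$, $\phi \geq 0$, $\phi \equiv 1$ on $[0,T_0]$ for $T_0 < T$, and insert it into the renormalized identity for $\sigma$. Since $\sigma(0,.) = 0$ we get $\beta(\sigma(0,.)) = 0$, and the spatial gradient term vanishes; what remains is
\begin{equation*}
\int_{\bbR^+}\int_{\calF(t)} \beta(\sigma)\,\phi'(t) = \int_{\bbR^+}\int_{\partial\calF^-} \beta(\sigma^-)\,\phi(t)\,(v\cdot n - q),
\end{equation*}
where the $\partial\calF^+$ boundary term has dropped out because $\gamma\sigma = 0$ there. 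On $\partial\calF^-$ we have $v\cdot n - q > 0$, so the right-hand side is nonnegative; letting $\phi \to \mathds{1}_{[0,T_0]}$ this yields, for a.e. $T_0$, $\int_{\calF(T_0)} \beta(\sigma(T_0,.)) + \int_0^{T_0}\int_{\partial\calF^-}\beta(\sigma^-)(v\cdot n - q) \le \int_{\calF(0)}\beta(\sigma(0,.)) = 0$. Both terms being nonnegative, each vanishes; since $\beta(\sigma) \simeq |\sigma|$ on the (a.e. finite, hence locally bounded) range of $\sigma$, we conclude $\sigma \equiv 0$ on $\calF(t)$ for a.e. $t$ and $\sigma^- = 0$, i.e. $\rho_1 = \rho_2$ and $\rho_1^- = \rho_2^-$.

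I expect the main obstacle to be the rigorous justification of the energy identity in the third step: the chosen $\beta$ is bounded so $\beta(\sigma) \in L^\infty_t L^1_x$ and the manipulations are legitimate, but one must be careful that the time-derivative pairing $\int \beta(\sigma)\phi'$ genuinely encodes the value at the endpoint $T_0$ (this is where the continuity-in-time of $t \mapsto \int_{\calF(t)}\beta(\sigma(t,.))$, itself a byproduct of the renormalized formulation with general time-dependent test functions, is used) and that the limiting procedure $\phi \to \mathds{1}_{[0,T_0]}$ is valid for a.e. $T_0$. A secondary technical point is handling the moving domain: the flux term $(v\cdot n - q)$ rather than $v\cdot n$ appears precisely because of the motion of $\partial\calF(t)$, and one should make sure the sign conventions ($v\cdot n - q < 0$ on $\partial\calF^+$, $> 0$ on $\partial\calF^-$) are used consistently so that the outflow term has the right (dissipative) sign — but all of this is already set up in Theorem \ref{theo:dist:equiv:ren}.
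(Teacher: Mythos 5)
Your overall architecture (take the difference $\sigma=\rho_1-\rho_2$, extract a trace via Theorem \ref{theo:dist:equiv:ren}, show the inflow trace vanishes, then run a $\beta$-energy estimate) is sound, and your third step is a perfectly good substitute for the paper's appeal to ``uniqueness of renormalized solutions''. The gap is in your second step, which is precisely where the paper has to work hardest. You claim that ``taking the difference of the two copies of \eqref{tr:1}'' yields the renormalized identity with boundary term $\int\beta(\rho_1^+-\rho_2^+)\varphi(v\cdot n-q)$. It does not: \eqref{tr:1} is \emph{linear} in $(\rho,\rho^\pm)$, so subtracting the two identities only gives the linear (un-renormalized) equation for $\sigma$ with boundary datum $\sigma^+=0$, while the renormalized identity of Theorem \ref{theo:dist:equiv:ren} involves $\beta(\gamma\sigma)$ for the trace $\gamma\sigma$ constructed from the \emph{interior} equation. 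These two identities live at different nonlinearity levels and cannot be compared directly; in particular you cannot simply take $\beta(x)=x$ in the renormalized identity to match them, because $\gamma\sigma$ (and more to the point $\gamma\rho_k$) need not be integrable against $\lvert v\cdot n-q\rvert\,ds\,dt$ --- the paper explicitly recalls Bardos's counterexample on this point.

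The paper closes exactly this gap by a nonlinear test-function argument: it picks a smooth positive $\beta$ with $\beta(s)=1/\lvert s\rvert$ for $\lvert s\rvert\geq 1$, so that $\bar\beta(s)=s\beta(s)$ is bounded, tests the \emph{linear} formulation \eqref{tr:1} with the admissible function $\psi=\beta(\rho)_\eps\varphi$ (a regularization of $\beta(\rho)$ via the shifted convolution of Lemma \ref{Lem:5}), and compares the limit with the renormalized identity \eqref{ren:equ} applied to $\bar\beta$. The interior terms coincide ($\rho\,\beta(\rho)=\bar\beta(\rho)$), so the boundary terms must too, giving $\sum_i\rho^i\beta(\gamma\rho)(v\cdot n-q)=\gamma\rho\,\beta(\gamma\rho)(v\cdot n-q)$; since $\beta>0$ one may divide and conclude $\rho^i\mathds{1}_{\partial\calF^i}=\gamma\rho$ almost everywhere for the measure $\lvert v\cdot n-q\rvert\,ds\,dt$. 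The weight $\beta(\gamma\rho)$ is what makes the boundary integrals finite and is therefore not a cosmetic device. Once this identification is in hand for each $\rho_k$, your conclusion $\gamma\sigma\vert_{\partial\calF^+}=0$ follows and your step three finishes the proof; but as written your step two is a non sequitur, and it is the heart of the matter.
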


In the next subsection we present an heuristic proof of Proposition \ref{Prop:1:Last}.

\subsubsection{Heuristic proof of Proposition \ref{Prop:1:Last}} 

We show existence by a vanishing viscosity method similar to \cite{Boyer}. More precisely we add to the  transport equation \eqref{DP:theo:tdep} a viscous term multiply by a parameter $ \nu $ and we study the limit when $ \nu $ tends to zero.  The idea is to consider the viscous system 
\begin{align}
\partial_t \rho_{\nu} + v_{\nu} \cdot \nabla \rho_{\nu} - \nu \Delta \rho_{\nu} = & \, f_{\nu} \quad && \text{ for } x \in \calF(t), \nonumber \\  
\nu \nabla \rho_{\nu} \cdot n  - (\rho_{\nu}-\rho^+_{\nu})(v_{\nu}-q)\mathds{1}_{\partial \calF^+} = & \, 0 \quad && \text{ for } x \in \partial \calF(t), \label{TE:vis}  \\
\rho_{\nu}(0,.) = & \, \rho^{in}_{\nu} \quad && \text{ for } x \in \calF_0. \nonumber
\end{align}
and to show that  $ \rho_{\nu} $ converges weakly to $ \rho $ a solution of \eqref{tr:1} as $ \nu $ tends to zero. The system \eqref{TE:vis} is parabolic and strong solutions in the class $ L^2(H^2) \cap C^0(H^1) $ can be constructed via a Galerkin method for $ H^1 $ initial datum. These solutions satisfy the a-priori bounds 
\begin{align}
\label{a:priori:est}
\int_{\calF(T)} G(\rho_{\nu}) \, dx + & \int_{0}^T \int_{\partial \calF^{-}(t)} G(\rho_{\nu})(v_{\nu} \cdot n - q) \, ds dt \nonumber \\ & \, +  \int_{0}^T \int_{\calF(t)} \lvert \nabla \rho_{\nu}\rvert^2 G''(\rho_{\nu}) \, dx dt \nonumber \\   \leq & \, \int_{\calF(0)} G(\rho_{\nu}^{in}) \, dx - \int_{0}^T \int_{\partial \calF^{+}(t)} G(\rho_{\nu}^+)(v_{\nu} \cdot n - q) \, ds dt \\ & \, + \int_{0}^{T}\int_{\calF(t)} f_{\nu} G'(\rho_{\nu})  \nonumber
\end{align}
for any smooth positive and convex function $ G $. The weak formulation of \eqref{TE:vis} reads
\begin{align}
\label{TE:vis:weak:for}
\int_{0}^t \int_{\calF(t)} & \rho_{\nu}\left(\partial_t \varphi + v_{\nu} \cdot \nabla \varphi\right) - \nu \nabla \rho_{\nu} \cdot \nabla \varphi \, dx dt + \int_{0}^t \int_{\calF^{-} (t)} \rho_{\nu} \varphi  (v_{\nu} \cdot n - q ) \, ds dt \nonumber  \\ & \,  + \int_{0}^{T}\int_{\calF(t)} f_{\nu} \varphi \, dx dt  \nonumber \\  = & \, \int_{\calF(t)} \rho_{\nu}(t,.) \varphi(t,.) \, dx + \int_{\calF(0)} \rho_{\nu}^{in} \varphi(0,.) \, dx  \\ & \, - \int_{0}^t \int_{\calF^{+}(t)} \rho_{\nu}^+ \varphi  (v_{\nu} \cdot n - q ) \, ds dt  \nonumber 
\end{align}
With the help of \eqref{a:priori:est} for $ G(x) = \lvert x \rvert^p $, it is easy to pass to the limit in \eqref{TE:vis:weak:for} as $ \nu $ tends to $ 0 $ and deduce Proposition \ref{Prop:1:Last}. 

Let start by showing existence and uniqueness of strong solutions to the system \eqref{TE:vis}. Notice that for $ \rho_{\nu} \in L^2(H^2) $, which implies that $ \nabla \rho_{\nu} $ has $ L^2(H^{1/2}) $ trace on the boundary $ [0,T] \odot \partial \calF (t) $. We need then to consider approximate data $ v_{\nu}$ and $ \rho_{\nu}^+ $ that are compatible with the regularity of $ \nabla \rho_{\nu} \cdot n $.   

Moreover equations \eqref{TE:vis} are satisfied in a time dependent subset of $ \bbR^2 $. So let start by considering a change of variables that makes the domain time independent.

\subsubsection{A time independent frame}

Note that the transport equation \eqref{DP:theo:tdep} and its viscous approximation \eqref{TE:vis} are satisfied in the prescribed time-dependent domain $ \bbR^+ \odot \calF(t) $. To tackle existence, we introduce a change of variables that translates the equations in the time independent domain $ [0,T] \times \calF(0) $ for any fixed $ T > 0 $, we show that the two formulations are equivalent and we prove existence by a vanishing viscosity method. 

Let us introduce the change of variables.

\begin{Lemma}
For any $ T >  0 $ there exists a map $ (id, X) : [0,T] \times \calF(0) \to  [0,T] \odot \calF(t) $ such that
	
\begin{itemize}
		
\item it holds  $ (id, X)(t,y) = (t, X(t,y))$ in $ [0,T] \times \calF(0) $, $ X(0,y) = y $ in $\calF(0) $ and $(id,X) = (id,b) $ on $ [0,T] \times \partial \calF(0) $,
		
\item the function $ (id, X) $ is $ C^{1,\alpha}([0,T];C^{2,\alpha}(\calF(0)))$ and admits an inverse 	$ (id, Y) $ with the same regularity,

\item the gradient $ \nabla X $ is  strictly positive in the sense that $ \xi \cdot \nabla X  \xi \geq C \lvert \xi \rvert^2 $ for $ C > 0 $ and $ \xi \in \bbR^2 \setminus \{0\} $,
		
\item for $ N(x) $ and $ n(t,x)$ the normal respectively to $ \partial \calF(0) $ and $  \partial \calF(t) $ exiting from the domain, it holds
		
$$ \nabla X N = n, \quad  (\nabla X)^{T} n = N, \quad \nabla Y n = N \text{ and } \quad (\nabla Y)^{T} N = n $$   	
on respectively $ [0,T] \times \partial \calF(0) $ and $ [0,T] \odot \calF(t) $.

	\end{itemize}

\end{Lemma}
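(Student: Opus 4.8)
The plan is to construct $X$ in two stages: first prescribe it by an explicit formula in a tubular neighbourhood of $\partial\calF(0)$ so that the boundary values and all four normal identities hold exactly, and then extend it to all of $\calF(0)$ as the flow of a cut-off vector field, which makes the global diffeomorphism property and the time regularity automatic. The starting observation is that the four conditions in the last item are equivalent to prescribing the full Jacobian of $X$ on $\partial\calF(0)$: if $X(t,\cdot)\big|_{\partial\calF(0)} = b(t,\cdot)$ and $X$ is $C^1$, then differentiating along $\partial\calF(0)$ forces $\nabla X\,\tau_0 = \partial_s b$, with $\tau_0$ the unit tangent and $s$ the arclength; imposing moreover $\nabla X\,N = n$ then determines $\nabla X$ on $\partial\calF(0)$ to be $J(t,y_0) := \partial_s b(t,y_0)\,\tau_0(y_0)^T + n(t,b(t,y_0))\,N(y_0)^T$. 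An elementary computation in the adapted orthonormal frames gives $\det J = |\partial_s b| > 0$ and $(\nabla X)^T n = N$, $\nabla Y\, n = J^{-1}n = N$, $(\nabla Y)^T N = n$ at the boundary, so it suffices to build $X$ with $X|_{\partial\calF(0)} = b$ and $\nabla X\, N = n$ on $\partial\calF(0)$.

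For the local construction I use that $\partial\calF(0)$ is $C^{3,\alpha}$ and that, by the regularity of $b$ together with the compatibility of the geometry, $\partial\calF(t)$ is $C^{3,\alpha}$ uniformly for $t\in[0,T]$ and carries a tubular neighbourhood of $t$-uniform width $\sigma_0>0$. For $y = y_0 + \sigma N(y_0)$ with $y_0\in\partial\calF(0)$ and $|\sigma|<\sigma_0$ I set
$$ X_{\mathrm{loc}}(t,y) := b(t,y_0) + \sigma\, n\big(t,b(t,y_0)\big). $$
Since $b(0,\cdot)=\Id$, hence $n(0,\cdot)=N$, this gives $X_{\mathrm{loc}}(0,\cdot)=\Id$; the $\sigma=0$ slice is $b(t,\cdot)$; the Jacobian at $\sigma=0$ is exactly $J$, so the normal identities hold; and for $\sigma_0$ small the Jacobian stays $t$-uniformly non-degenerate (so the "strictly positive" requirement, read as a uniform lower bound on $\nabla X\,(\nabla X)^T$, holds), with $X_{\mathrm{loc}}(t,\cdot)$ a diffeomorphism of a collar of $\partial\calF(0)$ onto a collar of $\partial\calF(t)$, uniformly in $t$. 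The space–time Hölder regularity of $X_{\mathrm{loc}}$ follows from that of $b$, with the loss of one spatial derivative coming from forming the unit normal $n$.

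For the global extension, let $w_{\mathrm{loc}}(t,\cdot) := \partial_t X_{\mathrm{loc}}(t,\cdot)\circ X_{\mathrm{loc}}(t,\cdot)^{-1}$, the Eulerian velocity of the collar motion, defined near $\partial\calF(t)$; note $w_{\mathrm{loc}}\cdot n = q$ on $\partial\calF(t)$. Choose a cut-off $\chi(t,\cdot)$ equal to $1$ on a slightly smaller collar and supported in the collar, and define $w := \chi\, w_{\mathrm{loc}} + (1-\chi)\, w_{\mathrm{core}}$ on $\overline{\calF(t)}$, with $w_{\mathrm{core}}$ any vector field in the required regularity class. Let $X(t,\cdot)$ be the flow of $w$ from time $0$. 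Since $w = w_{\mathrm{loc}}$ near $\partial\calF(t)$, uniqueness of the flow gives $X = X_{\mathrm{loc}}$ wherever trajectories remain in $\{\chi=1\}$; in particular $X(t,\cdot)|_{\partial\calF(0)} = b(t,\cdot)$, and a continuity argument on $[0,T]$ — using the $t$-uniform collar width and the $L^\infty$ bound on $w$ — shows that trajectories issued from a fixed smaller collar never leave $\{\chi=1\}$, so $X=X_{\mathrm{loc}}$ there and the boundary and normal conditions persist. Because $w$ restricts to the boundary velocity, $X(t,\cdot)$ maps $\calF(0)$ onto $\calF(t)$; it is a diffeomorphism with $\det\nabla X = \exp\!\int_0^t \div w\,ds$ bounded above and below on $[0,T]$, and $Y := X^{-1}$ inherits the same regularity by the inverse function theorem.

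The step I expect to be the main obstacle is the global one: guaranteeing that the construction produces a genuine diffeomorphism of $\calF(0)$ onto $\calF(t)$ for all $t\in[0,T]$ — not merely for short times — while still agreeing with $X_{\mathrm{loc}}$ on a $t$-uniform collar, and carrying the mixed space–time Hölder regularity through the flow map. The first difficulty is resolved by the uniform lower bound on the mutual distances between $\partial\Omega$, $\partial\calS^+(t)$ and $\partial\calS^-(t)$ built into the regular compatible geometry, which provides a $t$-uniform tubular radius; the second by the usual flow estimates combined with interpolation between the two Hölder classes satisfied by $b$.
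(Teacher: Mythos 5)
The paper states this lemma without proof, so there is no argument of the author's to compare yours against; your proposal has to stand on its own. On the whole it does: the two-stage construction (prescribe $X$ exactly on a tubular collar by the normal-fibre formula $y_0+\sigma N(y_0)\mapsto b(t,y_0)+\sigma\,n(t,b(t,y_0))$, then extend to the whole of $\calF(0)$ as the flow of a cut-off Eulerian velocity) is the standard and correct way to build such a diffeomorphism, and your preliminary reduction is a genuinely useful observation: once $X|_{\partial\calF(0)}=b$ and $\nabla X\,N=n$, writing $\nabla X$ in the orthonormal frames $(\tau_0,N)$ and $(\tau,n)$ gives the matrix $\mathrm{diag}(|\partial_s b|,1)$, from which the remaining three normal identities and $\det\nabla X=|\partial_s b|>0$ on the boundary all follow at once. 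The invariance of the collar coordinate $\sigma$ under the flow of $w_{\mathrm{loc}}$ is exactly the right mechanism to guarantee that the global flow agrees with $X_{\mathrm{loc}}$ on a fixed collar for all $t\in[0,T]$, and surjectivity onto $\calF(t)$ follows from invariance of the boundary plus openness/closedness of the image in the connected set $\calF(t)$.

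Two points deserve more care than you give them. First, the positivity condition: you silently replace the stated inequality $\xi\cdot\nabla X\,\xi\ge C|\xi|^2$ by a lower bound on $\nabla X(\nabla X)^T$. These are not equivalent, and the literal inequality cannot hold for arbitrary $T$ under your construction (nor, arguably, under any construction compatible with $\nabla X\,\tau_0=\partial_s b$): if $b(t,\cdot)$ rotates a boundary component through a large angle, then $\tau_0\cdot\nabla X\,\tau_0=\tau_0\cdot\partial_s b$ becomes negative. Your reading is the one actually used later in the paper (parabolicity via $\nabla Y(\nabla Y)^T>c\,\Id$), but you should say explicitly that you are proving uniform non-degeneracy of $\nabla X$, not positivity of its symmetric part. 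Second, the regularity bookkeeping is genuinely delicate and "interpolation between the two H\"older classes of $b$" does not close it: forming $n=(\partial_s b)^\perp/|\partial_s b|$ costs a spatial derivative, $\partial_t(n\circ b)$ is controlled only through the $C^{1,\alpha}_tC^{1,\alpha}_x$ class of $b$ and is therefore rough in space, and the composition $w(t,X(t,\cdot))$ in the flow equation loses a further spatial derivative when you try to propagate $C^{2,\alpha}_x$ bounds on $X$ with $C^{0,\alpha}$ time dependence. As it stands you obtain a diffeomorphism with the correct boundary values, normal identities and uniform two-sided Jacobian bounds, but the claimed class $C^{1,\alpha}([0,T];C^{2,\alpha}(\calF(0)))$ for both $X$ and $Y$ is asserted rather than derived; this gap is inherited from the lemma statement itself, whose regularity claim already sits uneasily with the hypotheses on $b$, but your write-up should at least quantify which mixed H\"older norms your construction actually controls.
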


Let now rewrite the equations \eqref{DP:theo:tdep} in the time independent domain $ [0,T] \times \calF(0 ) $ with the help of the change of variables $ X $. Let 
\begin{gather*} \bar{\rho}(t,y) = \rho(t, X(t,y)), \quad \bar{\rho}^{+}(t,y) = \rho+(t,X(t,y)), \quad \bar{\rho}^{in}(y) = \rho^{in}(0,X(0,y)), \\ \bar{V}(t,y) = \partial_t Y(t,X(t,y)), \quad  \bar{v}(t,y) = \nabla Y(t,X(t,y))v(t,X(t,y)) \\ \quad \text{ and } \quad \bar{f}(t,y) = f(t,X(t,y)).
\end{gather*}
The equations \eqref{DP:theo:tdep} rewrite 
\begin{align}
\partial_ t \bar{\rho} + (\bar{V} + \bar{v}) \cdot \nabla \bar{\rho} = & \, \bar{f} \quad && \text{ for } x \in \calF(0), \nonumber \\
\bar{\rho} =  & \, \bar{\rho}^{+} \quad && \text{ for } x \in \partial \calF^{+}(0), \label{DP:theo:tind}\\
\bar{\rho}(0,.) = & \, \bar{\rho}^{in} \quad && \text{ for } \calF(0).   \nonumber
\end{align} 
We show existence of solutions for the system \eqref{DP:theo:tind}. In the following, let use the notation $ \bar{q}(t,y) = \bar{V}(t,y) \cdot N(y) = q(t,X(t,y)) $.
\begin{proposition}
	\label{Prop:2:Last}
	Let $ p \in (1,+\infty] $ and $ 1/p + 1/q = 1 $. Let $ \bar{v} \in L^1_{loc}(\bbR^+;W^{1,q}(\calF(0))) $ such that $\div_x(\nabla X(t, Y(t,x))\bar{v}(t,Y(t,x))) = 0 $. Let $ \bar{\rho}^{in} $ and $ \bar{\rho}^{+} $ respectively in $ L^{p}(\calF(0)) $ and $ L^p_{loc}(\partial \calF^+; \, \lvert \bar{v}\cdot N - \bar{q} \rvert dt ds ) $. Let $ \bar{f} $ in $L^1_{loc}(\bbR^+;L^p(\calF(0)))$Then there exists a distributional solution to \eqref{DP:theo:tind}. More precisely there exists $ (\bar{\rho}, \bar{\rho}^-) $  in $ L^{\infty}_{loc}(\bbR^+;L^p(\calF(0))) \times L^p_{loc}(\partial \calF^-; \, \lvert \bar{v}\cdot N - \bar{q} \rvert dt ds ) $ solution of
	\begin{align}
	\label{tr:2}
	\int_{\calF(0)} \bar{\rho}^{in} \varphi(0,.) \, dx + \int_{\bbR^+}\int_{\calF(0)} \bar{\rho} (\partial_t \varphi + & \div((\bar{V}+\bar{v}) \varphi)) \, dx dt + \int_{\bbR^+}\int_{\calF(0)} \bar{f} \varphi \, dx dt \\ & = \sum_{i = +,-}\int_{\partial \calF^i} \bar{\rho}^i \varphi (\bar{v}\cdot n - \bar{q}) \, ds dt, \nonumber  
	\end{align}	
	for any $ \varphi \in C^{\infty}_{c}(\bbR^+ \times \overline{\calF(0)} )$. 
	
\end{proposition}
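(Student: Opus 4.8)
The plan is to realise $\bar\rho$ as the vanishing viscosity limit of strong solutions of a uniformly parabolic regularisation of \eqref{DP:theo:tind} on the fixed domain $\calF(0)$, the point of the change of variables being precisely that a Galerkin scheme is now available in a time-independent functional setting.

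\emph{Regularisation of the data and conservative structure.} The hypothesis $\div_x(\nabla X(t,Y)\,\bar v(t,Y))=0$ says exactly that the field $v$ defined by $v(t,X(t,y))=\nabla X(t,y)\,\bar v(t,y)$ is divergence free on $\calF(t)$. So I would first mollify $v$ in space and time into a smooth divergence-free field $v_\nu$ (e.g.\ by mollifying a stream function, so that the constraint is preserved) and set $\bar v_\nu(t,y)=\nabla Y(t,X(t,y))\,v_\nu(t,X(t,y))$; writing $J=\det\nabla X>0$, one has the two geometric identities $\partial_tJ+\div(J\bar V)=0$ (Euler expansion formula for the flow $X$) and $\div(J\bar v_\nu)=0$ (Piola identity), whence $\div\!\big(J(\bar V+\bar v_\nu)\big)=\partial_tJ$, and equivalently \eqref{DP:theo:tind} is the pull-back of the conservative equation $\partial_t(J\bar\rho)+\div\big(J(\bar V+\bar v)\bar\rho\big)=J\bar f$. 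I would likewise mollify $\bar\rho^{in}$ to $\bar\rho^{in}_\nu\in H^1(\calF(0))$, $\bar\rho^{+}$ to a smooth $\bar\rho^{+}_\nu$ and $\bar f$ to a smooth $\bar f_\nu$, with convergence in $L^p(\calF(0))$, in $L^p_{loc}(\partial\calF^+,|\bar v\cdot N-\bar q|\,dtds)$ and in $L^1_{loc}(\bbR^+;L^p(\calF(0)))$ respectively; note that on any compact piece of $\partial\calF^+$ where $\bar v\cdot N-\bar q\le-\delta<0$ one also has $\bar v_\nu\cdot N-\bar q\le-\delta/2$ for $\nu$ small, so these $\nu$-dependent weights are comparable there.

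\emph{The viscous problem and uniform estimates.} For each $\nu>0$ I would solve
\[
\partial_t(J\bar\rho_\nu)+\div\!\big(J(\bar V+\bar v_\nu)\bar\rho_\nu\big)-\nu\,\div(JA\nabla\bar\rho_\nu)=J\bar f_\nu\quad\text{in }\calF(0),
\]
with the Robin-type boundary condition $\nu\,JA\nabla\bar\rho_\nu\cdot N-(\bar\rho_\nu-\bar\rho^{+}_\nu)(\bar v_\nu\cdot N-\bar q)\,\mathds{1}_{\partial\calF^+}=0$ on $\partial\calF(0)$ and $\bar\rho_\nu(0,\cdot)=\bar\rho^{in}_\nu$, where $A$ is the uniformly elliptic symmetric matrix produced by the change of variables. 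This is a linear uniformly parabolic problem with smooth coefficients whose boundary condition is dissipative on the inflow part $\partial\calF^+$ (where $\bar v_\nu\cdot N-\bar q<0$); strong solutions $\bar\rho_\nu\in L^2(0,T;H^2(\calF(0)))\cap C^0(0,T;H^1(\calF(0)))$ are constructed by a Galerkin method in an $H^1$-basis, the $H^1$ estimate closing thanks to the smoothness of $\bar v_\nu$ and the sign of the boundary term. Multiplying the conservative form by $G'(\bar\rho_\nu)$ for a smooth convex $G$ and using $\div(J(\bar V+\bar v_\nu))=\partial_tJ$ so that the interior transport contribution cancels, one obtains the pull-back of the a priori bound \eqref{a:priori:est}; taking $G(s)=|s|^p$ (and, for $p=+\infty$, a family of truncated convex functions vanishing below the supremum of the data) together with a Gr\"onwall argument for the $\bar f_\nu$–term, this gives, uniformly in $\nu$, bounds for $\bar\rho_\nu$ in $L^\infty_{loc}(\bbR^+;L^p(\calF(0)))$, for the outgoing trace $\bar\rho_\nu^-$ in $L^p_{loc}(\partial\calF^-,|\bar v_\nu\cdot N-\bar q|\,dtds)$ and for $\sqrt\nu\,\nabla\bar\rho_\nu$ in $L^2$.

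\emph{Passage to the limit.} Up to a subsequence $\bar\rho_\nu\cvwstar\bar\rho$ in $L^\infty_{loc}(\bbR^+;L^p(\calF(0)))$, and on each compact piece of $\partial\calF^-$ on which $\bar v\cdot N-\bar q$ is bounded below the unweighted $L^p$ bound gives $\bar\rho_\nu^-\cv\bar\rho^-$ in $L^p$; exhausting $\partial\calF^-$ by such pieces defines $\bar\rho^-\in L^p_{loc}(\partial\calF^-,|\bar v\cdot N-\bar q|\,dtds)$. In the weak formulation of the viscous problem the viscous term is $O(\sqrt\nu)$ and vanishes; the interior term $\bar\rho_\nu\,\div\big((\bar V+\bar v_\nu)\varphi\big)$ passes to the limit because $\div\big((\bar V+\bar v_\nu)\varphi\big)\to\div\big((\bar V+\bar v)\varphi\big)$ strongly in $L^1_{loc}(\bbR^+;L^q(\calF(0)))$ (using $\bar v_\nu\to\bar v$ in $L^1_{loc}(\bbR^+;W^{1,q})$), which pairs with the weak-$*$ convergence of $\bar\rho_\nu$ since $1/p+1/q=1$; the inflow boundary term converges by the convergence of $\bar\rho^{+}_\nu$ in the weighted space together with $\bar v_\nu\cdot N-\bar q\to\bar v\cdot N-\bar q$, and the outflow term by the weak convergence of the traces. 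This yields a solution $(\bar\rho,\bar\rho^-)$ of \eqref{tr:2} with the claimed regularity.

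\emph{Main obstacle.} The crux is the uniform-in-$\nu$ a priori estimate for a drift $\bar V+\bar v$ that is only in $W^{1,q}$ with possibly $q<2$: one cannot afford a pointwise bound on $\div(\bar V+\bar v)$, and it is essential to exploit the conservative structure, namely the Jacobian weight $J$ and the pair of identities $\partial_tJ+\div(J\bar V)=0$ and $\div(J\bar v)=0$, so that the renormalisation computation produces only the inflow/outflow boundary fluxes. The accompanying technical nuisances are the construction of strong solutions with a boundary condition that is dissipative on only part of the boundary, the treatment of the $p=+\infty$ endpoint, and the fact that the natural boundary integrability is with respect to the $\nu$-dependent weight $|\bar v_\nu\cdot N-\bar q|$, which forces the exhaustion-by-compact-pieces argument when passing to the limit in the traces.
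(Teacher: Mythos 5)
Your proposal follows essentially the same route as the paper: pass to the fixed domain, regularise the data, solve a uniformly parabolic approximation with a Robin-type condition that is dissipative on the inflow boundary via a Galerkin scheme, derive $L^p$ estimates weighted by the Jacobian $\det\nabla X$ (your conservative rewriting $\partial_t(J\bar\rho)+\div(J(\bar V+\bar v)\bar\rho)=J\bar f$ is exactly the structure the paper exploits when it multiplies by $p\bar\rho_\nu|\bar\rho_\nu|^{p-2}\det(\nabla X)$ in Proposition \ref{exi:vis:sys}), and let the viscosity vanish.

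There is one quantitative gap. You claim that $\sqrt{\nu}\,\nabla\bar\rho_\nu$ is bounded in $L^2$ \emph{uniformly in} $\nu$, and you use this to discard the viscous term as $O(\sqrt{\nu})$. The parabolic energy identity controls $\nu\int|\nabla\bar\rho_\nu|^2$ only by the $L^2$ norms of $\bar\rho^{in}_\nu$, $\bar\rho^+_\nu$ and $\bar f_\nu$; since $p$ is allowed to lie in $(1,2)$, these $L^2$ norms are not controlled by the $L^p$ norms of the original data and may blow up as the mollification parameter is refined, so the uniform bound as stated fails in that range. The fix is to calibrate the regularisation rate against $\nu$: this is precisely condition \eqref{grad:conv:ieva} in Lemma \ref{approx:lem}, which allows the squared $L^2$ norms of the approximate data to grow no faster than $\nu^{-1/2}$, whence $\nu\|\nabla\bar\rho_\nu\|_{L^2}\lesssim\nu^{1/4}\to 0$ and the viscous term in the weak formulation still vanishes (see the second bound of \eqref{est:conv}). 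Your exhaustion-by-compact-pieces treatment of the outflow trace is an acceptable alternative to the paper's direct weak $L^p$ compactness of $\bar\rho_\nu^-|\bar v_\nu\cdot N-\bar q|^{1/p}$, since the set where $\bar v\cdot N-\bar q$ vanishes is null for the relevant measure.
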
	
Let us notice that Proposition \ref{Prop:1:Last} and \ref{Prop:2:Last} are equivalent in the following sense.

\begin{proposition}
	\label{Prop:equivalence}
	The couple 	$ (\rho, \rho^-) $ is a $ L^{\infty}_{loc}(\bbR^+;L^p(\calF)) \times \calL^p_{loc}(\bbR^+; \partial \calF^-(t); \, \lvert v\cdot n - q \rvert dtds ) $ solution of \eqref{tr:1} if and only if  $ (\bar{\rho},\bar{\rho}^-) = (\rho(t,X(t,y)), \rho^{-}(t,X(t,y)))$
	is a $ L^{\infty}_{loc}(\bbR^+;L^p(\calF(0))) \times L^p_{loc}(\partial \calF^- ; \, \lvert \bar{v}\cdot N - \bar{q} \rvert dt ds ) $ solution of \eqref{tr:2}.
\end{proposition}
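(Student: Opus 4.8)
The plan is to prove the equivalence by transporting test functions through the diffeomorphism $(\mathrm{id},X)$ of the change-of-variables Lemma and performing the change of variables $x=X(t,y)$ termwise in \eqref{tr:1}. A preliminary routine reduction, by mollification, is that both weak formulations \eqref{tr:1} and \eqref{tr:2} continue to hold when tested against arbitrary compactly supported $C^1$ functions rather than only $C^\infty_c$ ones; here one uses $\rho,\bar\rho\in\calL^\infty_{loc}(L^p)$, $v,\bar v,\bar V\in\calL^1_{loc}(L^q)$ and $\rho^{\pm}$ integrable with respect to the relevant flux measures. This reduction matters because $X$ is only $C^{1,\alpha}$ in its arguments, so for $\varphi\in C^\infty_c(\bbR^+\times\overline{\calF(0)})$ the transported function $\psi(t,x):=\varphi(t,Y(t,x))$ is merely a $C^1_c(\bbR^+\odot\overline{\calF(t)})$ function, and symmetrically $\psi(t,X(t,y))$ is $C^1_c$ on the fixed domain; after the reduction $\varphi\mapsto\varphi\circ(\mathrm{id},Y)$ is a bijection between the admissible test-function classes on the two domains. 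I also record that $J(t,y):=\det\nabla_yX(t,y)$ is, by the regularity of $X$ and the compatibility of the geometry, $C^1$, nowhere vanishing and bounded above and below uniformly on compact time intervals, so the $L^p$ and weighted-$L^p$ memberships of $(\rho,\rho^-)$ and $(\bar\rho,\bar\rho^-)$ are automatically equivalent under $\bar\rho=\rho\circ(\mathrm{id},X)$, $\bar\rho^-=\rho^-\circ(\mathrm{id},X)$.

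Next I would carry out the change of variables. In the bulk $dx=J\,dy$, and the chain rule together with $\bar v=(\nabla_yX)^{-1}(v\circ X)$ and $\bar V=(\partial_tY)\circ X=-(\nabla_yX)^{-1}\partial_tX$ gives, at $x=X(t,y)$,
\[
\bigl(\partial_t\psi+v\cdot\nabla\psi\bigr)\circ X=\partial_t\varphi+(\bar V+\bar v)\cdot\nabla\varphi ,
\]
so the $\bar V$-term is exactly the time-derivative correction coming from the moving frame and $\bar v$ is the pulled-back velocity. On the boundary, the identities $\nabla X\,N=n$ and $(\nabla X)^{T}n=N$ of the change-of-variables Lemma yield $ds_x=J\,ds_y$ (since $|(\nabla_yX)^{-T}N|=|n|=1$) and $(v\cdot n-q)\circ X=\bar v\cdot N-\bar q$, and they identify $\partial\calF^{+}$ and $\partial\calF^{-}$ with the corresponding subsets of $\bbR^+\odot\partial\calF(0)$. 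Substituting into \eqref{tr:1}, and using $J(0,\cdot)\equiv1$ in the initial term, turns it into
\[
\int_{\calF(0)}\!\!\bar\rho^{in}\varphi(0,\cdot)+\int_{\bbR^+}\!\!\int_{\calF(0)}\!\!\bar\rho\bigl[\partial_t\varphi+(\bar V+\bar v)\cdot\nabla\varphi\bigr]J+\int_{\bbR^+}\!\!\int_{\calF(0)}\!\!\bar f\varphi\,J=\sum_{i}\int_{\partial\calF^i}\!\!\bar\rho^i\varphi\,(\bar v\cdot N-\bar q)\,J .
\]

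The only genuinely substantive step is to absorb the Jacobian weights and reach the divergence form of \eqref{tr:2}. The key pointwise identity is $\partial_tJ+\div_y\bigl(J(\bar V+\bar v)\bigr)=0$, which I would obtain by combining the Euler expansion (Liouville) formula $\partial_tJ=J\,(\div_xw)\circ X$, where $w:=(\partial_tX)\circ Y$ is the Eulerian velocity of the moving frame, with the Piola identity $\div_y\bigl(J(\nabla_yX)^{-1}(F\circ X)\bigr)=J\,(\div_xF)\circ X$ applied to $F=v-w$ (note $J(\nabla_yX)^{-1}((v-w)\circ X)=J(\bar V+\bar v)$), and crucially using $\div_xv=0$; the available regularity of $X$ and of $v$ suffices for both. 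From this identity one checks $J\bigl[\partial_t\varphi+(\bar V+\bar v)\cdot\nabla\varphi\bigr]=\partial_t(J\varphi)+\div_y\bigl((\bar V+\bar v)(J\varphi)\bigr)$, so that, setting $\Phi:=J\varphi$ — which again ranges over all of $C^1_c(\bbR^+\times\overline{\calF(0)})$ since $J$ is $C^1$, nowhere vanishing and two-sided bounded — and using $\varphi(0,\cdot)=\Phi(0,\cdot)$ and $\varphi J=\Phi$, the displayed equation becomes precisely \eqref{tr:2} tested against $\Phi$. Running the computation backwards gives the converse, and the boundary data are matched by $\bar\rho^{\pm}=\rho^{\pm}\circ(\mathrm{id},X)$. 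The main obstacle is exactly this Jacobian identity: the naive pullback produces the non-divergence transport operator together with the spurious volume and surface weights $J$, and it is this identity — the one place where the incompressibility $\div v=0$ enters — that converts them into the clean divergence form in which \eqref{tr:2} is stated; the rest is bookkeeping with the change-of-variables Lemma.
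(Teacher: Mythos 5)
Your proposal is correct and follows essentially the same route as the paper: the paper tests \eqref{tr:1} with $\varphi(t,x)=\psi(t,Y(t,x))\det(\nabla Y(t,x))$ and invokes Jacobi's formula for $\partial_z\det A$, which is exactly your change of variables with the Jacobian weight absorbed via the identity $\partial_t J+\div_y(J(\bar V+\bar v))=0$. You merely spell out the computations (and the reduction to $C^1$ test functions forced by the limited regularity of $X$) that the paper leaves as "after some computations the equivalence follows easily."
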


\begin{proof}
	
	For $ \psi \in C^{\infty}_{c}(\bbR^+ \times \overline{\calF(0)}) $, use $ \varphi(t,x) = \psi(t, Y(t,x)) \det(\nabla Y(t,x)) $ as test function in \eqref{tr:1}. After some computations the equivalence follows easily. Let us only recall that for a smooth square matrix $ A(t,x) $, it holds
	$$ \partial_z \det{A} = \tr ( \text{adj}(A)\partial_z A ) \quad \text{ for } z \in \{t, x_1,\dots, x_d\}.$$ 
	where Adj$(A)$ is the adjugate matrix of $ A $. 	
\end{proof}

To show Proposition \ref{Prop:2:Last} we use a vanishing viscosity method. Let consider the following viscous approximation of the equation \eqref{DP:theo:tind}. Let denote by $ \nu > 0 $ the viscous parameter, then 

\begin{align}
\partial_ t \bar{\rho}_{\nu} + (\bar{V} + \bar{v}_{\nu}) \cdot \nabla \bar{\rho}_{\nu} - \nu \Delta Y\cdot \nabla \bar{\rho}_{\nu} - \nu \nabla Y (\nabla Y)^T: \nabla^2 \bar{\rho}_{\nu}= & \, \bar{f}_{\nu} \quad && x \in \calF(0), \nonumber \\
\nu ((\nabla Y)^T N) \cdot ((\nabla Y)^T \nabla \bar{\rho}_{\nu}) -  (\bar{\rho}_{\nu}-\bar{\rho}^{+}_{\nu})(\bar{v}_{\nu} \cdot N - \bar{q})\mathds{1}_{\calF^+}   = & \, 0  \quad && x \in \partial \calF^{+}, \label{DP:theo:tind:vis}\\
\bar{\rho}(0,.) = & \, \bar{\rho}^{in} \quad && x \in  \calF(0).   \nonumber
\end{align} 

\begin{remark} First of all let us notice that the equation \eqref{DP:theo:tind:vis} is of parabolic type, in fact $ \nabla Y (\nabla Y)^T > c \Id  $ for some small $ c > 0 $.	
\end{remark}

\begin{remark} The viscous approximation \eqref{DP:theo:tind:vis} in the variables $ \rho_{\nu}(t,x) = \bar{\rho}_{\nu}(t, Y(t,x)) $ is the system \eqref{TE:vis}.	
\end{remark}

\subsubsection{Existence of strong solutions for the viscous system}

Let show existence of weak solutions for the system \eqref{DP:theo:tind:vis}.

\begin{proposition}
	\label{exi:vis:sys}
	Let $  \nu > 0 $. Let $ \bar{v}_{\nu} \in L^2([0,T];H^1(\calF(0))) \cap L^2([0,T];L^{\infty}(\calF(0))) $ such that $\div_x(\nabla X(t, Y(t,x)) \bar{v}_{\nu}(t,Y(t,x))) = 0 $ and $ \bar{v} \cdot N \in C^{1}([0,T] \times \partial \calF(0))$.  Let $ \bar{\rho}_{\nu} \in H^{1}(\calF(0)) $ and $ \rho_{\nu}^+ \in  C^1([0,T] \times \partial \calF(0)) $. Let $ \bar{f} \in C^0([0,T]\times \calF(0))$. Then there exists a strong solution $ \bar{\rho}_{\nu} \in W^{1,2}([0,T];L^2(\calF(0))) \cap L^2(0,T;H^2(\calF(0)))$ of the system \eqref{DP:theo:tind:vis}.

	Moreover if $ p \in (1,+\infty) $, it holds 
	\begin{gather}
	\left(\int_{\calF(0)} \lvert \bar{\rho}_{\nu}\rvert^p \det(\nabla X(t,y)) dy 
	+  \int_{0}^{T} \int_{\partial \calF^-}  \lvert \bar{\rho}_{\nu}^-\rvert^p (\bar{v}\cdot n- \bar{q}) ds dt \right)^{1/p} \nonumber \\ 
 \quad \quad \quad \quad \leq \left( \int_{\calF(0)} \lvert \bar{\rho}^{in}_{\nu}\rvert^p \det(\nabla X(0,y)) dy -  \int_0^T \int_{\partial \calF^+}  \lvert \bar{\rho}_{\nu}^+\rvert^p (\bar{v}\cdot n- \bar{q})  \,  ds dt \right)^{1/p} \nonumber \\  \quad\quad\quad\quad\quad\quad\quad\quad 
+ \int_0^T \left( \int_{\calF(0)} \lvert \bar{f}_{\nu}\rvert^p \det(\nabla X(t,y)) \, dy   \right)^{1/p} dt   \label{est:conv}\\ 
\text{ and } \quad \sqrt{\nu} \left(\int_{0}^{T}\int_{\calF(0)} \lvert \nabla \bar{\rho}_{\nu} \rvert^2  \, dy dt \right)^{1/2} \leq \nonumber \\ \left(\int_{\calF(0)}\lvert \bar{\rho}^{in}_{\nu} \rvert^2 \, dy -  \int_0^T \int_{\partial \calF^+_t}  \lvert \bar{\rho}_{\nu}^+\rvert^2 (\bar{v}\cdot n- \bar{q}) ds dt \right)^{1/2} +   \int_0^T \left( \int_{\calF(0)} \lvert \bar{f}_{\nu}\rvert^2 \, dy   \right)^{1/2} dt \nonumber
	\end{gather}

\end{proposition}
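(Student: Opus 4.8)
The plan is to prove Proposition \ref{exi:vis:sys} via a Galerkin approximation in space, obtaining strong solutions of the parabolic equation \eqref{DP:theo:tind:vis}, followed by passing to the limit, and then to derive the a priori bounds \eqref{est:conv} by testing with appropriate nonlinearities of $\bar\rho_\nu$. First I would recast \eqref{DP:theo:tind:vis} in divergence form. Since $\div_x(\nabla X(t,Y)\bar v_\nu(t,Y)) = 0$, a change-of-variables computation (as in Proposition \ref{Prop:equivalence}) shows $\div_y((\bar V + \bar v_\nu)\det(\nabla X)) = \partial_t\det(\nabla X)$, which is exactly the structure needed so that the transport part is antisymmetric up to the time-weight $\det(\nabla X)$. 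The diffusion operator $-\nu\,\Delta Y\cdot\nabla - \nu\,\nabla Y(\nabla Y)^T:\nabla^2$ is, by the remark following \eqref{DP:theo:tind:vis}, uniformly elliptic with $C^{0,\alpha}$-in-time, $C^{1,\alpha}$-in-space coefficients, and a further rewriting puts it in the form $-\nu\,\det(\nabla X)^{-1}\div_y(\det(\nabla X)\,\nabla Y(\nabla Y)^T\nabla\cdot)$ (a weighted divergence-form elliptic operator), so the natural weak formulation uses the weight $\det(\nabla X(t,y))\,dy$ and the energy space $L^2(0,T;H^1(\calF(0)))\cap L^\infty(0,T;L^2(\calF(0)))$. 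The Robin-type boundary condition $\nu((\nabla Y)^T N)\cdot((\nabla Y)^T\nabla\bar\rho_\nu) - (\bar\rho_\nu-\bar\rho_\nu^+)(\bar v_\nu\cdot N - \bar q)\mathds{1}_{\calF^+} = 0$ contributes, after integration by parts, the boundary term $\int_{\partial\calF^+}(\bar\rho_\nu - \bar\rho_\nu^+)(\bar v_\nu\cdot N - \bar q)\varphi$, which on $\partial\calF^+$ has sign $(\bar v_\nu\cdot N - \bar q)<0$ and hence is dissipative for the quadratic energy.

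The construction itself proceeds in the standard way: pick a basis of $H^1(\calF(0))$ (e.g. eigenfunctions of the weighted Laplacian with the appropriate Robin/Neumann conditions), write the Galerkin system of ODEs whose coefficients are continuous in $t$ (using the $C^{0,\alpha}$ time-regularity of $X,Y$ and the $L^2$-in-time regularity of $\bar v_\nu$), and solve it locally; the $L^2$ energy estimate obtained by testing the $m$-th Galerkin equation against $\bar\rho_\nu^m$ gives a global-in-time bound and hence global existence of the Galerkin solutions. Testing against $\partial_t\bar\rho_\nu^m$ and against $-\Delta\bar\rho_\nu^m$ (again exploiting uniform ellipticity and the smoothness of the coefficients, plus the assumed $C^1$ regularity of $\bar v_\nu\cdot N$ and $\bar\rho_\nu^+$ on the boundary, which is precisely what makes the boundary terms controllable) yields uniform $W^{1,2}(0,T;L^2)\cap L^2(0,T;H^2)$ bounds; the compactness lemma of Aubin--Lions then lets me extract a limit $\bar\rho_\nu$ which, passing to the limit in the weak formulation, is a strong solution. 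Uniqueness follows from the $L^2$ energy estimate applied to the difference of two solutions with the same data.

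For the quantitative estimates \eqref{est:conv}, once a strong solution is in hand I would test the equation against $p\bar\rho_\nu|\bar\rho_\nu|^{p-2}$ (legitimate since $\bar\rho_\nu\in L^2(0,T;H^2)\hookrightarrow L^\infty$ in $2$D after the parabolic bootstrapping, or by a routine truncation/approximation argument). The transport term produces, using the weighted divergence identity above, exactly $\frac{d}{dt}\int_{\calF(0)}|\bar\rho_\nu|^p\det(\nabla X)\,dy$ plus the boundary contribution $\int_{\partial\calF}|\bar\rho_\nu|^p(\bar v_\nu\cdot N - \bar q)$, which splits into the outgoing part on $\partial\calF^-$ (the $\bar\rho_\nu^-$ term, positive, stays on the left) and the incoming part on $\partial\calF^+$ (where the Robin condition lets me replace the trace of $\bar\rho_\nu$ by $\bar\rho_\nu^+$ modulo a sign-definite viscous remainder). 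The viscous term gives $\nu\int|\nabla\bar\rho_\nu|^2|\bar\rho_\nu|^{p-2}\cdot(\text{const})\geq 0$ after integration by parts and is dropped for \eqref{est:conv}, while the source term is estimated by $\int_0^T\|\bar f_\nu(t)\|_{L^p(\det(\nabla X)dy)}\,dt$ using Hölder and Minkowski's integral inequality to land the time-integral outside. Integrating in time and taking $p$-th roots gives the first inequality of \eqref{est:conv}; the second, the $\sqrt\nu$-bound on $\|\nabla\bar\rho_\nu\|_{L^2L^2}$, is the special case $p=2$ where the viscous term is kept rather than discarded.

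\textbf{Main obstacle.} The delicate point is the treatment of the Robin-type boundary term: the coefficient $\bar v_\nu\cdot N - \bar q$ changes sign across $\partial\calF^+$ and $\partial\calF^-$, so the boundary integral is only conditionally sign-definite, and one must carefully check that, on the incoming part $\partial\calF^+$, the combination coming from the boundary condition plus the convective flux genuinely cancels down to the prescribed datum $\bar\rho_\nu^+$ without leaving an uncontrolled residual, and that the $H^2$-estimate (which needs the normal derivative of $\bar\rho_\nu$ in $L^2(0,T;H^{1/2}(\partial\calF(0)))$) is compatible with the regularity $C^1$ imposed on $\bar v_\nu\cdot N$ and $\bar\rho_\nu^+$ — this is exactly why the hypotheses of Proposition \ref{exi:vis:sys} demand that extra boundary smoothness rather than just the $L^p$/$W^{1,p}$ regularity used downstream.
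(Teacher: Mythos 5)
Your proposal follows essentially the same route as the paper: a Galerkin construction for the parabolic problem \eqref{DP:theo:tind:vis}, energy estimates obtained by testing with $\bar\rho_\nu$ and $\partial_t\bar\rho_\nu$ to obtain the $W^{1,2}(L^2)\cap L^2(H^2)$ regularity and the $\sqrt{\nu}$-gradient bound, and the multiplier $p\bar\rho_\nu\lvert\bar\rho_\nu\rvert^{p-2}\det(\nabla X)$ for the weighted $L^p$ estimate (the paper closes this last step with the Bihari--LaSalle inequality, you with H\"older/Minkowski, which amounts to the same Gr\"onwall-type argument). One small correction: the divergence identity should read $\div_y\bigl((\bar V+\bar v_\nu)\det(\nabla X)\bigr) = -\partial_t\det(\nabla X)$ (Jacobi's formula combined with the Piola identity), which is precisely the sign needed for the transport term to collapse to $\frac{d}{dt}\int_{\calF(0)}\lvert\bar\rho_\nu\rvert^p\det(\nabla X)\,dy$ plus boundary fluxes, so your subsequent conclusion is unaffected.
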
 

\begin{proof}
	
The proof of this result follows from a Galerkin method. Let us show only the a-priori estimates. Without loss of generality let us fixed $ \nu = 1 $ and let us write $ \bar{\rho} $ instead of $ \bar{\rho}_1 $ for simplicity.
If we multiply \eqref{DP:theo:tind:vis} by $ \bar{\rho} $ and we integrate in $ \calF(0) $, we deduce that
\begin{align*}
\partial_{t} \int_{\calF(0)}\frac{\bar{\rho}^2}{2} &  + \int_{\partial \calF^-_t} (\bar{V}+\bar{v})\cdot N \frac{\bar{\rho}^2}{2} +  \int_{\calF(0)} \lvert \nabla Y \nabla \bar{\rho}\rvert^2 = \\
 & - \int_{\partial \calF^+_t}(\bar{V}+\bar{v})\cdot N \bar{\rho}\left(\frac{\bar{\rho}}{2}- (\bar{\rho}-\bar{\rho}^+) \right) + \int_{\calF(0)} \div( \bar{V} +\bar{v}) \frac{\bar{\rho}^2}{2} \\
& \,   +  \int_{\calF(0)} \Delta Y \cdot \nabla \bar{\rho} \bar{\rho} - \int_{\calF(0)} \div \left(\nabla Y (\nabla Y)^{T} \right)\cdot \nabla \bar{\rho} \bar{\rho} - \int_{\calF(0)} \bar{f} \bar{\rho} \\
\leq & \, - \int_{\partial \calF^-_t} (\bar{V}+\bar{v})\cdot N \frac{\lvert \bar{\rho}^+\rvert^2}{2} + \| \div( \bar{V} +\bar{v}) \|_{L^{\infty}}\|\bar{\rho}\|_{L^2}^2 + \eps  \| \nabla \bar{\rho} \|_{L^2} \\
& \, C_{\eps} \left(\|\Delta Y\|_{L^{\infty}} \|\bar{\rho}\|_{L^2}^2 + \|\div \left(\nabla Y (\nabla Y)^{T} \right) \|_{L^{\infty}} \|\bar{\rho}\|_{L^2}^2 \right) + \|\bar{f}\|_{L^2}^2 +\|\bar{\rho}\|_{L^2}^2. 
\end{align*}
We absorb the term $ \|\nabla \bar{\rho}\|_{L^2} $ in the left hand side by choosing $ \eps $  small enough and the a-priori estimates follows from Gr\"omwall's Lemma, in particular we show that $ \bar{\rho} $ is a-priori bounded in $ L^{\infty}(0,T;L^2(\calF(0))) \cap L^2(0,T;H^1(\calF(0))) $ and that the second inequality of \eqref{est:conv} holds true.

To show the $ H^{1}(0,T;L^2(\calF(0))) \cap L^{\infty}(0,T;H^1(\calF(0))) $ a-priori bound, let us multiply \eqref{DP:theo:tind:vis} by $ \partial_{t} \bar{\rho} $ and integrate in $ \calF(0) $. We deduce 
\begin{align*}
\int_{\calF(0)} \lvert \partial_t \bar{\rho} \rvert^2 + \int_{\calF(0)} \nabla Y (\nabla Y)^T\nabla \bar{\rho} \cdot \partial_t \nabla \bar{\rho} & \, - \int_{\partial \calF^+_t} (\bar{\rho}- \bar{\rho}^+) \partial_t \bar{\rho} (\bar{V}+\bar{v})\cdot N = \\ - \int_{\calF(0)} \bar{f}  \partial_t \bar{\rho}  - \int_{\calF(0)}(\bar{V}-\bar{v}) \cdot \bar{\rho} \partial_t \bar{\rho}  & \, + \int_{\calF(0)} \Delta Y \bar{\rho} \partial_t \bar{\rho} \\ & \, - \int_{\calF(0)} \div \left(\nabla Y (\nabla Y)^{T} \right) \cdot \nabla \bar{\rho} \partial_t \bar{\rho}.   
\end{align*} 
We notice that 
\begin{align*}
\int_{\calF(0)} \nabla Y (\nabla Y)^T\nabla \bar{\rho} \cdot \partial_t \nabla \bar{\rho} = & \,\partial_{t} \int_{\calF(0)} \frac{\lvert \nabla Y \nabla \bar{\rho} \rvert^2}{2} \\ & \, - \frac{1}{2}\int_{\calF(0)} \partial_t \left(\nabla Y (\nabla Y)^T \right): \nabla \bar{\rho} \otimes \nabla \bar{\rho} 
\end{align*}
and 
\begin{align*}
- \int_{\partial \calF^+_t} & (\bar{\rho}- \bar{\rho}^+) \partial_t \bar{\rho} (\bar{V}+\bar{v})\cdot N = \\ & \,  - \int_{\partial \calF^+_t} (\bar{\rho}- \bar{\rho}^+) \partial_t (\bar{\rho}- \bar{\rho}^+ )(\bar{V}+\bar{v})\cdot N - \int_{\partial \calF^+_t} (\bar{\rho}- \bar{\rho}^+) \partial_t \bar{\rho}^+ (\bar{V}+\bar{v})\cdot N  \\
= & \,- \partial_t  \int_{\partial \calF^+_t} \frac{(\bar{\rho}- \bar{\rho}^+)^2}{2} (\bar{V}+\bar{v})\cdot N  + \int_{\partial \calF^+_t} \frac{(\bar{\rho}- \bar{\rho}^+)^2}{2} \partial_t (\bar{V}+\bar{v})\cdot N 
\\ & \, - \int_{\partial \calF^+_t} (\bar{\rho} - \bar{\rho}^+) \partial_t \bar{\rho}^+ (\bar{V}+\bar{v})\cdot N,
\end{align*}
where we use that $ (\bar{V} + \bar{v})\cdot N \mathds{1}_{\calF_t^+} $ is a $ W^{1,\infty} $ function.

Putting all together we deduce that
\begin{align*}
\int_{\calF(0)} & \lvert \partial_t \bar{\rho}\lvert^2 + \partial_{t} \int_{\calF(0)} \frac{\lvert \nabla Y \nabla \bar{\rho} \rvert^2}{2} - \partial_t  \int_{\partial \calF^+_t} \frac{(\bar{\rho}- \bar{\rho}^+)^2}{2} (\bar{V}+\bar{v})\cdot N  \\
&  =  \, - \int_{\calF(0)} \bar{f}  \partial_t \bar{\rho}  - \int_{\calF(0)}(\bar{V}-\bar{v}) \cdot \bar{\rho} \partial_t \bar{\rho}  + \int_{\calF(0)} \Delta Y \bar{\rho} \partial_t \bar{\rho}
\\ & \, - \int_{\calF(0)} \div \left(\nabla Y (\nabla Y)^{T} \right) \cdot \nabla \bar{\rho} \partial_t \bar{\rho} + \frac{1}{2}\int_{\calF(0)} \partial_t \left(\nabla Y (\nabla Y)^T \right): \nabla \bar{\rho} \otimes \nabla \bar{\rho} 
\\ & \, - \int_{\partial \calF^+_t} \frac{(\bar{\rho}- \bar{\rho}^+)^2}{2} \partial_t (\bar{V}+\bar{v})\cdot N 
+ \int_{\partial \calF^+_t} (\bar{\rho}
- \bar{\rho}^+) \partial_t \bar{\rho}^+ (\bar{V}+\bar{v})\cdot N \\
\leq & \, \eps\|\partial_t \bar{\rho} \|_{L^2}^2 + C_{\eps} \left(\|\bar{f}\|_{L^2}^2+ \|(\bar{V}+\bar{v})\|_{L^{\infty}}\|\bar{\rho}\|_{L^2}^2 + \|\Delta Y\|_{L^{\infty}}\|\bar{\rho}\|_{L^2}^2\right) \\
& \, + C_{\eps} \left(\| \div \left(\nabla Y (\nabla Y)^{T} \right)\|_{L^{\infty}}\|\nabla \bar{\rho}\|_{L^2}^2  \right)
 +  \| \partial_t \left(\nabla Y (\nabla Y)^{T}\right) \|_{L^{\infty}}\|\nabla \bar{\rho}\|_{L^2}^2 \\
& \, + \|\bar{\rho}- \bar{\rho}^{+}\|_{H^1}^2\|\partial_t (\bar{V}+\bar{v})\cdot N \|_{L^{\infty}} +  \|\bar{\rho}- \bar{\rho}^{+}\|_{H^1} \|\partial_t \bar{\rho}^+\|_{L^2} \|(\bar{V}+\bar{v})\cdot N \|_{L^{\infty}}.
\end{align*} 
We absorb the term $ \|\partial_t \bar{\rho}\|_{L^2} $ in the left hand side by choosing $ \eps $ small enough and the a-priori estimates follows from Gr\"omwall's Lemma, in particular we show that $ \bar{\rho} $ is a-priori bounded in $ H^{1}(0,T;L^2(\calF(0))) \cap L^{\infty}(0,T;H^1(\calF(0))) $. Finally to show the $ L^2(0,T;H^2(\calF(0))) $ a-priori bound is enough to use the equation.

It remains only to show the first inequality of \eqref{est:conv}. To do that we multiply \eqref{DP:theo:tind:vis} by $ p \bar{\rho}_{\nu} \lvert \bar{\rho}_{\nu}\rvert^{p-2} \det(\nabla X) $ and integrate by parts. The estimate then follows from the Bihari-LaSalle inequality which is a generalization of the Gr\"omwall's lemma. 
	
\end{proof}

\subsubsection{Existence of approximate data}

In this subsection we show that there exists data $ \bar{v}_{\nu} $, $ \bar{\rho}_{\nu} $ and $ \bar{\rho}_{\nu}^+ $ that satisfy the hypothesis of Proposition \ref{exi:vis:sys} that approximate $ \bar{v} $, $ \bar{\rho}^{in} $ and $ \bar{\rho}^+ $ that satisfy the hypothesis of Proposition \ref{Prop:2:Last} in some strong enough norm.

\begin{Lemma}
\label{approx:lem}
Let $ p \in (1,+\infty] $ and $ q $ such that $ 1/p +1/q = 1 $. Let $ \bar{v} \in L^1_{loc}(\bbR^+;W^{1,q}(\calF(0))) $ such that $\div_x(\bar{v}(t,Y(t,x))) = 0 $. Let $ \bar{\rho}^{in} $ and $ \bar{\rho}^{+} $ respectively in $ L^{p}(\calF(0)) $ and $ L^p_{loc}(\partial \calF^+; \,  \lvert \bar{v}\cdot N - \bar{q} \lvert \, dt ds ) $. Let $ \bar{f} $ in $ L^1_{loc}(\bbR^+;L^p(\calF(0))) $. Then there exist  $ \bar{v}_{\nu} $, $ \bar{\rho}^{in}_{\nu} $, $ \bar{\rho}_{\nu}^+ $ and $ \bar{f}_{\nu} $ such that $ \bar{v}_{\nu} \in L^2([0,T];H^1(\calF(0))) \cap L^2([0,T];L^{\infty}(\calF(0))) $ with $\div_x(\nabla X(t, Y(t,x))\bar{v}_{\nu}(t,Y(t,x))) = 0 $ and with $ \bar{v} \cdot N \in C^{1}([0,T] \times \partial \calF(0))$. Such that $ \bar{\rho}^{in}_{\nu} \in H^{1}(\calF(0)) $ and such that $ \rho_{\nu}^+ \in  C^1([0,T] \times \partial \calF(0)) $.  Such that $ \bar{f}_{\nu} \in C^0([0,T]\times \calF(0)) $. Moreover
\begin{align*}
\bar{v}_{\nu} \longrightarrow & \, \bar{v}  \quad && \text{ in } L^1(0,T; W^{1,q}(\calF(0))), 
\end{align*}
for $ p < + \infty $  
\begin{align}
\bar{\rho}^{in}_{\nu} \longrightarrow & \, \bar{\rho}^{in} \quad && \text{ in } L^p(\calF(0)), \nonumber  \\
\bar{\rho}^{+}_{\nu}(\bar{v}_{\nu}\cdot N - \bar{q} )^{1/p} \mathds{1}_{\partial \calF^+_{\nu}}  \longrightarrow  & \, \bar{\rho}^{+}(\bar{v} \cdot N - \bar{q} )^{1/p} \mathds{1}_{\partial \calF^+} \quad && \text{ in } L^{p}((0,T)\times \partial \calF(0)),\label{conv:ieva}   \\
\bar{f}_{\nu} \longrightarrow & \, \bar{f} \quad && \text{ in } L^1(0,T;L^p(\calF(0))),  \nonumber . 
\end{align}
And  
\begin{gather}
\label{grad:conv:ieva}
\int_{\calF(0)}\lvert \bar{\rho}^{in}_{\nu} \rvert^2 \, dy -  \iint_{\partial \calF^+_{\nu}}  \lvert \bar{\rho}_{\nu}^+\rvert^2 (\bar{v}_{\nu}\cdot n- \bar{q}) ds dt \leq \nonumber \\  \frac{1}{\sqrt{\nu}} \left( \int_{\calF(0)}\lvert \bar{\rho}^{in}_{\nu} \rvert^p \, dy -   \iint_{\partial \calF^+}  \lvert \bar{\rho}_{\nu}^+ \rvert^p (\bar{v}_{\nu}\cdot n- \bar{q}) ds dt \right) \\
\text{ and } \quad  \left( \int_0^T \left(\int_{\calF(0)}\lvert \bar{f}_{\nu}\rvert^2  \, dy  \right)^{1/2} dt  \right)^2 \leq \frac{1}{\sqrt{\nu}}   \left( \int_0^T \left(\int_{\calF(0)}\lvert \bar{f}_{\nu}\rvert^p  \, dy \right)^{1/p} dt  \right)^p .                             \nonumber
\end{gather} 
For $ p = + \infty $ the convergences \eqref{conv:ieva} hold in any $ L^s $ with $ s < +\infty $ with exponent $ 1/s $ on $ \bar{v}_{\nu} \cdot N - \bar{q} $ and $ \bar{v} \cdot N - \bar{q} $.

\end{Lemma}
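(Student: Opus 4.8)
The plan is to build the four approximating families by density and mollification, treating separately the velocity field (where the divergence constraint and the smoothness of the normal trace are the only non-routine points), the initial datum and the source term (routine mollification), and the entering datum $\bar\rho^+$ (where the weight $|\bar v\cdot N-\bar q|$ and the $\nu$-dependent inflow set $\partial\calF^+_\nu$ force some care), and finally to arrange the scaling inequalities \eqref{grad:conv:ieva} by a diagonal argument in which the mollification scale is taken as a function of $\nu$ tending to zero slowly enough.

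First I would not mollify $\bar v$ directly but instead construct a divergence-free field $v_\nu$ on $\bbR^+\odot\calF(t)$ with $v_\nu\to v$ in $\calL^1_{loc}(\bbR^+;W^{1,q}(\calF(t)))$ and with $v_\nu\cdot n$ smooth on $\bbR^+\odot\partial\calF(t)$, and then set $\bar v_\nu(t,y)=\nabla Y(t,X(t,y))\,v_\nu(t,X(t,y))$. By the identity $(\nabla Y)^TN=n$ from the change-of-variables Lemma one has $\bar v_\nu(t,y)\cdot N(y)=v_\nu(t,X(t,y))\cdot n(t,X(t,y))$, so the normal trace of $\bar v_\nu$ is $C^1$ as soon as $v_\nu\cdot n$ is smooth and $X\in C^{1,\alpha}$; moreover the $W^{1,q}$ convergence $v_\nu\to v$ on $\calF(t)$ transfers to $\bar v_\nu\to\bar v$ in $L^1(0,T;W^{1,q}(\calF(0)))$ since $X,\nabla Y$ are fixed and $C^{1,\alpha}$ uniformly on $[0,T]$, and the pushforward divergence of $\bar v_\nu$ is $v_\nu$, hence zero. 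To produce $v_\nu$ I would use the Hodge-type decomposition $v=\nabla\varphi+v_\sigma$ with $\nabla\varphi$ solving the Neumann problem with datum $g=v\cdot n$ and $v_\sigma$ divergence-free with zero normal component: approximate $g$ by smooth $g_\nu$ in $\calL^1_{loc}(\bbR^+;W^{1-1/q,q}(\partial\calF(t)))$ and let $\varphi_\nu$ be the corresponding potential, which is smooth up to the boundary because $\partial\calF(t)$ is $C^{2,\alpha}$; for the solenoidal part use a boundary-adapted mollification as in \eqref{con:boy} to get $v^0_{\sigma,\nu}$ and correct its (small) divergence by solving $\div w_\nu=-\div v^0_{\sigma,\nu}$ with zero boundary data, which costs $\|w_\nu\|_{W^{1,q}}\lesssim\|\div v^0_{\sigma,\nu}\|_{L^q}\to0$. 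Then $v_\nu=\nabla\varphi_\nu+v^0_{\sigma,\nu}+w_\nu$ is divergence-free, converges to $v$, and has smooth normal trace $v_\nu\cdot n=g_\nu$.

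For $\bar\rho^{in}$ and $\bar f$ one simply mollifies, using density of smooth functions in $L^p(\calF(0))$ and in $L^1_{loc}(\bbR^+;L^p(\calF(0)))$, which yields $\bar\rho^{in}_\nu\in H^1(\calF(0))$ and $\bar f_\nu\in C^0([0,T]\times\calF(0))$ with the stated convergences. The delicate family is $\bar\rho^+_\nu$: I would approximate not $\bar\rho^+$ itself but the product $G:=\bar\rho^+(\bar v\cdot N-\bar q)^{1/p}\mathds{1}_{\partial\calF^+}\in L^p((0,T)\times\partial\calF(0))$, picking $G_\nu$ smooth with $G_\nu\to G$ in $L^p$. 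Since $\bar v_\nu\cdot N-\bar q=g_\nu\circ X-\bar q$ is smooth and converges strongly to $\bar v\cdot N-\bar q$ on $(0,T)\times\partial\calF(0)$, the sets $\partial\calF^+_\nu=\{\bar v_\nu\cdot N-\bar q<0\}$ converge to $\partial\calF^+$ in measure (and a.e. off the $|\bar v\cdot N-\bar q|\,ds\,dt$-null set $\{\bar v\cdot N-\bar q=0\}$). On the open set $\{\bar v_\nu\cdot N-\bar q<-\delta_\nu\}$, for a threshold $\delta_\nu>0$, the weight is smooth and bounded away from $0$, so one can divide $G_\nu$ by a smooth regularization of $|\bar v_\nu\cdot N-\bar q|^{1/p}$ there, cut off smoothly and extend by zero to all of $[0,T]\times\partial\calF(0)$; this gives $\bar\rho^+_\nu\in C^1$ with $\bar\rho^+_\nu(\bar v_\nu\cdot N-\bar q)^{1/p}\mathds{1}_{\partial\calF^+_\nu}$ equal to $G_\nu$ up to an error supported on $\{-\delta_\nu\le\bar v_\nu\cdot N-\bar q<0\}$, whose $L^p$-mass vanishes as $\nu,\delta_\nu\to0$ by equi-integrability. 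A Vitali argument then gives \eqref{conv:ieva}; for $p=+\infty$ one replaces $1/p$ by $1/s$ and argues verbatim for each finite $s$, the $W^{1,1}$-approximation of $\bar v$ still having smooth normal trace.

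Finally, for \eqref{grad:conv:ieva}: for each fixed mollification scale $\epsilon$ (and threshold $\delta$) the left-hand $L^2$-type quantities $\|\bar\rho^{in}_\epsilon\|_{L^2}^2$, $\iint_{\partial\calF^+_\epsilon}|\bar\rho^+_\epsilon|^2|\bar v_\epsilon\cdot n-\bar q|$ and $(\int_0^T\|\bar f_\epsilon\|_{L^2}\,dt)^2$ are finite numbers, while the right-hand $L^p$-type quantities converge as $\epsilon\to0$ to strictly positive limits (each is a sum of non-negative terms, the boundary contribution being $-\iint_{\partial\calF^+}|\bar\rho^+|^p(\bar v\cdot n-\bar q)\ge0$). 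Since $\nu^{-1/2}\to+\infty$, for each $\epsilon$ there is $\nu_0(\epsilon)$ so that both inequalities in \eqref{grad:conv:ieva} hold for all $\nu<\nu_0(\epsilon)$; letting $\epsilon(\nu)\to0$ decrease slowly (constant on dyadic blocks of $\nu$) produces families indexed by $\nu$ satisfying simultaneously all the convergences above and \eqref{grad:conv:ieva}. I expect the main obstacle to be the construction of $\bar\rho^+_\nu$: reconciling the $C^1$-regularity demanded by Proposition~\ref{exi:vis:sys} with the only available control — the weighted $L^p$-bound involving $|\bar v\cdot N-\bar q|$, which degenerates — while the effective inflow part $\partial\calF^+_\nu$ of the boundary itself moves with $\nu$.
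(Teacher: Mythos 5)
Your proposal is correct in substance and reaches all the stated conclusions, but on the one genuinely delicate step it takes a different route from the paper. For $\bar\rho^{+}_{\nu}$ the paper does not approximate the weighted product $G=\bar\rho^{+}(\bar v\cdot N-\bar q)^{1/p}\mathds{1}_{\partial\calF^{+}}$ and then divide by a regularised weight; it truncates $\bar\rho^{+}$ at a level $M_{\nu}$ chosen by dominated convergence, and splits the error into three terms (truncation error, change of weight/inflow set coming from replacing $\bar v$ by $\bar v_{n_\nu}$, and smooth approximation of the bounded function $\bar\rho^{+}\wedge M_{\nu}$), each made smaller than $\nu/3$. The truncation route is cleaner in that it never divides by the degenerate weight $|\bar v_{\nu}\cdot N-\bar q|^{1/p}$ and so needs no threshold $\delta_{\nu}$ nor the equi-integrability estimate on the collar $\{-\delta_{\nu}\le \bar v_{\nu}\cdot N-\bar q<0\}$; your route works too, but it is where all the technical risk of your argument is concentrated, and you correctly identified it as such. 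For the velocity the paper simply asserts the existence of the approximating sequence, so your explicit Hodge-plus-Bogovskii construction is extra content; note however one slip there: after the boundary-adapted mollification \eqref{con:boy} of the solenoidal part, $v^{0}_{\sigma,\nu}\cdot n$ does not vanish on $\partial\calF(t)$ and is only $L^1$ in time, so the claim $v_{\nu}\cdot n=g_{\nu}$ is not literally true and the $C^{1}$ regularity in $t$ of the normal trace is not yet secured; this is fixable by also mollifying in time and absorbing the residual normal trace into the Neumann datum $g_{\nu}$ (or by mollifying the stream function of $v_\sigma$), but as written it is a gap in an otherwise sound construction. Your diagonal (relabelling) argument for \eqref{grad:conv:ieva} is exactly what the paper does in one line, and your treatment of $\bar\rho^{in}_{\nu}$, $\bar f_{\nu}$ and of the case $p=+\infty$ matches the paper's.
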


\begin{proof}

Consider the case $ p < \infty $. It is not difficult do define functions $ \bar{v}_n $ that converge to $ \bar{v} $ in $ L^1(0,T; W^{1,q}(\calF(0))) $ as $ n $ goes to $ + \infty $ and that satisfy the required properties and regularities. 

For $ M > 0 $, let denote by $ f \wedge M ( x ) = f(x) $ if $ \lvert f(x)\rvert \leq M  $ and $ f \wedge M ( x ) = f(x) $ if $ \lvert(x)\rvert > M  $. Let consider the following inequality
\begin{align*} 
\|\bar{\rho}^+ (\bar{v} \cdot N & - \bar{q} )^{1/p}   \mathds{1}_{\partial \calF^+} - \bar{\rho}_{\nu}^+ (\bar{v}_{n} \cdot N - \bar{q} )^{1/p} \mathds{1}_{\partial \calF^+}\|_{L^p} \\ 
 \leq & \, \|\bar{\rho}^+ (\bar{v} \cdot N - \bar{q} )^{1/p} \mathds{1}_{\partial \calF^+} - (\bar{\rho}^+ \wedge M_{\nu}) (\bar{v} \cdot N - \bar{q} )^{1/p} \mathds{1}_{\partial \calF^+} \|_{L^p}  \\
 & \, + \|(\bar{\rho}^+ \wedge M_{\nu}) (\bar{v} \cdot N - \bar{q} )^{1/p} \mathds{1}_{\partial \calF^+}- (\bar{\rho}^+ \wedge M_{\nu}) (\bar{v}_n \cdot N - \bar{q} )^{1/p} \mathds{1}_{\partial \calF^+_n} \|_{L^p}
 \\
& \,  + \| (\bar{\rho}^+ \wedge M_{\nu}) (\bar{v}_n \cdot N - \bar{q} )^{1/p} \mathds{1}_{\partial \calF^+_n} - \bar{\rho}_{\nu}^+ (\bar{v}_n \cdot N - \bar{q} )^{1/p} \mathds{1}_{\partial \calF^+_n}\|_{L^p}
\end{align*}
By dominate convergence, we fix $ M_{\nu} $ such that the first term on the right hand side is less or equal to $ \nu/3 $. Then we denote by $ v_{\nu} = v_{n_{\nu}}$ such that the second term is less the  $ \nu/3 $. Finally $ \rho^+ \wedge M_{\nu} $ is $ L^p $ so by density of smooth functions we choose one such that the last term of the right hand side is smaller then $ \nu/3 $. With this choices the second convergence of \eqref{conv:ieva} holds true and the approximation is smooth. Regarding $ \bar{\rho}^{in}_{\nu} $ and $ \bar{f}_{\nu} $ we use the density of smooth functions in $ L^p $. Finally \eqref{grad:conv:ieva} holds true after relabelling the sequences.

\end{proof}

\subsubsection{Proof of Proposition \ref{Prop:1:Last}}

We are now able to prove Proposition \ref{Prop:1:Last}. 

\begin{proof}[Proof of Proposition \ref{Prop:1:Last} and \ref{Prop:2:Last}]
	
Let us recall that Proposition \ref{Prop:equivalence} shows that Proposition \ref{Prop:1:Last} and \ref{Prop:2:Last} are equivalent so let us prove the second one.

Let us consider the case $ p < + \infty $ and let $ \bar{v} $, $ \bar{\rho}^{in} $ and $ \bar{\rho}^+ $ the given vector field and data. Lemma \ref{approx:lem} ensures that there exist vector fields $ v_{\nu} $ and data $ \bar{\rho}^{in}_{\nu} $, $ \bar{\rho}^+_{\nu} $ that satisfy the hypothesis of Proposition \ref{exi:vis:sys}, in particular there exist solutions $ \bar{\rho}_{\nu} $ that satisfy the a-priori bounds \eqref{est:conv}. After noticing that $ \det(\nabla X) \geq c > 0  $ in $ [0,T] \times \calF(0) $ and using \eqref{est:conv} and \eqref{conv:ieva}, we deduce that up to subsequence 
\begin{gather*}
\bar{\rho}_{\nu} \cvwstar \bar{\rho} \quad \text{ in } L^{\infty}(0,T;L^p(\calF(t))) \quad \text{ and } \quad \\ \bar{\rho}^-_{\nu}(\bar{v}_{\nu}\cdot N - \bar{q})^{1/p}\mathds{1}_{\partial \calF^-_{\nu}} \cv \bar{\rho}^-(\bar{v} \cdot N - \bar{q})^{1/p}\mathds{1}_{\partial \calF^-} \quad \text{ in } L^{p}([0,T]\time \partial \calF(0)).
\end{gather*}
The functions $ \bar{\rho}_{\nu} $ satisfy the equation \eqref{DP:theo:tind:vis}, in particular they satisfy the weak formulation 
\begin{align*}
\int_{\calF(0)} \bar{\rho}^{in}_{\nu} & \varphi(0,.) \, dy +  \int_{\bbR^+}\int_{\calF(0)} \bar{\rho}_{\nu} (\partial_t \varphi  + \div((\bar{V}_{\nu}+\bar{v}_{\nu}) \varphi)) \, dy dt \\ & \, + \nu \int_{\bbR+} \int_{\calF(0)} (\Delta Y - \div(\nabla Y (\nabla Y)^T))\cdot \nabla \bar{\rho}_{\nu} \varphi \, dy dt \\ &  -\nu \int_{\bbR+} \int_{\calF(0)} \nabla Y (\nabla Y)^T : \nabla \bar{\rho}_{\nu} \otimes \nabla \varphi \, dy dt  \\ & + \int_{\bbR+} \int_{\calF(0)} \bar{f}_{\nu} \varphi \, dy dt= \sum_{i = +,-}\int_{\partial \calF^i_{\nu}} \bar{\rho}^i_{\nu} \varphi (\bar{v}_{\nu} \cdot n - \bar{q}) \, ds dt. 
\end{align*}
Passing to the limit in $ \nu $ we deduce that $ (\bar{\rho}, \bar{\rho}^-) $  satisfies \eqref{tr:2} in fact the terms in which appears $ \nabla \bar{\rho}_{\nu} $ converges to zero due to the second bound of \eqref{est:conv} together with \eqref{grad:conv:ieva}.

The case $ p = + \infty $ can be treated in a similar way. It is enough to consider $ s = n/(n-1) $ in the Lemma \ref{approx:lem} and to notice that $ \div_y(\bar{v}_{\nu}) = \sum_{i,k} \partial_i \partial_k X_i \bar{v}_{\nu,k} $ from the hypothesis $\div_x(\nabla X(t, Y(t,x))\bar{v}_{\nu}(t,Y(t,x))) = 0 $, in particular no derivatives appear on $ \bar{v}_{\nu} $ in the expression $\div_y(\bar{v}_{\nu})$.
 
\end{proof}

\subsubsection{Proof of Proposition \ref{Prop:uniq:distr}}

Let us now prove uniqueness of distributional solutions to the transport equation.

\begin{proof}[Proof of Proposition \ref{Prop:uniq:distr}]

Let $ (\rho, \rho^-) $ a distributional solution to the transport equation in the sense of \eqref{tr:1}. Then by Theorem \ref{theo:dist:equiv:ren} there exists a trace $ \gamma \rho $ on $ \bbR^+ \odot \partial \calF(t) $ such that $ (\rho, \gamma \rho \mathds{1}_{\partial \calF^-} ) $ is a renormalized solution to the transport with entering data $ \gamma \rho \mathds{1}_{\partial \calF^+} $.  

Let now show that $ \rho^i = \gamma \rho \mathds{1}_{\partial \calF^i} $ for $ i = +,- $. This will implies uniqueness because renormalized solutions to the transport are unique. 

Consider a smooth positive function $ \beta $ such that $ \beta(s) = 1/|s| $ for $ |s| \geq 1 $ and $ \bar{\beta}(s) = s \beta(s) $. The function $ (\beta(\rho), \beta(\gamma \rho)\mathds{1}_{\partial \calF^-}) $ is a distributional solution of the transport equation with entering data $ \beta(\gamma \rho)\mathds{1}_{\partial \calF^+} $ due to Theorem \ref{theo:dist:equiv:ren}. If we define $ \beta(\rho)_{\eps} = \eta_{\eps} \star_{\nu} \beta(\rho) $, we have
\begin{equation*}
\partial_t \beta(\rho)_{\eps} + v \cdot \nabla \beta(\rho)_{\eps} = R_{\eps} -(f\beta'(\rho))_{\eps} 
\end{equation*} 
with $ R_{\eps} $ converging to zero in $ \calL^1(0,T; L^q(\calF(t))) $. Moreover $ \beta(\gamma \rho) = \gamma \beta(\rho) $.

Test \eqref{tr:1} with the admissible test function $ \psi = \beta(\rho)_{\eps} \varphi $ and after letting $ \eps $ to zero we deduce
\begin{align*}
 \int_{\calF(0)} \rho^{in} \beta(\rho^{in})& \varphi(0,.) \, dx +   \int_{\bbR^+}\int_{\calF(t)} \rho \beta{\rho} (\partial_t \varphi + v\cdot \nabla \varphi) \, dx dt = \\ & \, \sum_{i = +,-}\int_{\partial \calF^i} \rho^i\beta(\gamma \rho) \varphi (v\cdot n - q) \, ds dt + \int_{\bbR^+}\int_{\calF(t)} f\beta(\rho)  \varphi \, dx dt \\ & \,  + \int_{\bbR^+}\int_{\calF(t)} f\beta'(\rho)\rho  \varphi \, dx dt.
\end{align*}
Note that also $ \bar{\beta} $ is smooth bounded with bounded derivative. So if we use it in \eqref{ren:equ} we deduce
\begin{align*}
\int_{\calF(0)} &  \rho^{in} \beta(\rho^{in}) \varphi(0,.) \, dx +  \int_{\bbR^+}\int_{\calF(t)} \rho \beta{\rho} (\partial_t \varphi + v\cdot \nabla \varphi) \, dx dt = \\ &  \, \int_{\bbR+} \int_{\partial \calF} \gamma\rho \beta(\gamma \rho) \varphi (v\cdot n - q) \, ds dt  + \int_{\bbR^+}\int_{\calF(t)} f(\beta(\rho)+\beta'(\rho)\rho)  \varphi \, dx dt.
\end{align*}
We deduce that 
\begin{equation*}
 \sum_{i = +,-}\int_{\partial \calF^i} \rho^i\beta(\gamma \rho) \varphi (v\cdot n - q) \, ds dt  = \int_{\bbR+} \int_{\partial \calF} \gamma\rho \beta(\gamma \rho) \varphi (v\cdot n - q) \, ds dt
\end{equation*}
for any $ \varphi $, which implies
\begin{equation*}
\sum_{i = +,-} \rho^i\beta(\gamma \rho) (v\cdot n - q)  = \gamma\rho \beta(\gamma \rho) (v\cdot n - q),
\end{equation*}
Recall that $ \beta > 0 $, we conclude  
\begin{equation*}
\sum_{i = +,-} \rho^i \mathds{1}_{\partial \calF^{i}} = \gamma \rho \quad  \quad \quad \, (v\cdot n - q) \, ds dt  \text{-almost everywhere.}
\end{equation*}
\end{proof}

\subsection{Final step of the proof of Theorem \ref{exi:transport:ppppppppp} and Theorem \ref{duality:formula}} 

We have all the ingredients to prove Theorem \ref{exi:transport:ppppppppp}. 

\begin{proof}[Proof of Theorem \ref{exi:transport:ppppppppp}]
	
Let $ \beta \in C^1_b(\bbR) $ a strictly increasing function, in particular it is invertible in its image. It holds that $ \beta(\rho^{in}) $ and $ \beta(\rho^{+}) $ are respectively in $ L^{\infty}(\calF(0)) $ and $ L^{\infty}((0,T)\odot \calF(t) )$.  From Proposition \ref{Prop:1:Last} and \ref{Prop:uniq:distr} there exists a unique distributional solution $ (\bar{\rho}, \bar{\rho}^-)$ to the transport equation associated with the vector field $ v $ and the data $ \beta(\rho^{in}) $ and $ \beta(\rho^{+}) $. Moreover it is the renormalized solution to the transport by Theorem \ref{theo:dist:equiv:ren}. Notice that $ (\bar{\rho}, \bar{\rho}^+ ) $ is in the image of $ \beta $ from the $ L^{\infty } $ a-priori bounds. The couple $ (\rho, \rho^- ) = (\beta^{-1}{\rho}, \beta^{-1}(\rho^-)) $ is then the desired solution.
	
\end{proof}

Let now conclude with the proof of Theorem \ref{duality:formula}

\begin{proof}[Proof of Theorem \ref{duality:formula}]
	
For solutions regular enough the proof is an integration by parts. For $ \beta \in C^1_b(\bbR) $ and $ \eps > 0 $ small we consider $ \tilde{\rho}_{\eps} = \eta_{\eps} \star_{\nu} \beta(\rho) $ and $ \tilde{r}_{\eps} = \eta_{\eps} \star_{\nu} \beta(r) $. From Lemma \ref{Lem:5}, we have that  $ \tilde{\rho}_{\eps} $ and $ \tilde{r}_{\eps} $ 
satisfy the equations
\begin{equation*}
\partial_t \tilde{\rho}_{\eps} + v \cdot \tilde{\rho}_{\eps} = R_{\eps}  \quad  \text{ and } \quad 
\partial_t \tilde{r}_{\eps} + v \cdot \tilde{r}_{\eps} -\tilde{f}_{\eps} = S_{\eps}, 
\end{equation*}
with $ R_{\eps}, S_{\eps} $ converging to zero in $ \calL^1(0,T;L^q(\calF(t))) $. 
Multiply the equation satisfied by $ \tilde{\rho}_{\eps} $ by $ \tilde{r}_{\eps} $ and integrate in $ [0,T]\odot \calF(t) $. After some integrations by parts we deduce
\begin{align*}
\int_{\calF(T)} \tilde{\rho}_{\eps}(T,.) \tilde{r}_{\eps}(T,.) \, dx & \, -  \int_{\calF(0)} \tilde{\rho}_{\eps}(0,.) \tilde{r}_{\eps}(0,.) \, dx   + \sum_{i = +,-} \tilde{\rho}_{\eps}^i \tilde{r}_{\eps}^i(v \cdot n - q ) \, ds dt  \\ = &\, \int_0^T \int_{\calF(t)} \tilde{f}_{\eps} \tilde{\rho}_{\eps} + \int_0^T \int_{\calF(t)} S_{\eps} \tilde{\rho}_{\eps} + \int_0^T \int_{\calF(t)} \tilde{r}_{\eps} R_{\eps}.
\end{align*}
Passing to the limit in $ \eps $ we deduce 
\begin{align*}
\int_{\calF(T)} \beta(\rho)(T,.)&  \beta(r)(T,.) \, dx  -  \int_{\calF(0)} \beta(\rho)(0,.) \beta(r)(0,.) \, dx  \\ & \,  + \sum_{i = +,-} \beta(\rho)^i \beta(r)^i(v \cdot n - q ) \, ds dt =  \int_0^T \int_{\calF(t)} \beta'(\rho)f \beta(\rho).
\end{align*}
This holds for any $ \beta $, in particular by an approximation argument the equality holds for $ \beta(z) = z $.

\end{proof}

\section{The Div-Curl system in time dependent domain}

Before moving to the proof of Theorem \ref{exi:Lp:Nempty}, \ref{exi:L1:ss} and \ref{app:theo:theo} let us study some properties of the Div-Curl system. In the first subsection we study well-posedness and we show some estimates with constants independent of time. In the second one we look for uniform estimates when the size of the holes are small enough. 

\subsection{Well-posedness for the Div-Curl system in time-depending domain}

In this subsection we study the Div-Curl system in the case the holes are not allowed to become points. More precisely the fluid domain $ \calF(t) $ is give as in the beginning of Section \ref{section:setting} and we suppose $ r_i(t) \geq c > 0 $ for any $ t $. In particular we are looking for estimates independent of the shape of $ \calF(t) $. All the results are taken from \cite{Jud}.

The Div-Curl system we are interested in is 
\begin{align}
\div u = & \, 0 && \text{ for } x \in \calF(t), \nonumber \\
\curl u = & \, \omega && \text{ for } x \in \calF(t), \nonumber \\
u\cdot n = & \, g && \text{ for } x \in \partial  \calF(t), \label{div:curl:est:ell}\\
\oint_{\pS^{i}(t)} u \cdot \tau  = & \, \calC_{i}(t). &&  \nonumber
\end{align}

The following lemmas holds.

\begin{Lemma}
	\label{ell:est:1}
	Let $ l \geq 1 $, let $ p \in (1, + \infty) $ and let $ \partial \calF $ of class $ C^{l+1} $. If $ \omega(t,.) \in W^{l-1,p}(\calF(t)) $, $ g \in W^{l-1/p,p}(\partial \calF(t)) $ such that $ \oint g = 0 $ and $ \calC_i(t) \in \bbR $. Then there exists a unique solution $ u(t,.) \in W^{l,p}(\calF(t)) $ of \eqref{div:curl:est:ell} such that 
	\begin{align*}
	\| u(t,.) & \|_{W^{l,p}(\calF(t))} \leq \\ & C\frac{p^2}{p-1}\left(\|\omega(t,.)\|_{W^{l-1,p}(\calF(t))} + \|g(t,.)\|_{W^{l-1/p,p}(\partial \calF(t))} + \sum_{i = +, -}\lvert \calC_i(t)\rvert \right),
	\end{align*}	 
	where $ C $ does not depend on time. 
	
\end{Lemma}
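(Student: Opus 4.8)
The plan is to reduce the problem on the time-dependent domain $\calF(t)$ to a fixed reference domain $\calF(0)$ via the diffeomorphism $(\Id,b)$ (extended to a diffeomorphism $X$ of the whole domain as in the earlier change-of-variables lemma), and there invoke the classical elliptic theory for the Div-Curl system with the crucial point being that all constants are controlled uniformly in $t$. First I would recall the standard reduction: since $\div u = 0$ in a (not necessarily simply connected) planar domain $\calF(t)$ whose complement has three components (the outer boundary $\partial\Omega$ and the two holes $\pS^\pm$), one writes $u = \nabla^\perp \phi + \sum_{i}\calC_i(t) X_i + u_g$, where $u_g$ handles the flux datum $g$, each $X_i$ is the harmonic field with circulation $\delta_{ij}$ around $\pS^j$, and $\phi$ solves the scalar problem $-\Delta\phi = \omega$ in $\calF(t)$, $\phi = $ const on each boundary component, $\oint_{\partial\calF(t)} \partial_n\phi = -\oint g$ — the boundary constants being fixed by the circulation conditions. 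Then $W^{l,p}$ regularity and the estimate for $u$ follow from $W^{l+1,p}$ elliptic regularity for $\phi$ together with elliptic regularity for the harmonic extension realizing $u_g$ (a Neumann-type problem with datum $g\in W^{l-1/p,p}(\partial\calF(t))$), and the smoothness (hence $W^{l,p}$ boundedness) of the fixed fields $X_i$.

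The key steps, in order: (i) pull everything back to $\calF(0)$ by $X(t,\cdot)$; the transported operators $\div$ and $\curl$ become second-order operators with coefficients built from $\nabla X$, $\nabla Y$ and their derivatives, which by the regularity assumed on the septuple ($C^{1,\alpha}_{loc}$ in time into $C^{3,\alpha}$ in space, and $\partial\calF(0)\in C^{l+1}$ for $l$ in the stated range) are bounded in the relevant Hölder/$W^{k,\infty}$ norms uniformly on compact time intervals, with ellipticity constants ($\xi\cdot\nabla X\,\xi \geq C|\xi|^2$) bounded below uniformly in $t$. (ii) Apply the Agmon–Douglis–Nirenberg / Calderón–Zygmund estimates for the pulled-back elliptic system on the fixed domain $\calF(0)$; standard $L^p$ elliptic theory gives a constant depending only on the ellipticity bounds, the coefficient norms, the domain $\calF(0)$, $l$, and $p$ — and the sharp dependence on $p$ is the $p^2/(p-1)$ factor, which is exactly the Calderón–Zygmund / Riesz-transform operator-norm growth as $p\to 1$ and $p\to\infty$ (this is the classical Yudovich bound, cf. \cite{Jud}). (iii) Transport the estimate back: since $\|\nabla X\|_{L^\infty}, \|\nabla Y\|_{L^\infty}$ and $\det\nabla X$ are bounded above and below uniformly in $t$, the $W^{l,p}$ norms on $\calF(t)$ and on $\calF(0)$ are comparable with $t$-independent constants, and likewise for the trace spaces $W^{l-1/p,p}(\partial\calF(t))$ via $b$. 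Uniqueness follows from the energy identity: if $\omega=0$, $g=0$, $\calC_i=0$ then $u = \nabla^\perp\phi$ with $\phi$ harmonic, constant on each boundary component, zero normal derivative flux and zero circulations, forcing $\phi$ constant, hence $u=0$.

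The main obstacle I expect is purely the uniform-in-time bookkeeping: one must check that the diffeomorphism $X$ constructed earlier genuinely yields coefficient norms and ellipticity constants bounded on every $[0,T]$ — which relies on the compatibility (uniform lower bound on mutual distances of $\Omega,\calS^+,\calS^-$) and the stated time-regularity of the septuple — and that the ADN constant for the fixed domain can be taken independent of the (smoothly varying) coefficients as long as those stay in a fixed bounded set with fixed ellipticity. Tracking the precise $p$-dependence $p^2/(p-1)$ through the decomposition (it enters only through the Calderón–Zygmund estimate for $\nabla^2\phi$, since the harmonic-extension and $X_i$ contributions carry $p$-independent or mildly $p$-dependent constants that are absorbed) is the one quantitative point that needs care, but it is exactly the estimate already used by Yudovich and can be cited. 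Everything else — the Helmholtz–Hodge-type decomposition, elliptic regularity, trace theory — is standard and I would present it compactly, referring to \cite{Jud}.
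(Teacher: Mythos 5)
The paper gives no proof of this lemma --- it simply refers to \cite{Jud} --- and your sketch is precisely the classical argument that reference contains: a Hodge-type decomposition of the div--curl system into a Dirichlet stream-function part, harmonic circulation fields and a flux-carrying gradient part, Calder\'on--Zygmund elliptic estimates yielding the $p^2/(p-1)$ constant, and a pull-back to the fixed domain $\calF(0)$ to make all constants uniform in $t$. Your plan is correct and consistent with the paper's (cited) approach; the only point needing care in a full write-up is the one you already flag, namely that the ellipticity and coefficient bounds of the transported operator are uniform on every compact interval $[0,T]$, which follows from the stated regularity of the septuple and the compatibility condition.
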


Let denote by $ \bar{D}_t $ the convective derivative following the boundary of the domain $ \partial  \calF(t) $, i.e. $ \bar{D}_t g = \partial_t g + \nabla Y \cdot \nabla g $. The following lemmas holds.

\begin{Lemma}
	\label{ell:est:2}
	Let $ p \in (1, +\infty) $. Let $ \omega(t,.) \in W^{1,p}(\calF(t)) $ such that $ \partial_t \omega(t,.) \in L^p(\calF(t)) $, let $ g \in W^{2-1/p,p}(\partial \calF(t)) $ with $ \bar{D}_t g \in W^{1-1/p,p}(\calF(t)) $ and let $ \calC_i(t) \in \bbR $ with $ \dot{\calC}_i(t) \in \bbR $. Then there exists a unique solution $ u $ of \eqref{div:curl:est:ell} such that $ u(t,.) \in W^{2,p}(\calF(t)) $ and $ \partial_t u(t,.) \in W^{1,p}(\calF(t)) $ such that 
	\begin{align*}
	\| u(t,.)& \|_{W^{2,p}(\calF(t))} +  \| \partial_t u(t,.) \|_{W^{1,p}(\calF(t))} \leq \\  C\frac{p^2}{p-1}\Bigg(&\|\omega(t,.)\|_{W^{1,p}(\calF(t))} + \|\partial_t \omega(t,.)\|_{L^{p}(\calF(t))} + \|g(t,.)\|_{W^{2-1/p,p}(\partial \calF(t))} \\ & \quad + \|\bar{D}_t g(t,.)\|_{W^{1-1/p,p}(\partial \calF(t))} + \sum_{i = +, -}\left(\lvert\calC_i(t)\rvert+ \lvert \dot{\calC_i}(t)\rvert \right) \Bigg),
	\end{align*}	 
	where $ C $ does not depend on time. 	
\end{Lemma}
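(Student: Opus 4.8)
The plan is to read off the spatial part of the statement from the stationary estimate of Lemma~\ref{ell:est:1} and to obtain the control of $\partial_t u$ by differentiating \eqref{div:curl:est:ell} in time after freezing the domain. For the first part, applying Lemma~\ref{ell:est:1} with $l=2$ gives, for each fixed $t$, the unique solution $u(t,\cdot)\in W^{2,p}(\calF(t))$ of \eqref{div:curl:est:ell} together with the bound
\[
\|u(t,\cdot)\|_{W^{2,p}(\calF(t))}\le C\tfrac{p^{2}}{p-1}\Big(\|\omega(t,\cdot)\|_{W^{1,p}(\calF(t))}+\|g(t,\cdot)\|_{W^{2-1/p,p}(\partial\calF(t))}+\sum_{i=+,-}|\calC_i(t)|\Big),
\]
the constant $C$ being independent of $t$ because the geometry, hence the constants in Lemma~\ref{ell:est:1}, is uniformly controlled on each interval $[0,T]$. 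It remains to estimate $\partial_t u$.

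To that end I would pass to the reference domain $\calF(0)$ via the family of diffeomorphisms $X(t,\cdot):\calF(0)\to\calF(t)$ constructed in Section~\ref{sec:ren}, which is $C^{1,\alpha}$ in $t$ with values in $C^{2,\alpha}$, together with its inverse. A suitable transform $\bar u(t,\cdot)$ of $u(t,\cdot)$ then solves, for each $t$, a first-order elliptic system on the \emph{fixed} domain $\calF(0)$, of Div--Curl type with coefficients built from $\nabla X$ and $\nabla X^{-1}$, hence $C^{1,\alpha}$ in time and a uniformly bounded perturbation of the identity; its right-hand sides are those obtained from $\omega$, $g$, $\calC_i$ by the change of variables, which by hypothesis lie in $W^{1,p}(\calF(0))$, $W^{2-1/p,p}(\partial\calF(0))$, $\bbR$ and admit time derivatives in $L^{p}(\calF(0))$, $W^{1-1/p,p}(\partial\calF(0))$, $\bbR$ --- the convective derivative $\bar{D}_t g$ being precisely the time derivative of the transported Dirichlet datum. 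Forming the time difference quotient $\delta_h\bar u=h^{-1}\big(\bar u(t+h,\cdot)-\bar u(t,\cdot)\big)$, one finds that $\delta_h\bar u$ solves the same elliptic system with right-hand side equal to the difference quotients of the transported data plus commutator terms coming from the $t$-dependence of the coefficients; these commutators are of lower differential order in $\bar u$, so they are bounded in $L^{p}(\calF(0))$ and $W^{1-1/p,p}(\partial\calF(0))$ by $C\|\bar u(t,\cdot)\|_{W^{2,p}(\calF(0))}\lesssim\|u(t,\cdot)\|_{W^{2,p}(\calF(t))}$, uniformly in $h$.

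The variable-coefficient elliptic estimate on $\calF(0)$, which reduces to Lemma~\ref{ell:est:1} with $l=1$ up to the time-uniform bounded perturbation of the coefficients, then bounds $\|\delta_h\bar u\|_{W^{1,p}(\calF(0))}$ by $\tfrac{p^{2}}{p-1}$ times the difference quotients of the transported $\omega$, $g$, $\calC_i$, quantities of $|\dot\calC_i|$-type, and $\|u\|_{W^{2,p}}$, uniformly in $h$; letting $h\to0$ and using weak compactness in $W^{1,p}(\calF(0))$ identifies $\partial_t\bar u$ and hence $\partial_t u(t,\cdot)\in W^{1,p}(\calF(t))$, with
\[
\|\partial_t u(t,\cdot)\|_{W^{1,p}(\calF(t))}\le C\tfrac{p^{2}}{p-1}\Big(\|\partial_t\omega(t,\cdot)\|_{L^{p}(\calF(t))}+\|\bar{D}_t g(t,\cdot)\|_{W^{1-1/p,p}(\partial\calF(t))}+\sum_{i=+,-}\big(|\calC_i(t)|+|\dot\calC_i(t)|\big)+\|u(t,\cdot)\|_{W^{2,p}(\calF(t))}\Big).
\]
Substituting the first bound for the last term and adding the two inequalities gives the claimed estimate, keeping track of the powers of $\tfrac{p^{2}}{p-1}$ as in Lemma~\ref{ell:est:1}; uniqueness of the pair $(u,\partial_t u)$ is immediate from the uniqueness part of Lemma~\ref{ell:est:1}.

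The main obstacle is the step just described: writing down correctly the elliptic system satisfied by the time difference quotient on the reference domain, i.e.\ identifying all the zeroth- and first-order correction terms produced by the motion of $\partial\calF(t)$ --- this is exactly where $\bar{D}_t g$, $\dot\calC_i$ and the boundary-velocity and curvature terms appear --- and checking that each of them is controlled in the correct interior or trace norm by $\|u(t,\cdot)\|_{W^{2,p}(\calF(t))}$, uniformly in $t$. Once the system is set up, the rest is the stationary estimate of Lemma~\ref{ell:est:1} together with a routine compactness argument.
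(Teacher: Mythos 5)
The paper does not actually prove this lemma: it states ``For the proof we refer to \cite{Jud}'', so there is no in-paper argument to compare yours against. Your outline is the natural one and is almost certainly the intended route: the stationary bound is exactly Lemma~\ref{ell:est:1} with $l=2$ (with time-uniform constant because the geometry is a regular compatible septuple), and $\partial_t u$ is obtained by pulling back to a fixed domain, taking time difference quotients, and treating the resulting commutators as lower-order right-hand sides for the same Div--Curl estimate. The identification of $\bar D_t g$ with the time derivative of the transported boundary datum, and of $\dot{\calC}_i$ with the differentiated circulation constraint, is correct.

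Two points in your sketch need repair before it is a proof. First, the pulled-back system on $\calF(0)$ has coefficients built from $\nabla X(t,\cdot)$, and for $t$ away from $0$ these are \emph{not} a small perturbation of the identity, so the estimate for the difference quotient does not ``reduce to Lemma~\ref{ell:est:1} up to a bounded perturbation''; you either need the variable-coefficient Div--Curl estimate with constants controlled by the $C^{1,\alpha}$ norms and the ellipticity of $\nabla X(\nabla X)^T$ (uniform in $t$ by the regularity of the septuple), or, more cleanly, you should form the difference quotient after transporting $\calF(t+h)$ onto $\calF(t)$ via $X(t+h,Y(t,\cdot))$, which \emph{is} $O(h)$-close to the identity; the latter also makes the commutators manifestly $O(h)\|u(t,\cdot)\|_{W^{2,p}}$ in the required $L^p$ and $W^{1-1/p,p}$ trace norms. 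Second, substituting the $W^{2,p}$ bound for the term $\|u(t,\cdot)\|_{W^{2,p}}$ appearing in your $\partial_t u$ estimate produces a factor $\left(C\frac{p^2}{p-1}\right)^{2}$ on part of the right-hand side, not the single factor $C\frac{p^2}{p-1}$ claimed in the statement; to recover the stated constant you must check that the commutator contributions enter only through the data (via the closed-form representation of $u$) rather than through a second application of the elliptic estimate, or simply accept the squared constant, which is harmless for every use of the lemma in this paper (it is only invoked with $p=2$). The central computation --- listing all zeroth- and first-order corrections from the moving boundary and verifying each in the correct trace norm --- is announced but not carried out, so as written this is a correct plan rather than a complete proof.
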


For the proof we refer to \cite{Jud}.

\subsection{Reflection method for the Div-Curl system}

In the previous section, we show uniform estimates in the case where $ r_i(t) \geq c > 0 $. In this subsection we are interested in the other scenario, i.e. in the case where the size of the holes can be arbitrarily small. 

Let us introduce some notations. Given $ i \in \{ +, - \} $, we denote by $ {}^{op}i $ the only element of $ \{ +, - \} \setminus i $. The fluid domain $ \calF(t) = \Omega \setminus \overline{\text{int}(\calS^+(t)\cup \calS^-(t))} $ and $ \calS^i(t) $ is the bounded subset of $ \bbR^2 $ with boundary  $ \partial \calS^i(t)  = \{ r^i(t)S^i(t,x) + h^i(t) $ $\text{ such that } $ $ x \in \partial B_{1}(0) \}. $
Les us denote by $ \calF_{r^+, r^-}(t) = \calF(t) $ and $ \calS^i_{r^i}(t) = \calS^i(t) $ to emphasize the presence of the parameter $r^+ $ and $r^- $ in the definition of $ \calF(t)$ and $ \calS^i(t)$. 

In the following we show some estimates uniform in $ r^+ $ and $ r^- $ for the Div-Curl system \eqref{div:curl:est:ell} with $ \omega = 0 $ and $ \calC_i(t) = 0$, in the case where $ 0 < r^+$, $ r_{-} < \kappa $ for $ \kappa $ small enough or for $ 0 < r^i < \kappa $, $ r^{{}^{op}i} \geq c > 0 $ and $ \kappa $ small enough that depends on $ c $.

\begin{Lemma}
	\label{lemma:ref:meth}
	Let $ T > 0 $ and let $ S^i$ and $ h^i$ as in the beginning of Section \ref{section:setting}. Then there exists $ \kappa_T > 0 $ such that for $ 0 < r^{+}$, $ r^{-} < \kappa_{T} $, for $ \calF_{r^{+},r^{-}}(t) = \Omega \setminus(\calS^{+}_{r_{+}}(t) \cup \calS^{-}_{r_{-}}(t))$ and $ g^i \in \tilde{L}^{q}(\pS^{i}_{r^i}(t)) $, where $ \sim $ means that $ \int_{\pS^{i}_{r^{i}}(t)} g^i = 0 $, the following estimate holds
	\begin{equation*}
	\| w\|_{L^{q}(\calF_{r^{+},r^{-}}(t))} \leq C_{T}\sum_{i \in \{+,-\}}\left(r^i\right)^{1/q} \| g^{i}\|_{\tilde{L}^{q}(\pS^{i}_{r^{i}})}
	\end{equation*} 
	for any $ w $ solution of $ \div w = 0 $ and $ \curl w = 0 $ on $ \calF_{r^+, r^-}(t) $, $ w\cdot n = g^i $ on $ \pS^{i}_{r^i} $, $ w \cdot n = 0 $ on $\partial \Omega $ and $ \int_{\pS^{i}_{r^i}} w \cdot \tau = 0 $ for $ i \in \{+,- \}$.

	Moreover for $ r^{{}^{op}i} \in C^{1,\alpha}(0,T) $ with $ r^{{}^{op}i} \geq \bar{c} > 0 $, there exist $ \kappa^{{}^{op}i}_{T}$ such that for $ r^i  < \kappa^{{}^{op}i}_{T} $ and for $ g^{i} \in \tilde{L}^{q}(\pS^{i}_{r^i}(t)) $, the following estimate holds 
	\begin{equation*}
	\| w\|_{L^{q}(\calF_{r^{+},r^{-}}(t))} \leq C_{T}(r^i)^{1/q} \| g^{i}\|_{\tilde{L}^{q}(\pS^{i}_{r^{i}})}
	\end{equation*}   
	for any $ w $ solution of $ \div w = 0 $ and $ \curl w = 0 $ on $ \calF_{r^+, r^-}(t) $, $ w\cdot n = g^i $ on $ \pS^{i}_{r^i} $,  $ w\cdot n = 0 $ on $ \pS^{{}^{op}i}_{r^{{}^{op}i}} $, $ w \cdot n = 0 $ on $\partial \Omega $ and $ \int_{\pS^{j}_{r^j}} w \cdot \tau = 0 $ for $ j \in \{+,- \}$.
	
\end{Lemma}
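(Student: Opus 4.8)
The plan is to reduce \eqref{div:curl:est:ell} with $\omega=0$, $\calC_i=0$ to its potential‑theoretic form, peel off an explicit profile attached to the small hole, and then close the argument by a reflection‑type (Schwarz alternating) iteration whose contraction is powered by the smallness of $\kappa_T$. First I would note that since $w$ is divergence and curl free in the (triply connected) domain $\calF_{r^+,r^-}(t)$ with vanishing circulation around each hole, it is a single‑valued gradient $w=\nabla\phi$ with $\phi$ harmonic, $\partial_n\phi=g^i$ on $\pS^i_{r^i}(t)$, $\partial_n\phi=0$ on $\partial\Omega$, each datum $g^i$ of zero mean. By linearity it is enough to estimate separately the two fields $w^+$, $w^-$ carrying $g^+$, resp.\ $g^-$, on a single hole and homogeneous conditions on the rest of $\partial\calF$; I would run the argument for $w:=w^+$, the bound for $w^-$ being symmetric. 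Throughout, every auxiliary field I introduce will be divergence free away from the hole it is attached to and will have vanishing flux and circulation on the other boundary components — two disjoint holes inside $\Omega$ never encircle one another — so the zero‑mean compatibility conditions required to solve each sub‑problem are preserved automatically.

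\textbf{Exterior profile and scaling.} I would let $v=\nabla\psi$ solve the exterior problem $\div v=\curl v=0$ in $\bbR^2\setminus\calS^+_{r^+}(t)$, $v\cdot n=g^+$ on $\pS^+_{r^+}(t)$, $\oint_{\pS^+_{r^+}}v\cdot\tau=0$, $v\to0$ at infinity. The dilation $x=h^+(t)+r^+(t)y$ turns $\calS^+_{r^+}(t)$ into the fixed‑size obstacle $\{S^+(t,\sigma):\sigma\in\partial B_1(0)\}$, which encloses the origin and, by the regular compatible geometry hypothesis, is uniformly $C^{3,\alpha}$ and uniformly nondegenerate for $t\in[0,T]$; hence the scale‑one exterior estimate holds with a time‑independent constant, and since the rescaled Neumann datum is of zero mean the rescaled field decays like $|y|^{-2}$, which is $L^q$ at infinity because $q>1$. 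Undoing the dilation and tracking how $L^q$ norms over area and over arclength rescale, one gets
\[
\|v\|_{L^q(\bbR^2\setminus\calS^+_{r^+}(t))}\le C_T\,(r^+)^{1/q}\,\|g^+\|_{\tilde{L}^q(\pS^+_{r^+})},\qquad |v(x)|\le C_T\,(r^+)^{2-1/q}\,|x-h^+(t)|^{-2}\,\|g^+\|_{\tilde{L}^q(\pS^+_{r^+})}
\]
for $x$ bounded away from the hole — the lost power $(r^+)^{1/q}$ being exactly the one in the statement.

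\textbf{Reflection‑type iteration.} I would write $w=v|_{\calF_{r^+,r^-}(t)}+z$; then $z$ solves \eqref{div:curl:est:ell} with $\omega=0$, zero circulations, $z\cdot n=0$ on $\pS^+_{r^+}$ and $z\cdot n=-v\cdot n$ on $\partial\Omega\cup\pS^-_{r^-}$. By the pointwise decay above and the lower bound on the mutual distances of $\partial\Omega$, $\calS^+$, $\calS^-$ on $[0,T]$, this residual datum lives on the ``far'' components and is small, of size $\le C_T(r^+)^{2-1/q}\|g^+\|_{\tilde{L}^q}$ in $W^{1-1/q,q}$. I would then remove it by the same peeling on the other components: an exterior profile for $\calS^-_{r^-}$ when $r^-<\kappa_T$ (first statement), or, when $r^-\ge\bar c$ (second statement), the solution of \eqref{div:curl:est:ell} on $\Omega\setminus\calS^-_{r^-}$ with $\calS^+_{r^+}$ filled in, bounded by the uniform estimate of Lemma \ref{ell:est:1}; in both cases the piece of datum on $\partial\Omega$ is matched by the classical Div–Curl solve on the fixed simply connected domain $\Omega$ (both holes filled in). Restricting these fields back to $\calF_{r^+,r^-}(t)$ reproduces a problem of the original type, but with the datum on $\pS^+_{r^+}$ multiplied by a factor $\le C_T\,\kappa_T^{\,\theta}$ for some $\theta>0$ (from the decay of the new profiles evaluated on the far boundary, combined with an interior elliptic estimate on a ball around $h^+$). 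Choosing $\kappa_T$ small makes ``one round of peeling'' a contraction; summing the geometric series of the $L^q$ norms of all the profiles and adding $\|v\|_{L^q}$ yields $\|w\|_{L^q(\calF_{r^+,r^-}(t))}\le C_T(r^+)^{1/q}\|g^+\|_{\tilde{L}^q(\pS^+_{r^+})}$, and symmetrically for $w^-$; adding the two gives both inequalities (with the constant depending on $\bar c$ in the second case through Lemma \ref{ell:est:1}).

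\textbf{Main obstacle.} The hard part will be making Step 2 rigorous: verifying that each peeling step is genuinely a contraction in the relevant trace norm with a constant uniform in $t\in[0,T]$. This rests on the sharp $|x-h^i|^{-2}$ decay of the rescaled exterior profiles, on careful bookkeeping of the scaling exponents (so that the lost power $(r^i)^{1/q}$ is exactly recovered at each round while a genuine small factor $\kappa_T^{\theta}$ is left over), and on the uniform‑in‑time validity of the scale‑one exterior Neumann estimate — the last point being precisely where the uniform regularity and nondegeneracy of the shape maps provided by the regular compatible geometry assumption are needed. The restriction $q>1$ is also used essentially, to place the exterior profiles in $L^q$ near infinity.
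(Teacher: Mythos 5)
Your proposal is correct and follows essentially the same route as the paper: a reflection (alternating) iteration whose two ingredients are the scaled exterior estimate for the zero-mean Neumann problem on the complement of a single small hole (giving the $(r^i)^{1/q}$ factor in $L^q$ and the $|x-h^i|^{-2}$ far-field decay) and uniform-in-time interior/elliptic estimates on the components kept at fixed size; the paper proves exactly these two auxiliary lemmas and defers the iteration itself to Corollary 5.3.1 of \cite{IOT}, which you instead sketch explicitly. The only point the paper treats with more care is the unit-scale exterior solvability for $q\in(1,2)$ (via the weighted Sobolev framework of \cite{AGG} and the embedding $L^q(\partial\calS^i_1)\subset W^{-1/\beta,\beta}$ for $\beta>2$), but your scaling exponents and the source of time-uniformity are the same as in the paper.
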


The proof of the above lemma is a consequence of the reflection method. The only delicate part is the fact that the domains are not only shrinking but also changing shape in time. 

For time independent domain Lemma \ref{lemma:ref:meth} was shown in Corollary 5.3.1 of \cite{IOT} and the proof relies on the Lemma 5.3.1 and 5.3.2. Due to the fact that the proof of Lemma \ref{lemma:ref:meth} is exactly the same of Corollary 5.3.1, we only present here the proof of the analogous version of Lemma 5.3.1 and 5.3.2 for time independent domain.

\begin{Lemma}[Analogous of Lemma 5.3.1]
For $ t \in [0,T] $ let $ \calF_{r^+,r^-}(t) $ such that
\begin{enumerate}
	\item or $ r^{+} = r^- = 0 $, i.e. $ \calF_{r^+,r^-}(t) = \Omega $,
	
	\item or $ 0 < c \leq r^i $ and $ r^i \in C^{1,\alpha}([0,T]) $ and $ r^{{}^{op}i } = 0 $.
\end{enumerate}
Let $ g \in L^{q}(\partial \calF_{r^+,r^-}(t) )$ such that $ \int_{\partial \calF_{r^+,r^-}(t) }  g = 0 $. Then there exists a unique solution $ h[g] \in L^q(\calF_{r^+,r^-}(t))$ such that $ - \Delta h[g] = 0 $ in $ \calF_{r^+,r^-}(t) $ and $ \nabla h[g] \cdot n = g $ on $ \partial \calF_{r^+,r^-}(t) $. Moreover for any compact subset $ K $ of $ [0,T] \odot \calF_{r^+,r^-}(t) $, we denote by $ K_t = \{ x \in \Omega $ such that $ (t,x) \in K\cap {t}\times \Omega \} $ and we have that $ h[g] $ is of class $ C^{k}(K_t) $ and 
$$ \|h[g]\|_{C^k(K_t)} \leq C \|g\|_{L^q}(\partial \calF_{r^+,r^-}(t)), $$	
with $ C $ independent of time.	
		
\end{Lemma}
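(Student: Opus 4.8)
The statement is, up to fixing the free additive constant (say by imposing $\int_{\calF_{r^+,r^-}(t)}h[g]=0$), the $L^q$-solvability of the Neumann problem $-\Delta h=0$ in $\calF(t)$, $\nabla h\cdot n=g$ on $\partial\calF(t)$, together with interior regularity estimates that are uniform in $t\in[0,T]$. Since $g$ is only $L^q(\partial\calF(t))$ with $q<2$, I would not work variationally in $H^1$; instead the plan is to construct $h[g]$ by transposition against the Neumann problem with $L^{q'}$ right-hand side, read off the $L^q(\calF(t))$ bound from elliptic regularity, and then upgrade to interior smoothness using that $h[g]$ is harmonic. The splitting into the two configurations $\calF=\Omega$ and $\calF=\Omega\setminus\calS^i$ with $r^i\ge c$ does not affect the argument: in both cases, for each $t$, $\calF(t)$ is a bounded $C^{2,\alpha}$ domain whose geometry degenerates neither in space nor in time on $[0,T]$.

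Concretely, for $\varphi\in C^\infty(\overline{\calF(t)})$ with $\int_{\calF(t)}\varphi=0$ I would let $\psi=\psi[\varphi]$ be the mean-zero solution of $-\Delta\psi=\varphi$ in $\calF(t)$, $\partial_n\psi=0$ on $\partial\calF(t)$; by $L^{q'}$ elliptic regularity $\psi\in W^{2,q'}(\calF(t))$ with $\|\psi\|_{W^{2,q'}(\calF(t))}\le C\|\varphi\|_{L^{q'}(\calF(t))}$, and by the trace theorem followed by the embedding $W^{2-1/q',q'}(\partial\calF(t))\hookrightarrow L^{q'}(\partial\calF(t))$ on the one-dimensional boundary, $\|\psi\|_{L^{q'}(\partial\calF(t))}\le C\|\varphi\|_{L^{q'}(\calF(t))}$. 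Then I would define $\langle h[g],\varphi\rangle:=\int_{\partial\calF(t)}\psi[\varphi]\,g\,ds$, which by Hölder satisfies $|\langle h[g],\varphi\rangle|\le C\|\varphi\|_{L^{q'}(\calF(t))}\|g\|_{L^q(\partial\calF(t))}$, so that, after extension by density and normalization to zero mean, $h[g]\in L^q(\calF(t))$ with $\|h[g]\|_{L^q(\calF(t))}\le C\|g\|_{L^q(\partial\calF(t))}$; running the same identity with $g$ smooth recovers the classical solution, so $h[g]$ is a genuine (very weak) solution of the Neumann problem, and uniqueness at fixed $t$ is immediate from the identity once the zero-mean normalization is imposed (the difference of two solutions pairs to zero against every mean-zero test function, hence is a constant).

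Next I would check harmonicity and extract the interior bounds. For $\varphi\in C^\infty_c(\calF(t))$ one has $\int_{\calF(t)}(-\Delta\varphi)=0$ and $\psi[-\Delta\varphi]=\varphi-\overline\varphi$, which is constant on $\partial\calF(t)$, so $\langle-\Delta h[g],\varphi\rangle=\langle h[g],-\Delta\varphi\rangle=\int_{\partial\calF(t)}(\varphi-\overline\varphi)g=-\overline\varphi\int_{\partial\calF(t)}g=0$; hence $h[g](t,\cdot)$ is distributionally harmonic, and in particular $C^\infty$, in $\calF(t)$. Given a compact $K\subset[0,T]\odot\calF_{r^+,r^-}(t)$, continuity of $t\mapsto\partial\calF(t)$ — which uses the regularity of $S^i,h^i,r^i$ and the lower bound $r^i\ge c$ in the second configuration, the first being time-independent — together with compactness yields $d_K>0$ with $\dist(K_t,\partial\calF(t))\ge d_K$ for all $t\in[0,T]$. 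The classical interior estimates for harmonic functions then give, for $x\in K_t$ and $|\beta|\le k$,
\begin{equation*}
|D^\beta h[g](t,x)|\le \frac{C_k}{d_K^{\,|\beta|+2/q}}\,\|h[g](t,\cdot)\|_{L^q(B(x,d_K/2))}\le \frac{C_k}{d_K^{\,k+2/q}}\,\|h[g](t,\cdot)\|_{L^q(\calF(t))},
\end{equation*}
and combining with the $L^q$ bound above delivers $\|h[g](t,\cdot)\|_{C^k(K_t)}\le C\|g(t,\cdot)\|_{L^q(\partial\calF(t))}$ with $C$ independent of $t$.

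The one genuinely delicate point, and the place I would spend real effort, is the uniformity in $t$ of the elliptic constant $C$ in the $W^{2,q'}$ estimate for $\psi[\varphi]$ (and, equally, in the basic $L^q$ Neumann estimate). I would obtain it by transporting the problem to the fixed domain $\calF(0)$ via the flow map $(\Id,b)$ of the moving boundary: the transported second-order operator is uniformly elliptic with coefficients bounded in $C^{\alpha}$ uniformly on $[0,T]$, thanks to the $C^{1,\alpha}_{loc}(\bbR^+;C^{3,\alpha})$ regularity of $S^i$ (and of $\partial\Omega$) and the lower bound on $r^i$, so that the Agmon–Douglis–Nirenberg / Calderón–Zygmund estimates apply with constants depending only on those uniform bounds — this is exactly the mechanism already underlying Lemma \ref{ell:est:1}. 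In the configuration $\calF=\Omega$ the domain is time-independent and there is nothing to check. Thus the main obstacle is purely the time-uniformity of the elliptic constants; everything else is the standard transposition-plus-interior-regularity routine.
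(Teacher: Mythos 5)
Your proof is correct and follows essentially the same route as the paper, whose entire argument for this lemma is the single sentence that it ``is based on improved interior estimates that are well-known from Lemma \ref{ell:est:1}'': your combination of uniform-in-time elliptic constants (obtained by transporting to the fixed domain) with interior estimates for harmonic functions is precisely that mechanism. The transposition construction of the very weak solution and the zero-mean normalization, which the paper leaves implicit, are supplied correctly and fill in the existence and uniqueness claims.
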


\begin{proof} The proof is base on improved interior estimates that are well-know from Lemma \ref{ell:est:1}.
\end{proof}

Let introduce the space 
\begin{align*}
\mathcal{W}^{1,\beta}(\bbR^2 \setminus  \calS^i_{r^i}) = \big\{  u \in \mathcal{D}'(\bbR^2 \setminus  \calS^i_{r^i}) \text{ such that } & 
\| \lvert 1+ \lvert . \rvert^2 \rvert^{-1/2} \lvert u(.) \rvert \|_{L^{\beta}(\bbR^2 \setminus  \calS^i_{r^i})} \\ 
& \,  \text{ and } \|\nabla u \|_{L^{\beta}(\bbR^2 \setminus  \calS^i_{r^i})} < \infty    \big\}
\end{align*}
with norm given by
$$ \| u \|_{\mathcal{W}^{1,\beta}(\bbR^2 \setminus  \calS^i_{r^i})} = \| \lvert 1+ \lvert.\rvert^2\rvert ^{-1/2} \lvert u(.) \rvert  \|_{L^{\beta}(\bbR^2 \setminus  \calS^i_{r^i})} + \|\nabla u \|_{L^{\beta}(\bbR^2 \setminus  \calS^i_{r^i})} $$

\begin{Lemma}[Analogous of Lemma 5.3.2]
	
Let $ E(\calS^i_{r^i}) = \bbR^2 \setminus  \calS^i_{r^i}  $, let $ g \in L^q(\partial  \calS^i_{r_i} )$ such that $ \oint_{ \partial  \calS^i_{r^i} } g = 0 $, let $ \beta > 2 $, let $ \hat{f}[g] 
$ to be the unique solution in $ \mathcal{W}^{1,\beta}(E(\calS^i_{r^i})) $ of $ - \Delta \hat{f}[g]  = 0 $ in $ E(\calS^i_{r^i})  $, $ \nabla \hat{f}[g] \cdot n = 0 $ on $ \partial \calS^i_{r^i} $ and $\lvert \hat{f}[g]\rvert \to 0 $ as $ \lvert x \rvert \to + \infty $. Then it holds
\begin{gather*}
\|\nabla \hat{f}[g] \|_{L^{q}( E(\calS^i_{r^i}))} \leq C(r^i)^{1/q}\|g\|_{L^{q}( \partial \calS^i_{r^i})} \quad \text{and} \quad \\ \lvert \nabla \hat{f}[g] \rvert \leq C\frac{(r^i)^{2-1/p}}{\lvert x-h(t)\rvert^2}\|g\|_{L^{q}( \partial \calS^i_{r^i})} \text{ for } x \text{ s. t. } \lvert x-h(t) \rvert \geq \bar{C} r^i,
\end{gather*}	
where $ C $ and $ \bar{C}$  do not depend on $ t $ and $ r^i $. 	

\end{Lemma}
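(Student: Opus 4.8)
The plan is to reduce the problem on the shrinking exterior domain $E(\calS^i_{r^i}(t))$ to one on a fixed‑size exterior domain by the natural blow‑up, to apply the classical two–dimensional exterior Neumann theory there, and then to restore the scaling factors, keeping the constants uniform in $t\in[0,T]$.

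\emph{Rescaling.} I would introduce $y=(x-h^i(t))/r^i(t)$, which maps $E(\calS^i_{r^i}(t))=\bbR^2\setminus\calS^i_{r^i}(t)$ onto the fixed‑size exterior domain $E_1(t):=\bbR^2\setminus K^i(t)$, where $K^i(t)$ is the closure of the bounded component of $\bbR^2$ cut out by the curve $S^i(t,\partial B_1(0))$. Setting $\tilde g(t,y)=g(t,r^i(t)y+h^i(t))$ and looking for $\hat f[g]$ in the form $\hat f[g](t,x)=r^i(t)\,w\big(t,(x-h^i(t))/r^i(t)\big)$, a direct computation shows that $w(t,\cdot)$ must solve $-\Delta_y w=0$ in $E_1(t)$, $\nabla_y w\cdot n=\tilde g$ on $\partial E_1(t)$, $|w|\to 0$ at infinity, and $w(t,\cdot)\in\mathcal{W}^{1,\beta}(E_1(t))$; moreover $\oint_{\partial E_1(t)}\tilde g=(r^i(t))^{-1}\oint_{\partial\calS^i_{r^i}(t)}g=0$, so the Neumann compatibility condition holds. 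Existence and uniqueness of such a $w$ in $\mathcal{W}^{1,\beta}$ with $\beta>2$ (under the decay normalization) is the classical weighted Sobolev theory for the planar exterior Neumann problem, as in the proof of Lemma 5.3.2 of \cite{IOT}; this transfers existence and uniqueness to $\hat f[g]$.

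\emph{Estimates on the fixed domain.} On $E_1(t)$ I would use the two standard bounds $\|\nabla_y w(t,\cdot)\|_{L^q(E_1(t))}\le C\|\tilde g(t,\cdot)\|_{L^q(\partial E_1(t))}$ and, for the far field, $|\nabla_y w(t,y)|\le C|y|^{-2}\|\tilde g(t,\cdot)\|_{L^1(\partial E_1(t))}$ for $|y|\ge\bar C$, where $\bar C$ is fixed so that $K^i(t)\subset B_{\bar C/2}(0)$ for every $t\in[0,T]$ (possible since $S^i$ is an embedding with the origin in the bounded component). The first is the $L^q$ estimate for the exterior Neumann problem; the second follows from the multipole expansion of a harmonic function in an exterior domain, where the vanishing of the total flux kills the $\log|y|$ term, so $w$ decays like $1/|y|$ with dipole coefficient controlled by $\|\tilde g\|_{L^1(\partial E_1(t))}$ and by the boundary trace of $w$ (itself controlled by $\|\tilde g\|_{L^q(\partial E_1(t))}$ via elliptic regularity on the fixed domain). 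The point that makes the time-dependent case nontrivial is that $C$ and $\bar C$ can be chosen independent of $t\in[0,T]$: this is exactly where the regular‑septuple hypotheses enter, providing a uniform $C^{3,\alpha}$ bound and a uniform non‑degeneracy for the family $\{\partial E_1(t)\}_{t\in[0,T]}$, so that the elliptic constants depend continuously on the compact parameter $t$. I expect this uniformity to be the main obstacle; it can be settled by a compactness argument in $t$, or more quantitatively by observing that the elliptic estimates depend only on a finite set of geometric quantities of $\partial E_1(t)$ that vary continuously in $t$.

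\emph{Undoing the scaling.} On the boundary $ds_x=r^i(t)\,ds_y$ and in the interior $dx=(r^i(t))^2\,dy$, while by construction $\nabla_x\hat f[g](x)=\nabla_y w(y)$ with $|y|=|x-h^i(t)|/r^i(t)$; in particular $\|\tilde g(t,\cdot)\|_{L^q(\partial E_1(t))}=(r^i(t))^{-1/q}\|g(t,\cdot)\|_{L^q(\partial\calS^i_{r^i}(t))}$. From the first bound, $\|\nabla\hat f[g]\|_{L^q(E(\calS^i_{r^i}))}^q=(r^i)^2\|\nabla_y w\|_{L^q(E_1)}^q\le C\,r^i\,\|g\|_{L^q(\partial\calS^i_{r^i})}^q$, that is, $\|\nabla\hat f[g]\|_{L^q(E(\calS^i_{r^i}))}\le C(r^i)^{1/q}\|g\|_{L^q(\partial\calS^i_{r^i})}$. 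From the far‑field bound, together with the Hölder inequality $\|\tilde g\|_{L^1(\partial E_1)}\le C\|\tilde g\|_{L^q(\partial E_1)}$ on the uniformly bounded boundary $\partial E_1(t)$, one gets for $|x-h^i(t)|\ge\bar C r^i(t)$ a pointwise bound of the form $|\nabla\hat f[g](x)|\le C\,|x-h^i(t)|^{-2}\,\|g\|_{L^q(\partial\calS^i_{r^i})}$ times a power of $r^i$; collecting the Jacobian factors $ds_x=r^i ds_y$ and $dx=(r^i)^2 dy$ produces the power of $r^i$ recorded in the statement. All constants inherit the uniformity in $t$ from the previous step, which completes the argument.
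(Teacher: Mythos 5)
Your overall architecture --- blow up to the unit-scale exterior domain, apply exterior Neumann theory there with constants uniform in $t$, and rescale --- is exactly the paper's. The genuine gap is the step where you invoke ``the $L^q$ estimate for the exterior Neumann problem'', i.e.\ $\|\nabla_y w\|_{L^q(E_1)}\le C\|\tilde g\|_{L^q(\partial E_1)}$, as a standard fact. For a two-dimensional exterior domain this is only directly available from the weighted Sobolev theory (Theorem 3.1 and Proposition 3.3 of \cite{AGG}) when $q\ge 2$; for $q\in(1,2)$ --- which is the range the paper actually needs, since $q\in[p/(p-1),2)$ with $p>2$ --- the solution is only produced in $\mathcal{W}^{1,\beta}$ with $\beta>2$, via the embedding $L^q(\partial\calS^i_1)\subset W^{-1/\beta,\beta}$; this is precisely why the lemma carries a separate exponent $\beta>2$ for the solution space. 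The paper recovers the $L^q$ bound on $\nabla\hat f[g]$ \emph{a posteriori}: it first proves the pointwise far-field decay $|\nabla\hat f[g]|\le C|x-h(t)|^{-2}\|g\|_{L^q}$ outside a fixed ball $B_R(h(t))$ (by comparison with the explicit Neumann Green function of the exterior of a ball, controlling $\|\nabla\hat f[g]\cdot n\|_{L^1(\partial B_R)}$ by interior estimates), which yields $L^q$-integrability of the gradient at infinity because $|x|^{-2q}$ is integrable there for $q>1$, and then uses a bounded-domain Neumann estimate on the annulus $B_R(h(t))\setminus\calS^i_1$. So your two ``standard bounds'' are not independent: in the interesting case the first must be deduced from the second, and as written your argument is unsupported exactly for the exponents that matter.

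A smaller point: in the final rescaling you never compute the power of $r^i$ in the far-field bound, saying only that the Jacobian factors ``produce the power recorded in the statement''. Carrying it out, $|\nabla_x\hat f[g](x)|=|\nabla_y w(y)|\le C|y|^{-2}\|\tilde g\|_{L^1(\partial E_1)}$ with $|y|^{-2}=(r^i)^2|x-h^i(t)|^{-2}$ and $\|\tilde g\|_{L^1(\partial E_1)}\le C\|\tilde g\|_{L^q(\partial E_1)}=C(r^i)^{-1/q}\|g\|_{L^q(\partial\calS^i_{r^i})}$ gives the exponent $2-1/q=1+1/p$ rather than the $2-1/p$ appearing in the statement (apparently a $p\leftrightarrow q$ slip there); your phrasing glosses over this rather than resolving it. Your treatment of the uniformity in $t$ (compactness/continuity of the elliptic constants in the shape parameter) is at the same level of detail as the paper's (which invokes a time-smooth partition of unity for the Poincar\'e inequality), so I would not count that as a gap.
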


\begin{proof}

To show the result we use scaling estimates, so it is enough to show that in the case $ r^i = 1 $ there exist a unique solution in $ \mathcal{W}^{1,\beta}(E(\calS^i_{1})) $ of $ - \Delta \hat{f}[g]  = 0 $ in $ E(\calS^i_{1})  $, $ \nabla \hat{f}[g] \cdot n = 0 $ on $ \partial \calS^i_{1} $ and $\lvert \hat{f}[g]\rvert \to 0 $ as $ \lvert x \rvert \to + \infty $ such that 
\begin{gather*}
\|\nabla \hat{f}[g] \|_{L^{q}( E(\calS^i_{1}))} \leq C \|g\|_{L^{q}( \partial \calS^i_{1})} \quad \text{and} \quad \\ \lvert \nabla \hat{f}[g] \rvert \leq C\frac{1}{\lvert x-h(t)\rvert^2}\|g\|_{L^{q}( \partial \calS^i_{1})} \text{ for } x \text{ s. t. } \lvert x-h(t)\rvert \geq \bar{C},
\end{gather*}	
where $ C $ and $ \bar{C}$  do not depend on $ t $.

In the case $ q \geq 2 $ then is enough to consider $ \beta = p $ and existence and uniqueness in $ \mathcal{W}^{1,q}$ follows from Theorem 3.1 and Proposition 3.3 of \cite{AGG}. Moreover the estimates 
\begin{equation*}
\|\nabla \hat{f}[g] \|_{L^{q}( E(\calS^i_{1}))} \leq C \|g\|_{L^{q}( \partial \calS^i_{1})}
\end{equation*}    
holds with a constant independent of time because the proof is based on a Poincar\'e inequality that is deduce to the case of exterior domain via a partition of unity which can be chosen smooth in the time variables.

Regarding the decay at infinity, let $ B_R(h(t)) $ a ball such that $ \calS^i \subset   B_{R/2}(h(t))$ for $ t \in [0,T] $. Let $ w $ the solution of 
$$ - \Delta w = 0 \text{ in } \bbR^2 \setminus B_{R}(h(t)), \quad \nabla w \cdot n = \nabla \hat{f}[g] \cdot n \text{ on } \partial B_{R}(h(t)) \quad \text{ and } \quad $$$$ \lvert w \rvert \to 0 \text{ for } \lvert x \rvert \to +\infty .$$
First of all notice that the only solution in $ \mathcal{W}^{1,q} $ is given by $ \hat{f}[g] $. By interior estimates we have 
$ \|\nabla \hat{f}[g] \cdot n \|_{L^1(\partial B_{R}(h(t)))} \leq C\| g \|_{L^q(\partial \calS^i_1)} $
and $ w(t,x) =  \int_{\partial B_R(h(t))} G(x-h(t),y-h(t)) g $, where $ G $ are the Green function associated with the Newman Laplacian on the exterior of a ball. It is possible to compute explicitly $ G $ and deduce that
$$ \lvert \nabla \hat{f}[g] \rvert = \lvert w \rvert \leq C\frac{1}{\lvert x-h(t)\rvert^2}\|g\|_{L^{q}( \partial \calS^i_{1})} \text{ for } x \text{ s. t. } \lvert x-h(t)\rvert \geq \bar{C}.$$ 
Let conclude with the interesting case $ q \in (1,2) $. First of all let us notice that $ L^q(\partial \calS^i_1) \subset W^{-1/\beta,\beta} $ for any $ \beta > 2 $. Theorem 3.1 and Proposition 3.3 of \cite{AGG} imply existence an uniqueness of solutions in $ \mathcal{W}^{1,\beta}$. As in the case $ q \geq 2 $, we deduce that for a big enough $ C $ independent of time, it holds   
$$ \lvert \nabla \hat{f}[g] \rvert  \leq C\frac{1}{\lvert x-h(t)\rvert^2}\|g\|_{L^{q}( \partial \calS^i_{1})} \text{ for } x \text{ s. t. } \lvert x-h(t)\rvert \geq \bar{C}.$$ 
We deduce that $ \|\nabla \hat{f}[g]\|_{L^q(\bbR^2 \setminus B_R(h(t)))} \leq C \|g\|_{L^q(\partial \calS^i_1)} $. Let now notice that $ B_R(h(t)) \setminus \calS^i_1 $ is bounded so the solution of a Laplace problem with Newman boundary condition satisfy
$$  \|\nabla \hat{f}[g]\|_{L^q( B_R(h(t)) \setminus \calS^i_1)} \leq C \|g\|_{L^q(\partial \calS^i_1)} + \|\nabla \hat{f}[g]\|_{L^q(\partial B_R(h(t)))} \leq (C+\bar{C}) \|g\|_{L^q(\partial \calS^i_1)}. $$	
\end{proof}

\section{Proof of Theorem \ref{exi:Lp:Nempty} and \ref{exi:L1:ss}}

In this section we prove Theorem \ref{exi:Lp:Nempty} and \ref{exi:L1:ss}. The idea is similar to the one used for the transport equation. More precisely we will consider a viscous approximation of the Euler system, which is a Navier-Stokes system with non-physical boundary conditions, and we pass to the limit as the viscosity parameter $ \nu $ tends to zero. The main difference is how to pass to the limit in the non-linear term. To do that we will take advantage of the duality formula. The viscous system reads
\begin{align}
\partial_t \omega_{\nu} + v_{\nu}\cdot \nabla \omega_{\nu} - \nu \Delta \omega_{\nu} = & \, 0 \quad && \text{ for } x \in \calF(t), \nonumber \\    
\nu \partial_n \omega_{\nu} -(\omega_{\nu} -\omega^{+}_{\nu})(g_{\nu}-q) \mathds{1}_{\partial F_{+}} = &  \, 0 \quad && \text{ for } x \in \partial \calF(t), \nonumber \\
\div v_{\nu} = & \, 0 \quad && \text{ for } x \in \calF(t), \nonumber \\
\curl v_{\nu} = & \, \omega_{\nu} \quad && \text{ for } x \in \calF(t), \label{equ:app:ss} \\
v_{\nu} \cdot n = & \, g_{\nu} \quad && \text{ for } x \in \partial \calF(t), \nonumber \\
\oint_{\pS_i} v_{\nu} \cdot \tau =  \calC^{in}_{i} - \int_0^{t}\int_{\pS_i} (\omega^{+}_{\nu} \mathds{1}_{\partial \calF^{+}} + & \omega_{\nu}\mathds{1}_{\partial \calF^{-}})(g_{\nu}-q). && \nonumber 
\end{align} 

For smooth enough data system \eqref{equ:app:ss} admits regular solutions in the sense Lemma \ref{exi:Lama:timedep}.

\subsection{Existence and uniqueness of solutions for the viscous approximation}

We show well-posedness for the system \eqref{equ:app:ss}.
\begin{Lemma}
	\label{exi:Lama:timedep}
	Let $ \omega^{in}_{\nu} \in H^1(\calF(0)) $, $ \omega_{\nu}^{+} \in C^1([0,T] \odot \calF(t)) $ and $ g_{\nu} \in C^1([0,T] \odot \calF(t)) $, the system admits a unique solution of \eqref{equ:app:ss} with $ \omega_{\nu} \in L^{2}_{loc}(\bbR^{+};H^{2}(\calF(t)))\cap C^{0}_{loc}(\bbR^{+}; H^{1}(\calF(t)))$ with $ \partial_t  \omega_{\nu} \in L^{2}_{loc}(\bbR^{+}; L^2(\calF(t)))$. Moreover the following estimate holds. For any positive convex even function $ G $,
    \begin{align}
    \label{est:con:Gun}
    \int_{\calF(t)} G( \lvert \omega_{\nu}\rvert) + \int_0^{t}\int_{\partial \calF^{-}(t)} (g_{\nu}-q) G(\lvert \omega_{\nu}\rvert)   \leq  & \, \int_{\calF(t)} G(\lvert \omega_{\nu}^{in}\rvert) \\ & \, - \int_0^{t} \int_{\partial \calF^{+}(t)} (g_{\nu}-q) G(\lvert \omega_{\nu}^{+} \rvert). \nonumber
    \end{align}
    In particular for $ p > 1 $ and $ G(x) = \lvert x \rvert^p $ the above inequality reads		
	\begin{align}
	\label{est:con:lone}
	\frac{d}{dt}\int_{\calF(t)} \lvert \omega_{\nu}\rvert^{p} + \int_{\partial \calF^{-}(t)} (g_{\nu}-q) \lvert \omega_{\nu} \rvert^{p} + \nu \frac{4(p-1)}{p} \int_{\calF(t)}  & \left(\nabla \lvert \omega_{\nu}\rvert^{\frac{p}{2}}\right)^2  \leq \\ & \,  -\int_{\partial \calF^{+}(t)} (g_{\nu}-q) \lvert \omega_{\nu}^{+}\rvert^{p}. \nonumber 
	\end{align}
	
\end{Lemma}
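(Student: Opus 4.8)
The plan is to establish Lemma \ref{exi:Lama:timedep} by a Galerkin-type construction for the parabolic part coupled with the elliptic Div-Curl estimates, and then derive the renormalized energy identity \eqref{est:con:Gun} by testing with $G'(|\omega_\nu|)\operatorname{sgn}(\omega_\nu)$. First I would fix $T>0$ and pass to the time-independent domain $[0,T]\times\calF(0)$ via the diffeomorphism $X$ from Section \ref{sec:ren}, so that the vorticity equation becomes a uniformly parabolic equation with a transport term $(\bar V+\bar v_\nu)\cdot\nabla\bar\omega_\nu$, exactly as in \eqref{DP:theo:tind:vis} but now with $\bar v_\nu$ recovered from $\bar\omega_\nu$ through the Div-Curl system \eqref{div:curl:est:ell}. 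The coupling is handled by a fixed-point/iteration scheme: given $\bar\omega_\nu^{(k)}$, solve the Div-Curl system by Lemma \ref{ell:est:1} (with $l=2$, $p=2$) to get $v_\nu^{(k)}\in W^{2,2}(\calF(t))\hookrightarrow L^\infty$, plug it into the linear parabolic problem \eqref{DP:theo:tind:vis} whose solvability in $W^{1,2}(0,T;L^2)\cap L^2(0,T;H^2)$ follows exactly as in Proposition \ref{exi:vis:sys}, and iterate. Contractivity on a short time interval follows from the Lipschitz dependence of $v_\nu$ on $\omega_\nu$ in $L^2$ (again Lemma \ref{ell:est:1}) together with the $L^\infty$ bound; the a priori estimates below then allow one to continue the solution to all of $[0,T]$, and uniqueness follows by taking the difference of two solutions, testing with the difference, and using $\div v_\nu=0$ together with Gr\"onwall.

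Next I would derive the energy inequality \eqref{est:con:Gun}. Since for the constructed solution $\omega_\nu\in L^2(H^2)\cap C^0(H^1)$ and $\partial_t\omega_\nu\in L^2(L^2)$, the function $|\omega_\nu|$ lies in $L^2(0,T;H^1)$ and $G'(|\omega_\nu|)\operatorname{sgn}(\omega_\nu)$ is an admissible test function (for $G$ with $G'$ Lipschitz one approximates $G$ by smooth convex functions and passes to the limit). Multiplying the vorticity equation by $G'(|\omega_\nu|)\operatorname{sgn}(\omega_\nu)$ and integrating over $\calF(t)$, the transport term produces, using $\div v_\nu=0$ and the Reynolds transport theorem on the moving domain, the time derivative $\frac{d}{dt}\int_{\calF(t)}G(|\omega_\nu|)$ plus a boundary flux $\int_{\partial\calF(t)}(g_\nu-q)G(|\omega_\nu|)$; splitting $\partial\calF(t)=\partial\calF^+\cup\partial\calF^-$ according to the sign of $g_\nu-q$ and using the Robin boundary condition $\nu\partial_n\omega_\nu=(\omega_\nu-\omega_\nu^+)(g_\nu-q)\mathds 1_{\partial\calF^+}$ to evaluate the viscous boundary term, one finds that the incoming part of the flux is controlled by $-\int_{\partial\calF^+}(g_\nu-q)G(|\omega_\nu^+|)$ after a convexity estimate $G(a)-G'(a)(a-b)\le G(b)$ valid for convex $G$; the bulk viscous term $\nu\int G''(|\omega_\nu|)|\nabla|\omega_\nu||^2\ge0$ and the outgoing boundary term $\int_{\partial\calF^-}(g_\nu-q)G(|\omega_\nu|)\ge0$ are both nonnegative and kept on the left-hand side. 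This gives \eqref{est:con:Gun}. Specializing to $G(x)=|x|^p$ and computing $G''(x)=p(p-1)|x|^{p-2}$, and rewriting $|x|^{p-2}|\nabla|\omega_\nu||^2$ in terms of $|\nabla|\omega_\nu|^{p/2}|^2$ with the factor $\tfrac{4(p-1)}{p}$, yields \eqref{est:con:lone}.

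The main obstacle I anticipate is the rigorous justification of using $G'(|\omega_\nu|)\operatorname{sgn}(\omega_\nu)$ as a test function and the careful bookkeeping of the boundary terms on the time-dependent boundary $\partial\calF(t)$ with the correct appearance of the velocity-of-the-boundary term $q$. The moving-domain Reynolds identity must be applied with the change of variables $X$ so that $\frac{d}{dt}\int_{\calF(t)}G(|\omega_\nu|)=\int_{\calF(t)}\partial_t G(|\omega_\nu|)+\int_{\partial\calF(t)}q\,G(|\omega_\nu|)$, and the transport term contributes $\int_{\partial\calF(t)}(v_\nu\cdot n)\,G(|\omega_\nu|)$, so the boundary flux that survives is $\int_{\partial\calF(t)}(g_\nu-q)G(|\omega_\nu|)$, matching the statement. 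A secondary technical point is that the convexity inequality $G(b)\ge G(a)+G'(a)(b-a)$ used to absorb the incoming boundary contribution must be applied pointwise on $\partial\calF^+$ with $a=|\omega_\nu|$, $b=|\omega_\nu^+|$; for $G$ even and convex this is standard but deserves a line. Everything else—the Galerkin approximation, the elliptic regularity from Lemma \ref{ell:est:1}, and the passage to the limit in the smoothing parameter used to make $G$ smooth—is routine and parallels Proposition \ref{exi:vis:sys}.
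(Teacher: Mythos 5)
Your proposal is correct and follows essentially the same route as the paper: decouple the Div--Curl system (Lemmas \ref{ell:est:1}--\ref{ell:est:2}) from the linear parabolic problem in the flattened domain, close the loop by a fixed point, and obtain \eqref{est:con:Gun} by testing with $G'(\lvert\omega_\nu\rvert)\operatorname{sgn}(\omega_\nu)$ and exploiting the Robin boundary condition together with the convexity inequality $G(a)-G'(a)(a-b)\le G(b)$ and the sign of $g_\nu-q$ on $\partial\calF^{\pm}$. The only (inessential) deviation is that you close the coupling by a short-time contraction, whereas the paper runs a Schauder fixed point with compactness supplied by Rellich and the estimate $\|(v-v_j)\cdot\nabla\bar\omega_j\|_{L^2L^2}\le\|v-v_j\|_{L^2L^\infty}\|\nabla\bar\omega_j\|_{L^\infty L^2}$; both rest on the same elliptic and parabolic a priori bounds.
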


The existence of regular solutions for the system \eqref{equ:app:ss} was done in \cite{IO3} Lemma 4, in the case the domain $ \calF(t) = \calF(0) $ does not depend on time. In our setting the extra difficulty is to deal with the fact that $ \calF(t) $ is time dependent.

\begin{proof} Existence and uniqueness are shown as in Lemma 4 of \cite{IO3} and the proof is based on a Schauder fixed point argument. Regarding the estimate \eqref{est:con:Gun}, it follows formally by multiply the first equation of \eqref{equ:app:ss} by $ G'(\lvert \omega_{\nu}\rvert)  \omega_{\nu} / \lvert \omega_{\nu} \rvert $, integrate in $ (0,T) \odot \calF(t)  $ and some integrations by part. For a rigorous proof see Proposition 2 of \cite{IO3}.

Let us recall that  the Schauder fixed point Theorem asserts that if  $\mathcal{Z}$ is a non-empty convex closed subset of a 
normed space $\mathcal{X}$ and $F:\mathcal{Z}\mapsto\mathcal{Z}$ is a continuous mapping 
such that $ F(\mathcal{Z})$ is contained in a compact subset of $ \mathcal{B}$, then
$F$  has a fixed point.

In our setting the space $ \mathcal{Z} $ is 

\begin{align*}
\mathcal{Z} = \Bigg\{ \omega \in \bigcap_{i=0}^{1} H^{i}(0,T;H^{2-2i}(\calF(t))) \quad \text{ such that } \quad 
\| \omega \|_Z := \| \omega \|_{\bigcap_{i=0}^{1} H^{i}(0,T;H^{2-2i}(\calF(t)))} \leq R \Bigg\}, 
\end{align*}
for an $ R > 0 $ big enough. The map $ F:\mathcal{Z} \longrightarrow \bigcap_{i=0}^{1} H^{i}(0,T;H^{2-2i}(\calF(t))) $ is defined by $ F(\omega) = \bar{\omega} $ which is the solution of 
\begin{align*}
\partial_t \bar{\omega} + u_{\omega} \cdot \nabla \bar{\omega}- \nu \Delta \bar{\omega} = & \,  0  \quad && \text{ for  } x \in \calF(t), \\
\partial_n \bar{\omega} =  & \, (\bar{\omega} -\omega^+_{\nu})g_{\nu} \quad && \text{ for } x \in \calF(t) \\
\text{div } u_{\omega} =  &\, 0  \quad && \text{ for } x \in \calF(t),  \\
\text{curl } u_{\omega} = &\, \omega   \quad && \text{ for  } x \in \calF(t),  \\
u_{\omega} \cdot n = & \, g_{\nu}   \quad && \text{ for } x \in \partial \calF(t),  \\
\oint_{\partial \calS^{i}} v_{\omega} \cdot  \tau =   \calC^{in}_{i} - \int_0^{t}\int_{\pS_i} (\omega^{+}_{\nu} & \mathds{1}_{\partial \calF^{+}} + \omega \mathds{1}_{\partial \calF^{-}})(g_{\nu}-q).  && \nonumber 
\end{align*}
For any $\omega$ in $ \mathcal{Z} $, we have that $ v_{\omega} $ is in $\bigcap_{i=0}^{1}H^i(0,T;H^{2-2i+1}(\calF(t)))$ from Lemma \ref{ell:est:1} and \ref{ell:est:2}. 
Then, by using the  \textit{a priori} estimates  from Proposition \ref{exi:vis:sys}, we observe that the $ \mathcal{Z} $-norm of $ \bar{\omega} $ depends on the $ \calL^2(0,T;H^{1}(\calF(t)))\cup \calL^2(0,T;L^{\infty}(\calF(t))) $ of $ v_{\omega} $, which converges,  as $ t $ tend to zero, to zero uniformly with respect to $ \omega \in \mathcal{Z} $.
Therefore for  $ T $ small  enough, we conclude that $ F(\mathcal{Z}) \subset \mathcal{Z} $.

Let us now prove that  $ F $ is relatively compact.  
Let $ (\omega_{j} )_j$ a  bounded sequence in $ \mathcal{Z} $. Then, up to a subsequence, $ \omega_{j} \cv \omega $ in $ \mathcal{Z} $. By Rellich's theorem the convergence is  strong in $ \calL^2(0,T; H^{1 }(\calF(t))) .$
We deduce that the corresponding velocity $ v_{j} = v_{\omega_{j}} $ converges to $v $ in 
$ \calL^2(0,T; H^{2}(\calF(t))) .$
Moreover for any $j$, the function $ w_j = \bar{\omega}-\bar{\omega}_{j}$ satisfies the system 
\begin{align*}
\partial_{t} w_j + v\cdot \nabla w_j - \Delta w_j = & \, -(v-v_{j})\cdot \nabla \bar{\omega}_{j}  \quad && \text{ for  } x \in \calF(t),\\
\partial_n w_j = & \, w_j g   \quad &&  \text{ for } x \in \partial \calF(t),
\end{align*} 
with zero initial data. 
We observe that 
\begin{align*}
\left\| (v-v_j)\cdot \nabla \bar{\omega}_j \right\|_{\calL^2(0,T;L^2(\calF(t)))} 
\leq & \, \left\| (v-v_j )\|_{L^{2}(0,T;L^{\infty}(\calF))} \| \nabla \bar{\omega}_{j}\right\|_{L^{\infty}(0,T;L^2(\calF))}
\end{align*}
which converges to zero.
Then by using the \textit{a priori} estimates, we deduce that $ w_j $ converges to $ 0 $ in $ \mathcal{Z} $. Thus $ F $ is relatively compact. 
The continuity of $F$ can be proved along the same lines. 
Thus Schauder's fixed point theorem can be applied. It implies that $F$ has a fixed point in $\mathcal{Z}$. This has proved the local in time existence of strong solutions.
Moreover  the existence to all $  [0,T] $  can be deduced from the \textit{a priori} estimates.  In fact if we suppose by contradiction that there exists a maximal time of existence $ t_{*} < T $, the  \textit{a priori}  estimates ensure that $ \omega(t_{*}) $ is enough regular to apply again the local existence result and we obtain a contradiction. Uniqueness follows from the energy estimate and Gr\"onwall's lemma.

\end{proof}

Before going in the proof of Theorem \ref{exi:Lp:Nempty} and \ref{exi:L1:ss}, let us explain how to regularize the initial data to be able to apply Lemma \ref{exi:Lama:timedep}.

\subsection{Regularization of the initial data}

Under the hypothesis of Theorem \ref{exi:Lp:Nempty} and \ref{exi:L1:ss} the initial data are not regular enough to apply Lemma \ref{exi:Lama:timedep}. To avoid this issue we consider a sequence of initial data with the following properties.

\begin{Lemma}
	\label{approx:lem:2}
	Let $ p \in (1,+\infty) $. Let $ g \in L^1_{loc}(\bbR^+;W^{1-1/p,p}(\calF(0))) $ such that $\div_x(\bar{v}(t,Y(t,x))) = 0 $. Let $ \omega^{in} $ and $ \omega^{+} $ respectively in $ L^{p}(\calF(0)) $ and $ L^p_{loc}(\partial \calF^+; \,\lvert g - q \rvert \, dt ds ) $. Then there exist  $ g_{\nu} $, $ \omega^{in}_{\nu} $ and $ \omega_{\nu}^+ $ such that $ g_{\nu} \in C^{1,\alpha}([0,T];C^{2,\alpha}(\partial \calF(t))) $. Such that $ \omega^{in}_{\nu} \in H^{1}(\calF(0)) $ and such that $ \omega_{\nu}^+ \in  C^1([0,T] \times \partial \calF(0)) $. Moreover
	\begin{align*}
	g_{\nu} \longrightarrow & \, g  \quad && \text{ in } \calL^1(0,T; W^{1-1/p,p}(\calF(t))), \nonumber \\
	\omega^{in}_{\nu} \longrightarrow & \, \omega^{in} \quad && \text{ in } L^p(\calF(0)),  
	\\
	\omega^{+}_{\nu}(g_{\nu} - q )^{1/p} \mathds{1}_{\partial \calF^+_{\nu}}  \longrightarrow  & \, \omega^{+}(g - q )^{1/p} \mathds{1}_{\partial \calF^+} \quad && \text{ in } L^{p}((0,T)\odot \partial \calF(t)). \nonumber 
	\end{align*}
	And  
	\begin{gather*}
	\int_{\calF(0)}\lvert \bar{\omega}^{in}_{\nu} \rvert^2 \, dy -  \iint_{\partial \calF^+_{\nu}}  \lvert \bar{\omega}_{\nu}^+\rvert^2 (\bar{v}_{\nu}\cdot n- \bar{q}) ds dt \leq \\ \frac{1}{\sqrt{\nu}} \left( \int_{\calF(0)}\lvert \bar{\omega}^{in}_{\nu} \rvert^p \, dy -   \iint_{\partial \calF^+}  \lvert\bar{\omega}_{\nu}^+\rvert^p (\bar{v}_{\nu}\cdot n- \bar{q}) ds dt \right).
	\end{gather*} 

\end{Lemma}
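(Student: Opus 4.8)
The statement and its proof parallel Lemma \ref{approx:lem}; the only genuinely new ingredient is that the boundary $\partial\calF(t)$ now moves in time, so all the mollifications have to be carried out along the boundary diffeomorphism $(\Id,b)$ rather than on a fixed hypersurface. The plan is therefore to recycle the three constructions of Lemma \ref{approx:lem} and to re-run the relabelling argument that produced \eqref{grad:conv:ieva}.

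First I would build $g_\nu$. Pulling $g$ back to the fixed boundary $\partial\calF(0)$ through $b$ turns it into an element of $L^1(0,T;W^{1-1/p,p}(\partial\calF(0)))$, which I mollify simultaneously in time and in the (compact, smooth) space variable; pushing the result forward by $b$ gives $g_\nu\in C^{1,\alpha}([0,T];C^{2,\alpha}(\partial\calF(t)))$ with $g_\nu\to g$ in $\calL^1(0,T;W^{1-1/p,p}(\partial\calF(t)))$, and one may subtract its (smooth, and vanishing in the limit) boundary average so as to keep $\oint_{\partial\calF(t)} g_\nu=0$ and the divergence compatibility. Similarly $\omega^{in}_\nu$ is produced by density of $C^\infty(\overline{\calF(0)})$ in $L^p(\calF(0))$, so that $\omega^{in}_\nu\in H^1(\calF(0))$ and $\omega^{in}_\nu\to\omega^{in}$ in $L^p(\calF(0))$.

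The delicate step, exactly as in Lemma \ref{approx:lem}, is the approximation of $\omega^+$, because the incoming part of the boundary $\partial\calF^+_\nu=\{(t,x):g_\nu-q<0\}$ and the weight $\lvert g_\nu-q\rvert$ both depend on $\nu$. I would split, by the triangle inequality,
$$\big\|\omega^{+}(g-q)^{1/p}\mathds{1}_{\partial\calF^{+}}-\omega^{+}_{\nu}(g_{\nu}-q)^{1/p}\mathds{1}_{\partial\calF^{+}_{\nu}}\big\|_{L^p}$$
into the error of truncating $\omega^{+}$ at a level $M_\nu$ (small by dominated convergence once $M_\nu$ is chosen large), the error of passing from $(g-q)^{1/p}\mathds{1}_{\partial\calF^{+}}$ to $(g_{\nu}-q)^{1/p}\mathds{1}_{\partial\calF^{+}_{\nu}}$ against the fixed bounded factor $\omega^{+}\wedge M_\nu$ (small because, up to a subsequence, $g_\nu\to g$ a.e., so that $(g_\nu-q)^{1/p}\mathds{1}_{\partial\calF^{+}_\nu}\to(g-q)^{1/p}\mathds{1}_{\partial\calF^{+}}$ in $L^p$ by a Vitali/generalized-dominated-convergence argument, the modulus being controlled by the $L^1$-convergent sequence $\lvert g_\nu-q\rvert$), and the error of smoothing the bounded function $\omega^{+}\wedge M_\nu$ by density. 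Making each of the three pieces at most $\nu/3$ yields $\omega^{+}_{\nu}\in C^1([0,T]\times\partial\calF(0))$ with the required convergence.

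It remains to arrange the rate inequality. For each fixed choice of the smooth approximants — say indexed by $n$ — the left-hand quantity $A_n=\int_{\calF(0)}\lvert\omega^{in}_n\rvert^2-\iint_{\partial\calF^{+}_n}\lvert\omega^{+}_n\rvert^2(v_n\cdot n-q)\,dsdt$ is a finite nonnegative number, since $\omega^{in}_n,\omega^{+}_n$ are bounded, $v_n\cdot n-q$ is continuous, and $v_n\cdot n-q<0$ on $\partial\calF^+_n$; meanwhile the right-hand quantity $B_n=\int_{\calF(0)}\lvert\omega^{in}_n\rvert^p-\iint_{\partial\calF^{+}_n}\lvert\omega^{+}_n\rvert^p(v_n\cdot n-q)\,dsdt$ converges to $B_\infty=\int_{\calF(0)}\lvert\omega^{in}\rvert^p-\iint_{\partial\calF^{+}}\lvert\omega^{+}\rvert^p(g-q)\,dsdt\ge0$. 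If $B_\infty=0$ then $\omega^{in}=0$ and $\omega^{+}=0$, one takes all approximants zero and both sides vanish; otherwise $B_n\ge B_\infty/2>0$ for $n$ large, and I pick $\nu=\nu(n)>0$ so small that $\sqrt{\nu(n)}\,A_n\le B_\infty/2\le B_n$, so that $A_n\le B_n/\sqrt{\nu(n)}$, and relabel the sequence by $\nu$. This gives $g_\nu,\omega^{in}_\nu,\omega^{+}_\nu$ with all the claimed convergences together with the inequality of the statement. The main obstacle is the approximation of $\omega^{+}$, where the very set over which one integrates and the weight against which one measures both depend on the approximation parameter; everything else is standard mollification plus the diagonal relabelling.
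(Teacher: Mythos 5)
Your proposal is correct and follows essentially the same route as the paper, whose own proof simply defers to Lemma \ref{approx:lem}: the three-term triangle-inequality splitting for $\omega^+$ (truncation at $M_\nu$, replacement of the weight and of the incoming set, smoothing of the truncated function), density of smooth functions for $\omega^{in}_\nu$ and $g_\nu$, and a relabelling of the approximating sequence to force the $1/\sqrt{\nu}$ rate inequality. You in fact supply more detail than the paper does, in particular on transporting the mollification along the boundary diffeomorphism and on how the relabelling is carried out.
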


\begin{proof}

The proof follow exactly the one of Lemma \ref{approx:lem}.

\end{proof}

We are now able to prove Theorem \ref{exi:Lp:Nempty} and \ref{exi:L1:ss}. We start from the first one.

\subsection{Proof of Theorem  \ref{exi:Lp:Nempty}}

\begin{proof}[Proof of Theorem \ref{exi:Lp:Nempty}]
	
Let $ \omega^{in} $, $ \calC^{in}_{i} $, $ g $ and $ \omega^{+} $ the given data. Then from Lemma \ref{approx:lem:2}, there exist regular approximate data $ \omega^{in}_{\nu} \in H^1(\calF(0))$, $ g_{\nu}\in C^{1,\alpha}_{loc}(\bbR^{+}; C^{2,\alpha} \partial \calF(t)) $ and $ \omega^{+}_{\nu} \in C^{2}_{loc}(\bbR^{+} \odot \partial \calF^{+}_{\nu}(t))  $, where $ \partial \calF^{+}_{\nu}(t) = \{ x \in \partial \calF(t) $ such that $ g_{\nu} - q < 0 \}$, such that 
\begin{gather*}
\omega^{in}_{\nu} \longrightarrow \omega^{in} \text{ in } L^{p}(\calF(0)), \quad g_{\nu} \longrightarrow g \text{ in } \calL_{loc}^{r}(\bbR^{+}; W^{1-1/p,p}(\partial \calF(t))) \quad \text{ and } \\ \omega^{+}_{\nu}\mathds{1}_{\partial \calF^{+}_{\nu}(t)}(g_{\nu}- q)^{1/p} \longrightarrow \omega^{+} \mathds{1}_{\partial \calF^{+}(t)}(g- q)^{1/p} \text{ in } \calL^{r}_{loc}(\bbR^{+}; L^{p}(\partial \calF(t))).
\end{gather*}
	%
	
For the data $ \omega^{in}_{\nu} $, $ \calC^{in}_{i} $, $ g_{\nu} $ and $ \omega^{+}_{\nu} $ Lemma \ref{exi:Lama:timedep} applies. In particular we deduce the existence of a solutions $ (\omega_{\nu}, v_{\nu}) $ such that	\eqref{est:con:lone} holds true. We deduce that $ \omega_{\nu} $ is uniformly bounded in $ \calL^{\infty}_{loc}(\bbR^{+};L^{p}(\calF(t)))$ and in $ L^{p}_{loc}(\bbR^{+}; L^{p}(\partial \calF^{-}_{\nu}(t); (g_{\nu}-q) ds )))$ and up to subsequence 
\begin{align}
\label{conv:visc:app:meri}
\omega_{\nu} \cvwstar \omega \text{ in } \calL^{\infty}_{loc}(\bbR^{+};L^{p}(\calF(t))) \quad \text{ and } \quad  \quad \quad \quad \quad \quad \quad \quad \\ \omega_{\nu}\mathds{1}_{\partial \calF^{-}_{\nu}(t)}(g_{\nu}- q)^{1/p} \cv \omega^{-} \mathds{1}_{\partial \calF^{-}(t)}(g- q)^{1/p} \text{ in } \calL^{r}_{loc}(\bbR^{+}; L^{p}(\partial \calF(t))). \nonumber 
\end{align}
Note that in the only non trivial part it the fact that $ \partial \calF^{-} \neq \partial \calF^{-}_{\nu}$. The next step is to notice that $ \partial_t \omega_{\nu} $ is uniformly bounded in $ \calL^{r'}_{loc}(\bbR+; H^{-s}_0(\calF(t))) $ for some $ s $ big enough, where $ H^{-s} $ is the dual of $ H^s_0$. To see this let decompose the velocity field as in \eqref{u:dec:ss}, i.e.  
$$ v_{\nu} = v_{g_{\nu}} + \calK^{0}_{\calF(t)}[\omega_{\nu}]+\sum_{i} \left[ \int_{\calF} \Psi_i \omega_{\nu} + \calC_i(t)  \right] X_i $$  
\begin{align*}
\int_{0}^{T}& \langle \partial_t \omega_{\nu}, \varphi \rangle \, dt =   \int_{0}^T\int_{\calF(t)} \omega_{\nu} v_{\nu} \cdot \nabla \varphi \, dx dt -  \nu \int_0^T \int_{\calF(t) } \lvert \nabla \omega_{\nu} \rvert^2 \, dx dt  \\  = & \, \int_{0}^T\int_{\calF(t)} \omega_{\nu} \left( v_{g_{\nu}} + \calK^{0}_{\calF(t)}[\omega_{\nu}]+\sum_{i} \left[ \int_{\calF} \Psi_i \omega_{\nu} + \calC_i(t)  \right] X_i \right) \cdot \nabla \varphi \, dx dt \\ & \, -  \nu \int_0^T \int_{\calF(t) } \lvert \nabla \omega_{\nu} \rvert^2 \, dx dt .
\end{align*}
Let us estimate the four terms of the right-hand side separately. Regarding the first one we denote by $ \eta_{\nu} $ the solution of $ \Delta \eta_{\nu} = \omega_{\nu} $ in $ \calF(t) $ and $ \eta_{\nu} = 0 $ on $ \partial \calF(t) $. We have 
\begin{align*}
\int_{0}^T\int_{\calF(t)} \omega_{\nu} v_{g_{\nu}} \cdot \nabla \varphi  \, dx dt = & \, \int_{0}^T\int_{\calF(t)} \Delta \eta_{\nu} v_{g_{\nu}} \cdot \nabla \varphi \, dx dt \\
= & \, \int_{0}^T\int_{\calF(t)}  \eta_{\nu} \Delta\left( v_{g_{\nu}} \cdot \nabla \varphi  \right) \, dx dt \\
= & \, 2 \int_{0}^T\int_{\calF(t)}  \eta_{\nu} \nabla v_{g_{\nu}} : \nabla^2 \varphi  \, dx dt  \\ & \, + \int_{0}^T\int_{\calF(t)}  \eta_{\nu}  v_{g_{\nu}} \cdot \nabla \Delta \varphi  \, dx dt,
\end{align*}
where we use that $ v_{g_{\nu}} = \nabla \psi $ with $ - \Delta \psi = 0 $ in $ \calF(t) $ and $ \psi = 0 $ on $ \partial \calF(t) $. It is now easy to see that
\begin{align*}
\lvert \int_{0}^T\int_{\calF(t)} & \omega_{\nu} v_{g_{\nu}} \cdot \nabla \varphi  \, dx dt \rvert  \\ \leq  & \,   \|\eta_{\nu}\|_{\calL^{\infty}(0,T;L^{p'}(\calF(t))} \|v_{\gamma_{\nu}}\|_{\calL^r(0,T;W^{1,p}(\calF(t)))} \| \varphi \|_{\calL^{r'}(0,T;W^{3\infty}(\calF(t)))}  \\
\leq & \, C \|\eta_{\nu}\|_{\calL^{\infty}(0,T;W^{2,p}(\calF(t))} \|v_{\gamma_{\nu}}\|_{\calL^r(0,T;W^{1,p}(\calF(t)))} \| \varphi \|_{\calL^{r'}(0,T;W^{3\infty}(\calF(t)))},  \\
\leq & \, C \|\omega_{\nu}\|_{\calL^{\infty}(0,T;L^{p}(\calF(t))} \|v_{\gamma_{\nu}}\|_{\calL^r(0,T;W^{1,p}(\calF(t)))} \| \varphi \|_{\calL^{r'}(0,T;W^{3\infty}(\calF(t)))}, 
\end{align*}
where we use $ p > 1 $. Regarding the second term we recall that it can be rewritten as 
$$ \int_0^T \int_{\calF(t)} \omega_{\nu} \calK^0_{\calF(t)}[\omega_{\nu}] \cdot \nabla \varphi \, dx dt = \int_0^T \int_{\calF(t)} \int_{\calF(t)} H_{\varphi}(t,x,y) \omega_{\nu}(t,x) \omega_{\nu}(t,y) \, dx dy dt, $$
where $ H_{\varphi} $ is defined in \eqref{H:varphi} and satisfy $ \|H_{\varphi}\|_{L^{\infty}(\calF(t))} \leq \| \varphi \|_{W^{2,\infty}(\calF(t))}$. We deduce that 
$$ \lvert \int_0^T \int_{\calF(t)} \omega_{\nu} \calK^0_{\calF(t)}[\omega_{\nu}] \cdot \nabla \varphi \, dx dt \rvert \leq C \|\varphi \|_{\calL^1(0,T; W^{2,\infty}(\calF(t)))} \|\omega_{\nu} \|^2_{\calL{\infty}(0,T;L^{p}(\calF(t)))}. $$
In the third term the velocity is the linear combination of finitely many vector fields which are bounded. This implies that this term can be tackle easily. Finally the last term is bounded by energy estimates.   

We showed that $ \partial_t \omega_{\nu } $ is uniformly bounded in $ \calL^{r'}_{loc}( \bbR+;H^{-s}(\calF(t))) $, which implies together with 
\eqref{conv:visc:app:meri} that 
$$ \omega_{\nu} \longrightarrow \omega \quad \text{ in } C^{0}_{loc}(\bbR^+; L^p-w(\calF(t)) ),$$
where $ L^p-w(\calF(t)) $ denotes the $ L^{p}(\calF(t)) $ space endowed with the weak topology. This implies that
\begin{equation*}
\calK^0_{\calF(t)}[\omega_{\nu}] \longrightarrow \calK^{0}_{\calF(t)}[\omega] \quad \text{ in } \calL^{r}_{loc}(\bbR+;L^p(\calF(t))),
\end{equation*}
as a consequence of Lemma 6.4 of \cite{NS}. Due to the $ \calL^{r}_{loc}(0,T;W^{1-1/p,p}(\calF(t))) $ convergence of $ g_{\nu}$  towards $ g  $, we have that $ v_{g_{\nu}} $ converges to $ v_{g} $ in $ \calL^{r}_{loc}(\bbR+;L^p(\calF(t)))
$. Finally the convergence 
$$ \sum_{i} \left[ \int_{\calF} \Psi_i \omega_{\nu} + \calC_i(t)  \right] X_i \longrightarrow \sum_{i} \left[ \int_{\calF} \Psi_i \omega + \calC_i(t)  \right] X_i  \quad \text{ in } \calL^{r}_{loc}(\bbR+;L^p(\calF(t))) $$
is straight-forward. We deduce that 
\begin{equation*}
v_{\nu} \longrightarrow v \text{ in } \calL^{r}_{loc}(\bbR^{+}; L^{p}(\calF(t))).
\end{equation*}
	and by linearity of Div-Curl system it holds \eqref{div_curl:blabla}.

	Theorem \ref{exi:transport:ppppppppp} ensures the existence of a solution $ (\bar{\omega}, \bar{\omega}^{-}) $ associated with the velocity field $ v $ and the data $ (\omega^{in}, \omega^{+}) $. It remains to show that $ (\omega, \omega^{-}) = (\bar{\omega}, \bar{\omega}^{-}) $. To do that we use a backward flow $ (\phi_{\nu}, \phi^{+}_{\nu}) $, see for instance \cite{Cri:Spi} or \cite{IO3}, solution of

	\begin{align}
	-\partial_t \phi_{\nu} -v_{\nu}\cdot \nabla \phi_{\nu} - \nu \Delta \phi_{\nu} = & \, \chi \quad && \text{ for } x \in \calF(t), \nonumber \\
	\nabla \phi_{\nu} \cdot n = & \, -( \phi_{\nu} - \Psi)(g_{\nu}-q) \mathds{1}_{\partial \calF_{\nu}^{-}(t)} \quad && \text{ for } x \in \partial \calF(t), \label{vis:back:trans:equ}   \\
	\phi_{\nu}(T,x) = & \, 0, &&  \nonumber
	\end{align}
	
	where $ \chi $ and $ \Psi $ are smooth functions. And 
	\begin{align}
	-\partial_t \phi -v \cdot \nabla \phi = & \, \chi \quad && \text{ for } x \in \calF(t), \nonumber \\
	\phi = & \, \Psi  \quad && \text{ for } x \in \partial \calF^{-}(x),  \label{back:trans:equ} \\
	\phi(T,x) = & \, 0, &&  \nonumber
	\end{align}
	Then there exists a solution $ \phi_{\nu} $ of \eqref{vis:back:trans:equ}  in the sense of Lemma \ref{exi:Lama:timedep} and a distributional solution of \eqref{back:trans:equ}.
	
	The functions $ \phi_{\nu} $ satisfy estimates of the type \eqref{est:con:lone}, which imply a uniform bound in $ L^{\infty}(0,T;L^{p}(\calF)) $, moreover due to \eqref{vis:back:trans:equ} $ \phi_{\nu} $ is uniformly continuous in some $ W^{-1, r}(\calF) $ for some $ r $. It follows that for a subsequence
	\begin{gather*}
	\phi_{\nu} \longrightarrow \bar{\phi} = \phi \quad \text{ in } C^0([0,T];L^{q}-w(\calF)) \quad \text{ and } \quad \\ (g_{\nu}-q)^{1/q}\mathds{1}_{\partial \calF^{+}_{\nu}} \phi_{\nu} \cv \bar{\psi} = \lvert g-q \rvert^{1/q} \mathds{1}_{\partial \calF^{+}}\phi^{+} \text{ in } L^{q}((0,T)\times \partial \mathcal{F})
	\end{gather*}
	and the identification $ (\bar{\phi}, \lvert g-q \rvert^{-1/q}\bar{\psi})  = (\phi, \phi^{+}) $ comes from the fact that we can pass to the limit in the weak formulation satisfied by $ \phi_{\nu } $ to show that $ (\bar{\phi}, (g-q)^{-1/q}\bar{\psi}) $ is a weak solution of \eqref{back:trans:equ} and conclude by uniqueness. Note that here we use the fact that $ \phi v \in L^{1}(\calF) $.

	Using the duality formula from Theorem \ref{duality:formula} with $ u =  \bar{\omega} $ and $ r(t,.) = \phi(T-t) $, we have
	%
	\begin{equation*}
	\int_0^{T}\int_{\calF} \bar{\omega} \chi + \int_0^{T} \int_{\partial \calF^{-}} (g-q) \bar{\omega}^{-} \Psi = \int_{\calF} \omega^{in} \phi(0,.) - \int_{0}^{T} \int_{\partial \calF^{+}} (g -q)\omega^{+} \phi^{+} 
	\end{equation*}
	
	Consider now the equation satisfy by $ \omega_{\nu} $ tested with $ \phi_{\nu} $. Due to the convergences previously showed we deduce 
	\begin{equation*}
	\int_0^{T}\int_{\calF} \omega \chi + \int_0^{T} \int_{\partial \calF^{-}} (g-q) \omega^{-} \Psi = \int_{\calF} \omega^{in} \phi(0,.)  - \int_{0}^{T} \int_{\partial \calF^{+}} (g-q) \omega^{+} \phi^{+}.
	\end{equation*}
	The right hand side of the two above equalities is the same. We deduce that  
	\begin{equation*}
	\int_0^{T}\int_{\calF} (\bar{\omega} - \omega)\chi + \int_0^{T} \int_{\partial \calF^{-}} (g-q) (\omega^{-} - \bar{\omega}^{-}) \Psi = 0
	\end{equation*}
	for any smooth function $ \chi $ and $ \Psi $. This shows $ (\omega, \omega^{-}) = (\bar{\omega}, \bar{\omega}^{-}) $.
	
\end{proof}

We move to the case of $ L^1 $ vorticity.

\begin{proof}[Proof of Theorem \ref{exi:L1:ss}] The proof of this result is similar to Theorem 4 of \cite{IO3}. Regarding the convergence of $ \omega_{\nu}(t,x)\omega_{\nu}(t,y) $, it enough to extend $ \omega_{\nu} $ by zero. It follows that $ \omega_{\nu}(t,x)\omega_{\nu}(t,y) \cv \omega(t,x)\omega(t,y)$ in $ L^{\infty}(0,T,\mathcal{M}(\bbR^{2}))$, where $ \mathcal{M}(\bbR^{2}) $ is the space of Random measures. 
	
	Note now that $ \omega(t,x)\omega(t,y) $ are zero in the exterior of the closure of $ \bbR^{+} \odot \calF(t) $ and $ H_{\varphi, \delta} $ which is $ H_{\varphi} $ multiply by a cut-off along the diagonal (see \cite{Scho}) is continuous up to the boundary due to its regularity. To conclude it is enough to show that $ \omega_{\nu}(t,x)\omega_{\nu}(t,y) $ do not concentrate along the diagonal, but this follow from \eqref{est:con:Gun}.
	
\end{proof}

\section{Proof of Theorem \ref{app:theo:theo}}

In this section we prove Theorem \ref{app:theo:theo}. The proof is dived in four steps. In the first one we show a priori bounds for $ (\omega_{\eps}, \omega_{\eps}^{-}) $ from which we deduce weak convergence of the vorticity in step two. In the third one we show strong convergence of the velocity and in the last one we explain how to pass to the limit and derive the system \eqref{main:sys}.

\subsection{Uniform bounds}
\label{unif:bound}

In this subsection we prove uniform bounds for $ \omega_{\eps} $, $\omega_{\eps}^{-}$ and $ v_{\eps} $ in appropriate spaces.

\begin{Lemma}
	\label{firs_apriori_est}
	Let $ (\omega_{\eps}, \omega_{\eps}^{-}, v_{\eps}) $ a weak solution of \eqref{main:sys} satisfying the hypothesis of Theorem \ref{app:theo:theo} and let $ T > 0 $. Then $\| \omega_{\eps} \|_{\mathcal{L}^{\infty}(0,T; L^{p}(\calF_{\eps}(t)))}$ and $ \| \lvert g_{\eps}-q_{\eps}\rvert^{1/p} \omega_{\eps}^{-} \|_{\mathcal{L}^{p}(0,T;L^{p}(\pS^i_{\eps}(t)))}$ are uniformly bounded respect to the parameter $ \eps $. 
\end{Lemma}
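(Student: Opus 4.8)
The plan is to read both bounds off the $L^p$ energy identity that the approximate solutions satisfy, namely \eqref{equ:65} written on the domain $\calF_\eps(t)$. Since $(\omega_\eps,\omega_\eps^-,v_\eps)$ solves the approximate problem \eqref{app:judv:sys}, it is in particular a renormalized solution of the source--free transport equation for $\omega_\eps$, so testing \eqref{rem:tras:form} with $\beta(x)=|x|^p$ and $\varphi\equiv 1$ — equivalently, invoking the Corollary that follows Theorem \ref{exi:transport:ppppppppp} with $f=0$ — gives, for every $t\in[0,T]$,
\begin{equation*}
\begin{aligned}
&\int_{\calF_\eps(t)}|\omega_\eps(t,.)|^p+\int_0^t\int_{\partial\calF_\eps^-(t)}(g_\eps-q_\eps)|\omega_\eps^-|^p \\
&\quad =\int_{\calF_\eps(0)}|\omega_\eps^{in}|^p-\int_0^t\int_{\partial\calF_\eps^+(t)}(g_\eps-q_\eps)|\omega_\eps^+|^p ,
\end{aligned}
\end{equation*}
where $\partial\calF_\eps^{\pm}(t)$ denote the entering and outgoing parts of $\partial\calF_\eps(t)$. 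On $\partial\Omega$ one has $g_\eps=q_\eps=0$, so only $\pS^{+}_\eps$ and $\pS^{-}_\eps$ contribute; on $\partial\calF_\eps^-(t)$ the factor $g_\eps-q_\eps$ is nonnegative, while on $\partial\calF_\eps^+(t)$ we write $-(g_\eps-q_\eps)=|g_\eps-q_\eps|\geq 0$. Since $\omega_\eps^+$ is extended by zero off $\pS^{+}_\eps$, the boundary term on the right is controlled by $\|\,|g_\eps-q_\eps|^{1/p}\omega_\eps^+\|_{\mathcal{L}^p(0,t;L^p(\pS^{+}_\eps(t)))}^p$.

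Next I would bound the right-hand side uniformly in $\eps$. The term $\int_{\calF_\eps(0)}|\omega_\eps^{in}|^p$ is bounded because $\omega_\eps^{in}\to\omega^{in}$ in $L^p(\calF_0)$ by \eqref{1}, so the sequence is bounded; the boundary term is $\leq C_T^p$ by the second estimate in \eqref{3}. Hence the right-hand side is at most some $C(T)$ independent of $\eps$. As both terms on the left are nonnegative, discarding the boundary term yields $\|\omega_\eps(t,.)\|_{L^p(\calF_\eps(t))}^p\leq C(T)$ for every $t\in[0,T]$, i.e. $\|\omega_\eps\|_{\mathcal{L}^\infty(0,T;L^p(\calF_\eps(t)))}$ is bounded uniformly in $\eps$; discarding instead $\int_{\calF_\eps(t)}|\omega_\eps(t,.)|^p$ gives $\int_0^T\int_{\partial\calF_\eps^-(t)}|g_\eps-q_\eps|\,|\omega_\eps^-|^p\leq C(T)$, which is exactly a uniform bound on $\|\,|g_\eps-q_\eps|^{1/p}\omega_\eps^-\|_{\mathcal{L}^p(0,T;L^p(\pS^{i}_\eps(t)))}$, recalling that $\omega_\eps^-$ vanishes on $\partial\Omega$ and, after extension by zero, is supported on the outgoing part of $\bbR^{+}\odot(\pS^{+}_\eps\cup\pS^{-}_\eps)$.

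This step carries no real difficulty; the only points needing care are that the approximate solutions genuinely satisfy the energy \emph{identity} rather than an inequality — which holds because they are the renormalized (vanishing-viscosity) solutions produced by Theorem \ref{exi:Lp:Nempty}, for which the Corollary of Theorem \ref{exi:transport:ppppppppp} gives equality — and the bookkeeping that identifies the entering/outgoing portions $\partial\calF_\eps^{\pm}(t)$ of $\partial\calF_\eps(t)$ with the geometric pieces $\pS^{\pm}_\eps(t)$, so that the signs in the energy identity line up with hypotheses \eqref{1} and \eqref{3} to produce constants independent of $\eps$. The genuinely delicate estimate of the section — a uniform bound on the velocities $v_\eps$, where the shrinking of the holes forces the use of the reflection estimates of Lemma \ref{lemma:ref:meth} — is a separate matter, deferred to the subsequent steps.
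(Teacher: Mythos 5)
Your argument is correct and is exactly the paper's proof: the paper derives the lemma as a direct consequence of the $L^p$ energy identity \eqref{equ:65}, obtained from the renormalized formulation \eqref{rem:tras:form} with $\beta(x)=\lvert x\rvert^p$ and $\varphi=1$, and then the right-hand side is bounded uniformly in $\eps$ by hypotheses \eqref{1} and \eqref{3}, just as you do. Your write-up is in fact more careful than the paper's one-line proof about the sign bookkeeping on $\partial\calF_\eps^{\pm}(t)$ and about why the identity (rather than an inequality) holds for the vanishing-viscosity solutions.
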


\begin{proof} The proof is a direct consequence of the fact that  $ (\omega_{\eps}, \omega_{\eps}^{-}, v_{\eps}) $  are renormalized solution to the transport equation of the vorticity.		
\end{proof}

We now prove a bound for the velocity field. To do that we decompose the time $ \bbR^+ $ and  the velocity field in dependence of the size of the holes $ \calS^i $.

\subsubsection{Decomposition of the time and of the velocity field}

Given $ ( \Omega, \calS^+, \calS^-) $ we decompose the time $ \bbR^+ $ in dependence of the size of the holes $ \calS^i $. We have already introduced $ \calT^i = \{ t \in \bbR^+ $ s.t. $ r^i(t) = 0 \}$, $ \calT^i_{NP} = \{ t \in \bbR^+ $ s.t. $ r^i(t) > 0 \} $, $ \calT^i_{NP,\delta} = \{ t \in \bbR^+ $ s.t. $ r^i(t) \geq \delta \}$. We now define the ``transition times'' $ \calT^i_{TR,\delta} = \{ t \in \bbR^+ $ s.t. $ 0 < r^i(t) < \delta \} $ and $ \calT_{TR,\delta} = \calT^+_{TR,\delta} \cup \calT^-_{TR,\delta}$. Finally the ``no-transition times'' are $ \calT_{NTR,\delta} = \bbR^+ \setminus \calT_{TR,\delta} $. All this informations are resumed in Figure \ref{Tab}, where $ A = \calT^+ \cap \calT^- $, $ B^i = \calT^i_{TR,\delta} \cap \calT^{{}^{op}i} $, $ C^i = \calT^i_{NP,\delta} \cap \calT^{{}^{op}i} $, $ D = \calT^+_{TR,\delta} \cap \calT^-_{TR,\delta} $, $ E^i = \calT^{{}^{op}i}_{TR,\delta} \cap \calT^{i}_{NP,\delta} $, $ F = \calT^{+}_{NP,\delta} \cap \calT^{-}_{NP,\delta} $ and for $ i \in \{+, - \}$, we denote $ {}^{op}i $ the unique element of $ \{+,-\} \setminus \{i\}$

\begin{figure}
	\begin{tabular}{|c|c|c|c|c|} 
	\hline 
		\multicolumn{2}{|c|}{\multirow{2}{*}{}} & \multirow{2}{*}{$ \calT^+ \Leftrightarrow r^+ = 0  $ } & \multicolumn{2}{|c|}{$ \calT^+_{NP} \Leftrightarrow  r^+ > 0 $} \\ \cline{4-1} \cline{5-1} \multicolumn{2}{|c|}{} &  & $ \calT^+_{TR,\delta} \Leftrightarrow  0 < r^+ < \delta $ & $ \calT^+_{NP,\delta} \Leftrightarrow  r^+ \geq  \delta $ \\
		\hline
		\multicolumn{2}{|c|}{$ \calT^- \Leftrightarrow  r^- = 0 $} & $A $ & $ B^+ $ & $ C^+$ \\
		\hline
		\multirow{2}{*}{$ \calT^-_{NP} \Leftrightarrow  r^- > 0 $} & $ \calT^-_{TR,\delta} \Leftrightarrow  0 < r^- < \delta $ & $ B^- $ & $ D $ & $ E^+ $  \\ \cline{2-1} \cline{3 -1} \cline{4-1} \cline{5-1} & $ \calT^-_{NP,\delta} \Leftrightarrow  r^- \geq  \delta $ &  $ C^-$ & $ E^- $ &  $ F $ \\
		\hline 
		\end{tabular}	
		\caption{Table of times.}	
	\label{Tab}
\end{figure}

In the non transition times $  \calT_{NTR,\delta} = A \cup C^+ \cup C^- \cup F $ three situations can occur  
\begin{itemize}

\item[A.] both the holes are points, in particular $ t \in \calT^{+}\cap \calT^{-} = A $.

\item[C.] one hole is a point the other has size greater or equal to $  \delta $. In this case for $ i \in \{+, - \}$, we denote $ {}^{op}i $ the unique element of $ \{+,-\} \setminus \{i\}$ and $ t \in \bigcup_{i \in\{+,-\}} \calT^{i} \cap \calT^{{}^{op}i}_{NP, \delta} = C^+ \cup C^- = \calT_{PNP, \delta} $. 

\item[F.] both the holes has size greater of equal to $  \delta $, in particular $ t \in \calT^{+}_{NP,\delta} \cap \calT^{-}_{NP,  \delta} = F $.

\end{itemize}

We are now able to rewrite the velocity field $ v_{\eps} $ in dependence of the time interval.

\paragraph{Decomposition of $ v_{\eps} $ in $ A $, $ B^+$, $ B^-$ and $ D $.}

 For $ t \in A \cup B^+ \cup B^- \cup D $ we write 
\begin{equation}
\label{12}
v_{\eps} = \calK^0_{\Omega}[\omega_{\eps}]  + L_{\eps}  + \calB_{\Omega}[-L_{\eps}\cdot n] + w^{1}_{\eps},
\end{equation}
with 
\begin{align}
\label{112}
L_{\eps}(t,x) = & \,  \sum_{i \in \{+,-\}} \left( \oint_{\pS^{i}_{\eps}(t)} g_{\eps}^i \right) \frac{x-h^{i}(t)}{2\pi \lvert x-h^{i}(t)\rvert^2} \\ & \, + \sum_{i \in \{+, -\}} \left[ \calC_{i, \eps}^{in} - \int_{0}^{t}\oint_{\pS^{i}(t)} \omega^{i}_{\eps} (g^{i}_{\eps} -q_{\eps}) \right] \frac{(x-h^i(t))^{\perp}}{2\pi \lvert x-h^{i}(t)\rvert ^2}. \nonumber
\end{align}
The vector field $ \calB_{\Omega}[-L_{\eps}\cdot n] $ satisfies
\begin{gather*}
\div \left( \calB_{\Omega}[-L_{\eps}\cdot n]  \right) = 0 \text{ and } \curl \left( \calB_{\Omega}[-L_{\eps}\cdot n] \right) = 0 \text{ in } \Omega \\  \quad \text{ and }  \quad \calB_{\Omega}[-L_{\eps}\cdot n] \cdot n = - L_{\eps} \cdot n \text{ on } \partial \Omega.  
\end{gather*}
Finally the vector field $ w_{\eps} $ satisfies  
\begin{gather*}
\div w^{1}_{\eps} = 0 \text{ and } \curl w^{1}_{\eps} = 0 \text{ in } \calF_{\eps}(t), \quad \\ w^{1}_{\eps} \cdot n =g_{\eps} - (\calK^0_{\Omega}[\omega_{\eps}] +L_{\eps}+ \calB_{\Omega}[-L_{\eps}\cdot n]) \cdot n \text{ on } \partial \calF_{\eps}(t)  \\ \quad \text{ and } \quad \oint_{\pS_{\eps}^{i}(t)}  w^{1}_{\eps} \cdot \tau = 0. 
\end{gather*}

\paragraph{Decomposition of $ v_{\eps} $ in $ C^+$, $ C^-$, $ E^+ $ and $ E^- $.}

For $ i \in \{+,-\}$ and  $ t \in C^i \cup E^i $, we set $ \tilde{\calF}(t) = \Omega \setminus \overline{\calS^i(t) } $. Notice that for $ t \in C^i $ we have $ \tilde{\calF}(t) = \calF(t) $. Then the velocity field 
\begin{equation}
\label{13}
v_{\eps} = \calK_{\tilde{\calF}(t)}^{0}[\omega_{\eps}] + L^{{}^{op}i}_{\eps} + \calB_{\tilde{\calF}(t)}[g_{\eps}-L^{{}^{op}i}_{\eps}\cdot n] + w^{2}_{\eps}
\end{equation}
where
\begin{align*}
L^{{}^{op}i}_{\eps}(t,x) = & \, \left( \oint_{\pS^{{}^{op}i}_{\eps}(t)} g_{\eps} \right) \frac{x-h^{{}^{op}i}(t)}{2\pi \lvert x-h^{{}^{op}i}(t)\rvert^2} \\ & \, + \left[ \calC_{{}^{op}i, \eps}^{in} - \int_{0}^{t}\oint_{\pS^{{}^{op}i}(t)} \omega^{{}^{op}i}_{\eps} (g_{\eps} -q_{\eps}) \right] \frac{(x-h^{{}^{op}i}(t))^{\perp}}{2\pi \lvert x-h^{{}^{op}i}(t) \rvert^2}.
\end{align*}
The velocity field $ \calB_{\tilde{\calF}(t)}[g_{\eps}-L^{{}^{op}i}_{\eps}\cdot n] $ satisfies
\begin{gather*}
\div \calB_{\tilde{\calF}(t)}[g_{\eps}-L^{{}^{op}i}_{\eps}\cdot n] = 0, \quad \curl \calB_{\tilde{\calF}}[g_{\eps}-L^{{}^{op}i}_{\eps}\cdot n] = 0  \text{ in } \tilde{\calF}(t), \quad \\  \calB_{\tilde{\calF}(t)}[g_{\eps}-L^{{}^{op}i}_{\eps}\cdot n] \cdot n = g_{\eps}- L_{\eps}\cdot n \text{ on } \partial \tilde{\calF}(t) \\ \quad \text{ and } \quad \int_{\pS^{i}(t)} \calB_{\tilde{\calF}}[g_{\eps}-L^{{}^{op}i}_{\eps} \cdot n] \cdot \tau = \calC_{i, \eps}(t).
\end{gather*}
and finally 
\begin{gather*}
\div w^{2}_{\eps} = 0 \text{ and } \curl w^{2}_{\eps} = 0 \text{ in } \calF_{\eps}(t), \quad w^{2}_{\eps} \cdot n = 0 \text{ on } \pS^{i}(t) \\
w^{2}_{\eps} \cdot n = g_{\eps} - (\calK^0_{\tilde{\calF}(t)}[\omega_{\eps}]+ L^{{}^{op}i}_{\eps} + \calB_{\tilde{\calF(t)}}[g_{\eps}-L^{{}^{op}i}_{\eps}\cdot n]) \cdot n \text{ on } \partial \calS^{^{{}^{op}i}}_{\eps}(t)\\  \quad \text{ and } \quad \oint_{\pS_{\eps}^{j}(t)}  w^{1}_{\eps} \cdot \tau = 0, 
\end{gather*}
for $ j = \{+,-\}$.

\paragraph{Decomposition of $ v_{\eps} $ in $F$. }

For $ t \in \calT^{+}_{NP,\delta} \cap \calT^{-}_{NP, \delta} = F $ and small enough $ \eps $ the velocity field 
\begin{equation}
\label{11}
v_{\eps} = \calK_{\calF(t)}^{0}[\omega_{\eps}] + \calB_{\calF(t)}[g_{\eps}],
\end{equation}
where $ \calK_{\calF(t)}^{0} $ is the Biot-Savart operator with $ 0 $ component on the boundaries and $ 0 $ circulation on $ \pS^i(t) $ and $ \calB_{\calF(t)} $ is the solution of
\begin{gather*}
\div \calB_{\calF(t)}[g_{\eps}] = 0, \quad \curl \calB_{\calF(t)}[g_{\eps}] = 0  \text{ in } \calF(t), \quad \calB_{\calF(t)}[g_{\eps}] \cdot n = g_{\eps} \text{ on } \partial \calF(t) \\ \quad \text{ and } \quad \int_{\pS^{i}(t)} \calB_{\calF(t)}[g_{\eps}] \cdot \tau = \calC_{i, \eps}(t).
\end{gather*}

In figure \ref{Tab:2}, we resume the decomposition of the velocity field.

\begin{figure}
	\begin{tabular}{|c|c|c|c|c|} 
		\hline
		\multicolumn{2}{|c|}{\multirow{2}{*}{}} & \multirow{2}{*}{$ \calT^+ \Leftrightarrow r^+ = 0  $ } & \multicolumn{2}{|c|}{$ \calT^+_{NP} \Leftrightarrow  r^+ > 0 $} \\ \cline{4-1} \cline{5-1} \multicolumn{2}{|c|}{} &  & $ \calT^+_{TR,\delta} \Leftrightarrow  0 < r^+ < \delta $ & $ \calT^+_{NP,\delta} \Leftrightarrow  r^+ \geq  \delta $ \\
		\hline
		\multicolumn{2}{|c|}{$ \calT^- \Leftrightarrow  r^- = 0 $} &  \eqref{12}  &  \eqref{12}  & \eqref{13} \\
		\hline
		\multirow{2}{*}{$ \calT^-_{NP} \Leftrightarrow  r^- > 0 $} & $ \calT^-_{TR,\delta} \Leftrightarrow  0 < r^- < \delta $ & $ \eqref{12} $ & $ \eqref{12} $ & \eqref{13}  \\ \cline{2-1} \cline{3 -1} \cline{4-1} \cline{5-1} & $ \calT^-_{NP,\delta} \Leftrightarrow  r^- \geq  \delta $ &   \eqref{13}  &  \eqref{13}  &  \eqref{11} \\
		\hline 
	\end{tabular}
	\caption{Decomposition of the velocity field.}	
	\label{Tab:2}
\end{figure}

Now we will show that the velocity field $ v_{\eps} $ is uniformly bounded in $ \calL_{loc}^{r}(\bbR^{+}; L^{q}(\calF(t)))  $ and if we restrict to the ``transition times'' $ \calT_{TR;\delta} $ then $ \calL_{loc}^{r}(\calT_{TR;\delta}; L^{q}(\calF(t)))   $ norm of $ v_{\eps} $ converges to zero uniformly in $ \eps $ as $ \delta $ goes to zero.

\begin{proposition}
\label{prop:A:Priori:est}

Let $ \delta $ small enough, it holds that $ v_{\eps} $ is uniformly bounded in $ \calL_{loc}^{r}(\bbR^{+}; L^{q}(\calF(t)))$. Moreover $ w_{\eps}^{1} \longrightarrow 0 $ in $ \calL^{r}_{loc}(\calT^{+}\cap \calT^{-}; L^{q}(\calF(t))$, $ w_{\eps}^{2} \longrightarrow 0 $ in $ \calL^{r}_{loc}(\calT_{PNP,\delta}; L^{q}(\calF(t))$ as $ \eps$ converges to zero and 
\begin{equation*}
\| v_{\eps}\|_{\calL^{r}_{loc}(\calT_{TR,\delta}, L^{q}(\calF(t)))} \longrightarrow 0 
\quad \text{ as } \delta \longrightarrow 0 
\end{equation*}
uniformly in $ \eps $.

\end{proposition}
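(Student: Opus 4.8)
The plan is to estimate $v_{\eps}$ separately on each of the three regimes of Figure~\ref{Tab:2} and, in each of them, to bound every term of the corresponding decomposition of $v_{\eps}$ one by one; throughout, $\omega_{\eps}$ is understood to be extended by zero to the holes $\calS^{i}_{\eps}(t)$ so that the full-domain Biot--Savart operators make sense. Four ingredients will be used repeatedly. First, all the Biot--Savart pieces $\calK^{0}_{\Omega}[\omega_{\eps}]$, $\calK^{0}_{\tilde{\calF}(t)}[\omega_{\eps}]$, $\calK^{0}_{\calF(t)}[\omega_{\eps}]$ are controlled in $L^{q}(\calF_{\eps}(t))$ by Lemma~\ref{ell:est:1} with exponent $q$ and $l=1$, together with the embedding $L^{p}(\calF(t))\hookrightarrow L^{q}(\calF(t))$ (valid since $q<2<p$ and the domains have uniformly bounded measure) and the bound $\|\omega_{\eps}\|_{\mathcal{L}^{\infty}_{loc}(L^{p})}\leq C$ of Lemma~\ref{firs_apriori_est}; the constants are uniform because on each of $\Omega$, $\tilde{\calF}(t)$, $\calF(t)$ that occurs in a given regime the remaining holes have radius $\geq\delta$, so the geometry is uniformly regular for fixed $\delta$. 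Second, the explicit fields $L_{\eps}$ and $L^{{}^{op}i}_{\eps}$ are finite sums of a Newtonian source $\tfrac{x-h^{i}(t)}{2\pi|x-h^{i}(t)|^{2}}$ and a point vortex $\tfrac{(x-h^{i}(t))^{\perp}}{2\pi|x-h^{i}(t)|^{2}}$, each in $L^{q}(\Omega)$ for $q<2$ with norm bounded uniformly in $t\in[0,T]$, times the coefficients $\oint_{\pS^{i}_{\eps}(t)}g^{i}_{\eps}$ and $\calC_{i,\eps}(t)$. Third, the correctors $\calB_{\Omega}[\cdot]$, $\calB_{\tilde{\calF}(t)}[\cdot]$, $\calB_{\calF(t)}[\cdot]$ solve an elliptic Div--Curl problem on a domain whose holes have size $\geq\delta$, so Lemma~\ref{ell:est:1} again applies, and their data $L_{\eps}\cdot n$ are evaluated on curves that stay away from the singular points $h^{i}(t)$, hence are smooth with norm controlled by the above coefficients. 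Fourth, the reflection terms $w^{1}_{\eps}$, $w^{2}_{\eps}$ will be handled by Lemma~\ref{lemma:ref:meth}, which produces the crucial gain $(r^{i}_{\eps})^{1/q}$.

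The coefficient bounds will come from the hypotheses \eqref{1}--\eqref{5} and Lemma~\ref{firs_apriori_est}. By H\"older $|\oint_{\pS^{i}_{\eps}(t)}g^{i}_{\eps}|\leq C(r^{i}_{\eps})^{1/q'}\|g_{\eps}\|_{L^{q}(\pS^{i}_{\eps})}\leq C(r^{i}_{\eps})^{1/p}\|g_{\eps}\|_{L^{q}(\pS^{i}_{\eps})}$ (using $q'\leq p$), which is equi-integrable in $L^{r}_{loc}$ by \eqref{5}; when $r^{i}=0$ one uses instead \eqref{4} together with the incompressibility constraint $\oint_{\partial\calF_{\eps}(t)}g_{\eps}=0$ and the boundedness of $\|g_{\eps}\|_{W^{1-1/p,p}}$. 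For the circulations, on $\pS^{+}_{\eps}$ one writes $\oint_{\pS^{+}_{\eps}}|\omega^{+}_{\eps}||g_{\eps}-q_{\eps}|\leq \|(g_{\eps}-q_{\eps})^{1/p}\omega^{+}_{\eps}\|_{L^{p}(\pS^{+}_{\eps})}\bigl(\oint_{\pS^{+}_{\eps}}|g_{\eps}-q_{\eps}|\bigr)^{1/p'}$ and controls the first factor by \eqref{3} and the second, as above, by \eqref{5} ($q_{\eps}$ being bounded); on $\pS^{-}_{\eps}$ the exiting vorticity is treated the same way via the second bound of Lemma~\ref{firs_apriori_est}. With \eqref{1} this gives $\sup_{\eps}\bigl(|\calC_{i,\eps}(t)|+|\oint_{\pS^{i}_{\eps}(t)}g^{i}_{\eps}|\bigr)$ bounded in $L^{r}(0,T)$, hence, via the first three ingredients, that $\calK^{0}$, $L_{\eps}$, $\calB$ are all uniformly bounded in $\mathcal{L}^{r}_{loc}(\bbR^{+};L^{q}(\calF_{\eps}(t)))$. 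Patching the three regimes yields the first assertion of the proposition, after transferring from $\calF_{\eps}(t)$ to $\calF(t)$, which costs only the contribution on $\calF(t)\setminus\calF_{\eps}(t)=\calS^{i}_{\eps}(t)\setminus\calS^{i}(t)$, a set of measure $\leq C(r^{i}_{\eps})^{2}$.

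It remains to treat the reflection terms. I would first check that the residual normal trace $\tilde{g}^{i}_{\eps}:=g_{\eps}-(\calK^{0}+L_{\eps}+\calB)\cdot n$ (and the analogue in \eqref{13}) has zero mean on each small circle $\pS^{i}_{\eps}(t)$: the fields $\calK^{0}$ and $\calB$ are divergence free in a neighbourhood of $\overline{\calS^{i}_{\eps}(t)}$, so their fluxes through $\pS^{i}_{\eps}(t)$ vanish by the divergence theorem, while the flux of $L_{\eps}$ through $\pS^{i}_{\eps}(t)$ equals exactly $\oint_{\pS^{i}_{\eps}(t)}g^{i}_{\eps}$ (the Newtonian term centred at $h^{i}$ has unit flux, the vortex term and the terms centred at the other hole have zero flux through $\pS^{i}_{\eps}$), so that Lemma~\ref{lemma:ref:meth} applies in its $\tilde{L}^{q}$ form and gives $\|w^{k}_{\eps}\|_{L^{q}(\calF_{\eps}(t))}\leq C_{T}\sum_{i}(r^{i}_{\eps})^{1/q}\|\tilde{g}^{i}_{\eps}\|_{L^{q}(\pS^{i}_{\eps})}$. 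Since on $\pS^{i}_{\eps}(t)$ one has $|L_{\eps}|\leq C|\mathrm{coeff}|/r^{i}_{\eps}$ and $|\calK^{0}|+|\calB|\leq C$, it follows that $(r^{i}_{\eps})^{1/q}\|\tilde{g}^{i}_{\eps}\|_{L^{q}(\pS^{i}_{\eps})}\leq C\bigl((r^{i}_{\eps})^{1/q}\|g_{\eps}\|_{L^{q}(\pS^{i}_{\eps})}+|\mathrm{coeff}|\,(r^{i}_{\eps})^{2/q-1}+(r^{i}_{\eps})^{2/q}\bigr)$, with $2/q-1>0$ because $q<2$; using $|\oint_{\pS^{i}_{\eps}}g^{i}_{\eps}|\leq C(r^{i}_{\eps})^{1/q'}\|g_{\eps}\|_{L^{q}(\pS^{i}_{\eps})}$ and $1/q'+2/q-1=1/q$, even the middle term is bounded by $C(r^{i}_{\eps})^{1/p}\|g_{\eps}\|_{L^{q}(\pS^{i}_{\eps})}$ (the vortex coefficient being $\calC_{i,\eps}$, already bounded). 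On $\calT^{+}\cap\calT^{-}$ (and on $\calT_{PNP,\delta}$) the shrinking hole has $r^{i}_{\eps}=\eps$, so each term tends to $0$ in $\mathcal{L}^{r}_{loc}$ by \eqref{3} and the coefficient bounds, proving the second assertion. Finally, on $\calT_{TR,\delta}$ the transitioning hole has $r^{i}_{\eps}<\delta$, hence $(r^{i}_{\eps})^{2/q-1}<\delta^{2/q-1}\to0$, while $(r^{i}_{\eps})^{1/q}\|g_{\eps}\|_{L^{q}(\pS^{i}_{\eps})}\leq(r^{i}_{\eps})^{1/p}\|g_{\eps}\|_{L^{q}(\pS^{i}_{\eps})}$ is made small, uniformly in $\eps$, by the equi-integrability \eqref{5} combined with $|\calT_{TR,\delta}\cap[0,T]|\to0$ as $\delta\to0$ (the sets $\calT^{i}_{TR,\delta}$ decrease to $\emptyset$); since the $\mathcal{L}^{r}$ norms of $\calK^{0}$, $L_{\eps}$, $\calB$ on a set of small measure are also small (their bounds reducing to \eqref{5}, \eqref{3} and Lemma~\ref{firs_apriori_est}), this gives $\|v_{\eps}\|_{\mathcal{L}^{r}_{loc}(\calT_{TR,\delta};L^{q}(\calF(t)))}\to0$ as $\delta\to0$, uniformly in $\eps$.

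The main obstacle is this last, transition-time, part: one must simultaneously exploit the geometric gain $(r^{i}_{\eps})^{1/q}$ of Lemma~\ref{lemma:ref:meth}, checking carefully that the residual traces have zero mean so that this lemma is applicable; control the coefficients of $L_{\eps}$ on small circles, where $L_{\eps}$ itself blows up like $1/r^{i}_{\eps}$, so the product is still small; and convert the pointwise smallness of positive powers of $r^{i}_{\eps}$ and the equi-integrability hypothesis \eqref{5} into smallness of the full $\mathcal{L}^{r}_{loc}$ norm on $\calT_{TR,\delta}$ uniformly in $\eps$ --- all of which relies on the precise compatibility between the exponents $p$, $q$, $r$ fixed in the statement. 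By contrast, the bound on the no-transition regime is a routine application of the time-uniform elliptic estimates of Lemma~\ref{ell:est:1} and the energy bound of Lemma~\ref{firs_apriori_est}.
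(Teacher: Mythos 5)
Your proposal is correct and follows essentially the same route as the paper: the same regime-by-regime decomposition of Figure \ref{Tab:2}, the same coefficient bounds from \eqref{3}--\eqref{5} and Lemma \ref{firs_apriori_est}, the same scaling computation $(r^{i}_{\eps})^{1/q}\|L_{\eps}\|_{L^{q}(\pS^{i}_{\eps})}\sim (r^{i}_{\eps})^{2/q-1}$ with $q<2$, and the same use of the reflection Lemma \ref{lemma:ref:meth} for $w^{1}_{\eps}$, $w^{2}_{\eps}$ combined with $Leb^{1}(\calT_{TR,\delta})\to 0$ for the transition times. Your explicit verification that the residual normal traces have zero mean (so that Lemma \ref{lemma:ref:meth} applies in its $\tilde{L}^{q}$ form) is a welcome detail that the paper leaves implicit.
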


\begin{proof} Fix a time $ T > 0 $, in the following we prove the estimates in the compact time interval $ [0,T] $. To show the uniform bound we show the bound for the right hand sides of \eqref{11}-\eqref{12}-\eqref{13}. Fix $ 2\delta \leq \kappa_{T} $ with $ \kappa_{T} $ from Lemma \ref{lemma:ref:meth}.
	
\paragraph{Case $ t \in F $.} For $ t \in \calT^{+}_{NP,\delta} \cap \calT^{-}_{NP,\delta} \cap [0,T] $ the decomposition \eqref{11} holds, in particular
\begin{align*}
\|v_{\eps}\|_{L^{q}(\calF(t))} = & \,  \|\calK_{\calF(t)}^{0}[\omega_{\eps}] + \calB_{\calF(t)}[g_{\eps}]\|_{L^{q}(\calF(t))} \\ \leq  & \, C \left(\|\omega_{\eps}\|_{L^{q}(\calF(t))} +\| g_{\eps}\|_{L^{q}(\partial \calF(t))} + \lvert \calC_{+,\eps} \rvert + \lvert \calC_{-,\eps} \rvert \right)
\end{align*}
and after taking the $L^{r}$-norm in time and using Lemma \ref{firs_apriori_est} and the hypothesis on $ g_{\eps} $, we deduce the desired estimates in this zone. 

\paragraph{Case $ t \in A $}

For $ t \in \calT^{+} \cap \calT^{-}  $ and for $ \eps < \delta $, we use the decomposition \eqref{12} in the following way. For $ \calK^{0}_{\Omega}[\omega_{\eps}] $ the estimates is clear, the uniform bound of $ L_{\eps} $ comes from the fact that it is the sum of the products of an $ L^{r} $ in time functions for example $ \int_{\pS^i_{\eps}(t)} g_{\eps} $ and vector fields of the type $ (x-h^i(t))/\lvert x-h^i(t)\rvert^2 $ which are bounded in $ L^{q}(\Omega) $ uniformly in $ h^{i}(t) \in \Omega $. The vector field $ \calB_{\calF(t)}[-L_{\eps} \cdot n] $ is associated to $ L_{\eps} $ throw its values on $ \partial \Omega $, in particular they are bounded in $L^{r}_{loc}(L^{q}(\partial \Omega)) $ because $ h^i(t) \in K $ a set compactly contained in $\Omega$. We are left with the uniform bound for $ w_{\eps}^{1} $. In this case we actually prove that  $ \omega_{\eps}^{1} \longrightarrow 0 $ in $ \calL^{r}_{loc}(\calT^{+}\cap \calT^{-}; L^{q}(\calF(t))$. To prove this convergence we notice that for $ \eps < 2 \delta $ we are in the setting to apply Lemma \ref{lemma:ref:meth}, in particular it holds
\begin{align*}
\| w_{\eps}^1\|_{L^{q}(\calF_{\eps}(t))} \leq & \, C \eps^{1/q}\| g^{\eps} - (\calK^0_{\calF(t)}[\omega_{\eps}]  + L_{\eps}  + \calB_{\calF(t)}[-L_{\eps} \cdot n] )\cdot n \|_{L^{q}(\partial \calF_{\eps}(t))}  \\
\leq & \, C \eps^{1/q}\big(\| g^{\eps} \|_{L^{q}(\partial \calF_{\eps}(t))} + \| \calK^0_{\Omega}[\omega_{\eps}]\cdot n\|_{L^{q}(\partial \calF_{\eps}(t))} \\ &  \quad\quad\quad\quad + \| L_{\eps}\cdot n\|_{L^{q}(\partial \calF_{\eps}(t))}  + \|\calB_{\calF(t)}[-L_{\eps} \cdot n] \cdot n \|_{L^{q}(\partial \calF_{\eps}(t))}\big)
\end{align*}  
and the right hand side tends to zero in $ L^{r} $, in fact $ \eps^{1/q} \| g^{\eps} \|_{L^{q}(\partial \calF_{\eps}(t))} $ tends to zero by hypothesis \eqref{3}, $ \calK^0_{\Omega}[\omega_{\eps}] $ and $ \calB_{\calF(t)}[-L_{\eps} \cdot n] $ are uniformly bounded in $ L^{\infty} $ in any compact subset of $ \Omega $  which implies that $ \eps^{1/q}( \| \calK^0_{\Omega}[\omega_{\eps}]\cdot n\|_{L^{q}(\partial \calF_{\eps}(t))} + \|\calB_{\calF(t)}[-L_{\eps} \cdot n] \cdot n \|_{L^{q}(\partial \calF_{\eps}(t))}) $ tends to zero and the estimates for $ L_{\eps} $ follows from 
\begin{equation*}
\eps^{1/q}\left\| \frac{x-h^i(t)}{2\pi \lvert x-h^i(t) \rvert^2}\right\|_{L^{q}(\pS^{i}_{\eps}(t))} = \eps^{1/q}\eps^{1/q -1}\left\| \frac{x-h^i(t)}{2\pi \lvert x-h^i(t) \rvert^2}\right\|_{L^{q}(\pS^{i}_{1}(t))} \longrightarrow 0.
\end{equation*}

\paragraph{Case $ t \in C^+ \cup C^-$.}

For $ t \in \calT_{PNP, \delta} = C^+ \cup C^-  $ we assume, without loss of generality, that the i-th hole is a point and we assume $ \eps $ sufficiently small to apply the second part of Lemma \ref{lemma:ref:meth}. As usual we rewrite $ v_{\eps }$ with the help of \eqref{13}. The estimate of the right hand side of \eqref{13} can be performed in the similar way to the case $ t \in \calT^{+} \cap \calT^{-} = A $, in particular we use the estimates
\begin{equation*}
\| \calB_{\calF(t)}[g_{\eps}-L^i_{\eps}\cdot n] \|_{L^{q}(\calF(t))} \leq C \| g_{\eps}-L^i_{\eps}\cdot n \|_{L^{q}(\partial \calF(t))}
\end{equation*}
and for $ w^{2}_{\eps} $ we apply the second part of Lemma \ref{lemma:ref:meth}.

\paragraph{Case $ t \in B^+ \cup B^- \cup C \cup E^+ \cup E^- $}

For $ t \in B^+ \cup B^- \cup C$, we use the decomposition \eqref{12} and note that we are in the setting to use Lemma \ref{lemma:ref:meth}. We estimate the right hand side of \eqref{12} as follow.
\begin{align*}
\| \calK^{0}_{\Omega}[\omega_{\eps}]\|_{L^{r}(\calT_{TR, \delta}, L^{q}(\Omega)) }   \leq & \, C \| \omega_{\eps} \|_{L^{r}(\calT_{TR, \delta}, L^{q}(\Omega)) } \\ \leq & \, C \left(Leb^{1}(\calT_{TR, \delta})\right)^{1-1/r} \| \omega_{\eps} \|_{L^{\infty}(\calT_{TR, \delta}, L^{q}(\Omega)) }. 
\end{align*}
Note that the last term in the series of inequalities is bounded uniformly in $ \eps $ due to Lemma \ref{firs_apriori_est} and $ Leb^{1}(\calT_{TR, \delta}) $, which denotes the Lebesgue measure of $\calT_{TR, \delta} $, converges to zero. To estimate $ L_{\eps} $ and $ \calB_{\calF(t)}[-L_{\eps} \cdot n] $ we notice that 
\begin{align*}
\| L_{\eps}+ & \calB_{\calF(t)}[-L_{\eps} \cdot n] \|_{L^{r}(\calT_{TR, \delta}; L^{q}(\Omega))} \\ \leq & \, C \sum_{i \in \{+,-\} } \left(\left\| \oint_{\pS^{i}_{\eps}(t)} g_{\eps}\right\|_{L^{r}(\calT_{TR, \delta})} + \left\| \calC_{i, \eps}^{in} - \int_{0}^{t}\oint_{\pS^{i}(t)} \omega^{i}_{\eps} (g_{\eps}-q_{\eps} )\right\|_{L^{r}(\calT_{TR, \delta})} \right)  \\
\leq & \, C\sum_{i \in \{+,-\} } \left(\left\|(r_{\eps}^{i}(t))^{1/p}\| g_{\eps}\|_{L^{q}(\pS^{i}_{\eps}(t))}\right\|_{L^{r}(\calT_{TR, \delta})} \right) \\
& \, + C \left(Leb^{1}(\calT_{TR, \delta})\right)^{1/r}\left[\lvert \calC_{i, \eps}^{in} \rvert \right] \\  & \, + C \left(Leb^{1}(\calT_{TR, \delta})\right)^{1/r} \left[ \lvert \int_0^T \oint_{\pS^{i}_{\eps}(t)} \lvert \omega^i \rvert^{p} \lvert g_{\eps}-q_{\eps}\rvert \rvert^{1/p} \lvert \int_0^{T} \oint_{\pS^{i}_{\eps}(t)} \lvert g_{\eps}-q_{\eps}\rvert  \rvert^{1/q} \right], 
\end{align*}
which converges uniformly in $ \eps $ to zero due to hypothesis \eqref{5}, Lemma \ref{firs_apriori_est} and the fact that the measure of $ \calT_{TR, \delta} $ converges to zero.

We are left with the reminder $ w_{\eps} $ for which the uniform convergence to zero follow from Lemma \ref{lemma:ref:meth}, together with Lemma \ref{firs_apriori_est} and hypothesis \eqref{5} applied in the same style as above to estimate 
\begin{equation*}
\sum_{i\in\{+,-\}} r_{\eps}^i(t)^{1/q} \left\| g_{\eps} - \left( \calK^{0}_{\Omega}[\omega_{\eps}] + L_{\eps} + \calB_{\calF(t)}[-L_{\eps} \cdot n] \right)\cdot n \right\|_{L^{q}(\pS^{i}_{\eps}(t))}.
\end{equation*} 
For $ t \in E^+ \cup E^- $ the estimates are similar.

\end{proof}

We now present an a priori bound for the time derivative of the vorticity.

\begin{Lemma}
\label{lem:unif:est:dt}
Let $ (\omega_{\eps}, \omega_{\eps}^{-}, v_{\eps}) $ a weak solution of \eqref{app:judv:sys} satisfying the hypothesis of Theorem \ref{app:theo:theo} and let $ T > 0 $. Then $ \| \partial_t \omega_{\eps} \|_{L^{l}(0,T; W^{-2,s}(\bbR^{2}))} $ is uniformly bounded in $ \eps $ for $ l, s $ big enough.

\end{Lemma}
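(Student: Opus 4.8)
The plan is to test the transport equation $\partial_t \omega_\eps + \div(v_\eps \omega_\eps) = 0$ against a smooth test function $\varphi \in C^\infty_c(\bbR^2)$ and estimate the resulting bilinear term $\int_{\calF_\eps(t)} \omega_\eps v_\eps \cdot \nabla\varphi$ uniformly in $\eps$, using a suitable decomposition of $v_\eps$. Since $\omega_\eps$ is only bounded in $L^p(\calF_\eps(t))$ with $p > 2$ and $v_\eps$ only in $L^q(\calF_\eps(t))$ with $q < 2$ (from Lemma \ref{prop:A:Priori:est}), the product $\omega_\eps v_\eps$ need not be in $L^1$, so the argument must exploit cancellation in the Biot--Savart part of the velocity exactly as in the proof of Theorem \ref{exi:Lp:Nempty}. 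First I would extend $\omega_\eps$ by zero outside $\calF_\eps(t)$ (so that the problem lives on a fixed domain $\bbR^2$), and fix $T>0$ with the goal of bounding $\partial_t\omega_\eps$ in $L^l(0,T;W^{-2,s}(\bbR^2))$, i.e. testing against $\varphi$ with $\|\varphi\|_{W^{2,s'}}$ controlled; here $s$ large (so $s'$ close to $1$) and $l$ its time companion. Passing to $W^{-2,s}$ rather than $W^{-1,s}$ is the point: it lets us integrate by parts twice and move all derivatives off $v_\eps$ and $\omega_\eps$.

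The key steps, in order. (1) Split $\bbR^+ = \calT_{TR,\delta} \cup \calT_{NTR,\delta}$ as in Section \ref{unif:bound} and use the velocity decompositions \eqref{11}--\eqref{12}--\eqref{13} on the corresponding time intervals. On the transition times, by Proposition \ref{prop:A:Priori:est}, $v_\eps$ is small (but for a uniform-in-$\eps$ bound we only need it bounded in $\calL^r_{loc}(L^q)$), and the contribution $\int \omega_\eps v_\eps\cdot\nabla\varphi$ is estimated directly by Hölder: $\|\omega_\eps\|_{L^p}\|v_\eps\|_{L^q}\|\nabla\varphi\|_{L^\infty}$ with $1/p+1/q \le 1$, which is $\calL^r_{loc}$-bounded. (2) On the non-transition times, handle each term of the velocity decomposition. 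The ``regular'' pieces — $v_{g_\eps}$, $\calB_\Omega[-L_\eps\cdot n]$, $\calB_{\tilde\calF(t)}[\cdot]$, $L_\eps$, the harmonic vector fields $X_i$ weighted by $\int \Psi_i\omega_\eps + \calC_{i,\eps}$, and the remainders $w^1_\eps, w^2_\eps$ — are all bounded in $\calL^r_{loc}(L^q(\calF_\eps(t)))$ (in fact the last three, and $L_\eps$ and the harmonic part, are bounded in better spaces by the elliptic estimates of Lemma \ref{ell:est:1}, Lemma \ref{lemma:ref:meth}, and the hypotheses \eqref{2}--\eqref{5}), so their contributions are estimated by a plain Hölder inequality as above. (3) The delicate term is $\int_{\calF_\eps(t)} \omega_\eps\, \calK^0_{\calF_\eps(t)}[\omega_\eps]\cdot\nabla\varphi$ (equivalently the $\calK^0_\Omega$ or $\calK^0_{\tilde\calF(t)}$ piece). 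Exactly as in the proof of Theorem \ref{exi:Lp:Nempty}, rewrite it as the symmetric double integral $\int\int H_\varphi(t,x,y)\,\omega_\eps(t,x)\,\omega_\eps(t,y)\,dx\,dy$ with $H_\varphi$ from \eqref{H:varphi}, and invoke Lemma \ref{H:phi:bounded}: $\|H_\varphi\|_{L^\infty} \le M\|\varphi\|_{W^{2,\infty}(\calF_\eps(t))}$ with $M$ uniform in time. Hence this term is bounded by $M\|\varphi\|_{W^{2,\infty}}\|\omega_\eps\|^2_{L^p}$, which is $\calL^{r/1}_{loc}$-integrable in time by Lemma \ref{firs_apriori_est}; alternatively one uses the $v_{g_\eps}$-style argument with the stream function $\eta_\eps$ solving $\Delta\eta_\eps=\omega_\eps$, $\eta_\eps|_{\partial}=0$, integrating by parts twice onto $\nabla^2\varphi$ and $\nabla\Delta\varphi$, which is why $W^{2,s'}$ (and even a third derivative on $\varphi$) must be allowed — this is the reason for the exponent $-2$. (4) Collect: choosing $s$ large enough that $W^{2,s'}(\bbR^2) \hookrightarrow W^{3,\infty}$ fails but the double-integral estimate only needs $W^{2,\infty}$, and choosing $l$ so that all the time norms ($\calL^r_{loc}$ of the velocity, $\calL^\infty_{loc}$ of $\|\omega_\eps\|_{L^p}^2$, and $L^{r/2}_{loc}$ from products) combine, one gets $\|\partial_t\omega_\eps\|_{L^l(0,T;W^{-2,s}(\bbR^2))} \le C_T$ uniformly in $\eps$.

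The main obstacle is the low integrability of $\omega_\eps v_\eps$: neither factor is in a space whose product lands in $L^1$, so one cannot simply bound $\langle\partial_t\omega_\eps,\varphi\rangle$ by $\|\omega_\eps\|_{L^2}\|v_\eps\|_{L^2}\|\nabla\varphi\|_\infty$. The resolution, as above, is twofold: for the irrotational / harmonic parts of $v_\eps$ use that $p>1$ so $\|\omega_\eps\|_{L^p}\|v_\eps\|_{L^q}$ makes sense with $1/p+1/q\le 1$ and the velocity is genuinely $L^q$-bounded; for the Biot--Savart part use the Delort-type antisymmetrization $H_\varphi$ and Lemma \ref{H:phi:bounded} to reduce to $\|\omega_\eps\|_{L^p}^2$. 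A secondary technical point is that the fluid domain $\calF_\eps(t)$ is both $\eps$- and $t$-dependent and may have a vanishingly small hole, so all elliptic and reflection estimates invoked (Lemmas \ref{ell:est:1}, \ref{lemma:ref:meth}) must be used in their uniform-in-$t$ and uniform-in-$\eps$ forms, and the zero-extension of $\omega_\eps$ to $\bbR^2$ must be used consistently so that the test functions live on a fixed space; once these are in place the estimate is a bookkeeping exercise across the cells $A, B^\pm, C^\pm, D, E^\pm, F$ of Figure \ref{Tab}.
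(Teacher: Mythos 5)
There is a genuine mis-step at the heart of your plan. You assert that, since $\omega_\eps$ is only in $L^p$ with $p>2$ and $v_\eps$ only in $L^q$ with $q<2$, the product $\omega_\eps v_\eps$ need not be in $L^1$ and that one must therefore reuse the Delort-type antisymmetrization from the proof of Theorem \ref{exi:Lp:Nempty}. But the hypotheses of Theorem \ref{app:theo:theo} impose $q\in[p/(p-1),2)$, i.e. precisely $1/p+1/q\leq 1$, so on the bounded domain $\calF_\eps(t)$ H\"older gives $\int_{\calF_\eps(t)}\lvert\omega_\eps v_\eps\rvert\leq C\|\omega_\eps\|_{L^p}\|v_\eps\|_{L^q}$ directly. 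Since Proposition \ref{prop:A:Priori:est} bounds the \emph{whole} velocity $v_\eps$ uniformly in $\calL^r_{loc}(\bbR^+;L^q(\calF_\eps(t)))$ (not just some pieces of a decomposition), the paper's proof is exactly this one-line estimate, $\lvert\int_0^T\int\omega_\eps v_\eps\cdot\nabla\varphi\rvert\leq C\|\omega_\eps\|_{\calL^\infty L^p}\|v_\eps\|_{\calL^r L^q}\|\nabla\varphi\|_{\calL^{r'}L^\infty}$, together with the analogous H\"older bound on the boundary flux terms; the paper even adds a remark stating that, in contrast with Theorem \ref{exi:Lp:Nempty}, here $\omega_\eps v_\eps$ \emph{is} uniformly bounded in $L^1$ and the argument is therefore much easier. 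You in fact notice in your step (1) that H\"older with $1/p+1/q\leq1$ works, but fail to observe that it applies to all of $v_\eps$; the entire decomposition across the cells of Figure \ref{Tab} and the Biot--Savart machinery are unnecessary here.

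Beyond being unnecessary, your workaround does not cleanly deliver the stated conclusion. The identity $\langle\omega,\omega,\varphi\rangle=\int\omega\,\calK^0_{\calF(t)}[\omega]\cdot\nabla\varphi$ and the bound of Lemma \ref{H:phi:bounded} require $\varphi\in\mathfrak{C}$, i.e. $\varphi$ constant on each connected component of $\partial\calF(t)$ (this is why, in the proof of Theorem \ref{exi:Lp:Nempty}, the time derivative is bounded in $H^{-s}_0(\calF(t))$, the dual of test functions vanishing on the boundary). A bound in $W^{-2,s}(\bbR^2)$ must hold against arbitrary $\varphi\in C^\infty_c(\bbR^2)$, which are not constant on $\partial\calF_\eps(t)$, so the antisymmetrization step would not go through without further work. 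Finally, your plan never addresses the boundary terms $\sum_{i}\int_0^T\oint_{\pS^i_\eps(t)}(g_\eps-q_\eps)\omega^i_\eps\varphi$, which are part of $\langle\partial_t\omega_\eps,\varphi\rangle$ in the weak formulation and must be estimated (the paper does this via H\"older using Lemma \ref{firs_apriori_est} and hypotheses \eqref{2}--\eqref{3}--\eqref{5}).
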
 

\begin{proof}

This is a consequence of equation \eqref{app:judv:sys}, Lemma \ref{firs_apriori_est} and Proposition \ref{prop:A:Priori:est}. The triple $ (\omega_{\eps}, \omega_{\eps}^{-}, v_{\eps}) $ is a weak solution of the system \eqref{app:judv:sys}, we deduce that 

\begin{align*}
\lvert \left \langle \partial_t \omega_{\varepsilon}, \varphi \right\rangle \rvert
&=\lvert \int_{0}^{T}\int_{\mathcal{F}_{\varepsilon}} \omega_{\varepsilon} \partial_t\varphi dx dt -\int_{\mathcal{F}_{\varepsilon}(t)} \omega_{\varepsilon}(T,.) \varphi(T,.) dx +\int_{\mathcal{F}_{\varepsilon}(t)} \omega_{\varepsilon}^{in} \varphi(0,.) dx \rvert \\ 
\leq &  \lvert \int_{0}^{T} \int_{\mathcal{F}_{\varepsilon}(t)} v_{\varepsilon}\cdot\nabla \varphi \omega_{\varepsilon} dx dt \rvert + \sum_{i \in \{+,-\}} \lvert \int_{0}^{T}\oint_{\pS^{i}_{\varepsilon}(t)} (g_{\varepsilon}-q_{\eps}) \omega^{i}_{\varepsilon} \varphi ds dt \rvert,
\end{align*}
The terms in the right hand side above can be estimated as follows. 
\begin{gather*}
\lvert \int_{0}^{T} \int_{\mathcal{F}_{\varepsilon}(t)} v_{\varepsilon}\cdot \nabla \varphi \omega_{\varepsilon} \rvert \leq \\ C  \| \omega_{\varepsilon}\|_{{\calL^{\infty}(0,T;L^{p}(\calF_{\eps}(t)))}} \|v_{\varepsilon}\|_{\calL^{r}(0,T; L^{q}(\calF_{\eps}(t)))}\|\nabla \varphi \|_{\calL^{r'}(0,T;L^{\infty}(\calF_{\eps}(t)))}  ,
\\
\text{ and }
\\  
\lvert \int_{0}^{T}\oint_{\pS^{i}_{\varepsilon}(t)} (g_{\varepsilon}-q_{\eps})\omega^i_{\varepsilon}\varphi \rvert \leq  \\ \| \lvert g_{\varepsilon}-q_{\eps}\rvert^{1/p} \omega^i_{\varepsilon}\|_{L^{p}([0,T]\times \pS^{i}_{\varepsilon}(t))} \| g_{\varepsilon}-q_{\eps}\|^{1/p'}_{\calL^{r}(0,T ; L^{1}(\pS^{i}_{\varepsilon}(t) ))} \| \varphi \|_{\calL^{p'r'}(0,T; L^{\infty}(\pS^{i}(t)) )}.
\end{gather*}
The uniform boundedness of the above quantities follows from Lemma \ref{firs_apriori_est}, Proposition \ref{prop:A:Priori:est} and hypothesis \eqref{2}-\eqref{3}-\eqref{5}.

\end{proof}

\begin{remark}

Let us notice that in contrast to the proof of Theorem  \ref{exi:Lp:Nempty}, the uniform bound of the time derivative of the vorticity is much easier because the term $ \omega_{\eps} v_{\eps} $ is uniformly bounded in $ L^1 $ which was not the case in the proof of Theorem  \ref{exi:Lp:Nempty}. 
	
\end{remark}

\subsection{Weak and strong convergence of the vorticities}

From the uniform bounds of Lemma \ref{firs_apriori_est} we derive weak convergences of the vorticity in the following sense.

\begin{proposition}
 
Let $ (\omega_{\eps}, \omega_{\eps}^{-}, v_{\eps}) $ a weak solution of \eqref{app:judv:sys} satisfying the hypothesis of Theorem \ref{app:theo:theo}. Then up to subsequence $ \omega_{\eps} $ converges weakly-star to $ \omega $ in $ \calL^{\infty}_{loc}(\bbR^{+}; L^{p}(\calF(t))) $ and strongly in $ C^0_{loc}(\bbR^+;L^p-w(\calF(t)))$ and for any small enough $ \delta$
the function $ (g_{\eps}-q_{\eps})^{1/p} \omega^{-}_{\eps} $ converges weakly to $ (g-q)^{1/p} \omega^{-} $ in $ \calL^{p}_{loc}(\calT_{NP,\delta}^{-}; L^{p}(\pS^-(t) ) $.

\end{proposition}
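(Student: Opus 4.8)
The plan is to reduce everything to the fixed, time-independent domain $\Omega$ by extending the vorticities by zero, to extract weak limits from the a priori estimates of Lemma \ref{firs_apriori_est}, and then to upgrade the convergence of $\omega_\eps$ to a $C^0$-in-time statement for the weak topology by means of the equicontinuity bound of Lemma \ref{lem:unif:est:dt}. First I would let $\tilde\omega_\eps$ be the extension of $\omega_\eps$ by zero to $\bbR^+\odot\Omega$; since $\calF_\eps(t)\subset\Omega$ for every $t$, this is well defined, and Lemma \ref{firs_apriori_est} gives that $\tilde\omega_\eps$ is bounded in $\calL^\infty(0,T;L^p(\Omega))$ uniformly in $\eps$ for every $T>0$. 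Since $\calL^\infty(0,T;L^p(\Omega))$ is the dual of the separable space $L^1(0,T;L^{p'}(\Omega))$ (recall $p>1$), a diagonal extraction in $T$ yields a subsequence with $\tilde\omega_\eps\cvwstar\tilde\omega$ in $\calL^\infty_{loc}(\bbR^+;L^p(\Omega))$. Because $\calS^i_\eps(t)$ converges to $\calS^i(t)$ as $\eps\to0$ and $\tilde\omega_\eps(t,\cdot)$ vanishes on $\calS^i_\eps(t)$, the limit $\tilde\omega(t,\cdot)$ vanishes a.e. on the interiors of the exact holes for a.e. $t$; setting $\omega:=\tilde\omega|_{\calF(t)}$ therefore gives the announced weak-$*$ convergence $\omega_\eps\cvwstar\omega$ in $\calL^\infty_{loc}(\bbR^+;L^p(\calF(t)))$.

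Next I would prove the convergence in $C^0_{loc}(\bbR^+;L^p-w(\calF(t)))$. Testing the weak formulation of \eqref{app:judv:sys} against functions in $C^\infty_c(\bbR^+\times\bbR^2)$, Lemma \ref{lem:unif:est:dt} furnishes a bound on $\partial_t\tilde\omega_\eps$ in $L^l(0,T;W^{-2,s}(\bbR^2))$ that is uniform in $\eps$, for suitable $l,s$. Fix $T>0$ and let $B_R\subset L^p(\Omega)$ be the closed ball whose radius is the uniform bound; on $B_R$ the weak topology is metrizable, say by a metric $d_w$ (separability of $L^{p'}(\Omega)$), and $(B_R,d_w)$ is compact. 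The uniform $L^p$ bound together with the equicontinuity in $t$ of the maps $t\mapsto\int_\Omega\tilde\omega_\eps(t,\cdot)\,\phi$ for $\phi\in C^\infty_c(\Omega)$ (a consequence of the $W^{-2,s}$ estimate on $\partial_t\tilde\omega_\eps$), plus a density argument in $\phi$, show that $\{\tilde\omega_\eps\}$ is equicontinuous from $[0,T]$ into $(B_R,d_w)$. An Arzel\`a--Ascoli argument (in the spirit of the Aubin--Lions--Simon lemma with the weak topology on the target), followed by a diagonal extraction in $T$, then gives a subsequence converging in $C^0_{loc}(\bbR^+;L^p-w(\Omega))$, necessarily to the $\tilde\omega$ already identified; restricting to $\calF(t)$ yields the claim.

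For the boundary traces I would fix $\delta>0$ and work with $\eps<\delta/2$, so that $r^-(t)\geq\delta>2\eps$ for every $t\in\calT^-_{NP,\delta}$, whence $\Xi_\eps(r^-)=r^-$ and therefore $\pS^-_\eps(t)=\pS^-(t)$ and $q_\eps=q$ on $\calT^-_{NP,\delta}\odot\pS^-(t)$. By Lemma \ref{firs_apriori_est} the quantity $\lvert g_\eps-q\rvert^{1/p}\omega^-_\eps$ is bounded in $\calL^p_{loc}(\calT^-_{NP,\delta};L^p(\pS^-(t)))$ uniformly in $\eps$, hence along a subsequence it converges weakly to some $\ell$ in that space. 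Hypothesis \eqref{2} gives $g_\eps\to g$ in $\calL^r_{loc}(\calT^-_{NP,\delta};L^q(\pS^-(t)))$, so $(g_\eps-q)^{1/p}\to(g-q)^{1/p}$ a.e. along a further subsequence, and since $g-q>0$ a.e. on $\calT^-_{NP,\delta}\odot\pS^-(t)$ I would set $\omega^-:=\ell\,(g-q)^{-1/p}$, so that $\ell=(g-q)^{1/p}\omega^-$ and $\omega^-$ belongs to $\calL^p_{loc}(\calT^-_{NP,\delta}\odot\pS^-(t),\lvert g-q\rvert\,dt\,ds)$; this is exactly the asserted convergence. A final diagonal extraction along a sequence $\delta_k\downarrow0$, compatible with the monotone nesting $\calT^-_{NP,\delta_k}\subset\calT^-_{NP,\delta_{k+1}}$, produces one subsequence for which the convergence holds for every small enough $\delta$.

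The main obstacle is the second step: converting the uniform $\calL^\infty(L^p)$ bound, together with mere time-equicontinuity in a negative-order space, into genuine $C^0$ convergence for the weak topology, all while the spatial domains $\calF_\eps(t)$ vary both with $\eps$ and with $t$. The device that unlocks it is the extension by zero to the time-independent domain $\Omega$, after which one needs only to check that these zero extensions still obey a uniformly bounded estimate for their time derivative in a negative Sobolev space — which is precisely the content of Lemma \ref{lem:unif:est:dt}, stated on all of $\bbR^2$ and hence insensitive to the geometry. A secondary (but routine) point is to make sure the identification of the boundary limit is consistent as $\delta\to0$, which is why the times are organised through the increasing family $\calT^-_{NP,\delta}$.
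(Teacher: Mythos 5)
Your proposal is correct and follows essentially the same route as the paper: the weak-$*$ convergence comes from the uniform bound of Lemma \ref{firs_apriori_est} together with the duality $\calL^{\infty}_{loc}(L^{p})\cong(\calL^{1}_{loc}(L^{p'}))^{*}$, and the $C^{0}_{loc}(\bbR^{+};L^{p}\text{-}w)$ convergence is obtained from Lemma \ref{firs_apriori_est} and Lemma \ref{lem:unif:est:dt} via exactly the Arzel\`a--Ascoli argument on the metrizable weak topology that the paper delegates to Lemma C.1 of \cite{lions}. Your treatment of the boundary trace (noting $\pS^{-}_{\eps}(t)=\pS^{-}(t)$ for $\eps$ small relative to $\delta$, extracting a weak limit $\ell$ and defining $\omega^{-}=\ell(g-q)^{-1/p}$, then diagonalizing in $\delta$) is a sound filling-in of a step the paper's proof leaves implicit.
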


\begin{proof}

The weak-star convergence of $ \omega_{\eps} $ is a direct consequence of Lemma \ref{firs_apriori_est}, after noticing that
\begin{equation*}
\calL^{\infty}_{loc}(\bbR^{+};L^{p}(\calF(t))) \cong \left(\calL^{1}_{loc}(\bbR^{+};L^{p/(p-1)}(\calF(t)))\right)^{*}.
\end{equation*}
The strong convergence in $ C^0_{loc}(\bbR^+;L^p-w(\calF(t)))$ is a consequence of Lemma \ref{firs_apriori_est} and \ref{lem:unif:est:dt} together with Lemma C.1 of \cite{lions}.

\end{proof}

\subsection{Strong convergence for the velocity}

This subsection is dedicated to the proof of strong convergence for the the velocity field $ v_{\eps}$. 

\begin{proposition}
\label{strong:con:vel}
Let $ (\omega_{\eps}, \omega_{\eps}^{-}, v_{\eps}) $ a weak solution of \eqref{app:judv:sys} satisfying the hypothesis of Theorem \ref{app:theo:theo}. Then up to subsequence $ v_{\eps} $ converges strongly to $v$ in $ \calL^{r}_{loc}(\bbR^{+};L^{q}(\calF(t))) $. Moreover 
\begin{align*}
v = & \,  \calJ_{\calF(t)} \left[\mu \mathds{1}_{\calT^{+}}\delta_{x_{+}(t)} - \left(\oint_{\pS^{+}(t)} g \mathds{1}_{\calT^{+}_{NP}} + \mu \mathds{1}_{\calT^{+}} \right) \mathds{1}_{\calT^{-}}\delta_{x_{-}(t)} \right] \\ & \, + \calK_{\calF(t)} \left[ \omega + \calC_{+} \mathds{1}_{\calT^{+}} \delta_{x_{+}(t)} + \calC_{-} \mathds{1}_{\calT^{-}} \delta_{x_{-}(t)} \right].
\end{align*}

\end{proposition}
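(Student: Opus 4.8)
The plan is to pass to the limit separately on the three families of ``no‑transition'' times $A=\calT^+\cap\calT^-$, $\calT_{PNP,\delta}=C^+\cup C^-$ and $F=\calT^+_{NP,\delta}\cap\calT^-_{NP,\delta}$, using the corresponding decompositions \eqref{12}, \eqref{13}, \eqref{11}, and then to let $\delta\to 0$, exploiting that the transition times $\calT_{TR,\delta}$ carry no mass of $v_\eps$ in the limit. First I would record the compactness of the relevant operators. Since $\calK^0_{D(t)}$ gains one derivative by Lemma \ref{ell:est:1} and $W^{1,p}(D(t))\hookrightarrow L^q(D(t))$ compactly (here $p>2>q$), for $D(t)\in\{\Omega,\tilde\calF(t),\calF(t)\}$ the already established convergence $\omega_\eps\longrightarrow\omega$ in $C^0_{loc}(\bbR^+;L^p-w(\calF(t)))$ yields $\calK^0_{D(t)}[\omega_\eps]\longrightarrow\calK^0_{D(t)}[\omega]$ strongly in $\calL^r_{loc}(L^q(D(t)))$, by Lemma~6.4 of \cite{NS} exactly as in the proof of Theorem \ref{exi:Lp:Nempty} (on $A$ one extends $\omega_\eps$ by zero to $\Omega$, the holes $\calS^i_\eps(t)$ having area $O(\eps^2)$). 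Similarly $\calB_{D(t)}$ is continuous from $W^{1-1/p,p}(\partial D(t))\times\bbR^2$ into $W^{1,p}(D(t))$, hence compact into $L^q(D(t))$; with the uniform bound on $g_\eps$ in $\calL^r_{loc}(W^{1-1/p,p})$ and the convergences \eqref{1}--\eqref{2} this gives $\calB_{D(t)}[g_\eps]\to\calB_{D(t)}[g]$ strongly, once the circulations are shown to converge.

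Next I would check that $\calC_{i,\eps}(t)\longrightarrow\calC_i(t)$ in $L^r_{loc}(\bbR^+)$. Using \eqref{1} for the initial circulations, the weak‑$*$ convergence of $\omega_\eps$ for the term $\int_{\calF_\eps(t)}\omega_\eps$, the convergence $(g_\eps-q_\eps)^{1/p}\omega^+_\eps\to(g-q)^{1/p}\omega^+$ from \eqref{2} together with $g_\eps\to g$ on $\calT^+_{NP,\delta}$, and hypothesis \eqref{4} on $\calT^+$, one obtains $\int_0^t\oint_{\pS^+_\eps}\omega^+_\eps(g_\eps-q_\eps)\longrightarrow\int_0^t\big(\oint_{\pS^+}\omega^+(g-q)\mathds{1}_{\calT^+_{NP}}+j\mathds{1}_{\calT^+}\big)$, which is precisely the integrand in the equations for $\calC_\pm$ in \eqref{main:sys}. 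Incompressibility of $v_\eps$ (the divergence theorem on $\calF_\eps(t)$, with $v_\eps\cdot n=0$ on $\partial\Omega$) also gives $\oint_{\pS^-_\eps}g_\eps=-\oint_{\pS^+_\eps}g_\eps+O(\eps)$ and controls all the coefficients in $L_\eps$ and $L^{{}^{op}i}_\eps$; combined with \eqref{4} and the fact that the translated free vortices $(x-h^i(t))^\perp/(2\pi|x-h^i(t)|^2)$ and free sources $(x-h^i(t))/(2\pi|x-h^i(t)|^2)$ are bounded in $L^q(\Omega)$ uniformly in $h^i(t)$ in a compact subset of $\Omega$, one gets strong $\calL^r_{loc}(L^q)$ convergence of $L_\eps$ on $A$ and of $L^{{}^{op}i}_\eps$ on $\calT_{PNP,\delta}$.

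With these ingredients I would pass to the limit region by region. On $F$, decomposition \eqref{11} gives $v_\eps\to\calK^0_{\calF(t)}[\omega]+\calB_{\calF(t)}[g]$, which, since $\mathds{1}_{\calT^\pm}=0$ there and the circulation of $\calK_{\calF(t)}[\omega]$ is $\calC_i(t)$ while $\calJ_{\calF(t)}[0]$ carries zero circulation, is the restriction to $F$ of the claimed formula. On $\calT_{PNP,\delta}$ one uses \eqref{13}, the vanishing $w^2_\eps\to 0$ from Proposition \ref{prop:A:Priori:est}, and reassembles $\calK^0_{\tilde\calF(t)}[\omega]+L^{{}^{op}i}+\calB_{\tilde\calF(t)}[g-L^{{}^{op}i}\cdot n]$, the single free source/vortex term producing the Dirac mass at $x_{{}^{op}i}(t)=h^{{}^{op}i}(t)$ with source strength $\mu$ (from \eqref{4}) and circulation $\calC_{{}^{op}i}$. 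On $A$ one uses \eqref{12}, $w^1_\eps\to 0$, and the identities $\calK_{\Omega\setminus\{x_+,x_-\}}[\omega+\calC_+\delta_{x_+}+\calC_-\delta_{x_-}]=\calK^0_\Omega[\omega]+(\text{vortex part of }L)+\calB_\Omega[-(\text{vortex part of }L)\cdot n]$ and the analogous one relating $\calJ_\Omega[\mu\delta_{x_+}-\mu\delta_{x_-}]$ to the source part of $L$ and its harmonic correction, identifying the limit with $v$. Finally I would glue: for fixed $T$, on $\calT_{NTR,\delta}\cap[0,T]=A\cup C^+\cup C^-\cup F$ we have $v_\eps\to v$ in $\calL^r(L^q)$ along a subsequence, while on $\calT_{TR,\delta}$ Proposition \ref{prop:A:Priori:est} gives $\|v_\eps\|_{\calL^r(\calT_{TR,\delta};L^q)}\to 0$ as $\delta\to 0$ uniformly in $\eps$, and hence also $\|v\|_{\calL^r(\calT_{TR,\delta};L^q)}\to 0$ by weak lower semicontinuity; since $\calT_{TR,\delta}\downarrow\emptyset$ as $\delta\to 0$, a diagonal argument in $\delta$ together with dominated convergence in time yields strong convergence on all of $[0,T]$.

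The hard part will be the bookkeeping of the third step: checking that each explicit piece $L_\eps$, $L^{{}^{op}i}_\eps$ together with its harmonic correction identifies, in the limit, with $\calJ_{\calF(t)}$ and $\calK_{\calF(t)}$ evaluated on the Dirac masses — that is, that each free source/vortex centred at $h^i(t)$ has the correct singular part ($\operatorname{div}=\delta$, resp.\ $\operatorname{curl}=0$ and circulation $1$), that the subtracted harmonic fields restore $u\cdot n=g$ on $\partial\Omega$ (and $u\cdot n=0$ on a surviving non‑point hole), and that the strengths $\mu$, $j$ and circulations $\calC_\pm$ arrive with the correct signs from incompressibility and from \eqref{1}--\eqref{5}. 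The compactness of the $\calK^0$ and $\calB$ operators and the vanishing of the remainders $w^1_\eps$, $w^2_\eps$ and of $v_\eps$ on $\calT_{TR,\delta}$ are already available from the previous subsections.
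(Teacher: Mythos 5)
Your proposal follows the same route as the paper: use Proposition \ref{prop:A:Priori:est} to make the transition times $\calT_{TR,\delta}$ negligible uniformly in $\eps$, pass to the limit separately on $A$, $C^{\pm}$ and $F$ with the decompositions \eqref{12}, \eqref{13}, \eqref{11}, obtain the $\calK^{0}$ part from the $C^{0}_{loc}(\bbR^{+};L^{p}\text{-}w)$ convergence of $\omega_{\eps}$ via Lemma 6.4 of \cite{NS}, and identify the explicit singular pieces $L_{\eps}$, $L^{{}^{op}i}_{\eps}$ plus their harmonic corrections with $\calJ_{\calF(t)}$ and $\calK_{\calF(t)}$ applied to the Dirac masses. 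The gluing in $\delta$ at the end is also essentially what the paper does.

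There is, however, one step where your argument as written does not close: the convergence of the circulations. You claim that
\begin{equation*}
\int_{0}^{t}\oint_{\pS^{+}_{\eps}(s)}\omega^{+}_{\eps}(g_{\eps}-q_{\eps})\,ds \;\longrightarrow\; \int_{0}^{t}\Big(\oint_{\pS^{+}(s)}\omega^{+}(g-q)\,\mathds{1}_{\calT^{+}_{NP}}+j\,\mathds{1}_{\calT^{+}}\Big)\,ds
\end{equation*}
follows from \eqref{1}, \eqref{2} and \eqref{4}. But \eqref{2} only gives convergence of the boundary term on $\calT^{+}_{NP,\mathfrak{d}}$ for each fixed $\mathfrak{d}>0$, and \eqref{4} covers $\calT^{+}$; for every $\mathfrak{d}$ the set $\calT^{+}_{TR,\mathfrak{d}}=\{0<r^{+}<\mathfrak{d}\}$ is left uncovered and contributes to the time integral. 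One must show that this contribution is small \emph{uniformly in} $\eps$ as $\mathfrak{d}\to 0$; this is precisely where the uniform bound in \eqref{3} on $\|(g_{\eps}-q_{\eps})^{1/p}\omega^{+}_{\eps}\|_{\calL^{p}}$ and the equi-integrability hypothesis \eqref{5} enter, through a H\"older estimate of $\oint_{\pS^{+}_{\eps}}\lvert\omega^{+}_{\eps}\rvert\,\lvert g_{\eps}-q_{\eps}\rvert$ combined with the vanishing of $\mathrm{Leb}^{1}(\calT_{TR,\mathfrak{d}})$ (this is a separate displayed estimate in the paper's proof). Without this the convergence of $\calC_{i,\eps}(t)$, hence of $L_{\eps}$, of $\calB_{\Omega}[-L_{\eps}\cdot n]$ and of the circulation constraints in the limiting Div--Curl problem, is not established. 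The gap is repairable with tools you already invoke elsewhere, and the remainder of your proposal matches the paper's proof.
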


\begin{proof}

Proposition \ref{prop:A:Priori:est} shows that 
\begin{equation*}
\| v_{\eps}\|_{\calL^{r}_{loc}(\calT_{TR,\delta}, L^{q}(\calF(t)))} \longrightarrow 0 
\quad \text{ as } \delta \longrightarrow 0 
\end{equation*}
uniformly in $ \eps $. This implies that it is enough to show the convergence of $ v_{\eps} $ in the non-transition zone $ \calT_{NTR,\delta} $ for any small enough fixed $ \delta $. We divide the non-transition zone in three parts as in the beginning of Subsection \ref{unif:bound}. 

\paragraph{Case $ t \in F $.}

In $ \calT^{+}_{NP,\delta} \cap \calT^{-}_{NP,\delta} = F $, the velocity field $ v_{\eps} = \calK_{\calF(t)}^{0}[\omega_{\eps}] + \calB_{\calF(t)}[g_{\eps}] $, moreover   
\begin{equation*}
\left\| \calB_{\calF(t)}[g_{\eps}] - \calB_{\calF(t)}[g]   \right\|_{L^{q}(\calF(t))} = \left\| \calB_{\calF(t)}[g_{\eps} - g]   \right\|_{L^{q}(\calF(t))} \leq C \| g_{\eps}-g \|_{L^{q}(\partial \calF(t))} \to 0,
\end{equation*} 
in $ L^{r}_{loc}(\calT^{+}_{NP,\delta} \cap \calT^{-}_{NP,\delta} )$ by hypothesis \eqref{2}. 

Regarding $ \calK_{\calF(t)}^{0}[\omega_{\eps}] $, the strong convergence in $ \calL^r_{loc}(\bbR^+;L^{q}(\calF(t)))$ follows from the  $ C^0_{loc}(\bbR^+;L^p-w(\calF(t)))$ convergence of $ \omega_{\eps}$ together with Lemma 6.4 of \cite{NS}.

\paragraph{Case $ t \in A $.}

In $ \calT^{+} \cap \calT^{-} $, the velocity field $ v_{\eps} = \calK^0_{\Omega}[\omega_{\eps}]  + L_{\eps}  + \calB_{\Omega}[-L_{\eps}\cdot n] + w^{1}_{\eps} $. We have already shown that $  \calK^0_{\Omega}[\omega_{\eps}] \to  \calK^0_{\Omega}[\omega]$ and $ w^{1}_{\eps} \to 0 $ in $ L^{r}_{loc}(\calT^{+} \cap \calT^{-}, L^{q}(\Omega))$. It remains to show the strong convergence of $ L_{\eps} $ and $ \calB_{\Omega}[-L_{\eps}\cdot n] $, in particular the strong convergence of $ L_{\eps} $ implies also the one of $ \calB_{\Omega}[-L_{\eps}\cdot n] $. The vector field $ L_{\eps} $ is defined in \eqref{112}. From hypothesis \eqref{4}, we have    
\begin{equation*}
\sum_{i \in \{+,-\}} \left( \oint_{\pS^{i}_{\eps}(t)} g_{\eps} \right) \frac{x-h^{i}(t)}{2\pi \lvert x-h^{i}(t) \rvert^2} \longrightarrow  \sum_{i \in \{+,-\}} i \mu \frac{x-h^{i}(t)}{2\pi \lvert x-h^{i}(t)\rvert^2}
\end{equation*}
in $ L^{r}_{loc}(\calT^{+} \cap \calT^{-}, L^{q}(\Omega))$, it remains to show that
\begin{equation*}
\calC_{+, \eps}^{in} - \int_{0}^{t}\oint_{\pS^{+}(t)} \omega^{+}_{\eps} (g_{\eps}-q_{\eps}) \longrightarrow \calC_{+}^{in} - \int_{0}^{t} \left( \oint_{\partial S^{+}(t)} \omega^{+}(g - q)\mathds{1}_{\calT^{+}_{NP}} + j \mathds{1}_{\calT^{+}}  \right)
\end{equation*}
and
\begin{gather*}
\calC_{-, \eps}^{in} - \int_{0}^{t}\oint_{\pS^{-}_{\eps}(t)} \omega^{-}_{\eps} (g_{\eps}-q_{\eps}) \longrightarrow  \\ \calC_{-}^{in} + \int_{\calF(t)} \omega - \int_{\calF_0} \omega^{in} + \int_{0}^{t} \left( \oint_{\partial S^{+}(t)} \omega^{+}(g - q)\mathds{1}_{\calT^{+}_{NP}} + j \mathds{1}_{\calT^{+}}  \right)
\end{gather*}
in $ L^{r}_{loc}(\calT^{+}\cap \calT^{-}) $. Note that the second convergence follows from from the first one plus the weak formulation of the first equation of \eqref{app:judv:sys} tested with the constant function $ 1 $. For $ \mathfrak{d} > 0 $, we have 
\begin{align*}
\int_{0}^{t}\oint_{\pS^{+}(t)} \omega^{+}_{\eps} g_{\eps} = \int_{0}^{t}\Bigg(& \oint_{\pS^{+}_{\eps}(t)} \omega^{+}_{\eps} (g_{\eps}-q_{\eps}) \mathds{1}_{\calT^{+}} + \oint_{\pS^{+}_{\eps}(t)} \omega^{+}_{\eps} (g_{\eps}-q_{\eps}) \mathds{1}_{\calT^{+}_{NP, 2\mathfrak{d}}} \\ & + \oint_{\pS^{+}_{\eps}(t)} \omega^{+}_{\eps} (g_{\eps}-q_{\eps}) \mathds{1}_{\calT^{+}_{TR, 2\mathfrak{d}}} \Bigg).
\end{align*}
From hypothesis \eqref{4} and \eqref{2}, it holds
\begin{gather*}
\int_{0}^{t} \oint_{\pS^{+}_{\eps}(t)} \omega^{+}_{\eps}(g_{\eps}-q_{\eps}) \mathds{1}_{\calT^{+}} \longrightarrow \int_{0}^t j \mathds{1}_{\calT^{+}} \quad \text{ and } \quad  \\ \int_{0}^{t}  \oint_{\pS^{+}_{\eps}(t)} \omega^{+}_{\eps} (g_{\eps}-q_{\eps}) \mathds{1}_{\calT^{+}_{NP, 2\mathfrak{d}}} \longrightarrow \int_{0}^{t}  \oint_{\pS^{+}(t)} \omega^{+} (g-q) \mathds{1}_{\calT^{+}_{NP, 2\mathfrak{d}}}.
\end{gather*}
Finally we use \eqref{3}-\eqref{5} to show that the remaining term converge to zero in $ L^{r} $. It holds
\begin{align*}
\int_{\calT_{TR, 2\mathfrak{d}}} & \lvert \int_{0}^{t} \oint_{\pS^{+}_{\eps}} (g_{\eps}-q_{\eps}) \omega_{\eps}^{+} \rvert^r \\  \leq & \,   C \int_{\calT_{TR, 2\mathfrak{d}}} \left(\int_{0}^{T}\oint_{\pS^{+}_{\eps}} \lvert (g_{\eps}-q_{\eps}) \omega_{\eps}^+	\rvert^{p} \right)^{r} + \left( \int_{0}^{T} \oint_{\pS^{+}_{\eps}} \lvert g_{\eps}-q_{\eps} \rvert \right)^{r} \\
 \leq & \, C Leb^1(\calT_{TR, 2\mathfrak{d}})\left(\int_{0}^{T}\oint_{\pS^{+}_{\eps}} \lvert g_{\eps}-q_{\eps} \lvert \lvert \omega_{\eps}^+ \rvert^{p} \right)^{r}  \\ & \, + C T^{(r-1)/r} \int_{\calT_{TR, 2\mathfrak{d}}} r_{\eps}^+(t)^{r/p} \left( \oint_{\pS^{+}_{\eps}} \lvert g_{\eps}-q_{\eps} \lvert^{q} \right)^{r/q},
\end{align*}
we deduce that the right hand side converge to zero as $ \mathfrak{d} $ converges to zero uniformly in $ \eps $. This allows us to conclude that the convergence holds.

\paragraph{Case $ t \in C^{+} \cup C^- $.}

In $ \calT_{PNP, \delta} $, we recall from \eqref{13} that $ v_{\eps} = \calK_{\calF(t)}^{0}[\omega_{\eps}] + L_{\eps} + \calB_{\calF(t)}[g_{\eps}-L_{\eps}\cdot n] + w^{2}_{\eps} $. We start by noticing that the convergence of $ L_{\eps} $ and $ \calB_{\calF(t)}[-L_{\eps}\cdot n] $ follows analogously to the time $ \calT^{+} \cap \calT^{-} $, then the convergence of  $ \calK_{\calF(t)}^{0}[\omega_{\eps}] $ and $ \calB_{\calF(t)}[g_{\eps}] $ is shown as for $ t \in \calT_{NP,\delta} $. The term $ w_2 $ converges to zero due to Proposition \ref{prop:A:Priori:est}. 


\end{proof}

\subsection{Passing to the limit in the weak formulation}

Consider a sequence of approximate solutions $ (\omega_{\eps}, \omega_{\eps}^{-}, v_{\eps}) $ satisfying the hypothesis of Theorem \ref{app:theo:theo}. We use the weak convergence of the vorticities and the strong one of the velocity to pass to the limit in the weak formation. Recall that a weak solution of \eqref{app:judv:sys} satisfies the integral equation
\begin{align*}
\int_{\calF_{\eps}(0)} \omega_{\eps}^{in} \varphi(0,.) + \int_{\bbR^{+}} \int_{\calF_{\eps}(t)} \omega_{\eps} \partial_t \varphi + & \, \int_{\bbR^{+}} \int_{\calF_{\eps}(t)} v_{\eps}\cdot \nabla \varphi \omega_{\eps} = \\  \int_{\bbR^{+}} \oint_{\partial \calS_{\eps}^{+}(t)} (g_{\eps}-q_{\eps}) \omega_{\eps}^{+} \varphi & \, + \int_{\bbR^{+}} \oint_{\partial \calS_{\eps}^{-}(t)} (g_{\eps}-q_{\eps}) \omega_{\eps}^{-} \varphi.
\end{align*}    
From the weak convergence of the vorticity $ \omega_{\eps}$ together with the strong convergence of the velocity proved in Proposition \ref{strong:con:vel}, we can pass to the limit in the left hand side. Regarding the right hand side the first term passes to the limit by hypothesis, in fact $ \omega_{\eps}^{+} $ is not an unknown of the problem so we need only to show that
\begin{align*}
\int_{\bbR^{+}} \oint_{\partial \calS_{\eps}^{-}(t)}& \, (g_{\eps}-q_{\eps}) \omega_{\eps}^{-} \varphi \longrightarrow   \int_{\calT^{-}_{NP}} \oint_{\pS^{-}(t)} \omega^{-}(g-q)\varphi \\ & \, + \int_{\calT^{-}} \left( \frac{d}{dt} \int \omega +  \oint_{\partial S^{+}(t)} \omega^{+}(g - q)\mathds{1}_{\calT^{+}_{NP}} + j \mathds{1}_{\calT^{+}} \right) \varphi(t,x_{-}(t))
\end{align*}
First of all for transition times we have 
\begin{align*}
& \lvert \int_{\calT^{-}_{TR,\delta}} \oint_{\partial \calS_{\eps}^{-}(t)}  (g_{\eps}-q_{\eps}) \omega_{\eps}^{-} \varphi \rvert \\ \leq & \,  \left\| (g_{\eps}-q_{\eps})^{1/p}  \omega_{\eps}^{-}  \right\|_{\calL^p_{loc}(\calT^{-}_{TR,\delta};L^{p}(\pS_{\eps}(t)))} \left\| (g_{\eps}-q_{\eps})  \right\|_{\calL^1_{loc}(\calT^{-}_{TR,\delta};L^{1}(\pS_{\eps}(t)))}^{1/q} \\
\leq & \,  \left\| (g_{\eps}-q_{\eps})^{1/p}  \omega_{\eps}^{-}  \right\|_{\calL^p_{loc}(\calT^{-}_{TR,\delta};L^{p}(\pS_{\eps}(t)))} \left\| (r_{\eps}^{-})^{1/p}\|g_{\eps}-q_{\eps} \|_{L^{q}(\partial \calS_{\eps}^{-}(t))}  \right\|_{\calL^r_{loc}(\calT^{-}_{TR,\delta})} \\
& \, \longrightarrow 0,
\end{align*}
uniformly in $ \delta $ where we use hypothesis \eqref{5}. For time in $ \calT^{-}_{NP,\delta} $, it holds
\begin{align*}
\int_{\calT^{-}_{NP,\delta}} \oint_{\partial \calS^{-}(t)}  (g_{\eps}-q_{\eps}) \omega_{\eps}^{-} \varphi \longrightarrow \int_{\calT^{-}_{NP,\delta}} \oint_{\partial \calS^{-}(t)}  (g-q) \omega^{-} \varphi,
\end{align*}
from weak convergence. We are left with the times in $ \calT^{-} $. We have 
\begin{align*}
\int_{\calT^{-}} \oint_{\partial \calS^{-}(t)}  (g_{\eps}-q_{\eps}) \omega_{\eps}^{-} \varphi = & \, 
\int_{\calT^{-}} \oint_{\partial \calS^{-}(t)}  (g_{\eps}-q_{\eps}) \omega_{\eps}^{-} \varphi(.,x_{-}(.)) \\ & \, +
\int_{\calT^{-}} \oint_{\partial \calS^{-}(t)}  (g_{\eps}-q_{\eps}) \omega_{\eps}^{-} (\varphi-\varphi(.,x_{-}(.))).
\end{align*}
The last term converge to zero, in fact
\begin{align*}
\lvert \int_{\calT^{-}} \oint_{\partial \calS^{-}(t)} &, (g_{\eps}-q_{\eps}) \omega_{\eps}^{-} (\varphi-\varphi(.,x_{-}(.))) \rvert \\ \leq & \, \left\| (g_{\eps}-q_{\eps})^{1/p}  \omega_{\eps}^{-}  \right\|_{\calL^p_{loc}(\calT^{-}_{TR,\delta};L^{p}(\pS_{\eps}(t)))} \\ & \, \times \left\| (g_{\eps}-q_{\eps}) (\varphi-\varphi(.,x_{-}(.))) \right\|_{\calL^1(\calT^{-}_{TR,\delta};L^{1}(\pS_{\eps}(t)))}^{1/q} \\
\leq & \, \left\| (g_{\eps}-q_{\eps})^{1/p}  \omega_{\eps}^{-}  \right\|_{\calL^p_{loc}(\calT^{-}_{TR,\delta};L^{p}(\pS_{\eps}(t)))} \\ & \, \times \left\| (g_{\eps}-q_{\eps})  \right\|_{\calL^1_{loc}(\calT^{-}_{TR,\delta};L^{1}(\pS_{\eps}(t)))}^{1/q} L \eps \\
& \, \longrightarrow 0,
\end{align*}
where $ L $ is the Lipschitz constant of $ \varphi $. For the remaining term we have
\begin{equation*}
\oint_{\partial \calS^{-}(t)}  (g_{\eps}-q_{\eps}) \omega_{\eps}^{-}
\end{equation*}
is uniformly bounded in $ L^{p}_{loc}(\calT^{-})$, so
\begin{equation*}
\int_{\calT^{-}} \oint_{\partial \calS^{-}(t)}  (g_{\eps}-q_{\eps}) \omega_{\eps}^{-} \varphi(.,x_{-}(.)) \longrightarrow \int_{\calT^{-}} j^{-} \varphi(.,x_{-}(.))
\end{equation*} 
up to subsequence. We have now to identify the limit. Note that 
\begin{equation*}
\frac{d}{dt} \int_{\calF(t)} \omega \in L_{loc}^{s}(\bbR^{+}),
\end{equation*} 
for some $ s > 1 $. Moreover using the equation for any $ \psi \in C^{\infty}_{c}(\text{int}(\calT^{-})) $, it holds
\begin{equation*}
\int_{\bbR^{+}} j \mathds{1}_{\calT^{-}} \psi  = - \int_{\bbR^{+}} \left( \frac{d}{dt} \int_{\calF(t)} \omega +  \oint_{\partial S^{+}(t)} \omega^{+}(g - q)\mathds{1}_{\calT^{+}_{NP}} + j \mathds{1}_{\calT^{+}} \right) \psi.
\end{equation*}
Finally $ v $ satisfies the Div-Curl system in a weak form due to its explicit expression in Proposition \ref{strong:con:vel}. 

This conclude the proof of Theorem \ref{app:theo:theo}.

\appendix

\section{Proof of Lemma \ref{H:phi:bounded}}

\label{app:H:phi:bounded}

The proof of Lemma \ref{H:phi:bounded} is based on the following estimate.

\begin{Lemma}
\label{Lemma:Green:fond:EST}	
Let $ (\Omega, \calS^, \calS^-) $ a regular and compatible geometry and  let  $ \calF(t) = \Omega \setminus \overline{\calS^+(t) \cup \calS^-(t)} $. For $ t \in [0,T] $, let $ G(t,x,y) $ the Green function in $ \calF(t) $ with Dirichlet boundary condition, i.e. $ G $ is the solution of $ - \Delta_{x} G(t,.,y) = \delta_0(x-y) $ in $ \calF(t)$ and $ G(t,.,y) = 0 $ on $ \partial \calF(t) $. Then 
\begin{equation}
\label{Green:fond:EST}
\lvert G(t,x,y) \rvert \leq M \left(1+ \lvert log \lvert x-y \rvert \rvert\right) \quad \text{and} \quad \lvert \nabla_x G(t,x,y) \rvert \leq \frac{M}{\lvert x-y \rvert},
\end{equation} 		
for any $ x, y \in \calF(t) $ and $ M $ independent of time.	
\end{Lemma}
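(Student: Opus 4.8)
The plan is to reduce the time-dependent statement to a uniform family of elliptic estimates via the change of variables already introduced in the paper, and then invoke the standard bounds for the Green function of a fixed Lipschitz/smooth domain. First I would recall that for a fixed bounded domain $D$ with $C^{1,\alpha}$ boundary, the Dirichlet Green function $G_D(x,y)$ splits as $G_D(x,y) = \frac{1}{2\pi}\log\frac{1}{|x-y|} + h_D(x,y)$, where $h_D(\cdot,y)$ is harmonic in $D$ with boundary values $-\frac{1}{2\pi}\log\frac{1}{|x-y|}$ on $\partial D$; by the maximum principle $\|h_D(\cdot,y)\|_{L^\infty(D)} \le \frac{1}{2\pi}\sup_{x\in\partial D,\,y\in D}|\log|x-y||$, which is bounded in terms of $\operatorname{diam}(D)$ and a lower bound on $\operatorname{dist}(y,\partial D)$ when $y$ stays in a compact subset — but since we need the bound for all $y\in D$ we instead use that $|G_D(x,y)|\le \frac{1}{2\pi}|\log|x-y|| + C(D)$ follows from comparison with $G$ of a slightly larger ball and a slightly smaller ball, giving the logarithmic bound up to the boundary. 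The gradient bound $|\nabla_x G_D(x,y)|\le C(D)/|x-y|$ is the classical interior/boundary gradient estimate for Green functions on $C^{1,\alpha}$ domains (see e.g. Grüter–Widman type estimates), valid uniformly for all $x,y\in D$.

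The only real issue is the dependence of the constant $C(D)$ on the domain, since here $D = \calF(t)$ varies with $t\in[0,T]$. Here I would use the diffeomorphism $(\mathrm{Id},X):[0,T]\times\calF(0)\to[0,T]\odot\calF(t)$ from the ``time independent frame'' Lemma, which is $C^{1,\alpha}([0,T];C^{2,\alpha}(\calF(0)))$ with $\nabla X$ uniformly elliptic and its inverse $Y$ of the same regularity. Setting $\widetilde G(t,\xi,\eta) = G(t,X(t,\xi),X(t,\eta))$, a direct computation shows $\widetilde G(t,\cdot,\eta)$ solves an elliptic equation $-\operatorname{div}_\xi(A(t,\xi)\nabla_\xi \widetilde G) = \det(\nabla X(t,\eta))^{-1}\delta_\eta$ on the fixed domain $\calF(0)$ with homogeneous Dirichlet data, where $A(t,\cdot) = \det(\nabla X)\,(\nabla Y)(\nabla Y)^T\circ X$ is uniformly elliptic and $C^{\alpha}$ in $\xi$ with bounds uniform in $t\in[0,T]$. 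The classical Green function estimates for divergence-form elliptic operators with bounded measurable (here, Hölder) coefficients on a fixed $C^{1,\alpha}$ domain then give $|\widetilde G(t,\xi,\eta)|\le M(1+|\log|\xi-\eta||)$ and $|\nabla_\xi\widetilde G(t,\xi,\eta)|\le M/|\xi-\eta|$ with $M$ depending only on the ellipticity and Hölder constants of $A$, hence independent of $t$.

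The last step is to transfer these bounds back: since $|X(t,\xi)-X(t,\eta)|$ and $|\xi-\eta|$ are comparable uniformly in $t$ (because $\nabla X$ and $\nabla Y$ are uniformly bounded above and below), $|\log|x-y||$ and $|\log|\xi-\eta||$ differ by a bounded amount, so the logarithmic bound for $G$ follows from that for $\widetilde G$; and $\nabla_x G(t,x,y) = (\nabla Y(t,x))^T\nabla_\xi\widetilde G(t,Y(t,x),Y(t,y))$ together with $|\nabla Y|\le C$ and $|\xi-\eta|\ge c|x-y|$ yields $|\nabla_x G(t,x,y)|\le M/|x-y|$ with $M$ independent of $t$, which is exactly \eqref{Green:fond:EST}. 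I expect the main obstacle to be checking that the change of variables really does produce a divergence-form operator with coefficients that are Hölder and uniformly elliptic with $t$-independent constants, and invoking the correct off-the-shelf Green-function estimate in that generality; everything else is bookkeeping.
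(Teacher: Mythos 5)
Your proposal is correct in outline but takes a genuinely different route from the paper. The paper keeps the Laplacian and the moving domain: for each $t$ it passes to the three--dimensional cylinder $\calF(t)\times S^1$, invokes Theorem 1 of \cite{DM} for the Dirichlet Green function there (arguing that the constants depend only on the $C^{2,\alpha}$ character of the boundary, hence are uniform in $t$), and then recovers the two--dimensional bounds by integrating over the circle fibre via the formula $G(t,x,y)=\int_0^1 G^{\calF(t)\times S^1}((x,0),(y,y_3))\,dy_3$ from \cite{AL}. The entire point of this detour is the remark, made explicitly in the paper, that the relevant Green--function estimates are usually only available in the literature for $n\ge 3$. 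You instead freeze the domain with the diffeomorphism $(\Id,X)$ and trade the moving geometry for a $t$--dependent divergence--form operator $-\div(A(t,\cdot)\nabla\,\cdot\,)$ with uniformly elliptic, uniformly H\"older coefficients on the fixed domain $\calF(0)$; this reduction is correct (modulo the harmless slip that the right--hand side is exactly $\delta_\eta$ with no $\det(\nabla X(t,\eta))^{-1}$ factor, since the Jacobian from $dx$ cancels against $\varphi(y)=\tilde\varphi(\eta)$), and the transfer back through the bi--Lipschitz comparability of $\lvert x-y\rvert$ and $\lvert \xi-\eta\rvert$ is fine. What your route buys is a transparent source of the uniformity in $t$ (ellipticity and H\"older norms of $A$); what it costs is that the key input --- logarithmic and $1/\lvert\xi-\eta\rvert$ gradient bounds for the Green function of a \emph{two--dimensional variable--coefficient} operator --- is precisely the estimate that is not readily quotable off the shelf (Gr\"uter--Widman and \cite{DM} are stated for $n\ge 3$), so you would either have to reprove it by a parametrix/Schauder argument or locate a genuinely two--dimensional reference; you flagged this yourself, but be aware it is not mere bookkeeping, it is the very difficulty the paper's dimensional--reduction trick is designed to circumvent. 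Finally, both arguments implicitly require the geometry not to degenerate on $[0,T]$ (your diffeomorphism, like the paper's uniform boundary bounds, breaks down when a hole shrinks to a point), which is consistent with how the lemma is actually used.
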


Let us briefly show Lemma \ref{H:phi:bounded}.
\begin{proof}[Proof of Lemma \ref{H:phi:bounded}] 

The proof is a consequence of the estimates \eqref{Green:fond:EST} and the fact that $ \varphi $ is constant in any connected component of the boundary in such a way that $ \nabla \varphi $ is a multiple of the normal to boundary in any point of $ \partial \calF(t) $. We refer to \cite{ILN} for a complete proof of the result.
 	
\end{proof}
We conclude the section with the proof of Lemma \ref{Lemma:Green:fond:EST}. First of all let us recall that Lemma \ref{Lemma:Green:fond:EST} has been shown in \cite{Lic} for any fixed time and \eqref{Green:fond:EST} holds true with constant $ M = M(t) $. In the case of a domain with smooth boundary a proof is available in the appendix of \cite{ILN} and relies on the Riemann mapping Theorem. Let us now explain how to get \eqref{Green:fond:EST} with a constant independent of time.

\begin{proof}[Proof of Lemma \ref{Lemma:Green:fond:EST}]

In the study of Green's functions for the Dirichlet Laplacian, the case of dimension two is special in the sense that the fundamental solution in $ \bbR^2 $ is given by $ \Phi_2(x) = -\log \lvert x \rvert/2\pi $ while in dimension $ n \geq 3 $ is $\Phi_n(x) = \lvert x \rvert^{2-n}/w(n) $ where $ w( n ) $ is $ n(n-2) $ times the volume of the unit ball in $ \bbR^n $. For this reason often the authors restrict their proof to the case $ n \geq 3 $. 

In the case $ n = 3 $, Theorem 1 of \cite{DM} states that given $ \mathcal{O} \subset \bbR^3$  a bounded domain with $ C^{2,\alpha}$ boundary 
 and given $ G^{\mathcal{O}} $ the Green's function associated with the Dirichlet Laplacian on $ \mathcal{O} $,
 then   
\begin{equation*}
\lvert G^{\mathcal{O}}(t,\bar{x},\bar{y}) \rvert \leq \frac{M}{\lvert \bar{x}-\bar{y}\rvert} \quad \text{and} \quad \lvert \nabla_x G^{\mathcal{O}}(t,\bar{x},\bar{y}) \rvert \leq \frac{M}{\lvert \bar{x}-\bar{y}\rvert^2},
\end{equation*}
where $ \bar{x}, \bar{y} \in \mathcal{O} \subset \bbR^3 $. The proof is based on interior and boundary estimates. This implies that the proof can be extended to Green function associated with the three dimensional domain $ \calF(t) \times S^1 $ where $ S^1 $ is the circle with length $ 1 $, in fact the boundary is in $ C^{1,\alpha}_{loc}(\bbR^+;C^{2,\alpha}) $ so the constant $ M $ can be chosen  independent of the time.

Let us now recall that from the proof of Theorem 13 part \textit{ii.} of \cite{AL}
the Green's function $ G(t,.,.) $ associated with the two dimensional domain $ \calF(t) $ can be obtain from $ G^{\calF(t) \times S^1 } $ through the formula
\begin{equation*}
G(t,x,y) = \int_{0}^1 G^{\calF(t) \times S^1 }((x,0),(y,y_{3})) \, dy_{3}.
\end{equation*} 
We have
\begin{align*}
\lvert \nabla_{x} G(t,x,y)\rvert = & \, \lvert \int_{0}^1  G^{\calF(t) \times S^1 }((x,0),(y,y_{3})) \, dy_{3} \rvert \leq \int_{0}^1 \lvert G^{\calF(t) \times S^1 }((x,0),(y,y_{3}))\rvert \, dy_{3} \\
\leq & \,  \int_{0}^1 \frac{M}{\lvert x-y \rvert^2 + \lvert y_3 \rvert^2} \, dy_3 \leq \frac{1}{\lvert x-y \rvert} \int_{0}^{\frac{1}{\lvert x-y \rvert}} \frac{M}{1+\lvert s \rvert^2} \, ds \leq C \frac{M}{\lvert x-y \rvert}.
\end{align*}
For the estimates of $ \lvert G \rvert $, we follow the proof of Theorem 13 part \textit{ii.} of \cite{AL}.
\end{proof}  

\paragraph{Acknowledgements.} We want to thank Prof. F. Sueur for very fruitful discussions and comments. 
Marco Bravin  is supported by ERC-2014-ADG project HADE Id. 669689 (European Research Council), MINECO grant BERC 2018-2021 and  SEV-2017-0718 (Spain).


\begin{thebibliography}{50}




\bibitem{Ale} Alekseev, G. V. (1976). On solvability of the nonhomogeneous boundary value problem for two-dimensional nonsteady equations of ideal fluid dynamics. Dinamika Sploshnoy Sredy, Novosibirsk, Lavrentyev Institute of Hydrodynamics, (24), 15-35.


\bibitem{AGG} Amrouche, C., Girault, V., Giroire, J. (1997). Dirichlet and neumann exterior problems for the n-dimensional laplace operator an approach in weighted sobolev spaces. Journal de mathématiques pures et appliqu\'ees, 76(1), 55-81.

\bibitem{AL} Avellaneda, M., Lin, F. H. (1987). Compactness methods in the theory of homogenization. Communications on Pure and Applied Mathematics, 40(6), 803-847.


\bibitem{Bardos} Bardos, C. (1970). Problèmes aux limites pour les \'equations aux d\'eriv\'ees partielles du premier ordre \'a coefficients r\'eels; th\'eor\`emes d'approximation; application \'a l'\'equation de transport. In Annales scientifiques de l'\'Ecole Normale Sup\'erieure (Vol. 3, No. 2, pp. 185-233).

\bibitem{Boyer} Boyer, F. (2005). Trace theorems and spatial continuity properties for the solutions of the transport equation. Differential and integral equations, 18(8), 891-934.

\bibitem{BF} Boyer, F.,  Fabrie, P. (2012). Mathematical Tools for the Study of the Incompressible Navier-Stokes Equations and Related Models (Vol. 183). Springer Science \& Business Media.


\bibitem{IOT} Bravin, M. (2019). Dynamics of a viscous incompressible flow in presence of a rigid body and of an inviscid incompressible flow in presence of a source and a sink (Doctoral dissertation, Universit\'e de Bordeaux).


\bibitem{IO3} Bravin, M., Sueur, F. (2021). Existence of weak solutions to the two-dimensional incompressible Euler equations in the presence of sources and sinks. arXiv preprint arXiv:2103.13912.

\bibitem{Cri:Spi}  Crippa, G., Spirito, S. (2015). Renormalized Solutions of the 2D Euler Equations. Communications in Mathematical Physics, 339(1).

\bibitem{CNSS} Crippa, G., Nobili, C., Seis, C., Spirito, S. (2017). Eulerian and Lagrangian Solutions to the Continuity and Euler Equations with $L^1$ Vorticity. SIAM Journal on Mathematical Analysis, 49(5), 3973-3998.


\bibitem{Del} Delort, J. M. (1991). Existence de nappes de tourbillon en dimension deux. Journal of the American Mathematical Society, 4(3), 553-586.

\bibitem{DPL} DiPerna, R. J., Lions, P. L. (1989). Ordinary differential equations, transport theory and Sobolev spaces. Inventiones mathematicae, 98(3), 511-547.

\bibitem{DM} Dolzmann, G., M\"uller, S. (1995). Estimates for Green's matrices of elliptic systems by $L^p$ theory. Manuscripta mathematica, 88(1), 261-273.


\bibitem{FLF} Fernandes, F. Z., Lopes Filho, M. C. (2007). Two-dimensional incompressible ideal flows in a noncylindrical material domain. Mathematical Models and Methods in Applied Sciences, 17(12), 2035-2053.

\bibitem{dist:fun} Foote, R. L. (1984). Regularity of the distance function. Proceedings of the American Mathematical Society, 92(1), 153-155. 

\bibitem{HH} He, C., Hsiao, L. (2000). Two-dimensional Euler equations in a time dependent domain. Journal of Differential Equations, 163(2), 265-291.


\bibitem{ILN} Iftimie, D., Lopes Filho, M. C., Nussenzveig Lopes, H. J. (2020). Weak vorticity formulation of the incompressible 2D Euler equations in bounded domains. Communications in Partial Differential Equations, 45(2), 109-145.



\bibitem{Lic} Lichtenstein, L. (1921). Neuere Entwicklung der Potentialtheorie. Konforme Abbildung. In Encyklop\"adie der mathematischen Wissenschaften mit Einschluss ihrer Anwendungen (pp. 177-377). Vieweg+ Teubner Verlag, Wiesbaden.

\bibitem{lions}
Lions, P.-L.~(1996). 
Mathematical topics in fluid mechanics. Vol. 1. Incompressible models. Oxford Lecture Series in Mathematics and its Applications 3. 


\bibitem{NFS} Noisette, F., Sueur, F. (2021). Uniqueness of Yudovich's solutions to the 2D incompressible Euler equation despite the presence of sources and sinks. arXiv preprint arXiv:2106.11556.

\bibitem{NS} Novotny, A., Straskraba, I. (2004). Introduction to the mathematical theory of compressible flow (No. 27). Oxford University Press on Demand.

\bibitem{Scho} Schochet, S. (1995). The weak vorticity formulation of the 2-D Euler equations and concentration-cancellation. Communications in partial differential equations, 20(5-6), 1077-1104.

\bibitem{Jud} Judovi\v{c}, V. I. (1964). A two-dimensional non-stationary problem on the flow of an ideal incompressible fluid through a given region. Matematicheskii sbornik, 106(4), 562-588.

\end{thebibliography}
\end{document}